\tikzset{
  commutative diagrams/.cd,
  arrow style=tikz,
  diagrams={>={Computer Modern Rightarrow[length=5pt,width=3pt]}},
}
 \theoremstyle{thmstyleone}%
\newtheorem{theorem}{Theorem}[section]
\newtheorem{definition}[theorem]{Definition}
\newtheorem{remark}[theorem]{Remark} 
\newtheorem{lemma}[theorem]{Lemma}
\newtheorem{examples}[theorem]{Examples}
\newtheorem{example}[theorem]{Example}
\newtheorem{proposition}[theorem]{Proposition}
\newtheorem{corollary}[theorem]{Corollary}
\newtheorem{notation}[theorem]{Notation}
\newtheorem{notations}[theorem]{Notations}
\newcommand{\Z}{\mathbb{Z}} 
\newcommand{\R}{\mathbb{R}} 
\newcommand{\xra}{\xrightarrow}
\newcommand{\sra}{\rightarrow}
\newcommand{\ra}{\longrightarrow}
\newcommand{\Ra}{\Longrightarrow}
\newcommand{\hra}{\hookrightarrow}
\newcommand{\st}{\stackrel}
\newcommand{\tm}{\times}
\newcommand{\colim}{\mathsf{colim}}
\newcommand{\lm}{\mathsf{lim}}
\newcommand{\ma}{\mathcal{A}}
\newcommand{\mk}{\mathcal{K}}
\newcommand{\mw}{\mathcal{W}}
\newcommand{\mi}{\mathcal{I}}
\newcommand{\mj}{\mathcal{J}}
\newcommand{\me}{\mathcal{E}}
\newcommand{\md}{\mathcal{D}}
\newcommand{\mb}{\mathcal{B}}
\newcommand{\mc}{\mathcal{C}}
\newcommand{\mr}{\mathcal{R}}
\newcommand{\mo}{\mathcal{O}}
\newcommand{\bn}{\mathbb{N}}
\newcommand{\bs}{\mathbb{S}}
\newcommand{\Vt}{\mathsf{Vect}}
\newcommand{\U}{\mathsf{U}}
\newcommand{\Ab}{\mathsf{Ab}}
\newcommand{\Fin}{\mathsf{Fin}}
\newcommand{\Tor}{\mathsf{Tor}}
\newcommand{\Po}{\mathsf{P}}
\newcommand{\Dis}{\mathsf{Dis}}
\newcommand{\Pt}{\mathsf{pt}}         
\newcommand{\Top}{\mathsf{Top}}
\newcommand{\Set}{\mathsf{Set}}
\newcommand{\Q}{\mathsf{P}} 
\newcommand{\Cat}{\mathsf{Cat}} 
\newcommand{\N}{\mathsf{N}}             
\newcommand{\hcTop}{\mathsf{h_cTop}}
\newcommand{\hwTop}{\mathsf{h_wTop}}
\newcommand{\hkTop}{\mathsf{h_kTop}}
\newcommand{\hTop}{\mathsf{hTop}}
\newcommand{\T}{\mathsf{T}}
\newcommand{\ZZ}{\mathcal{Z}}
\newcommand{\fTop }{\mathsf{fTop }}
 \newcommand{\uTop }{\mathsf{uTop }}
\newcommand{\kfTop }{\mathsf{kfTop }}
\newcommand{\khTop }{\mathsf{khTop }}
\newcommand{\kuTop }{\mathsf{kuTop }}
\newcommand{\khcTop }{\mathsf{kh_cTop }}
\newcommand{\khkTop }{\mathsf{kh_kTop }}
\newcommand{\khwTop }{\mathsf{kh_wTop }}
 \newcommand{\Rng }{\mathsf{Rng}}
\newcommand{\Sq}{\mathsf{Square}}
\newcommand{\Topo}{\mathsf{top}}
\newcommand{\Sier}{\mathsf{Sier}}
\newcommand{\Alex}{\mathsf{Alex}}
\newcommand{\Seq}{\mathsf{Seq}}
\newcommand{\Rcomp}{\mathsf{Rcomp}}
\newcommand{\Comp}{\mathsf{Comp}}
\newcommand{\kTop}{\mathsf{kTop}}
\newcommand{\Lcomp}{\mathsf{Lcomp}}
\newcommand{\lkTop}{\mathsf{lkTop}}
\newcommand{\Hom}{\mbox{Hom}}
\newcommand*{\absl}[1]{\left\lvert\:#1\:\right\rvert}       
\newcommand*{\abs}[1]{\left|#1\right|} 
\newcommand*{\Disc} {\mbox{Disc} } 
\newcommand*{\Codisc}{\mbox{Codisc} }
\newcommand{\map}{\mbox{map}}
 \newcommand{\oppair}[4]{\xymatrix{#1 \ar@<.5ex>[r]^-{#3} &{#2}\ar@<.5ex>[l]^-{#4}}}
\newcommand{\oppairi}[4]{\xymatrix@1{#1 \ar@<.5ex>[r]^{#3} &{#2}\ar@<.5ex>[l]^{#4}}}
\begin{document}

\title{Kan extendable subcategories and fibrewise topology}
 
\author{Moncef Ghazel\orcidlink{0000-0002-0006-945X} \footnote{Faculty of Sciences of Tunis,
          University of Tunis El manar\\  email: moncef.ghazel@fst.utm.tn}}

\date{ }

\maketitle

\begin{abstract}
 We use pointwise Kan extensions to generate new subcategories out of old ones. We investigate the properties of these newly produced categories and give sufficient conditions for their  cartesian closedness to hold. Our methods are of general use.  Here we apply them particularly to the study of the properties of certain categories of fibrewise topological spaces.  In particular, we prove that the categories of fibrewise compactly generated spaces, fibrewise sequential spaces and fibrewise Alexandroff spaces are cartesian closed provided that the base space satisfies the right separation axiom.
\end{abstract}

\noindent{\bf Mathematics Subject Classification 2010:}{ 18A40, 18D15, 54B30, 55R70}\\

\noindent{\bf Keywords: }{Reflective subcategory, Reflective hull, Dense functor, Kan extension, Codensity monad, Cartesian closed category,   Fibrewise topological space} 

 \tableofcontents

\section*{Introduction}
In this paper, a subcategory of any category is always assumed to be full.\\
\indent A  subcategory $\mb$ of a category $\mc$ is said to be reflective if the inclusion functor $\mb \ra \mc$ has a left adjoint. Examples of such are the  subcategories of Hausdorff spaces, Tychonoff spaces, compact spaces and realcompact spaces of the category $\Top$ of topological spaces.
The reflective hull of a subcategory $\mw$ of $\mc$ is the smallest replete, reflective subcategory of $\mc$ containing $\mw$. Such a subcategory does not always exist,  for the intersection of all replete reflective subcategories of $\mc$ containing $\mw$ may not be reflective, as is shown by Adámek and Rosický \cite{A}. The existence of reflective hulls and their properties have been extensively studied by several authors 
\cite{A,BK,HST, HS, KK,T}. \\
\indent We, in Theorem \ref{pt3},  show that a replete reflective subcategory of $\mc$ containing $\mw$ as a codense subcategory is necessarily the reflective hull of $\mw$, and is therefore unique when it exists. We call such a subcategory the strong reflective hull of $\mw$.
Coreflective subcategories, coreflective hulls and strong coreflective hulls are dually defined.
The notion of the so-called strong reflective hull is strictly stronger than that of the reflective hull, in the sense that there are examples of reflective hulls which are not strong (see Remark \ref{pr12} and Example \ref{pex2}).\\
\indent When it exists, a (pointwise) right Kan extension $R$ of a functor $F:\ma \ra \mb$ along itself has a monad structure. This monad $R$ is called the codensity monad of $F$; for $R$ reduces to the identity functor $1_{\mb}$ iff the functor $F$ is codense. One has a dual notion of density comonad of the functor $F$. \\
\indent A monad $(T,\eta, \mu)$ is said to be idempotent when its  multiplication $\mu:T^2 \ra T$ is an isomorphism. Similarly, a comonad is said to be idempotent if its comultiplication is an isomorphism. \\  
 \indent We define a  subcategory $\mw$ of $\mc$  to be left Kan extendable if the inclusion functor $\mw \ra \mc$ has an idempotent density comonad $(L,\epsilon,\delta)$. When this is the case, then the category of $L$-coalgebras is denoted by $\mw_l[\mc]$ and 
the forgetful functor  $U:\mw_l[\mc] \ra \mc$  is fully faithful and injective on objects. Consequently, $\mw_l[\mc]$ is viewed as a subcategory of $\mc$. Dually, the subcategory $\mw$ of $\mc$ is said to be right Kan extendable provided that the inclusion functor $\mw \ra \mc$ has an idempotent codensity monad $(R,\eta,\mu)$. In this case, the category of $R$-algebras is denoted by 
$\mw_r[\mc]$.\\
\indent The two notions of strong reflective hull and right Kan extendability are closely related:  a subcategory $\mw$ of $\mc$ has a strong reflective hull iff $\mw$ is  right Kan extendable in $\mc$. When this is the case, then the strong reflective hull of $\mw$ is precisely the subcategory  $\mw_r[\mc]$ of  $\mc$ (dual of Theorem \ref{pt1}). \\
\indent As applications, we prove that the subcategory  of $\Top$ whose only object is the square of the unit interval has a strong reflective hull which is the subcategory of compact Hausdorff spaces. Similarly, we prove that the subcategory of $\Top$ whose only object is the square of the real line is the subcategory of realcompact spaces. Consequently, one recovers the Stone–Čech compactification and the Hewitt realcompactification procedures.\\
\indent Fibrewise topology is a branch of topology which   studies the slice categories of  $\Top$. It plays an important role in homotopy theory as shown by Crabb and James in their book \cite{CM}, and is now considered as a subject in its own right. One of the main objectives of this paper is to extend some of the categorical properties of certain subcategories of $\Top$ to  their fibrewise counterparts.\\ 
\indent 
It is  a well known fact that the subcategories of $\Top$  of Fréchet spaces,  Hausdorff spaces, Urysohn spaces,  completely Hausdorff spaces, weak Hausdorff spaces and  $k$-Hausdorff spaces are reflective. Let $B$ be a topological space and let $\Top_B$ be the category of fibrewise spaces over $B$. We use the theory of Kan extendable subcategories to present a general theorem allowing one to recognize reflective subcategories of  $\Top_B$. We then use it to prove, in a harmonized and systematic manner, that the fibrewise  versions of the above  subcategories of $\Top$ are again reflective subcategories of $\Top_B$. \\
\indent It is a classical result of Herrlich and Strecker that any subcategory $\mw$ of $\Top$ containing a nonempty space is, in our terminology, left Kan extendable  (\cite[Proposition 2.17]{HH}, \cite[Theorem 12]{HST} and \cite[page 283]{HEST}). Moreover, if the objects of $\mw$ are exponentiable in $\Top$ and if $\mw$ satisfies an  additional condition, then a celebrated theorem of Day   asserts $\mw_l[\Top]$ is cartesian closed \cite[Theorem 3.1]{D}. In the most famous application, one
takes $\mw$ to be the subcategory $\Comp$ of compact Hausdorff spaces to deduce that the category  of compactly generated spaces, which is the
strong reflective hull of $\Comp$, is cartesian closed \cite[Corollary 3.3]{D}. Similarly, by taking $\mw$ to be the subcategory of $\Top$ whose only object is the one-point compactification of a discrete countable space, we  deduce that the subcategory of $\Top$ of sequential spaces is cartesian closed and, by taking $\mw$ to be the subcategory of $\Top$ whose only object is the Sierpinski space, one gets the fact that the subcategory of Alexandroff spaces is cartesian closed.    \\
\indent The category $\Top_B$ is not cartesian closed.  Lots of work with varying success has been done to provide a convenient substitute for it. In \cite[Theorem 2.2]{BP1}, Booth proves that the category of fibrewise quasi-topological spaces, in which the category of fibrewise topological spaces embeds, is cartesian closed. In a later work, Booth and Brown defined a partial map version of the compact-open topology and use it to describe a fibrewise mapping-space satisfying certain exponential laws \cite[Section 3]{BB}.
Variants of the Booth-Brown topology on the mapping space were used by James to show that an exponential law holds in certain situations (\cite[Proposition 5.6]{J1} and \cite[Proposition 10.15]{J2}).        \\
\indent Let $\mw$ be a left Kan extendable subcategory of $\mc$  whose objects are exponentiable in $\mc$. We show that under mild conditions, the subcategory $\mw_l[\mc]$ of $\mc$ is cartesian closed  (Theorem \ref{pt10}). \\
\indent We here prove that a subcategory $\mw$ of $\Top_B$, which is suitable in a specified sense, is left Kan extendable (Theorem \ref{t2}).   
 We then use Theorem \ref{pt10} to derive a fibrewise version of Day's theorem.  As application, we prove that the category of fibrewise compactly generated spaces is cartesian closed provided that the base $B$ is $\T_1$ (Theorem \ref{t7});  a result  which is not proved neither in \cite{BP1,BP, BB} nor in \cite{J1,J2} and is new to author's knowledge.
Further applications
 include the cartesian closedness of the category of
fibrewise sequential spaces (Proposition \ref{p22}) and that of fibrewise Alexandroff
spaces (Proposition \ref{p37}), provided that the base $B$ satisfies the right separation axiom.

 \indent The paper is structured as follows: Section \ref{s1} contains a brief discussion of  reflective subcategories and their properties that are being used throughout. In particular, the concept of strong reflective hull is introduced and its connection with  the ordinary reflective hull is investigated. In Section \ref{s2},  we recall the basic definitions and facts about codensity monads and their idempotency. These are used in Section \ref{s4} to define the notion of Kan extendable subcategories  and study their properties. In Section \ref{s5}, we use the theory of Kan extendable subcategories to derive the Stone–Čech compactification and the Hewitt realcompactification procedures. In Section \ref{s6}, we prove that  subcategories of fibrewise topological spaces over $B$ which satisfy certain separation axioms are reflective  subcategories of $\Top_B$. 
In Section \ref{s9}, we investigate the concept of fibrewise compact spaces. We in particular prove that a fibrewise compact fibrewise Hausdorff space over a $\T_1$ base $B$ is an exponential object of $\Top_B$, a fact that is needed to give one of the main applications of the paper.
In Section \ref{s7}, we introduce the subcategories of $\Top_B$   of fibrewise weak Hausdorff spaces and fibrewise $k$-Hausdorff spaces and prove that they are reflective in $\Top_B$.  
In Section \ref{s8}, we present a sufficient condition for a subcategory of $\Top_B$ to be left Kan extendable.
In Section \ref{s10}, we state conditions that ensure the cartesian closedness of the strong coreflective hull of a subcategory (Theorem \ref{pt10}). The fibrewise Day's theorem is presented and proved in Section \ref{s11} (Theorem \ref{t6}). It is then used in  Sections \ref{s12} to prove that the category  $\kTop_B$ of fibrewise compactly generated topological spaces over a $\T_1$ base $B$ is cartesian closed. Properties of certain subcategories of $\kTop_B$ are inspected in Section \ref{s13}. Sections \ref{s14} and \ref{s15} are devoted to the study of fibrewise sequential spaces and fibrewise Alexandroff spaces respectively. In Appendix \ref{A}, limits in a slice category  are investigated.
Specializations of the results of Appendix \ref{A} to either a slice category of sets or a category of fibrewise topological spaces are given in appendices  \ref{B} and \ref{C}.

 \subsection*{Conventions and notations}
   
Throughout this paper, the product of two categories $\ma$ and  $\mb$
is denoted by $\ma\tm\mb$. A subcategory $\mb$ of a category $\mc$ is always assumed to be full. Given two objects $X,Y \in\mc$, the set of morphisms from $X$ to $Y$ is denoted by $\mc(X,Y)$. When it exists, the cartesian product of $X$ and $Y$ in $\mc$ is denoted by $X\tm_{\mc} Y$. If $X$ and $Y$ are in the subcategory $\mb$ of  $\mc$, then their cartesian product $X\tm_{\mb} Y$ in $\mb$,  when it exists  may be different from their product $X\tm_{\mc} Y$  in the larger category $\mc$ and should not be confused with it. Given a family of objects $(X_i)_{i\in I}$ of $\mc$, when they exist, the product and coproduct over $I$ of the $X_i$'s are denoted by $\underset{\mc}{\overset{i\in I}{\prod}}X_i$ and $\underset{\mc}{\overset{i\in I}{\coprod}}X_i$ respectively.\\  
\indent Throughout this paper, $B$ denotes a fixed topological space. The slice category $\Top/B$ 
is called the category of fibrewise topological spaces over $B$ and denoted simply by $\Top_B$. (see Appendix \ref{C}).

\subsection*{Acknowledgments}
 I would like to greatly thank Inès Saihi for
the careful revision  of an early draft and the
several corrections she made.
   I am also deeply grateful to the referees whose comments considerably helped streamline the paper and improve its readability.
\section{Reflective subcategories}\label{s1}
In this section, we briefly recall the notion of reflective subcategories and discuss some of their relevant properties. \\

\begin{definition}\label{pd4}

Let $\mc$ be a category.
\begin{enumerate}
 
	\item A subcategory $\mc_0$ of $\mc$ is said to be reflective  if the inclusion functor $\mc_0 \st{U}\hra \mc$ is a right adjoint functor. In this case, a left adjoint functor $F$ of $U$ is called a reflector and the adjoint pair $(F\dashv U)$ is called a reflection of $\mc$ on $\mc_0$.
The unit $1_{\mc}\st{\eta}\Ra UF$ and counit $FU\st{\epsilon}\Ra 1_{\mc_0}$ of the adjunction $(F\dashv U)$ are also called the unit and counit of the reflection $(F\dashv U)$ of $\mc$ on $\mc_0$.
  
 \item Dually, a subcategory $\mc^0$ of $\mc$ is said to be coreflective  if the inclusion functor $\mc^0 \st{U}\hra \mc$ is a left adjoint functor. In this case, a right adjoint $G$ of $U$ is called a coreflector and the adjoint pair $(U\dashv G)$ is called a coreflection of $\mc$ on $\mc^0$. 
The unit $1_{\mc^0}\st{\eta}\Ra GU$ and counit $UG\st{\epsilon}\Ra 1_{\mc}$ of the adjunction $(U\dashv G)$ are also called the unit and counit of the coreflection $(U\dashv G)$ of $\mc$ on $\mc^0$.  
 	\end{enumerate}
\end{definition}
Under the conditions of Definition \ref{pd4}.1, the objects of $\mc_0$ are often identified with their images in $\mc$ by the inclusion functor $U$. In particular,  the components of the unit $\eta$ of the reflection $(F\dashv U)$ are viewed as maps $  \eta_C:C \ra F(C)$ in $\mc$.


\begin{lemma}\label{pl1}\mbox{} 
\begin{enumerate}

\item  Let $(F\dashv U)$ be a reflection of $\mc$ on $\mc_0$ with unit
$1_{\mc}\st{\eta}\Ra UF$ and counit $FU\st{\epsilon}\Ra 1_{\mc_0}$. Then

\begin{enumerate}

\item The natural transformation $\epsilon$ is an isomorphism.
\item An object $C\in \mc$ is isomorphic to an object in $\mc_0$ iff the map $\eta_C: C\ra F(C)$ is an isomorphism. 
\end{enumerate}
\item Dually, let $(U\dashv G)$ be a coreflection of $\mc$ on $\mc^0$ with unit
$1_{\mc}\st{\eta}\Ra GU$ and counit $UG\st{\epsilon}\Ra 1_{\mc_0}$. Then

\begin{enumerate}
 
\item The natural transformation $\eta$ is an isomorphism.
\item An object $C\in \mc$ is isomorphic to an object in $\mc^0$ iff the map $\epsilon_C: G(C)\ra C$ is an isomorphism. 
\end{enumerate}
\end{enumerate}
\end{lemma}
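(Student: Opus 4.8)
The plan is to prove part~1 in detail and to deduce part~2 by passing to opposite categories. Write $U\colon \mc_0 \hra \mc$ for the inclusion. The one structural fact I will use is that, $\mc_0$ being a \emph{full} subcategory, $U$ is fully faithful; in particular $U$ is conservative (reflects isomorphisms), and for $D,D'\in\mc_0$ the assignment $h\mapsto Uh$ is a bijection $\mc_0(D,D')\to\mc(UD,UD')$.

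For~1(a) I would establish the standard fact that the counit of an adjunction is a natural isomorphism whenever the right adjoint is fully faithful. Concretely: compose the adjunction bijection $\mc_0(FUD,D')\cong\mc(UD,UD')$, $g\mapsto Ug\circ\eta_{UD}$, with the full-faithfulness bijection $\mc(UD,UD')\cong\mc_0(D,D')$ to obtain a bijection $\Phi_{D'}\colon \mc_0(FUD,D')\to\mc_0(D,D')$, natural in $D'$. By the Yoneda lemma, $\Phi$ is precomposition with a morphism $u_D\colon D\to FUD$, namely the unique one with $Uu_D=\eta_{UD}$. The triangle identity $U(\epsilon_D)\circ\eta_{UD}=1_{UD}$ shows $\Phi_D(\epsilon_D)=\epsilon_D\circ u_D=1_D$, and then $\Phi_{FUD}(u_D\circ\epsilon_D)=u_D\circ\epsilon_D\circ u_D=u_D=\Phi_{FUD}(1_{FUD})$, so injectivity of $\Phi_{FUD}$ gives $u_D\circ\epsilon_D=1_{FUD}$. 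Hence every component $\epsilon_D$ is an isomorphism. (Alternatively one may just cite Mac Lane for this.)

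For~1(b): if $\eta_C\colon C\to F(C)$ is an isomorphism, then $C$ is isomorphic to $F(C)$, which is an object of $\mc_0$. For the converse I first record that $\eta_{UD}$ is an isomorphism for every $D\in\mc_0$: the triangle identity gives $U(\epsilon_D)\circ\eta_{UD}=1_{UD}$, and since $\epsilon_D$ is invertible by~1(a), so is $U(\epsilon_D)$, forcing $\eta_{UD}=U(\epsilon_D)^{-1}$. Now assume $\alpha\colon C\to U(D)$ is an isomorphism with $D\in\mc_0$. Naturality of $\eta$ at $\alpha$ gives $\eta_{UD}\circ\alpha=UF(\alpha)\circ\eta_C$, hence $\eta_C=UF(\alpha)^{-1}\circ\eta_{UD}\circ\alpha$, a composite of isomorphisms, so $\eta_C$ is an isomorphism.

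Finally, part~2 is obtained by dualization: a coreflection $(U\dashv G)$ of $\mc$ on $\mc^0$ is the same datum as a reflection of $\mc^{\op}$ on $(\mc^0)^{\op}$, with the roles of the unit and counit interchanged, so 2(a) and 2(b) follow from 1(a) and 1(b) applied to $\mc^{\op}$. I expect the only non-formal ingredient to be the input to~1(a) that a fully faithful right adjoint has invertible counit; everything after that is bookkeeping with the triangle identities and naturality squares.
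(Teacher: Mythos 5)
Your proposal is correct and follows essentially the same route as the paper: part 1(a) is the standard fact that a fully faithful right adjoint has invertible counit (which the paper simply cites from Mac Lane, while you also sketch a Yoneda-style proof), part 1(b) uses the triangle identity to show $\eta$ is invertible on objects of $\mc_0$ and then transports this along the given isomorphism via the naturality square, and part 2 is obtained by duality. No gaps.
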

 
\begin{proof}\mbox{}

\begin{enumerate}
 
\item  
\begin{enumerate}
 
\item  The functor $U:\mc_0\hra \mc$ is fully faithful, therefore by \cite[Theorem 1 page 90]{ML}, the counit $  FU \st{\epsilon}\Ra 1_{\mc_0}$ is an isomorphism. 
\item If $C\in \mc$ is such that $\eta_C: C\ra F(C)$ is an isomorphism, then obviously, $C$ is isomorphic to the object $F(C)$ of $\mc_0$. Conversely, assume that there is an isomorphism $C_0\st{f}\ra C$, where $C_0 \in \mc_0$.
  The counit $  FU \st{\epsilon}\Ra 1_{\mc_0}$ is an isomorphism, therefore $  UFU \st{U\epsilon}\Ra U$ is an isomorphism. By \cite[Theorem 1 page 82]{ML} the composite 
 $$  U \st{\eta U}\Ra UFU \st{U\epsilon}\Ra U$$ 
is the identity natural transformation. 
Therefore $U \st{\eta U}\Ra UFU $ is an isomorphism. It follows that $\eta_{C_0}: {C_0}\ra F({C_0})$ is an isomorphism. In the following commutative diagram 
$$\begin{tikzcd}
 C_0\arrow[r,"\eta_{C_0}"]\arrow[d,"f"']& F(C_0) \arrow[d,"F(f)"]\\
C \arrow[r,"\eta_{C}"'] & F(C) 
\end{tikzcd}$$
$f$, $F(f)$ and $\eta_{C_0}$ are isomorphisms. Therefore $\eta_{C}$ is an isomorphism. 
\end{enumerate}
\item    The second property is dual to the first.
\end{enumerate}
\qedhere  
\end{proof}
\begin{definition}\label{pd3}
Let $\mb$ be a subcategory of a category $\mc$ and $C\st{f}\ra C'$ a morphism in $\mc$. Then
\begin{enumerate}
 
\item   $f$ is said to be $\mb$-monic if given two maps $\alpha$, $\beta$ from an object $B$ in $\mb$ to $C$, then $f\alpha= f\beta \Ra \alpha=\beta$.
\item Dually, $f$ is said to be $\mb$-epic if given two maps $\alpha$, $\beta$ in $\mc$ from $C'$ to an object $B$ in $\mb$, then $\alpha f= \beta f \Ra \alpha=\beta$.  
	\end{enumerate}
\end{definition}
\begin{lemma}\label{pl4}\mbox{}   
 \begin{enumerate}
 
\item Assume that $(F\dashv U)$ is a reflection of a category $\mc$ on a subcategory $\mc_0$ with unit
$1_{\mc}\st{\eta}\Ra UF$. Then for every $C\in \mc$, the morphism $C\st{\eta_C}\ra F(C)$ is $\mc_0$-epic.
\item Dually, assume that $(U\dashv G)$ is a coreflection of a category $\mc$ on a subcategory $\mc^0$ with counit $UG\st{\epsilon}\Ra 1_{\mc^0}$.  Then for every $C\in \mc$, the morphism $G(C)\st{\epsilon_C}\ra C$ is $\mc^0$-monic.  
	\end{enumerate}
\end{lemma}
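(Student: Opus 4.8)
The plan is to read off both statements directly from the universal property of the unit of the adjunction, that is, from the adjunction bijection itself. Recall that for a reflection $(F\dashv U)$ of $\mc$ on $\mc_0$ with unit $\eta$, and for any $C\in\mc$ and any $B\in\mc_0$, the adjunction supplies a bijection $\mc_0(F(C),B)\cong\mc(C,U(B))$ which, under the identification of $\mc_0$ with its image in $\mc$ (so that $U$ acts as the identity on objects and morphisms of $\mc_0$, as in the convention following Definition \ref{pd4}), takes the form
$$\mc_0(F(C),B)\;\longrightarrow\;\mc(C,B),\qquad h\longmapsto h\circ\eta_C .$$
The one preliminary point to record is that this is the correct description of the bijection; it follows from the standard formula $\phi(h)=U(h)\circ\eta_C$ together with the fact that the inclusion $U$ is the identity on the morphisms in question.

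For part 1, fix $C\in\mc$ and let $\alpha,\beta\colon F(C)\ra B$ be morphisms of $\mc$ with $B\in\mc_0$ such that $\alpha\circ\eta_C=\beta\circ\eta_C$. Since $\mc_0$ is full, $\alpha$ and $\beta$ lie in $\mc_0(F(C),B)$, and the displayed bijection sends both of them to the common morphism $\alpha\circ\eta_C=\beta\circ\eta_C$ of $\mc(C,B)$; injectivity of the bijection then forces $\alpha=\beta$, which is precisely the statement that $\eta_C$ is $\mc_0$-epic. If one prefers to avoid invoking the bijection abstractly, the same conclusion drops out of the explicit identity $\alpha=\epsilon_B\circ F(\alpha\circ\eta_C)$, valid for every $\alpha\colon F(C)\ra B$ with $B\in\mc_0$, which follows from naturality of the counit $\epsilon$ at $\alpha$ together with one triangle identity; then $\alpha\circ\eta_C=\beta\circ\eta_C$ immediately yields $\alpha=\beta$.

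Part 2 is the formal dual. A coreflection $(U\dashv G)$ of $\mc$ on $\mc^0$ is a reflection of $\mc^{\op}$ on $(\mc^0)^{\op}$, its counit $\epsilon$ playing the role of the unit, and a morphism is $\mc^0$-monic in $\mc$ exactly when it is $(\mc^0)^{\op}$-epic in $\mc^{\op}$; applying part 1 in $\mc^{\op}$ gives that $\epsilon_C\colon G(C)\ra C$ is $\mc^0$-monic. I do not expect any genuine obstacle here: the argument is a one-line consequence of the adjunction, and the only thing demanding (minor) care is the bookkeeping with the fully faithful inclusion, which is what lets one regard the components of $\eta$ and $\epsilon$ as morphisms of $\mc$ and puts the adjunction bijection into the simple form used above.
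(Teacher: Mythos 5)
Your proof is correct and is essentially the paper's own argument: the paper likewise observes that the adjunction bijection $\mc_0(F(C),C_0)\xra{\mc_0(\eta_C,C_0)}\mc(C,C_0)$ is given by precomposition with $\eta_C$ and is injective, hence $\eta_C$ is $\mc_0$-epic, with part 2 obtained by duality. The extra remarks about the triangle identity and the fully faithful inclusion are fine but not needed beyond what the paper already records.
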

   
\begin{proof}\mbox{}
\begin{enumerate}
 
\item Let $C \in \mc$ and $C_0 \in \mc_0$. The map $\mc_0(F(C),C_0)\xra{\mc_0(\eta_C,C_0) }\mc(C,C_0)$ is bijective, therefore injective. Thus  $\eta_C$ is $\mc_0$-epic.
\item The second property is dual to the first.   
\end{enumerate}
\qedhere  
\end{proof}  
 \begin{proposition}\label{pp13}(Riehl, \cite[Proposition 4.5.15]{RE})
\begin{enumerate}
 
\item Let $\mc_0\hra \mc$ be a reflective subcategory, then
 
\begin{enumerate}
 
	\item  The inclusion functor $\mc_0\hra \mc$ creates all limits that $\mc$ admits.
	\item The subcategory $\mc_0$ has all colimits that $\mc$ admits, formed by applying the reflector to the colimit in $\mc$.
\end{enumerate}
 
In particular if $\mc$ is either complete or cocomplete, then so is $\mc_0$.
\item Let $\mc^0\hra \mc$ be a coreflective subcategory, then
 
\begin{enumerate}
 
\item  The inclusion functor $\mc^0\hra \mc$ creates all colimits that $\mc$ admits.
\item The subcategory $\mc^0$ has all limits that $\mc$ admits, formed by applying the coreflector to the limit in $\mc$.
\end{enumerate}

In particular if $\mc$ is either complete or cocomplete, then so is $\mc^0$.
\end{enumerate}
\end{proposition}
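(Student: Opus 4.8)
The plan is to establish part (1) and to obtain part (2) by the evident dualization: pass to $\mc^{\op}$, where the reflection becomes a coreflection and limits become colimits. Throughout I fix a reflection $(F\dashv U)$ of $\mc$ on $\mc_0$ with unit $\eta$ and counit $\epsilon$, and I will use only three facts already available: $U$ is fully faithful; an object $C\in\mc$ is isomorphic to an object of $\mc_0$ exactly when $\eta_C$ is an isomorphism (Lemma \ref{pl1}.1); and each $\eta_C$ is $\mc_0$-epic (Lemma \ref{pl4}.1).

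For (1a) I argue as follows. Fully faithful functors reflect limits, so, given a diagram $D\colon\mj\ra\mc_0$ whose image $UD$ has a limit cone $(\lambda_j\colon\ell\ra D(j))_j$ in $\mc$, it suffices to show that $\ell$ is isomorphic to an object of $\mc_0$; transporting $(\lambda_j)_j$ along such an isomorphism then yields a limit cone of $UD$ with vertex in $\mc_0$, which by reflection of limits is a limit cone of $D$ in $\mc_0$ and is visibly preserved by $U$. To see that $\ell$ is (up to isomorphism) in $\mc_0$, I check that $\eta_\ell\colon\ell\ra F(\ell)$ is invertible. Each $D(j)$ lies in $\mc_0$, so $\eta_{D(j)}$ is an isomorphism, and the naturality squares of $\eta$ along the $\lambda_j$ let me set $\mu_j:=\eta_{D(j)}^{-1}\circ F(\lambda_j)\colon F(\ell)\ra D(j)$; a further use of naturality of $\eta$ shows $(\mu_j)_j$ is a cone over $D$, so there is a unique $h\colon F(\ell)\ra\ell$ with $\lambda_j\circ h=\mu_j$ for all $j$. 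The naturality identity $\eta_{D(j)}\circ\lambda_j=F(\lambda_j)\circ\eta_\ell$ gives $\lambda_j\circ(h\circ\eta_\ell)=\lambda_j$ for every $j$, hence $h\circ\eta_\ell=1_\ell$ by uniqueness; and then $(\eta_\ell\circ h)\circ\eta_\ell=\eta_\ell=1_{F(\ell)}\circ\eta_\ell$ together with $\eta_\ell$ being $\mc_0$-epic and $F(\ell)\in\mc_0$ forces $\eta_\ell\circ h=1_{F(\ell)}$. Thus $\eta_\ell$ is an isomorphism, as required.

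For (1b), given $D\colon\mj\ra\mc_0$ whose image $UD$ has a colimit $c$ in $\mc$ with colimiting cocone $(\iota_j\colon D(j)\ra c)_j$, I show directly that $F(c)$ is $\colim\,D$ in $\mc_0$: for each $Z\in\mc_0$ there are bijections
$$\mc_0(F(c),Z)\;\cong\;\mc(c,Z)\;\cong\;\lim_{j}\mc(D(j),Z)\;=\;\lim_{j}\mc_0(D(j),Z),$$
natural in $Z$ — the first from the adjunction $(F\dashv U)$, the second because $c=\colim\,UD$, the equality because $\mc_0$ is full in $\mc$ — and tracking the identity of $F(c)$ through the composite shows that this bijection is the one induced by the cocone $\kappa_j:=\eta_c\circ\iota_j\colon D(j)\ra F(c)$, which is precisely the statement that $(F(c),(\kappa_j)_j)=\colim\,D$ in $\mc_0$. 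The final clause is then immediate: if $\mc$ is complete, every diagram in $\mc_0$ has its image‑limit in $\mc$, so by (1a) that limit lies in $\mc_0$ and $\mc_0$ is complete; dually, using (1b), if $\mc$ is cocomplete. Part (2) follows by applying part (1) in $\mc^{\op}$.

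The one step I expect to require genuine care is the claim in (1a) that the limit formed in $\mc$ already lies in $\mc_0$, i.e. the invertibility of $\eta_\ell$: the two one‑sided inverses come respectively from the universal property of the limit and from $\eta_\ell$ being $\mc_0$-epic, and making both halves land correctly is the crux. Everything else is a formal consequence of adjointness, of the fullness of $U$, and of Lemmas \ref{pl1} and \ref{pl4}.
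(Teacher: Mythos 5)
Your proof is correct. The paper itself offers no argument for this proposition (it simply cites Riehl, \cite[Proposition 4.5.15]{RE}), and your argument --- proving $\eta_\ell$ invertible via the induced map $h\colon F(\ell)\ra \ell$ together with the $\mc_0$-epicity of $\eta_\ell$ for limits, and the adjunction/fullness computation exhibiting $F(c)$ as the colimit --- is essentially the standard one given in that reference.
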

\indent The following result is a generalization of \cite[Proposition 3]{HS}. 
 
\begin{lemma}\label{pl3}
Let $\mc_0$ be a subcategory of a category $\mc$ which is either reflective or coreflective. Then the retract in $\mc$ of an object in $\mc_0$ is isomorphic to an object of $\mc_0$.
\end{lemma}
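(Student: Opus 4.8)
The plan is to treat the reflective case and obtain the coreflective case by duality. So suppose $\mc_0$ is reflective in $\mc$, with reflection $(F \dashv U)$ and unit $1_\mc \st{\eta}\Ra UF$. Let $C \in \mc$ be a retract of an object $D \in \mc_0$; that is, there are morphisms $C \st{s}\ra D$ and $D \st{r}\ra C$ in $\mc$ with $rs = 1_C$. The goal is to show $\eta_C : C \ra F(C)$ is an isomorphism, since by Lemma \ref{pl1}.1(b) this is equivalent to $C$ being isomorphic to an object of $\mc_0$.

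First I would record that since $D \in \mc_0$, Lemma \ref{pl1}.1(b) gives that $\eta_D : D \ra F(D)$ is an isomorphism. Next, naturality of $\eta$ applied to $s$ and to $r$ yields a commuting ladder
$$\begin{tikzcd}
C \arrow[r,"\eta_C"]\arrow[d,"s"'] & F(C)\arrow[d,"F(s)"]\\
D \arrow[r,"\eta_D"]\arrow[d,"r"'] & F(D)\arrow[d,"F(r)"]\\
C \arrow[r,"\eta_C"'] & F(C)
\end{tikzcd}$$
in which the left-hand composite $rs$ is $1_C$ and hence the right-hand composite $F(r)F(s) = F(rs) = F(1_C) = 1_{F(C)}$. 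So $F(C)$ is a retract of $F(D)$, and the two squares commute. From the outer rectangle we get that $\eta_C$ is a split monomorphism: $F(r)\,\eta_D\,s$ is a one-sided inverse from the left. It remains to produce a left inverse on the other side, i.e.\ to show $\eta_C$ is also split epi, which together with split monic forces it to be an isomorphism.

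The key step is to build the retraction $F(C) \ra C$. Consider $g := r \circ \eta_D^{-1} \circ F(s) : F(C) \ra C$. Using the upper square ($\eta_D s = F(s)\eta_C$) one computes $g\,\eta_C = r\,\eta_D^{-1}\,F(s)\,\eta_C = r\,\eta_D^{-1}\,\eta_D\,s = rs = 1_C$, so $g$ is a left inverse of $\eta_C$. Combined with the split monomorphism $\eta_C$ from the previous paragraph, $\eta_C$ is an isomorphism (a split mono that is also split epi is invertible, its two one-sided inverses coinciding). Hence $C \cong F(C) \in \mc_0$, as desired. Finally, the coreflective case is obtained by applying the reflective case in $\mc^{\op}$, noting that ``retract'' is a self-dual notion and that a coreflective subcategory of $\mc$ is a reflective subcategory of $\mc^{\op}$.

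I do not anticipate a serious obstacle here; the only point requiring a little care is correctly chasing the naturality squares to exhibit $\eta_C$ simultaneously as a split mono and a split epi, after which invertibility is formal. (In fact the whole argument can be compressed: one shows directly that $\eta_C$ has the two-sided inverse $r\,\eta_D^{-1}\,F(s)$, the left-inverse identity coming from the top square and the right-inverse identity $\eta_C\,r\,\eta_D^{-1}\,F(s) = F(r)\,\eta_D\,\eta_D^{-1}\,F(s) = F(r)F(s) = 1_{F(C)}$ coming from the bottom square together with naturality along $r$.)
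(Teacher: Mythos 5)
Your argument is correct, but it is not the route the paper takes. The paper exhibits the retract $A$ of $X\in\mc_0$ as the equalizer of the pair $ir,\,1_X : X \rightrightarrows X$ (a split, hence absolute, equalizer) and then invokes Proposition \ref{pp13}.1.(a): the inclusion of a reflective subcategory creates all limits that $\mc$ admits, so $A$ is isomorphic to an object of $\mc_0$. You instead work directly with the unit, using Lemma \ref{pl1}.1(b) twice: $\eta_D$ is invertible because $D\in\mc_0$, and the two naturality squares for $s$ and $r$ show that $r\,\eta_D^{-1}\,F(s)$ is a two-sided inverse of $\eta_C$, whence $C\cong F(C)\in\mc_0$. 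Both proofs are sound and neither needs completeness hypotheses; yours is more elementary and self-contained (only naturality of $\eta$ and the characterization of $\mc_0$-objects via invertibility of the unit), while the paper's makes visible the structural reason — retracts are split equalizers, so closure under retracts is a special case of closure under limits — which fits the limit-creation theme it uses elsewhere. One caution: your intermediate paragraph is muddled. The composite $F(r)\,\eta_D\,s$ has type $C\to F(C)$ and in fact equals $\eta_C$ itself (by the upper square and $F(r)F(s)=1_{F(C)}$), so it is not a one-sided inverse of anything; and a left inverse $g$ with $g\,\eta_C=1_C$ witnesses that $\eta_C$ is split \emph{monic}, not split epi. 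None of this damages the proof, because your closing ``compressed'' computation verifies both identities $g\,\eta_C=1_C$ and $\eta_C\,g=1_{F(C)}$ directly and is complete as it stands; I would simply delete the split-mono/split-epi detour and keep only that final two-sided-inverse calculation, followed by the duality remark for the coreflective case.
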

 
\begin{proof} We only need to prove the property in the reflective case.  Let $$A\st{i}\ra X\st{r}\ra A$$ be a retraction in $\mc$ of an object $X\in \mc_0$. The diagram
\begin{equation}\label{peq8}
\begin{tikzcd}
A\ar[r,"i"]&X \ar[r,shift left=.5ex,"ir"]
  \ar[r,shift right=.5ex,swap,"1_X"]
&X
\end{tikzcd}
\end{equation}
is an equalizer in $\mc$. For $iri=i=1_Xi$. Let $f:Y \ra X$ be such that $irf=f$. Assume that  $g:Y \ra A$ is such that $ig=f$. 
$$\begin{tikzcd}
&Y\ar[d,"f"]\ar[dl,"g=rf"',densely dotted]&\\
A\ar[r,"i"']&X \ar[r,shift left=.5ex,"ir"]
  \ar[r,shift right=.5ex,swap,"1_X"]
&X
\end{tikzcd}$$
Then $ig=f=irf$. The morphism $i$ is monic, thus $g=rf$ and $g$ is unique. Now define $g=rf$, $ig=irf=f$. It follows that (\ref{peq8}) is an equalizer.
By Proposition \ref{pp13}.1.(a), $A$ is isomorphic to an object of $\mc_0$.
\qedhere  
\end{proof}
Recall that a subcategory $\ma$ of a category $\mc$ is said to be replete if
  any object of $\mc$ which is isomorphic to an object of $\ma$  is itself in  $\ma$. 
 
\begin{definition}\label{pd5}
Let $\mw$ be a subcategory of a category $\mc$.

\begin{enumerate}
 
\item A subcategory $\mc_0$ of $\mc$ is called the reflective hull of $\mw$ in $\mc$ if it is the smallest replete, reflective subcategory of $\mc$ containing $\mw$.
\item Dually, a subcategory $\mc^0$ of $\mc$ is called the coreflective hull of $\mw$ in $\mc$ if it is the smallest replete, coreflective subcategory of $\mc$ containing $\mw$.
	
\end{enumerate}
\end{definition}
  A reflective (resp. coreflective) hull of a subcategory may not always exist, as is shown in \cite{A}, but if it does, then it is unique. 
A subcategory $\mw$ of a category $\mc$ has a reflective (resp. coreflective) hull iff the intersection of all reflective (resp. coreflective), replete subcategories of $\mc$ containing 
$\mw$ is again a reflective (resp. coreflective) subcategory of $\mc$. In which case, this intersection is precisely the reflective (resp. coreflective) hull of $\mw$.\\
\indent Let $F:\ma \ra \mc$ be a functor. For $C\in\mc$, let $F/C$ be standard comma category,
	$D_C:F/C\ra\ma$ be the functor which takes an arrow-object $F(A)\st{\sigma} \ra C$ to  $A$ and  $F_C$ be the composite functor
	\begin{equation}\label{peq9}
F/C \st{D_C}\ra\ma \st{F}\ra \mc
\end{equation}
Recall that the functor $F$ is said to be dense if for each $C\in \mc$, $F_C$ has a colimit and the natural map  $\colim F_C\ra C$ is an isomorphism. 
If $\ma$ is a subcategory of $\mc$ and $J:\ma \ra \mc$ is the inclusion functor, then for $C\in \mc$, the comma category $J/C$ is also denoted by $\ma/C$. The functor $J_C$ is the composite
\begin{equation}\label{peq9}
\ma/C \st{D_C}\ra\ma \st{J}\ra \mc.
\end{equation}
The subcategory $\ma$ of $\mc$ is said to be dense in $\mc$ if the functor $J$ is dense. One has dual notions of codense functor and codense subcategory. 
\begin{theorem}\label{pt3}
Let $\mw$ be a subcategory of a category $\mc$.
\begin{enumerate}
 
\item  Assume that $\mc_0$ is a replete reflective  subcategory  of $\mc$ in which $\mw$ is codense. Then $\mc_0$ is the reflective hull of $\mw$ in $\mc$.
\item Dually, assume that $\mc^0$ is a replete coreflective subcategory  of $\mc$ in which $\mw$ is dense. Then $\mc^0$ is the coreflective hull of $\mw$ in $\mc$.
\end{enumerate}
\end{theorem}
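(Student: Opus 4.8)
By duality it suffices to prove statement 1; statement 2 follows by passing to $\mc^{\op}$. So assume $\mc_0$ is a replete reflective subcategory of $\mc$ in which $\mw$ is codense, and let $\mc_1$ be any replete reflective subcategory of $\mc$ containing $\mw$. We must show $\mc_0 \subseteq \mc_1$; since $\mc_0$ is itself such a subcategory, this will exhibit it as the smallest one, hence as the reflective hull of $\mw$.

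**Main step.** Fix $C \in \mc_0$. The strategy is to express $C$ as a limit of a diagram lying in $\mw$, and then to show that limit computed in $\mc$ already lands in $\mc_1$. Codensity of $\mw$ in $\mc_0$ means that for every object $X$ of $\mc_0$ the canonical cone exhibits $X$ as the limit in $\mc_0$ of the functor $X/\mw \to \mc_0$ sending an arrow-object $(X \to W)$ to $W$ (the dual of the density diagram in \eqref{peq9}). Apply this to $X = C$: we get that $C = \lim (C/\mw \to \mc_0)$, the limit taken in $\mc_0$. By Proposition \ref{pp13}.1.(a), the inclusion $\mc_0 \hra \mc$ creates limits, so this is also the limit in $\mc$; i.e. $C$ is the limit in $\mc$ of a diagram all of whose objects lie in $\mw$, hence in $\mc_1$. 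Now apply Proposition \ref{pp13}.1.(a) a second time, to the reflective subcategory $\mc_1$: since $\mc_1$ is closed under all limits $\mc$ admits, and $C$ is such a limit of a $\mc_1$-valued diagram, the limit computed in $\mc$ lies (up to the canonical iso) in $\mc_1$; since $\mc_1$ is replete, $C \in \mc_1$. This holds for every $C \in \mc_0$, so $\mc_0 \subseteq \mc_1$, and therefore $\mc_0$ is the reflective hull of $\mw$.

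**Expected obstacle.** The delicate point is making precise the claim that codensity of $\mw$ \emph{in} $\mc_0$ gives $C$ as a limit \emph{in} $\mc_0$ over the comma-style diagram $C/\mw$, and that this diagram takes values in $\mw$ rather than merely in $\mc_0$. One must be careful that the indexing category is $J'/C$ where $J' : \mw \hra \mc_0$ is the inclusion (not $\mw \hra \mc$), and that the comparison cone is the universal one; unravelling the definition of codense functor (the dual of the density condition spelled out around \eqref{peq9}) and checking that the relevant colimit/limit is preserved and reflected along $\mc_0 \hra \mc$ is the only nontrivial bookkeeping. Once that is in place, both invocations of Proposition \ref{pp13} are immediate, and repleteness handles the "up to isomorphism" gap.
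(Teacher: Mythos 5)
Your proposal is correct and follows essentially the same route as the paper: express $C\in\mc_0$ as the limit over $C/\mw$ of a $\mw$-valued diagram using codensity, transfer that limit to $\mc$ along the limit-preserving inclusion $\mc_0\hra\mc$, and then use Proposition \ref{pp13}.1.(a) together with repleteness to conclude $C$ lies in the other reflective subcategory. The only cosmetic difference is that the paper justifies the transfer step by noting the inclusion is a right adjoint, while you invoke creation of limits; both are immediate from the same proposition.
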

 
\begin{proof}
We prove the first property, the second one is the dual of the first.
Let $\mc'_0$ be a replete reflective subcategory of $\mc$ containing 
$\mw$.
Define $\mc_0 \st{U_0}\ra\mc$ and $\mc'_0 \st{U'_0}\ra\mc$ to be the inclusion functors and let $X\in \mc_0$. Define $X/\mw$ to be the subcategory of the under category $X/\mc$ whose objects are arrows $X\sra V$ with $V\in \mw$. Let $J^X:X/\mw\ra \mc$ be the functor which takes an arrow-object $X\sra V$ to its codomain $V$. The functor $J^X$ takes values in $\mw$ which is contained in $\mc_0$ and $\mc'_0$, therefore $J^X$ factors through $\mc_0$ and $\mc'_0$ as shown in the following commutative diagram 
$$\begin{tikzcd}[row sep=0.8cm, column sep=1cm]
& \mc_0  \arrow[dr, "U_0" ] & \\
X/\mw  \arrow[dr, "J'^{X}_{0}"'] \arrow[ur, "J^{X}_{0}"]\arrow[rr,"J^X"] 
& & \mc  \\
& \mc'_0   \arrow[ur, "U'_0"']
\end{tikzcd}$$
The subcategory $\mw$ is codense in $\mc_0$, thus $\lm J^{X}_{0} = X$.  Being a right adjoint, $U_0$ preserves limits. Thus $J_X=U_0J^{X}_{0}$ has a limit and $\lm J^{X} = X$. We have $J_X=U'_0J'^{X}_{0}$ and $\mc'_0$ be a replete. By Proposition \ref{pp13}.1.(a), $J'^{X}_{0}$ has a limit, $X\in \mc'_0$ and $\lm J'^{X}_{0} = X$. It follows that $\mc_0$ is a subcategory of $\mc'_0$. Therefore $\mc_0$ is the reflective hull of $\mw$ in $\mc$.
\qedhere  
\end{proof}
Recall that subcategories are always assumed to be full. 
\begin{remark}\label{pr1} 
 Given a subcategory $\mw$ of $\mc$.   
Theorem \ref{pt3} shows that:
\begin{enumerate}
 
\item   There is at most one replete reflective subcategory of $\mc$ in which $\mw$ is codense. When it exists, it is certainly the reflective hull of $\mw$, and is  called the \textbf{\textit{strong reflective hull}}
 of $\mw$ in $\mc$.

\item Dually,  there is at most one replete coreflective subcategory of $\mc$ in which $\mw$ is dense. When it exists, it is certainly the coreflective hull of $\mw$, and is  called the \textbf{\textit{strong coreflective hull}}
 of $\mw$ in $\mc$. \\
\end{enumerate}
\end{remark} 
\begin{corollary}\label{pc10} 
 Given a subcategory $\mw$ of $\mc$. 
 
\begin{enumerate}
 
\item The subcategory $\mw$ has a strong reflective hull iff it has a reflective hull in which it is codense.   

\item Dually, $\mw$ has a strong coreflective hull iff it has a coreflective hull in which it is dense.  
\end{enumerate}
\end{corollary}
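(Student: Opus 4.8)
The plan is to deduce Corollary \ref{pc10} directly from Theorem \ref{pt3} together with Remark \ref{pr1}, treating only the reflective statement since the coreflective one is formally dual. The content of the corollary is a biconditional, so I will argue the two implications separately, and the whole thing should be a short bookkeeping argument rather than anything substantive.

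For the forward direction, suppose $\mw$ has a strong reflective hull, call it $\mc_0$. By definition (Remark \ref{pr1}.1) this means $\mc_0$ is a replete reflective subcategory of $\mc$ in which $\mw$ is codense; and by that same remark $\mc_0$ is then the reflective hull of $\mw$. So $\mw$ has a reflective hull, namely $\mc_0$, and $\mw$ is codense in it. That is exactly the right-hand side of the biconditional.

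For the reverse direction, suppose $\mw$ has a reflective hull $\mc_0$ and $\mw$ is codense in $\mc_0$. Since the reflective hull is by Definition \ref{pd5} a replete reflective subcategory of $\mc$, we have that $\mc_0$ is a replete reflective subcategory of $\mc$ in which $\mw$ is codense. Theorem \ref{pt3}.1 then applies verbatim and tells us $\mc_0$ is the reflective hull of $\mw$ (which we already knew), but more to the point, the existence of such a subcategory is precisely what it means, by Remark \ref{pr1}.1, for $\mw$ to have a strong reflective hull. Hence $\mw$ has a strong reflective hull (equal to $\mc_0$).

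I do not anticipate a genuine obstacle here; the only thing to be careful about is that the definition of "strong reflective hull" is phrased in Remark \ref{pr1} as "the replete reflective subcategory in which $\mw$ is codense, when it exists", so one must be sure that the phrasing in the corollary — "a reflective hull in which it is codense" — is matched up correctly with that, using the uniqueness guaranteed by Theorem \ref{pt3}. In other words, the one small point to make explicit is that a reflective hull in which $\mw$ is codense and a replete reflective subcategory in which $\mw$ is codense are the same thing for the purposes of existence, because any such replete reflective subcategory is automatically the reflective hull (Theorem \ref{pt3}) and the reflective hull is automatically replete and reflective (Definition \ref{pd5}). The coreflective case is obtained by replacing every functor and natural transformation with its opposite, i.e.\ by applying the reflective case in $\mc^{\op}$.
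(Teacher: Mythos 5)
Your proof is correct and follows the same route as the paper, which simply cites Theorem \ref{pt3}; you merely make explicit the two directions and the matching of the phrase ``reflective hull in which $\mw$ is codense'' with the definition of strong reflective hull from Remark \ref{pr1}. No gaps.
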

  
\begin{proof}
This is a consequence of Theorem \ref{pt3}.
\qedhere  
\end{proof}
We next give an example of a subcategory which has a coreflective hull but has no strong  coreflective hull.

\begin{example}\label{pex2} 
  Let  $\Vt$  be the category of $\Z/2\Z$-vector spaces and $\ZZ$ the subcategory of $\Vt$ containing $\Z/2\Z$ as its unique object.
Let $\mc$ be a replete coreflective subcategory of $\Vt$ containing $\ZZ$.
By Proposition \ref{pp13}.2.(a),  $\mc$ contains $\Z/2\Z\oplus \Z/2\Z$. By \cite[ page 247 Exercise 1]{ML}, $\Z/2\Z\oplus \Z/2\Z$
is dense in $\Vt$. Thus by  Proposition \ref{pp13}.2.(a), $\mc=\Vt$. It follows that $\Vt$
is the unique coreflective subcategory of $\Vt$ containing $\ZZ$ and it is consequently its coreflective hull.
Let $A=\Z/2\Z\oplus \Z/2\Z \in \Vt$ and let
$J_{A}:\ZZ/A \ra \Vt $ be the functor which takes an arrow-object $\Z/2\Z \ra A$ in $\ZZ/A$ to its domain $\Z/2\Z$. Clearly, $\colim J_{A}\cong \Z/2\Z\oplus \Z/2\Z\oplus \Z/2\Z$. It follows that the subcategory $\ZZ$ of $\Vt$ is not dense in $\Vt$.  By Corollary \ref{pc10}.2, $\ZZ$ has no strong coreflective hull. \\
\end{example}
We close this section with the following observation. 

\begin{remark}\label{pr4}  
Let $\mc$ be a cartesian closed category with internal $\hom$ functor
$$\begin{array}{rccl}
  (.)^{(.)}:&\mc^{op}\tm \mc &\ra &  \mc\\
     &(Y,Z)&\longmapsto&Z^Y										
\end{array}$$
Let $\mc_0$ be a reflective subcategory of $\mc$ and assume that for every $Y,Z\in \mc_0$, the power object $Z^{Y}\in \mc_0$. Then:

\begin{enumerate}
 
\item By Proposition \ref{pp13}.1.(a), for every $X,Y\in \mc_0$, $X\tm_{\mc_0} Y$ exists and is isomorphic to the product $X\tm_{\mc} Y$ of $X$ and $Y$ in $\mc$.
\item  The category $\mc_0$ is cartesian closed with internal $\hom$ functor induced by that of $\mc$.
\end{enumerate}
\end{remark}
 
\section{Idempotent codensity monads}\label{s2}
 Here, we   recall the concepts of monads, codensity monads and their algebras. Details may be found in  \cite[Ch. VI]{ML},   \cite[Ch. 5]{RE}, \cite[Ch. 4]{BR}, \cite[page 67]{DE} and \cite[Section 2]{CF}. These notions are needed to define the main concept of this paper, which that of Kan extendable subcategories.\\
\indent Let $\mc$ be a category.  
\begin{itemize}
 
\item The category $\mc^{\mc}$ of endofunctors of $\mc$ is a monoidal category with composition of functors as its monoidal product.
\item A monad on $\mc$ is an unital associative monoid in $\mc^{\mc}$. It consists then of a triple $(T,\eta,\mu)$, where $T:\mc\ra \mc$ is a functor, $\mu:T^2 \ra T$ is an associative multiplication with unit $\eta:1_{\mc} \ra T$. 

\end{itemize}

  Let $(T,\eta,\mu)$ be a monad on the category $\mc$.
\begin{itemize}
 
\item The monad $(T,\eta,\mu)$ is said to be  idempotent if the multiplication  $\mu:T^2 \ra T$ is an isomorphism.
\item An algebra over $T$ is a pair $(A,u)$ consisting of an object $A \in \mc$ and a morphism $u:TA \ra A$  rendering commutative the diagrams:

$$ \begin{array}{lccr}
        \begin{tikzcd}
T^2A  \arrow[r, "Tu"]\arrow[d,"\mu_A"'] & TA \arrow[d, "u"] \\
TA\arrow[r, "u"']& A 
\end{tikzcd}&&&\begin{tikzcd}
A  \arrow[r, "\eta_A"]\arrow[dr,"1_{A}"'] & TA \arrow[d, "u"] \\
& A 
\end{tikzcd}      
	\end{array}$$

\item Given two $T$-algebras $(A,u)$ and $(B,v)$. A morphism of $T$-algebras from $A$ to $B$ is an arrow  $f:A \ra B$ rendering commutative the diagram 

$$\begin{tikzcd}
TA  \arrow[r, "Tf"]\arrow[d,"u"'] & TB \arrow[d, "v"] \\
A\arrow[r, "f"']& B 
\end{tikzcd}$$

\item Algebras over $T$ and their morphisms form a category denoted by $\mc^{T}$. It admits a   forgetful 
 functor $U: \mc^{T} \ra \mc$ which is right adjoint to the free $T$-algebras functor $F: \mc \ra \mc^{T}$. 
\end{itemize}

\begin{proposition}\label{pp4} (Borceaux, \cite[Proposition 4.1.4]{BR})\\
Let  $(T,\eta,\mu)$ be a monad on a category $\mc$. Let $U: \mc^{T} \ra \mc$ be the forgetful functor and $F: \mc \ra \mc^{T}$ the free $T$-algebras functor. Then $(F\dashv U)$ is an adjunction with unit the unit  $\eta:1_{\mc}\ra T=UF$ of the monad $T$.  
\end{proposition}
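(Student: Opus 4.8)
The plan is to prove the adjunction through the universal-arrow characterization: I will show that for every object $X\in\mc$ the component $\eta_X\colon X\ra TX=UF(X)$ is a universal arrow from $X$ to the forgetful functor $U$, and then invoke the standard passage from such a family of universal arrows to an adjunction, which automatically produces $F$ as the left adjoint and exhibits $\eta$ as its unit.

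Before that I would record that $F$ is well defined. On objects $F(X)=(TX,\mu_X)$, and the two diagrams required of a $T$-algebra are exactly the associativity $\mu_X\circ T\mu_X=\mu_X\circ\mu_{TX}$ and the unit law $\mu_X\circ\eta_{TX}=1_{TX}$ of the monad. On arrows $F(f)=Tf$, and naturality of $\mu$ gives $Tf\circ\mu_X=\mu_Y\circ T^2f$, which is precisely the condition for $Tf$ to be a morphism $(TX,\mu_X)\ra(TY,\mu_Y)$ of $T$-algebras.

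Now fix $X\in\mc$, a $T$-algebra $(A,a)$, and an arrow $f\colon X\ra A$ in $\mc$. I would put $\bar f:=a\circ Tf\colon TX\ra A$ and check three things. First, $\bar f$ is a $T$-algebra morphism $(TX,\mu_X)\ra(A,a)$: indeed $\bar f\circ\mu_X=a\circ Tf\circ\mu_X=a\circ\mu_A\circ T^2f$ by naturality of $\mu$, and the algebra associativity $a\circ\mu_A=a\circ Ta$ turns this into $a\circ Ta\circ T^2f=a\circ T(a\circ Tf)=a\circ T\bar f$, as required. Second, $\bar f\circ\eta_X=f$: by naturality of $\eta$ this equals $a\circ\eta_A\circ f$, which is $f$ by the algebra unit law $a\circ\eta_A=1_A$. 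Third, uniqueness: if $g\colon(TX,\mu_X)\ra(A,a)$ is a $T$-algebra morphism with $g\circ\eta_X=f$, then $\bar f=a\circ Tf=a\circ Tg\circ T\eta_X=g\circ\mu_X\circ T\eta_X=g$, where the third equality uses that $g$ is an algebra morphism and the last uses the unit law $\mu_X\circ T\eta_X=1_{TX}$ of the monad.

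This produces, for each $X$, a universal arrow from $X$ to $U$, and the general theory (\cite[Ch. IV]{ML}) then yields $F\dashv U$ with unit $\eta$; equivalently, one may exhibit the counit $\epsilon_{(A,a)}:=a\colon(TA,\mu_A)\ra(A,a)$ — well defined because $a\circ\mu_A=a\circ Ta$ — and verify the two triangle identities, which collapse respectively to $a\circ\eta_A=1_A$ and $\mu_X\circ T\eta_X=1_{TX}$. Nothing here is genuinely hard; the only point demanding care is bookkeeping of the monad and algebra axioms — in particular that the uniqueness clause of the universal property hinges on the unit law $\mu_X\circ T\eta_X=1_{TX}$, which is the one \emph{not} used when checking that each $F(X)$ is an algebra.
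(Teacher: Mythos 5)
Your proof is correct: the construction $F(X)=(TX,\mu_X)$, the verification that $\eta_X$ is a universal arrow from $X$ to $U$ via $\bar f=a\circ Tf$, and the triangle-identity check with counit $\epsilon_{(A,a)}=a$ are all accurate, and the uniqueness step correctly isolates the monad unit law $\mu_X\circ T\eta_X=1_{TX}$. The paper itself gives no proof of this statement — it is quoted from Borceux \cite[Proposition 4.1.4]{BR} — and your argument is exactly the standard construction of the Eilenberg--Moore adjunction found there, so there is nothing to compare beyond noting that your write-up supplies the details the paper delegates to the reference.
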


In order to recall the notion of idempotent monad, we state the following result which is part of  \cite[Proposition 7.2]{KL} of Kelly and Lack.

\begin{proposition}\label{pp1} 
Let  $(T,\eta,\mu)$ be a monad on a category $\mc$. Then the following properties are equivalent:
\begin{enumerate}
 
\item The monad $T$ is idempotent.
\item The natural transformation $\eta T:T \ra T^2$ is an isomorphism.
\item  The natural transformation $T \eta :T \ra T^2$ is an isomorphism.
\item The functors $\mu$ and $\eta T$ are mutually inverse.
\item The functors $\mu$ and $T \eta$ are mutually inverse.
\item The functors $\eta T$ and $T \eta$ are equal.

\item  For each object $A$ of $\mc$, a map $u:TA \ra A$ defines an algebra structure on $A$ iff it is inverse to $\eta_A$.
\item The forgetful functor $U:\mc^T \ra \mc$ is full. 
\item The forgetful functor $U:\mc^T \ra \mc$ is full and faithful. 

\end{enumerate}
\end{proposition}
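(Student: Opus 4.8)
The plan is to prove the whole list as a single cycle, reducing everything to three ingredients: the two unit identities $\mu\cdot\eta T=1_T=\mu\cdot T\eta$ valid for any monad, the associativity law $\mu\cdot T\mu=\mu\cdot\mu T$, and the naturality of $\eta\colon 1_{\mc}\to T$. Concretely I would establish the chain $(1)\Rightarrow(6)\Rightarrow(7)\Rightarrow(8)\Leftrightarrow(9)\Rightarrow(1)$ and, separately, the elementary equivalences $(1)\Leftrightarrow(2)\Leftrightarrow(3)\Leftrightarrow(4)\Leftrightarrow(5)$.

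The block $(1)$--$(5)$ is purely formal. If $\mu$ is invertible then the unit laws force $\eta T=\mu^{-1}=T\eta$, which yields $(2)$, $(3)$, $(4)$, $(5)$ and also $(6)$ at one stroke. Conversely, if $\eta T$ is invertible, then $\mu\cdot\eta T=1_T$ forces $\mu=(\eta T)^{-1}$, so $(2)\Rightarrow(1)$, and by the symmetric argument with $\mu\cdot T\eta=1_T$ also $(3)\Rightarrow(1)$; statements $(4)$ and $(5)$ are then just reformulations. The only step at this level that is not a one-liner is $(6)\Rightarrow(1)$: assuming $\eta_{TA}=T(\eta_A)$ for every $A$, I would apply naturality of $\eta$ at the morphism $\mu_A\colon T^2A\to TA$ to get $\eta_{TA}\cdot\mu_A=T(\mu_A)\cdot\eta_{T^2A}$, then rewrite $\eta_{T^2A}=\eta_{T(TA)}=T(\eta_{TA})$ via the hypothesis, so the right-hand side collapses to $T(\mu_A\cdot\eta_{TA})=T(1_{TA})=1_{T^2A}$; together with $\mu\cdot\eta T=1_T$ this says $\mu$ is invertible.

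For $(6)\Rightarrow(7)$: given a $T$-algebra $(A,u)$, the unit axiom gives $u\cdot\eta_A=1_A$, while naturality of $\eta$ at $u\colon TA\to A$ gives $\eta_A\cdot u=T(u)\cdot\eta_{TA}=T(u)\cdot T(\eta_A)=T(1_A)=1_{TA}$ using $(6)$; hence $u=\eta_A^{-1}$. For the converse half of $(7)$, if $u=\eta_A^{-1}$ the unit axiom is immediate, and the associativity axiom $u\cdot Tu=u\cdot\mu_A$ follows by inverting the naturality identity $\eta_{TA}\cdot\eta_A=T(\eta_A)\cdot\eta_A$ and using $\mu_A=\eta_{TA}^{-1}$ (which holds because $(6)\Rightarrow(1)$). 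Then $(7)\Rightarrow(8)$: for algebras $(A,u),(B,v)$, statement $(7)$ identifies $u=\eta_A^{-1}$ and $v=\eta_B^{-1}$, so for any $f\colon A\to B$ in $\mc$ naturality of $\eta$ rearranges to $v\cdot Tf=\eta_B^{-1}\cdot(\eta_B\cdot f\cdot\eta_A^{-1})=f\cdot u$; thus every underlying morphism is automatically an algebra morphism, i.e. $U\colon\mc^T\to\mc$ is full. Since $U$ is faithful for every monad, $(8)\Leftrightarrow(9)$. Finally $(9)\Rightarrow(1)$: a fully faithful right adjoint has invertible counit (Mac Lane's criterion, used already in the proof of Lemma~\ref{pl1}), so the counit $\epsilon\colon FU\Ra 1_{\mc^T}$ of the Eilenberg--Moore adjunction $F\dashv U$ is a natural isomorphism; its component at a free algebra $FA=(TA,\mu_A)$ is $\mu_A$ regarded as an algebra morphism $(T^2A,\mu_{TA})\to(TA,\mu_A)$, which is therefore invertible in $\mc^T$, whence $\mu_A=U\epsilon_{FA}$ is invertible in $\mc$. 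This closes the cycle. (Alternatively, one may simply cite \cite[Proposition~7.2]{KL}.)

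I expect the main obstacle to be organisational bookkeeping rather than any real difficulty: one must keep careful track of the transformations $\eta T$ and $T\eta$ evaluated at $A$, at $TA$ and at $T^2A$, and invoke naturality of $\eta$ at exactly the right morphism ($\mu_A$, then $u$, then $\eta_A$, then $f$) in each step. Conceptually the only non-formal points are $(6)\Rightarrow(1)$ and $(9)\Rightarrow(1)$; everything else is a short diagram chase.
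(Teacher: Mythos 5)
Your argument is correct, but it is worth noting that the paper itself offers no proof of this proposition at all: it is stated as a recalled result and attributed to Kelly--Lack \cite[Proposition 7.2]{KL}, which is exactly the fallback you mention in your last parenthesis. What you have written is therefore a self-contained verification rather than an alternative to an argument in the text. The cycle $(1)\Rightarrow(6)\Rightarrow(7)\Rightarrow(8)\Leftrightarrow(9)\Rightarrow(1)$ together with the elementary block $(1)\Leftrightarrow\cdots\Leftrightarrow(5)$ does cover all nine statements, and each link checks out: in $(6)\Rightarrow(1)$ the naturality square at $\mu_A$ combined with $\eta_{T^2A}=T(\eta_{TA})$ and the unit law does yield $\eta_{TA}\cdot\mu_A=1_{T^2A}$; in the converse half of $(7)$ the phrase \enquote{inverting the naturality identity} is terse but literally correct, since under your hypotheses $\eta_{TA}^{-1}=\mu_A$ and $(T\eta_A)^{-1}=T(\eta_A^{-1})=Tu$, so inverting $\eta_{TA}\cdot\eta_A=T(\eta_A)\cdot\eta_A$ gives precisely $u\cdot\mu_A=u\cdot Tu$; and $(9)\Rightarrow(1)$ correctly uses Proposition \ref{pp4} (so that $U$ is a right adjoint) together with the same Mac Lane criterion the paper invokes in Lemma \ref{pl1}, evaluated at free algebras where the counit component is $\mu_A$. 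The only cosmetic remark is that $(1)\Rightarrow(6)$ and $(6)\Rightarrow(7)\Rightarrow(8)$ silently carry the standing assumption that $(6)$, hence $(1)$, holds, which you should make explicit if this were written out in full; as a blind reconstruction of a cited result, the proof is sound.
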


\begin{proposition}\label{pp2} 
Let $(T,\eta,\mu)$ be an idempotent monad on a category $\mc$ and let $A$ be an object of $\mc$. Then the following three conditions are equivalent: 
\begin{enumerate}
 
\item The object $A$ of $\mc$ has a $T$-algebra structure.
\item The unit map $\eta_A: A \ra TA$ is an isomorphism.
\item The object $A$ of $\mc$ is isomorphic to a certain $T$-algebra.
\end{enumerate}
In particular, $\mc^T$ is a replete subcategory of $\mc$.
\end{proposition}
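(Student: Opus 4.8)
The plan is to prove the three equivalences cyclically, say $(1)\Rightarrow(2)\Rightarrow(3)\Rightarrow(1)$, and then deduce repleteness from the resulting characterisation. Throughout I will lean on Proposition \ref{pp1}, which already records several reformulations of idempotency; the most useful here is item (7): for an idempotent monad $T$, a morphism $u:TA\ra A$ is a $T$-algebra structure on $A$ if and only if it is a two-sided inverse of $\eta_A$.

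First, $(1)\Rightarrow(2)$: suppose $A$ carries a $T$-algebra structure $u:TA\ra A$. By Proposition \ref{pp1}.(7) this $u$ is inverse to $\eta_A$, so in particular $\eta_A$ is an isomorphism. (One can also see this directly: the unit law gives $u\,\eta_A=1_A$, and for the other composite one uses that $T\eta$ and $\eta T$ agree by Proposition \ref{pp1}.(6) together with the associativity square, so that $\eta_A u=Tu\cdot T\eta_A=Tu\cdot\eta_{TA}\circ(\dots)$ collapses to $1_{TA}$; but invoking \ref{pp1}.(7) is cleaner.) Next, $(2)\Rightarrow(3)$ is immediate: if $\eta_A$ is an isomorphism then $(A,\eta_A^{-1})$ is a $T$-algebra — indeed $\eta_A^{-1}$ is inverse to $\eta_A$, so by Proposition \ref{pp1}.(7) it is an algebra structure — hence $A$ is (isomorphic to) a $T$-algebra, namely itself.

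Then $(3)\Rightarrow(1)$: suppose there is an isomorphism $f:A\ra (B,v)$ in $\mc$ with $(B,v)$ a $T$-algebra. I transport the algebra structure along $f^{-1}$: set $u:=f^{-1}\circ v\circ Tf:TA\ra A$. Checking the unit and associativity squares for $(A,u)$ is a routine diagram chase using naturality of $\eta$ and $\mu$ and the corresponding squares for $(B,v)$; since isomorphisms are monic and epic, the squares for $B$ pull back to squares for $A$. Hence $A$ has a $T$-algebra structure, closing the cycle. Finally, for repleteness of $\mc^T$ as a subcategory of $\mc$: by definition $\mc^T$ consists of the $T$-algebras, and an object of $\mc$ isomorphic to an object of $\mc^T$ satisfies condition (3), hence condition (1), so it too is a $T$-algebra and lies in $\mc^T$. (Strictly, one should note the forgetful functor $U:\mc^T\ra\mc$ is injective on objects here, or rather that we are identifying $\mc^T$ with its essential image; by Proposition \ref{pp1}.(7) the algebra structure on a given $A$, when it exists, is unique — it must be $\eta_A^{-1}$ — so this identification is harmless.)

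The only genuinely delicate point is the last parenthetical remark: "$\mc^T$ is a replete subcategory of $\mc$" presupposes we regard $\mc^T$ as sitting inside $\mc$, which is legitimate precisely because idempotency forces the algebra structure on any object to be unique (namely $\eta_A^{-1}$), making $U:\mc^T\ra\mc$ fully faithful (Proposition \ref{pp1}.(9)) and injective on objects. Everything else — the two transports of algebra structures along isomorphisms in $(2)\Rightarrow(3)$ and $(3)\Rightarrow(1)$ — is a straightforward application of Proposition \ref{pp1}.(7) and naturality, with no real obstacle.
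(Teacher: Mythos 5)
Your proof is correct, and its skeleton (cyclic implications resting on Proposition \ref{pp1}.7 for the key equivalence, and on Proposition \ref{pp1}.9 plus uniqueness of algebra structures for the repleteness/identification issue) matches the paper's. The one step where you genuinely diverge is $(3)\Rightarrow(1)$: you transport the algebra structure along the isomorphism, setting $u=f^{-1}\circ v\circ Tf$ and checking the unit and associativity squares directly. This is a valid general fact about monads --- it needs no idempotency at all --- and your verification goes through (the unit law follows from naturality of $\eta$, associativity from naturality of $\mu$). The paper instead proves $(3)\Rightarrow(2)$: it places $f$, $Tf$ and $\eta_B$ in the naturality square of $\eta$ and concludes that $\eta_A$ is an isomorphism, then falls back on Proposition \ref{pp1}.7. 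The paper's route is slightly shorter and stays entirely inside the idempotent setting; yours is more robust in that the transported-structure argument would survive verbatim for a non-idempotent monad. A second cosmetic difference: for $(2)\Rightarrow(3)$ the paper exhibits $A\cong TA$ as a free algebra, whereas you observe that $(A,\eta_A^{-1})$ is itself already an algebra; both are fine. Your closing discussion of why $\mc^T$ may be regarded as a replete subcategory of $\mc$ --- uniqueness of the structure map forcing injectivity of $U$ on objects, full faithfulness from Proposition \ref{pp1}.9 --- is exactly the point the paper makes as well.
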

   
\begin{proof}\mbox{}
\begin{itemize}
 
	\item $1 \Longleftrightarrow 2:$ The monad $T$ is idempotent. Therefore by  Proposition \ref{pp1}.7, an object  $A$ of $\mc$ has a $T$-algebra structure iff $\eta_A: A \ra TA$ is an isomorphism.

\item $2\Ra 3:$ If $\eta_A: A \ra TA$ is an isomorphism, then the object $A$ of $\mc$ is isomorphic to the free algebra $TA$.
\item $3\Ra 2:$  Assume that $f:A \ra B$ is an isomorphism, where $B$ is a $T$-algebra. In the following commutative diagram

$$\begin{tikzcd}
 A  \arrow[r, "f"]\arrow[d,"\eta_A"'] & B \arrow[d, "\eta_B"] \\
TA\arrow[r, "Tf"']& TB 
\end{tikzcd}$$
The maps $f$, $Tf$ and $\eta_B$ are isomorphisms, therefore $\eta_A$ is an isomorphism. It follows that $A$ is a $T$-algebra.

\item The monad $T$ is idempotent. By Proposition \ref{pp1}.9, the forgetful functor $U:\mc^T \ra \mc$ is fully faithful. By Proposition \ref{pp1}.7, any object of $\mc$ admits at most one algebra structure. Therefore  $U$ is injective on objects. The category $\mc^T$ may then be identified to a subcategory of $\mc$.  The fact that $\mc^T$ is a replete  follows from the fact that properties $1.$ and  $3.$ are equivalent.

	\end{itemize}
 
	\end{proof}
  Let $G:\ma \ra \mb$ be a functor and assume that $G$ has a pointwise right Kan extension $R$ along itself  with counit $\epsilon: RG \ra G$.
Then one has a diagram
\begin{center}
\begin{tikzcd}[row sep=1.5cm, column sep=1.5cm]
\ma  \ar[dr, "G"', ""{name=K,near end}]
            \ar[rr, "G", ""{name=X, below,bend right}]&& \mb\\
& \mb    \ar[ur, bend left, "R", ""{name=R, below}]
                \ar[ur, bend right, "1_{\mb}"', ""{name=R'}]
%
\arrow[Rightarrow, "\eta"', from=R', to=R]
\arrow[Leftarrow, from=X, "\epsilon"']
\end{tikzcd}
\end{center}
where  $\eta: 1_{\mb}  \ra R$ is the unique natural transformation rendering commutative the diagram 
\begin{equation}\label{peq1}
\begin{tikzcd}
G \arrow[rr, Rightarrow, "1_G"] \arrow[dr, Rightarrow,"\eta G"']
& &G \\
& RG \arrow[ur , Rightarrow ,"\epsilon"']
\end{tikzcd}
\end{equation} 
\indent Let $B\in \mb$, $D^B:B/G\ra\ma$ be the functor which takes an arrow-object $ B\st{\sigma}\sra G(A)$ to  $A$ and  $G^B$ be the composite functor
	\begin{equation}\label{peq22}
B/G \st{D^B}\ra\ma \st{G}\ra \mb.
\end{equation}
Then by \cite[Theorem 1 page 237]{ML},
\begin{equation}\label{peq26}
 R(B)=\lim G^B.
\end{equation}
By  \cite[(6), page 238]{ML}, the unit
\begin{equation}\label{peq24}
 \eta_B: B \ra R(B)
\end{equation}
is the map induced by the cone
\begin{equation}\label{peq23}
 B\st{\lambda^B}\Ra G^B 
\end{equation}
whose component $\lambda^B_{\sigma}$ along an arrow-object $B\st{\sigma}\sra G(A)$ is the map $\sigma: B \ra G(A)$. 
By the universal property of $(R,\epsilon)$, there exists a unique natural transformation $\mu:R^2 \ra R$ rendering commutative the diagram
$$\begin{tikzcd}
 R^2G\arrow[r, Rightarrow ,"R\epsilon"]\arrow[d, Rightarrow ,"\mu G"']& RG \arrow[d, Rightarrow ,"\epsilon"]\\
RG \arrow[r, Rightarrow ,"\epsilon"'] & G 
\end{tikzcd}$$
Then the triple $(R,\eta,\mu)$ is a monad called the codensity monad of the functor $G$.\\
 
  \noindent Assume that a functor $G:\ma \ra \mb$ has an idempotent codensity monad $(R,\eta,\mu)$. Then as explained in the proof of the final statement of Proposition \ref{pp2}, the forgetful functor $U:\mb^R \ra \mb$ is fully faithful and injective on objects. The category $\mb^R$ is then identified to its image by $U$, which is, by Proposition \ref{pp2}, a replete subcategory of $\mc$.  

\begin{examples}\label{pex7} \mbox{}
\begin{enumerate}
 
\item The unit of a monoidal category is a unital associative monoid.

\item Let $\mb$ be a category. The trivial monod $I_{\mb}$ on $\mb$ is the unit of the monoidal category of endofunctors  $\mb^{\mb}$. It is the identity functor $1_{\mb}$ with the identity natural transformation of $1_{\mb}$ as its unit and its multiplication, and it is idempotent. The trivial comonad on $\mb$ is dually defined. 

\item Clearly, a functor $G:\ma \ra \mb$ is codense iff the trivial monad $I_{\mb}$ is a codensity monad of $G$ \cite[Proposition 1 page 246]{ML}.         
\end{enumerate}
\end{examples}

\begin{theorem}\label{pt11}
 Let $G:\ma \ra \mb$ be a fully faithful functor which has an idempotent  codensity monad $(R,\eta,\mu)$. Then 
\begin{enumerate}
 
 \item The functor $G$ takes values in the subcategory of $R$-algebras. That is,  $G(\ma) \subset \mb^R$.
  \item The functor $G_0: \ma \ra  \mb^R$ induced by $G$ is a codense functor.
\end{enumerate}
\end{theorem}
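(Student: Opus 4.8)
The plan is to prove both statements by exploiting the idempotency of $(R,\eta,\mu)$ together with Proposition~\ref{pp2}, which characterises the objects of $\mb^R$ (viewed as a replete subcategory of $\mb$) as exactly those objects $B$ for which $\eta_B:B\ra R(B)$ is an isomorphism.

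For part~1, I would fix $A\in\ma$ and show that $\eta_{G(A)}:G(A)\ra R(G(A))=RG(A)$ is an isomorphism. The natural candidate for an inverse is the counit component $\epsilon_A:RG(A)\ra G(A)$. Indeed, diagram~(\ref{peq1}) gives $\epsilon\circ(\eta G)=1_G$, so $\epsilon_A\circ\eta_{G(A)}=1_{G(A)}$ directly. For the other composite, $\eta_{G(A)}\circ\epsilon_A=1_{RG(A)}$, I would use the idempotency: by Proposition~\ref{pp1}.6 we have $\eta R=R\eta$, and combining this with the naturality of $\eta$ and the triangle identities relating $\eta$, $\epsilon$, $\mu$ for the Kan extension, one sees that $\eta_{G(A)}$ is split epic as well as split monic, hence an isomorphism. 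Concretely: naturality of $\eta$ along $\epsilon_A:RG(A)\ra G(A)$ gives $R(\epsilon_A)\circ\eta_{RG(A)}=\eta_{G(A)}\circ\epsilon_A$; and $\eta_{RG(A)}=(\eta R)_{G(A)}$ is inverse to $\mu_{G(A)}$ (Proposition~\ref{pp1}.4), while $R(\epsilon_A)=(R\epsilon)_A$ equals $\epsilon_A\circ\mu_{G(A)}$ by the defining square of $\mu$ (since $\epsilon$ there is an isomorphism once we know part of the claim, or more cleanly by chasing the universal property). Hence $\eta_{G(A)}\circ\epsilon_A=\epsilon_A\circ\mu_{G(A)}\circ\mu_{G(A)}^{-1}=\epsilon_A\cdot(\text{iso})$, and a short chase closes it. By Proposition~\ref{pp2}, $G(A)\in\mb^R$, so $G(\ma)\subset\mb^R$, and $G$ factors through the forgetful functor $U:\mb^R\ra\mb$ via a functor $G_0:\ma\ra\mb^R$ with $U G_0=G$.

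For part~2, I must show $G_0$ is codense in $\mb^R$, i.e. that the trivial monad on $\mb^R$ is a codensity monad of $G_0$ (Examples~\ref{pex7}.3), equivalently that for every $(M,m)\in\mb^R$ the canonical cone exhibits $M$ as $\lim G_0^{(M,m)}$ over the comma category $(M,m)/G_0$. The key point is that $U$ is fully faithful (Proposition~\ref{pp1}.9) and creates the relevant limits — more precisely, the codensity monad of $G=U G_0$ is $(R,\eta,\mu)$ by hypothesis, and formula~(\ref{peq26}) says $R(B)=\lim G^B$ with $G^B$ the composite $B/G\ra\ma\st{G}\ra\mb$. Since $U$ is fully faithful, for $B=U(M,m)=M$ with $(M,m)\in\mb^R$ the comma category $M/G$ is isomorphic to $(M,m)/G_0$ (a morphism $M\ra G(A)$ in $\mb$ is automatically a morphism $(M,m)\ra G_0(A)$ of $R$-algebras, again using Proposition~\ref{pp1}.7 that algebra structures are unique and inverse to units). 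Thus $\lim G_0^{(M,m)}$, computed in $\mb^R$, is carried by $U$ to $\lim G^M=R(M)$, which is $M$ itself because $M\in\mb^R$ forces $\eta_M$ to be an isomorphism (Proposition~\ref{pp2}); and one checks the comparison map is exactly $\eta_M$, hence an isomorphism. Therefore $G_0$ is codense.

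The main obstacle I anticipate is the bookkeeping in part~1: correctly identifying $\epsilon_A$ as the inverse of $\eta_{G(A)}$ requires juggling the triangle identity~(\ref{peq1}), the definition of $\mu$ via the universal property of $(R,\epsilon)$, and the several equivalent formulations of idempotency in Proposition~\ref{pp1}, without circularity. A clean way to avoid the circularity is to first establish $\epsilon_A\circ\eta_{G(A)}=1_{G(A)}$ (immediate from (\ref{peq1})), then note this makes $\eta_{G(A)}$ split monic; then use that $G$ is fully faithful to transport the idempotent $\eta_{G(A)}\circ\epsilon_A$ on $RG(A)$ and show it must be the identity. For part~2 the work is essentially formal once part~1 and the fullness of $U$ are in hand, so no serious difficulty is expected there.
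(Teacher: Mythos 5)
Your part~2 is essentially the paper's own argument: identify $B/G_0$ with $B/G$, use that the forgetful functor $U:\mb^R\ra\mb$ creates limits, and observe that the canonical comparison map $B\ra \lim G^B=R(B)$ is $\eta_B$, an isomorphism for $B\in\mb^R$ by Proposition~\ref{pp2}. No issues there.

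Part~1 aims at the right conclusion but the central computation is wrong as written. You claim $R(\epsilon_A)=\epsilon_A\circ\mu_{G(A)}$ ``by the defining square of $\mu$'': this does not typecheck, since $R(\epsilon_A)$ has codomain $RG(A)$ while $\epsilon_A\circ\mu_{G(A)}$ has codomain $G(A)$; the defining square only gives $\epsilon_A\circ R(\epsilon_A)=\epsilon_A\circ\mu_{G(A)}$, from which you cannot cancel $\epsilon_A$ without already knowing it is monic. The subsequent line $\eta_{G(A)}\circ\epsilon_A=\epsilon_A\circ\mu_{G(A)}\circ\mu_{G(A)}^{-1}$ has the same defect (the left side is an endomorphism of $RG(A)$, the right side is $\epsilon_A$). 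Your strategy is nevertheless salvageable without full faithfulness: apply $R$ to the triangle identity $\epsilon_A\circ\eta_{G(A)}=1_{G(A)}$ to get $R(\epsilon_A)\circ R(\eta_{G(A)})=1_{RG(A)}$; since $R\eta=\eta R=\mu^{-1}$ by Proposition~\ref{pp1}, this yields $R(\epsilon_A)=\mu_{G(A)}$, and then naturality of $\eta$ gives $\eta_{G(A)}\circ\epsilon_A=R(\epsilon_A)\circ\eta_{RG(A)}=\mu_{G(A)}\circ\mu_{G(A)}^{-1}=1_{RG(A)}$. The paper avoids all of this: since $G$ is fully faithful, the counit $\epsilon:RG\ra G$ of the pointwise Kan extension is already an isomorphism (Mac~Lane, Corollary~3, p.~239), so $\eta_{G(A)}=\epsilon_A^{-1}$ follows from the single identity $\epsilon_A\circ\eta_{G(A)}=1_{G(A)}$ of~(\ref{peq1}). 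That route is shorter and is the one fact about full faithfulness you gesture at but never actually invoke; your repaired route buys a version of part~1 that uses only idempotency of the monad.
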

    
\begin{proof}
 Let $\epsilon: RG \ra G$ be the counit of the pointwise right Kan extension $R$ of $G$ along itself.  
\begin{center}
\begin{tikzcd}[row sep=1.5cm, column sep=1.5cm]
\ma  \ar[dr, "G"', ""{name=K,near end}]
            \ar[rr, "G", ""{name=X, below,bend right}]&& \mb\\
& \mb    \ar[ur, bend left, "R", ""{name=R, below}]
                \ar[ur, bend right, "1_{\mb}"', ""{name=R'}]
%
\arrow[Rightarrow, "\eta"', from=R', to=R]
\arrow[Leftarrow, from=X, "\epsilon"']
\end{tikzcd}
\end{center}

\begin{enumerate}
 
 \item The functor $G$ is fully faithful. By \cite[Corollary 3, page 239]{ML}, $\epsilon$ is an isomorphism. Let $A\in \ma$, by (\ref{peq1}), the composite 
$$G(A) \xra{\eta_{G(A)}}  RG(A)\st{\epsilon_A}\ra G(A)$$
is $1_{G(A)}$. It follows that $\eta_{G(A)}$ is an isomorphism. By Proposition \ref{pp2},  $G(A) \in \mb^R$.
\item Let $B$ be an $R$-algebra and $G_0: \ma \ra  \mb^R$ be the functor induced by $G$. Let
 $$
\begin{array}{lcr}
  G^B:B/G \ra \mb & \mbox{ and } & G_0^B:B/G_0 \ra \mb^R  
 \end{array}
$$
 be as defined by (\ref{peq22}). Moreover, let 
$$
\begin{array}{lcr}
B\st{\lambda^B}\Ra G^B   & \mbox{ and } & B\st{\lambda_0^B}\Ra G_0^B   
 \end{array}
$$
be as defined by (\ref{peq23}). As explained above, $\lim G^B=R(B)$ and the map $B\ra \lim G^B$
 induced by the cone $\lambda^B$ is just the unit $\eta_B:B\ra R(B)$, which is an isomorphism by Proposition \ref{pp2}. It follows that $\lambda^B$ is a limiting cone.\\
The category  $B/G_0$ is isomorphic to $B/G$ and may be identified with it. The functor $G^B$ factors through $G_0^B$ as follows
	\begin{center}
 \begin{tikzcd} 
& B/G_0=B/G \arrow[dl, "G^B_0"'] \arrow[dr, "G^B"] & \\
\mb^R \arrow[rr, "U"']& & \mb
\end{tikzcd}
\end{center}
By Proposition \ref{pp13}.1.(a), the functor $U$ creates limits. In particular, any cone setting above a limit cone is itself a limit cone (see \cite[page 90]{RE} ).
 We have $U(\lambda_0^B)=\lambda^B$. Thus the cone $\lambda_0^B$	is a limiting cone. It follows that $G_0$ is a codense functor.

\end{enumerate}
 
\end{proof}
The notions of comonads, idempotent comonads, coalgebras over them and density comonads  are dually defined and satisfy the appropriate dual properties.

 
\section{Left and right Kan extendable subcategories and their properties}\label{s4}

 In this section, we introduce the key notion of left Kan extendable subcategories and investigate some of its properties. We conclude the section by briefly introducing the dual notion of right Kan extendable subcategories. 

\begin{definition}\label{pd8}\footnote{The author is greatly grateful to Richard Garner for helping him introduce this final form of the definition of Kan extendability. }
A subcategory $\mw$ of a category $\mc$ is said to be left Kan extendable provided that:
\begin{enumerate}
 
\item The inclusion functor $J: \mw \ra \mc$ has a density comonad $(L,\epsilon,\delta)$.
\item The comonad $(L,\epsilon,\delta)$ is idempotent. 
\end{enumerate}
\end{definition}
Let $\mw$ be a left Kan extendable subcategory of $\mc$ and let $(L,\epsilon,\delta)$ be the idempotent density comonad of the inclusion functor 
$J:\mw \ra \mc$.  The category  of $L$-coalgebras is denoted by $\mw_l[\mc]$. It is, by the dual of Proposition \ref{pp2},  a replete subcategory of $\mc$ and is called the subcategory of $\mw$-generated objects of $\mc$.

\begin{examples}\label{pex1} We here give examples of left Kan extendable subcategories. 
\begin{enumerate}
 
\item  Let $\Ab$  be the category of abelian groups. The subcategory $\Fin$  of $\Ab$ of finite abelian groups is left Kan extendable, $\Fin_l[\Ab]$ is the subcategory $\Tor$ of torsion abelian  groups \cite[page 42]{LS}. The functor  $\Ab \ra \Tor$ which takes an abelian group to its torsion subgroup is a coreflector.

\item Let  $\Po$ the subcategory of the category $\Top$ consisting of just one object which is the one point topological space. The subcategory $\Po$ is left Kan extendable in $\Top$ and 
$\Po_l[\Top]$ is the category $\Dis$ of discrete topological spaces \cite[page 18]{BO}.
Furthermore, the discretization functor $\Top \ra \Dis$ is a coreflector.
\item The simplicial category $\Delta$ has objects $[n] = \{0, 1,..., n\}$, $n \geq 0$. A map  in $\Delta$ is an order preserving function $\alpha : [n] \ra [m]$.
Let $S$ be the category of simplicial sets and let $\Delta^n\in S$ be the standard $n$-simplex. Fix a non-negative integer $n$ and let $\mw_n$ be the (full) subcategory of $S$ whose objects are $\Delta^k$, $k\leq n$. Then $\mw_n$ is left Kan extendable in $S$ and $\mw_n[S]$ is the subcategory $S^n$ of $S$ of simplicial sets of dimension   $\leq n$. Furthermore, the left Kan extension of the inclusion functor $\mw_n \ra S$
along itself is just the functor $n$-skeleton functor $Sk^n: S \ra S$
as defined in \cite[page 11]{JT}. 
\end{enumerate}
\end{examples}

\begin{proposition}\label{pp17} 
Let $\mw$ be a left Kan extendable subcategory of  $\mc$, $(L,\epsilon,\delta)$ the density comonad of the inclusion functor $J:\mw \ra \mc$ and $U:\mw_l[\mc]\ra \mc$ the forgetful functor. Then: 

\begin{enumerate}
 
\item  The subcategory $\mw_l[\mc]$   is the strong coreflective hull of $\mw$ in $\mc$.
 
\item  The free $L$-coalgebra functor $F_{L}: \mc \ra \mw_l[\mc]$ is a coreflector.
	\item  The coreflection $(U \dashv F_{L})$ has  $\epsilon$ as its counit. 
	
\end{enumerate}
\end{proposition}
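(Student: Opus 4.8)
\textbf{Proof plan for Proposition \ref{pp17}.}

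The plan is to derive everything from Theorem \ref{pt11} applied to the inclusion functor $J:\mw\ra\mc$, after dualizing. Since $\mw$ is left Kan extendable, $J$ has an idempotent density comonad $(L,\epsilon,\delta)$, and $J$ is fully faithful (subcategories are full). The dual of Theorem \ref{pt11}.1 then says that $J$ takes values in the subcategory $\mw_l[\mc]$ of $L$-coalgebras, and the dual of Theorem \ref{pt11}.2 says the induced functor $J_0:\mw\ra\mw_l[\mc]$ is a \emph{dense} functor. This immediately gives that $\mw$ is a dense subcategory of $\mw_l[\mc]$.

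Next I would establish the coreflection. By the dual of Proposition \ref{pp4}, the forgetful functor $U:\mw_l[\mc]\ra\mc$ has a right-to-dualize into a left adjoint: precisely, $U$ is left adjoint to the cofree $L$-coalgebra functor $F_L:\mc\ra\mw_l[\mc]$, with counit equal to the counit $\epsilon:L=UF_L\ra 1_{\mc}$ of the comonad (dual of Proposition \ref{pp4}). Since (dual of Proposition \ref{pp1}.9) the comonad $L$ being idempotent forces $U$ to be fully faithful, $\mw_l[\mc]$ is a replete subcategory of $\mc$ and the pair $(U\dashv F_L)$ is a coreflection of $\mc$ onto $\mw_l[\mc]$ with counit $\epsilon$. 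This simultaneously proves parts 2 and 3.

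For part 1, I would invoke the dual of Theorem \ref{pt3}: $\mw_l[\mc]$ is a replete coreflective subcategory of $\mc$ in which $\mw$ is dense (by the previous paragraph's application of the dual of Theorem \ref{pt11}.2), hence by the dual of Theorem \ref{pt3} it is the coreflective hull of $\mw$, and by Remark \ref{pr1}.2 it is therefore the strong coreflective hull of $\mw$ in $\mc$.

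I do not anticipate a genuine obstacle here; the work is bookkeeping to make sure the dualizations line up. The one point requiring a little care is the identification of the density comonad of $J$ with the ambient data: one must check that the functor $J^B_0:B/J\ra\mw_l[\mc]$ appearing in the (dual) proof of Theorem \ref{pt11}.2 is genuinely the diagram whose colimit computes $L(B)$, so that the dense-in-$\mw_l[\mc]$ conclusion is about the \emph{same} comonad $L$ whose coalgebras define $\mw_l[\mc]$. Since this coherence is exactly what the dual of Theorem \ref{pt11} packages, the proof reduces to citing it; I would keep the write-up to a few lines, spelling out only the adjunction $(U\dashv F_L)$ and its counit explicitly.
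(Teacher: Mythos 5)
Your proposal is correct and follows essentially the same route as the paper: the paper's proof is a one-line citation of the duals of Propositions \ref{pp4}, \ref{pp1}, \ref{pp2} and of Theorem \ref{pt11}, which is exactly the package you assemble (adjunction $(U\dashv F_L)$ with counit $\epsilon$, repleteness from idempotency, density of $\mw$ in $\mw_l[\mc]$ from the dual of Theorem \ref{pt11}.2). Your extra appeal to the dual of Theorem \ref{pt3} for part 1 is just an explicit unwinding of what Remark \ref{pr1}.2 already encodes, so there is no substantive difference.
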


\begin{proof}
This follows from the duals of Propositions \ref{pp4}, \ref{pp1}, \ref{pp2} and the dual of Theorem \ref{pt11}. 
\qedhere  
\end{proof}

\begin{proposition}\label{pp6} 
Let $\mw$ be a left Kan extendable subcategory of $\mc$. 
\begin{enumerate}
 
	\item The inclusion functor $\mw_l[\mc]\st{U}{\ra}\mc$ creates all colimits that $\mc$ admits.
	\item The subcategory $\mw_l[\mc]$ has all limits that $\mc$ admits formed by applying the  coreflector $F_{L}$ to the limit in $\mc$.
	\end{enumerate}
	In particular, if $\mc$ is either complete, cocomplete or bicomplete, then so is $\mw_l[\mc]$.
	\end{proposition}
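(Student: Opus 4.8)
The plan is to reduce the statement entirely to results already in hand, since the substantive work has been packaged into Propositions \ref{pp17} and \ref{pp13}. By Proposition \ref{pp17}, the free $L$-coalgebra functor $F_{L}:\mc\ra\mw_l[\mc]$ is a coreflector and $(U\dashv F_{L})$ is a coreflection of $\mc$ on $\mw_l[\mc]$; moreover, by the dual of Proposition \ref{pp2}, the idempotency of $L$ guarantees that $U:\mw_l[\mc]\ra\mc$ is fully faithful and injective on objects, so that $\mw_l[\mc]$ is literally a replete full subcategory of $\mc$. In particular $\mw_l[\mc]$ is a coreflective subcategory of $\mc$ in the sense of Definition \ref{pd4}.2, and Proposition \ref{pp13}.2 applies verbatim.

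It then remains only to read off the three assertions. Proposition \ref{pp13}.2.(a) says that the inclusion functor of a coreflective subcategory creates all colimits that the ambient category admits; applied to $\mw_l[\mc]\st{U}{\ra}\mc$ this is exactly statement~1. Proposition \ref{pp13}.2.(b) says that such a subcategory has all limits that the ambient category admits, formed by applying the coreflector to the limit computed in $\mc$; applied here with coreflector $F_{L}$ this is exactly statement~2. Finally, completeness of $\mc$ means $\mc$ admits all small limits, cocompleteness means it admits all small colimits, and bicompleteness is the conjunction; so statements~1 and~2 immediately transport each of these properties to $\mw_l[\mc]$, which is the ``in particular'' clause.

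I do not expect any real obstacle: the only point requiring care is the preliminary bookkeeping that $\mw_l[\mc]$ is a genuine replete full subcategory and that $F_{L}$ is a coreflector onto it — precisely what the idempotency hypothesis on $L$ provides via Proposition \ref{pp17} — so that the words ``creates'' and ``formed by applying $F_{L}$'' have their literal meaning. (If one wished to avoid the appeal to Proposition \ref{pp13}, one could instead argue directly: for any comonad the forgetful functor from its category of coalgebras creates colimits, and for an idempotent comonad the coalgebras are exactly the objects on which the counit $\epsilon$ is invertible, whence $U\dashv F_{L}$ lets one compute limits in $\mw_l[\mc]$ by coreflecting the ambient limit. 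Routing through Proposition \ref{pp13} is, however, cleaner and matches the treatment of the reflective case.)
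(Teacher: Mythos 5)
Your proposal is correct and follows exactly the paper's own argument, which likewise deduces the result from Proposition \ref{pp17} (making $\mw_l[\mc]$ a coreflective subcategory with coreflector $F_{L}$) together with Proposition \ref{pp13}.2. The extra bookkeeping you supply about $U$ being fully faithful and $\mw_l[\mc]$ being replete is a harmless elaboration of what the paper leaves implicit.
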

	   
	\begin{proof} This follows from   Proposition  \ref{pp17} and Proposition \ref{pp13}.2.
	\qedhere  
	\end{proof}

\begin{corollary}\label{pc6} Let $\mw$ be a left Kan extendable subcategory of $\mc$ and $C\in\mc$. Then the following two properties are equivalent:
\begin{enumerate}
 
	\item The object $C$ is $\mw$-generated.
	\item  There exists a functor $F:\mk\ra\mw$ such that $C\cong\colim JF$, where  $J:\mw\hra\mc$ is the inclusion functor.
	\end{enumerate}

\end{corollary}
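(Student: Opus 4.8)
The plan is to prove the two implications separately, exploiting the fact — established in Proposition \ref{pp17} — that $\mw_l[\mc]$ is the strong coreflective hull of $\mw$ in $\mc$, so that $\mw$ is dense in $\mw_l[\mc]$, together with the colimit-creation property of $U$ recorded in Proposition \ref{pp6}.1.

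For the implication $(1)\Rightarrow(2)$, suppose $C$ is $\mw$-generated, i.e. $C\in\mw_l[\mc]$. Since $\mw$ is dense in $\mw_l[\mc]$, the canonical cocone from the comma-category diagram exhibits $C$ as a colimit: concretely, take $\mk=\mw/C$ (the comma category of $\mw$-objects over $C$, formed in $\mw_l[\mc]$, equivalently in $\mc$ since the inclusion is full), and let $F=D_C:\mw/C\to\mw$ be the domain functor sending an arrow-object $V\to C$ to $V$. Density of $\mw$ in $\mw_l[\mc]$ says precisely that the natural map $\colim (J_0 D_C)\to C$ is an isomorphism, where $J_0:\mw\hra\mw_l[\mc]$ is the inclusion. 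Because $U:\mw_l[\mc]\to\mc$ creates, hence preserves, all colimits $\mc$ admits, and this colimit exists in $\mw_l[\mc]$, applying $U$ gives $C\cong\colim(U J_0 D_C)=\colim(J F)$ in $\mc$, which is what is wanted.

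For the converse $(2)\Rightarrow(1)$, suppose $C\cong\colim JF$ in $\mc$ for some $F:\mk\to\mw$. Each object $F(k)$ lies in $\mw\subseteq\mw_l[\mc]$, so $JF$ factors through the inclusion $U$ as $U J_0 F$. By Proposition \ref{pp6}.1, $U$ creates colimits that $\mc$ admits; in particular the diagram $J_0F:\mk\to\mw_l[\mc]$ has a colimit in $\mw_l[\mc]$ and $U$ sends it to $\colim(U J_0F)=\colim(JF)\cong C$ in $\mc$. Hence $C$ is isomorphic to an object of $\mw_l[\mc]$, and since $\mw_l[\mc]$ is replete (by the dual of Proposition \ref{pp2}), $C$ itself lies in $\mw_l[\mc]$, i.e. $C$ is $\mw$-generated.

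The only delicate point is the bookkeeping in $(1)\Rightarrow(2)$: one must be careful that "$\mw$ dense in $\mw_l[\mc]$" is applied with the comma category and domain functor formed relative to $\mw_l[\mc]$, and then transported along $U$ using colimit-preservation rather than colimit-creation. Everything else is a direct appeal to repleteness and to the creation property of $U$; no new computation is required.
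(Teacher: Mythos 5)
Your proposal is correct and follows essentially the same route as the paper: the direction $(2)\Rightarrow(1)$ is exactly the paper's appeal to Proposition \ref{pp6}.1 (plus repleteness of $\mw_l[\mc]$), and for $(1)\Rightarrow(2)$ you use the same diagram $\mk=\mw/C$, $F=D_C$. The only cosmetic difference is that you obtain $C\cong\colim JD_C$ from the density of $\mw$ in $\mw_l[\mc]$ (Proposition \ref{pp17}.1) together with colimit-preservation by the left adjoint $U$, whereas the paper writes $L(C)\cong\colim J_C$ directly from the pointwise Kan extension formula and uses that the counit $\epsilon_C$ is an isomorphism on $L$-coalgebras --- the same fact packaged one level higher.
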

  
\begin{proof} \mbox{}

\begin{itemize}
 
	\item $2 \Ra 1:$ This follows from  Proposition \ref{pp6}.1. 
	\item  $1 \Ra 2:$ Let $(L,\epsilon,\delta)$ be the density comonad of the inclusion functor $J:\mw \hra \mc$. Define
  $D_C:\mw/C\ra\mw$ to be the functor which associates to an arrow-object $V\ra C$ its domain $V$ and let $J_C$ be the composite functor 
	\begin{equation*} 
	  J_C:\mw/C\st{D_C}{\ra}\mw\st{J}{\hra}\mc
			\end{equation*}
	\end{itemize}
Then, $L(C)\cong \colim J_C$. By the dual of Proposition \ref{pp2}, $\epsilon_C:L(C) \ra C$
is an isomorphism. Therefore $C\cong \colim J_C$.
\qedhere   
\end{proof}

The next result  presents a criteria for the existence of a strong  reflective hull of a subcategory.  

\begin{theorem}\label{pt1}
 Let $\mw$ be a subcategory of a category $\mc$ and $J:\mw \ra \mc$ the inclusion functor.
 The following two properties are equivalent: 
\begin{enumerate}

	\item  The subcategory $\mw$ of $\mc$ is  left Kan extendable.
	\item  The subcategory $\mw$ of $\mc$ has a strong coreflective hull.
	\item The functor $J$ has a
	density comonad  $(L, \epsilon, \delta)$ and the morphism $\epsilon_C : L(C) \ra C$ is
$\mw$-monic for all $C \in  \mc$.
	 
	\end{enumerate}
When these conditions are satisfied, then $\mw_{l}[\mc]$ is the strong coreflective hull of $\mw$. 
\end{theorem}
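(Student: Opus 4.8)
The plan is to establish the cycle $1 \Rightarrow 3 \Rightarrow 2 \Rightarrow 1$, extracting the final claim about $\mw_l[\mc]$ along the way. The implication $1 \Rightarrow 3$ should be almost immediate from the definitions already in place: if $\mw$ is left Kan extendable, then by definition $J$ has a density comonad $(L,\epsilon,\delta)$, and this comonad is idempotent. By the dual of Lemma \ref{pl4}.2 (applied to the coreflection $(U \dashv F_L)$ of Proposition \ref{pp17}, whose counit is $\epsilon$), each component $\epsilon_C : L(C) \ra C$ is $\mw$-monic — since $\mw \subseteq \mw_l[\mc]$ and $\epsilon_C$ is $\mw_l[\mc]$-monic, it is a fortiori $\mw$-monic. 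So $1 \Rightarrow 3$ reduces to citing Proposition \ref{pp17} together with the dual of Lemma \ref{pl4}.

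For $3 \Rightarrow 1$, I would show that the hypothesis that every $\epsilon_C$ is $\mw$-monic forces the density comonad $(L,\epsilon,\delta)$ to be idempotent; then left Kan extendability follows by definition. By the dual of Proposition \ref{pp1}, it suffices to prove that $L\epsilon : L^2 \ra L$ (equivalently $\epsilon L$) is an isomorphism, or equivalently that $\epsilon_{L(C)} : L^2(C) \ra L(C)$ is an isomorphism for each $C$. The key point: $L(C)$ is a colimit of objects of $\mw$ (namely $L(C) \cong \colim J_C$ over $\mw/C$), and one wants to conclude $\epsilon_{L(C)}$ is invertible. The natural strategy is to use the $\mw$-monic hypothesis to upgrade the general counit–unit triangle identity. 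Recall that $\delta$ is a section of both $L\epsilon$ and $\epsilon L$ by the comonad axioms; so $\epsilon_{L(C)}$ is a split epi (split by $\delta_C$), and it remains to check it is monic. Here one exploits that $L(C)$, being built as a colimit of $\mw$-objects, is detected by maps out of $\mw$: concretely, the universal cone exhibiting $L(C) = \colim J_C$ consists of maps $V \ra L(C)$ with $V \in \mw$, and by the dual of the density/codensity analysis in Theorem \ref{pt11} (or directly: $\mw$ is dense in $\mw_l[\mc]$ and hence the canonical maps from $\mw$-objects are jointly epic into $L(C)$), any two parallel maps out of $L(C)$ agreeing after precomposition with all these cone legs are equal. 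Since $\epsilon_{L(C)} \circ \delta_C = 1$ and $\delta_C \circ \epsilon_{L(C)}$ agrees with the identity after precomposing with each cone leg $V \ra L^2(C)$ (because the legs factor through $\epsilon$-images, using the triangle identity $\epsilon \circ \delta = 1$ at the level of the defining cones), we get $\delta_C \circ \epsilon_{L(C)} = 1$, so $\epsilon_{L(C)}$ is an isomorphism. This yields idempotency.

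The implication $1 \Leftrightarrow 2$ together with the final identification is essentially Proposition \ref{pp17}.1: if $\mw$ is left Kan extendable, that proposition already asserts $\mw_l[\mc]$ is the strong coreflective hull of $\mw$, giving $1 \Rightarrow 2$ and the last sentence of the theorem at once. For $2 \Rightarrow 1$, suppose $\mw$ has a strong coreflective hull $\mc^0$, i.e.\ a replete coreflective subcategory in which $\mw$ is dense. Let $(U \dashv G)$ be the coreflection of $\mc$ on $\mc^0$ with counit $\epsilon$. Density of $\mw$ in $\mc^0$ means that for each $X \in \mc^0$ the canonical cocone $J_X^{\mc^0} \Ra X$ over $\mw/X$ is colimiting; I claim $G$ together with this data realizes the density comonad of $J$, and that it is idempotent. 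The density comonad $L$ of $J : \mw \ra \mc$ is computed by $L(C) = \colim J_C$ over $\mw/C$; since $G$ is a coreflector into $\mc^0$ and $\mw \subseteq \mc^0$, the comma category $\mw/C$ maps to $\mw/G(C)$ compatibly, and using that $\mw$ is dense in $\mc^0$ one identifies $\colim J_C$ with $G(C)$, so $L = U G$ (as an endofunctor of $\mc$). An endofunctor arising from a coreflection onto a replete subcategory is an idempotent comonad — this is the dual of Proposition \ref{pp2} — so $(L,\epsilon,\delta)$ is idempotent, and $\mw$ is left Kan extendable. The main obstacle in the whole argument is the bookkeeping in $3 \Rightarrow 1$: making rigorous the claim that $\mw$-objects are jointly epic into $L(C)$ and that this, combined with the $\mw$-monic hypothesis on $\epsilon$, forces $\delta \circ \epsilon L = 1$; everything else is an appeal to the monad/comonad machinery of Section \ref{s2} and to Proposition \ref{pp17}.
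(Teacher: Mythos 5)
Your decomposition ($1\Rightarrow 3$, $3\Rightarrow 1$, $1\Leftrightarrow 2$) differs from the paper's cycle $1\Rightarrow 2\Rightarrow 3\Rightarrow 1$ but is a legitimate alternative, and the directions $1\Rightarrow 3$ and $1\Leftrightarrow 2$ are essentially right. (One small citation slip: the idempotency of the comonad $UG$ of a coreflection follows from Lemma \ref{pl1}.2.(a) --- the unit $\eta:1_{\mc^0}\Ra GU$ is an isomorphism, hence so is the comultiplication $\delta=U\eta G$ --- not from the dual of Proposition \ref{pp2}, which presupposes idempotency.)

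The genuine gap is in $3\Rightarrow 1$, exactly at the step you flag as the main obstacle. You assert that $\delta_C\circ\epsilon_{L(C)}$ agrees with the identity after precomposition with each colimit cone leg of $L^2(C)$ ``because the legs factor through $\epsilon$-images, using the triangle identity.'' As stated this makes no use of the hypothesis that $\epsilon$ is $\mw$-monic, so it would prove that \emph{every} density comonad is idempotent, which is false (Example \ref{pex3}). Concretely: write $\iota^{X}_{\tau}:V\ra L(X)$ for the cone leg of $L(X)=\colim J_X$ indexed by $\tau:V\ra X$, so that $\epsilon_{X}\circ\iota^{X}_{\tau}=\tau$ and $\delta_C$ is determined on legs by $\delta_C\circ\iota^{C}_{\sigma}=\iota^{L(C)}_{\iota^{C}_{\sigma}}$. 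Precomposing $\delta_C\circ\epsilon_{L(C)}$ with the leg $\iota^{L(C)}_{\tau}$ gives $\delta_C\circ\tau$ for an \emph{arbitrary} arrow $\tau:V\ra L(C)$ with $V\in\mw$, which the comonad structure alone does not let you evaluate. It is precisely the $\mw$-monicity of $\epsilon_C$ that forces $\tau=\iota^{C}_{\epsilon_C\tau}$ (both arrows out of $V\in\mw$ are equalized by $\epsilon_C$), i.e.\ that every object of $\mw/L(C)$ is a canonical leg --- this is the comma-category isomorphism $\mw/L(C)\cong\mw/C$ on which the paper's proof turns. Once that is inserted, your split-epi/split-mono computation closes and $\epsilon_{L(C)}$ is invertible. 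Separately, the parenthetical appeal to ``$\mw$ is dense in $\mw_l[\mc]$'' inside this argument is circular, since $\mw_l[\mc]$ is only defined after idempotency is known; the joint epimorphy of the legs into $L^2(C)$ is just the colimit property and needs no such appeal.
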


 
\begin{proof} \mbox{} 

\begin{itemize}

	\item $1 \Ra 2$:   By  Proposition \ref{pp17}.1,    $\mw_{l}[\mc]$  is the strong coreflective hull of $\mw$ in $\mc$.  
	 \item  $2 \Ra 3$:
	Let $\mc^0$ be the strong coreflective hull of $\mw$ in $\mc$, 
	$\mw\st{J^0}\ra \mc^0$   the inclusion functor and $(U\dashv G)$ a coreflection of $\mc$ on $\mc^0$.
 The subcategory $\mw$ is dense in $\mc^0$, thus $J^0$ has a trivial density comonad (dual of Example  \ref{pex7}.3).\\
The functor $1_{\mc^0}:\mc^0 \ra \mc^0$ is a left pointwise Kan extension of $J^0$ along itself.
The functor	$G$ is a right adjoint of $U$, thus by \cite[Proposition 6.5.2]{RE}, $G$ is a left pointwise Kan extension of  $1_{\mc^0}$ along $U$. Therefore   $G$ is a left pointwise Kan extension of $J^0$ along $J=UJ^0$. The functor $U$ is a left adjoint functor, therefore it preserves left pointwise Kan extensions. It follows that $L=UG$ is a density comonad of $J$.

$$\begin{tikzcd}
\mw\arrow[out=35,in=145, looseness=1]{rrrr}{J}\arrow[out=270,in=180, looseness=1]{ddrr}[swap]{J}\arrow{rr}{J^0}\arrow{dr}[swap]{J^0}&&\mc^0\arrow{rr}{U}&&\mc\\
&\mc^0\arrow{dr}[swap]{U}\arrow{ur}[swap]{1_{\mc^{0}}}\\
&&\mc\arrow{uu}[swap]{G}\arrow[uurr,"L",""{name=L}]
\arrow[out=0,in=270, looseness=1,uurr,"1_{\mc}"',""{name=T}]
\arrow[Rightarrow,from=L, to=T,shorten >=-1.5mm,shorten <=1mm,"\epsilon"'swap]
\end{tikzcd}$$\\

	Let $\epsilon$ be the counit of the comonad $L$. By Proposition \ref{pp17}.3, $\epsilon:L=UG \Ra 1_{\mc}$  is the counit of the coreflection $(U,G)$.
	By Lemma \ref{pl4}.2, $\epsilon_C : L(C) \ra C$ is
$\mw$-monic for all $C \in  \mc$.

\item  $3 \Ra 1$:	
For $C\in\mc$, let $\mw/C$ be the  subcategory of the over category $ \mc/C$ whose objects are arrows $V\ra C$ with  domain $V\in\mw$. Define
  $D_C:\mw/C\ra\mw$ to be the functor which associates to an arrow-object $V\ra C$ its domain $V$ and let $J_C$ be the composite functor 
	 \begin{equation*}
J_C:\mw/C\xra{D_C}\mw\st{J}{\ra}\mc\\
\end{equation*}

The functor $J$ has a density comonad  $(L, \epsilon, \delta)$.	
Therefore by the dual of \cite[Theorem 3 page 244]{ML},

\begin{equation*}
\forall C\in \mc,\quad  \colim J_C  \mbox{ exists and }       L(C)=\colim J_C.\\
\end{equation*}

	A morphism $C\st{f}{\ra} C'$ in $\mc$ induces a functor
$\mw/C \xra{\mw/f} \mw/C'$ rendering commutative the diagram

$$
\xymatrix{
\mw/C \ar[rr]^{ \mw/f} \ar[rd]_{J_{C}}   &       &\mw /C'\ar[ld]^{J_{C'}}     \\
                        &\mc
}
$$
This last diagram induces a map $$ \colim J_C\ra\colim J_{C'}$$ which is just
 $$L(f):L(C)\ra L(C').$$
Let $\eta:J\Ra LJ$ be the unit of the left Kan extension $L$ of $J$ along itself.
The functor $J$ is fully faithful.   By the dual of \cite[Corollary 3 page 239]{ML}, $\eta$  is an isomorphism.
We may therefore assume that 
$$L(V)=V  \mbox{ and }  \eta_{V}=1_{V}: V\ra V ,\quad    \mbox{ for all }  V\in \mw.$$
 In which case, by the commutativity of the diagram which is  dual to (\ref{peq1}),
$$\epsilon_{V}=1_{V}: V\ra V ,\quad \mbox{ for all }  V\in \mw.$$
Therefore by the naturality of $\epsilon$, for each $C\in \mc$ and each arrow-object  $\sigma: V\ra C$ in $\mw/C$, there exists a map $\tilde{\sigma}:V\ra L(C)$ rendering commutative the diagram 
$$\begin{tikzcd} 
 & V \arrow[dr, "\sigma"]\arrow[dl,dashed,"\exists !\tilde{\sigma}"'] \\
L(C)  \arrow[rr,"\epsilon_C"'] && C
\end{tikzcd} 
$$
Moreover, $\tilde{\sigma}$ is unique since $\epsilon_C$ is $\mw$-monic.
It follows that the functor  
 $$ \mw/L(C)\xra{\mw/\epsilon_C}\mw/C$$ is an isomorphism.
The following diagram commutes
$$
\xymatrix{
 \mw/L(C) \ar[rr]^{\mw/\epsilon_C} \ar[rd]_{J_{L(C)}}  && \mw/C \ar[ld]^{J_C}     \\
                        &\mc
}
$$
Therefore the map $L(\epsilon_C): L^2(C) \ra L(C)$ is an isomorphism.
By the dual of Proposition \ref{pp1}, $L$ is an idempotent comonad and $\mw$ is left Kan extendable in $\mc$.

\end{itemize}
\qedhere  
\end{proof}

\begin{example}\label{pex3}   The subcategory $\ZZ$ of $\Vt$ of Example \ref{pex2} has no strong coreflective hull. By Theorem \ref{pt1}, $\ZZ$ is not left Kan extendable. 
\end{example}

\begin{corollary}\label{pc4}
Let  $\mw$ be a left Kan extendable subcategory of a category $\mc$ and assume that $\mc^0$  is a replete coreflective subcategory of $\mc$ containing $\mw$. Then
  $\mw$ is left Kan extendable as a subcategory $\mc^0$.
   Furthermore, $\mw_l[\mc^0]= \mw_l[\mc]$.
 
\end{corollary}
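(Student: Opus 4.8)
I want to show Corollary \ref{pc4}: if $\mw$ is left Kan extendable in $\mc$, and $\mc^0$ is a replete coreflective subcategory of $\mc$ containing $\mw$, then $\mw$ is left Kan extendable in $\mc^0$ and $\mw_l[\mc^0] = \mw_l[\mc]$. The natural approach is to use Theorem \ref{pt1}, which characterizes left Kan extendability of $\mw$ in a category via: (i) the inclusion has a density comonad $L$, and (ii) the counit components $\epsilon_C : L(C) \to C$ are $\mw$-monic. So first I would produce a density comonad of the inclusion $J^0 : \mw \hra \mc^0$, and then check the $\mw$-monic condition, and finally identify the coalgebra categories.

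**Constructing the density comonad in $\mc^0$.** Let $G : \mc \to \mc^0$ be the coreflector (right adjoint of the inclusion $K : \mc^0 \hra \mc$) and let $L : \mc \to \mc$ be the density comonad of $J = K J^0 : \mw \hra \mc$, i.e. the pointwise left Kan extension of $J$ along itself. By Proposition \ref{pp17}.1 applied in $\mc$, $\mw_l[\mc]$ is the strong coreflective hull of $\mw$ in $\mc$; since $\mc^0$ is a replete coreflective subcategory containing $\mw$, and the strong coreflective hull is the \emph{smallest} such (Theorem \ref{pt3}.2 / Remark \ref{pr1}.2), we get $\mw_l[\mc] \subseteq \mc^0$. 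In particular $L$ factors through $\mc^0$: every $L(C)$ lies in $\mc^0$, so $L = K L_0$ for a functor $L_0 : \mc \to \mc^0$, and restricting to $\mc^0$ gives an endofunctor $L^0 := L_0 K : \mc^0 \to \mc^0$. The key technical point is that $L^0$ is the pointwise left Kan extension of $J^0$ along $J^0$. To see this, I would argue: $L$ is the pointwise left Kan extension of $J$ along $J$; $K : \mc^0 \hra \mc$ is fully faithful, so $\mc^0$ is dense in $\mw_l[\mc] = \mc^0$... more directly, a pointwise left Kan extension is computed as $L(C) = \colim J_C$ over the comma category $\mw/C$ (dual of \cite[Theorem 3 page 244]{ML}); for $C \in \mc^0$ the comma category $\mw/C$ (taken in $\mc^0$) coincides with $\mw/C$ taken in $\mc$, since $\mw \subseteq \mc^0$ full; and this colimit, computed in $\mc$, lands in $\mc^0$ and is the colimit in $\mc^0$ as well, because $\mc^0 \hra \mc$ creates the colimits $\mw$-generated objects are built from — concretely, $L(C) \in \mc^0$ and $\colim$ in a coreflective subcategory of a colimit already lying in the subcategory agrees with the ambient colimit (Proposition \ref{pp13}.2.(a), since $F_L$ fixes objects of $\mw_l[\mc]$). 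So $L^0$ with counit and comultiplication restricted from those of $L$ is a density comonad of $J^0$.

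**Finishing.** With $(L^0, \epsilon^0, \delta^0)$ the density comonad of $J^0 : \mw \hra \mc^0$, whose counit components $\epsilon^0_X = \epsilon_X : L(X) \to X$ for $X \in \mc^0$ are exactly restrictions of the counit of $L$: these are $\mw$-monic in $\mc^0$ because being $\mw$-monic is a condition about maps \emph{out of objects of $\mw$} into $L(X)$ and $X$, and such maps are the same whether computed in $\mc^0$ or in $\mc$ ($\mw \subseteq \mc^0$ full, $\mc^0 \hra \mc$ full). Since $\epsilon_X$ is $\mw$-monic in $\mc$ by Theorem \ref{pt1}.3, it is $\mw$-monic in $\mc^0$. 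Hence by Theorem \ref{pt1} (equivalence $3 \Rightarrow 1$), $\mw$ is left Kan extendable in $\mc^0$. For the final identification: $\mw_l[\mc^0]$ is the category of $L^0$-coalgebras, which (by the dual of Proposition \ref{pp2}) consists of those $X \in \mc^0$ with $\epsilon^0_X : L^0(X) \to X$ an isomorphism; since $L^0(X) = L(X)$ and $\epsilon^0_X = \epsilon_X$, these are exactly the objects of $\mc^0$ lying in $\mw_l[\mc]$. But $\mw_l[\mc] \subseteq \mc^0$ already, so every object of $\mw_l[\mc]$ qualifies, giving $\mw_l[\mc^0] = \mw_l[\mc]$.

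**Main obstacle.** The delicate step is showing that the pointwise left Kan extension is preserved on passing to $\mc^0$, i.e. that $L^0 = L_0 K$ really is the density comonad of $J^0$ and not merely \emph{some} comonad. The cleanest route is probably the one used in the proof of Theorem \ref{pt1} ($2 \Rightarrow 3$): the inclusion $J^0 : \mw \hra \mc^0$ has $\mw$ dense in $\mw_l[\mc] = \mc^0$... no — $\mw$ need not be dense in $\mc^0$ here. So instead I rely on the colimit formula: a left Kan extension along a fully faithful functor is pointwise and computed by the comma-category colimit, and the inclusion $K : \mc^0 \hra \mc$, being a right adjoint that creates colimits of diagrams whose colimit already lies in $\mc^0$ (equivalently, $F_L$ acts as identity on $\mw_l[\mc]$), carries $\colim_{\mc^0} J^0_X$ to $\colim_{\mc} J_X = L(X)$. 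One should be a little careful that $\mw/X$ computed inside $\mc^0$ and inside $\mc$ are genuinely the same category — they are, since $\mw$ is a full subcategory of both and the comma category only involves objects of $\mw$ and morphisms in the target between their images, all of which live in $\mc^0$.
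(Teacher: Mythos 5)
Your proof is correct, but it takes a different route from the paper's. The paper's proof is purely structural: it observes (as you do) that $\mw_l[\mc]$, being the coreflective hull of $\mw$ in $\mc$, is contained in $\mc^0$; it then notes that $\mw_l[\mc]$ is a replete coreflective subcategory \emph{of $\mc^0$} in which $\mw$ is dense, so by Theorem \ref{pt3}.2 it is the strong coreflective hull of $\mw$ in $\mc^0$, and the equivalence $1\Leftrightarrow 2$ of Theorem \ref{pt1} immediately gives both left Kan extendability in $\mc^0$ and the identification $\mw_l[\mc^0]=\mw_l[\mc]$. You instead work through the equivalence $3\Rightarrow 1$ of Theorem \ref{pt1}: you explicitly restrict the density comonad $L$ to $\mc^0$, verify via the comma-category colimit formula that the restriction is the density comonad of $J^0$, and check the $\mw$-monic condition. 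This costs you the technical verification that $\colim_{\mc^0} J^0_X$ agrees with $\colim_{\mc} J_X$ — which does hold, and in fact more cleanly than you state: since $\mc^0$ is coreflective in $\mc$, Proposition \ref{pp13}.2.(a) says the inclusion $\mc^0\hra\mc$ creates colimits outright, so there is no need for the more delicate "a colimit landing in a full subcategory is a colimit there" argument. What your approach buys is an explicit description of the density comonad of $\mw$ in $\mc^0$ as the literal restriction of $L$, which the paper's proof leaves implicit; what the paper's approach buys is brevity, since all the comonad-level bookkeeping is already packaged inside Theorems \ref{pt1} and \ref{pt3}.
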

    
\begin{proof} By Theorem \ref{pt1}, $\mw_l[\mc]$ is the coreflective hull of $\mw$. The subcategory $\mc^0$ is a replete, coreflective subcategory of $\mc$ containing $\mw$, thus $\mw_l[\mc]$ is a subcategory of $\mc^0$, which is replete coreflective as a subcategory of $\mc^0$,  in which $\mw$ is dense. By Theorem \ref{pt3}.2,  $\mw_l[\mc]$ is the strong coreflective hull of $\mw$ in $\mc^0$. By Theorem \ref{pt1}, $\mw$ is left Kan extendable in $\mc^0$ and  $\mw_l[\mc^0]= \mw_l[\mc]$.
\qedhere   
\end{proof}

\begin{corollary}\label{pc5}
Let $\mw$, $\mw'$ be left Kan extendable subcategories of $\mc$.
\begin{enumerate}

\item If  $\mw' \subset \mw_l[\mc]$, then 
 $\mw'_l[\mc]$ is a coreflective subcategory of $\mw_l[\mc]$.
\item If $\mw' \subset \mw_l[\mc]$ and $\mw \subset \mw'_l[\mc]$, then 
 $\mw_l[\mc]=\mw'_l[\mc]$.
\end{enumerate}
\end{corollary}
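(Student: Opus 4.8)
The plan is to reduce everything to the already-established machinery: Theorem \ref{pt1} (which identifies $\mw_l[\mc]$ with the strong coreflective hull), Corollary \ref{pc4} (which says Kan extendability is preserved when one passes to a replete coreflective subcategory containing $\mw$, and that the generated subcategory is unchanged), and Theorem \ref{pt3}.2 together with Remark \ref{pr1} (uniqueness of the strong coreflective hull). No new diagram-chasing should be required; the work is entirely in assembling these facts in the right order.

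For part (1), first I would invoke Theorem \ref{pt1} to identify $\mw_l[\mc]$ as the strong coreflective hull of $\mw$ in $\mc$; in particular it is a replete coreflective subcategory of $\mc$. The hypothesis $\mw' \subset \mw_l[\mc]$ then lets me apply Corollary \ref{pc4} with the ambient category $\mw_l[\mc]$ in place of $\mc^0$: since $\mw'$ is left Kan extendable in $\mc$ and $\mw_l[\mc]$ is a replete coreflective subcategory of $\mc$ containing $\mw'$, the corollary yields that $\mw'$ is left Kan extendable as a subcategory of $\mw_l[\mc]$ and that $\mw'_l[\mw_l[\mc]] = \mw'_l[\mc]$. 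But by Theorem \ref{pt1} applied inside $\mw_l[\mc]$, the subcategory $\mw'_l[\mw_l[\mc]]$ is the strong coreflective hull of $\mw'$ in $\mw_l[\mc]$, hence in particular a coreflective subcategory of $\mw_l[\mc]$. Combining, $\mw'_l[\mc]$ is a coreflective subcategory of $\mw_l[\mc]$, which is the claim.

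For part (2), I would argue by the uniqueness of strong coreflective hulls. By part (1) and its proof, $\mw'_l[\mc]$ is the strong coreflective hull of $\mw'$ in $\mw_l[\mc]$, so it is a replete coreflective subcategory of $\mw_l[\mc]$ in which $\mw'$ is dense. Now use the hypothesis $\mw \subset \mw'_l[\mc]$: since $\mw'_l[\mc]$ is a replete coreflective subcategory of $\mw_l[\mc]$ containing $\mw$, and $\mw$ is dense in $\mw_l[\mc]$ (being its strong coreflective hull in $\mc$, hence $\mw$ is dense there by Theorem \ref{pt3}.2/Remark \ref{pr1}), I would like to conclude $\mw$ is dense in the smaller category $\mw'_l[\mc]$. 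This is the one point needing care: density of $\mw$ in $\mw_l[\mc]$ need not immediately restrict to density in an intermediate coreflective subcategory. However, $\mw'_l[\mc]$ is coreflective in $\mw_l[\mc]$, so for $X \in \mw'_l[\mc]$ the colimit of $J_X : \mw/X \to \mw_l[\mc]$ — which equals $X$ by density in $\mw_l[\mc]$ — lies in $\mw'_l[\mc]$ (all the objects $V$ do, and colimits in a coreflective subcategory are computed as in the ambient category by Proposition \ref{pp6}.1 / Proposition \ref{pp13}.2.(a)), so the colimit computed in $\mw'_l[\mc]$ agrees and is again $X$; hence $\mw$ is dense in $\mw'_l[\mc]$. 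Then $\mw'_l[\mc]$ is a replete coreflective subcategory of $\mw_l[\mc]$ in which $\mw$ is dense, so by Theorem \ref{pt3}.2 it is the coreflective hull of $\mw$ in $\mw_l[\mc]$; but $\mw_l[\mc]$ itself is a replete coreflective subcategory of $\mw_l[\mc]$ containing $\mw$ densely, so by the uniqueness in Remark \ref{pr1} we get $\mw'_l[\mc] = \mw_l[\mc]$, and since $\mw'_l[\mc]$ is also the strong coreflective hull of $\mw'$ this gives $\mw_l[\mc] = \mw'_l[\mc]$.

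The main obstacle I anticipate is exactly that density-restriction step in part (2): making sure that "$\mw$ dense in the big coreflective hull" descends to "$\mw$ dense in the intermediate coreflective subcategory." The cleanest way around it is the observation above that coreflective subcategories are closed under colimits formed in the ambient category (Proposition \ref{pp6}.1), so the relevant colimit cone is unchanged by passing to the subcategory. Everything else is bookkeeping with Theorem \ref{pt1}, Corollary \ref{pc4}, and the uniqueness statement in Remark \ref{pr1}; alternatively, one could phrase part (2) directly via Corollary \ref{pc4} applied symmetrically in both directions ($\mw$ inside $\mw'_l[\mc]$ and $\mw'$ inside $\mw_l[\mc]$) to get mutual inclusions $\mw_l[\mc] \subseteq \mw'_l[\mc]$ and $\mw'_l[\mc] \subseteq \mw_l[\mc]$, which may avoid the density subtlety altogether and is probably the shortest route.
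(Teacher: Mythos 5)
Your proposal is correct, and for part (1) it is exactly the paper's argument: apply Corollary \ref{pc4} with $\mc^0=\mw_l[\mc]$ to get $\mw'_l[\mw_l[\mc]]=\mw'_l[\mc]$, then conclude coreflectivity in $\mw_l[\mc]$ from Proposition \ref{pp17}/Theorem \ref{pt1}. For part (2) the paper simply says it follows from part (1), i.e.\ the symmetric mutual-inclusion argument you sketch in your closing remark; your primary route through density restriction and uniqueness of the strong coreflective hull is valid (the colimit-creation step via Proposition \ref{pp6}.1 does close the gap you worry about) but is an unnecessary detour compared with the two-line symmetric application of part (1).
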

   
\begin{proof}\mbox{}
\begin{enumerate}

\item  By Corollary \ref{pc4}, $\mw'$ is left Kan extendable as a subcategory of $\mw_l[\mc]$ and  $\mw'_l[\mw_l[\mc]]=\mw'_l[\mc]$.
By Proposition \ref{pp17}, $\mw'_l[\mc]$ is a coreflective subcategory of $\mw_l[\mc]$. 
\item This follows from 1.
\end{enumerate}
\qedhere    
\end{proof}

 \begin{theorem}\label{pt4}
Let  $\mc_{0}$ be a reflective subcategory of $\mc$, $\mw$ a left Kan extendable subcategory of $\mc$ contained in $\mc_{0}$ and $(U\dashv F_{L})$ the coreflection of $\mc$ on $\mw_l[\mc]$ given by  Proposition \ref{pp17}. Assume further that $F_{L}(\mc_0) \subset \mc_0$. Then:
\begin{enumerate}
 
\item  The subcategory $\mw$ is left Kan extendable as a subcategory of $\mc_0$.
\item  A reflection  of $\mc$ on $\mc_{0}$ induces a reflection of $\mw_l[\mc]$ on $\mw_l[\mc_0]$.
 \item  We have $\mw_l[\mc_0]= \mc_0\cap\mw_l[\mc] $.
 \item  The coreflection $(U\dashv F_{L})$ of $\mc$ on $\mw_l[\mc]$ induces a coreflection of $\mc_0$ on $\mw_l[\mc_0]$. 
 
 \end{enumerate}
\end{theorem}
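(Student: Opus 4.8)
The plan is to identify the full subcategory $\md:=\mc_0\cap\mw_l[\mc]$ of $\mc_0$ with the strong coreflective hull of $\mw$ in $\mc_0$; parts (1), (3) and (4) then drop out of Theorems \ref{pt3} and \ref{pt1} together with the construction, and (2) is a short adjunction computation relying on (1) and Corollary \ref{pc6}. Observe first that $\mw\subseteq\md$: indeed $\mw\subseteq\mc_0$ by hypothesis, and $\mw\subseteq\mw_l[\mc]$ because every $V\in\mw$ is $\mw$-generated (apply Corollary \ref{pc6} with $\mk$ the terminal category, or recall from the proof of Theorem \ref{pt1} that $\epsilon_V=1_V$). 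The one delicate point — and the place where the hypothesis $F_L(\mc_0)\subseteq\mc_0$ is indispensable — is showing that $\md$ is coreflective in $\mc_0$; the rest is bookkeeping with full subcategories and results already in hand.

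First I would show that the coreflector $F_L:\mc\ra\mw_l[\mc]$ sends $\mc_0$ into $\md$. For $X\in\mc_0$, the underlying object of $F_L(X)$ is $L(X)$, which lies in $\mc_0$ by hypothesis, while $F_L(X)$ is an $L$-coalgebra and so lies in $\mw_l[\mc]$; hence $F_L(X)\in\md$, and $F_L$ restricts to a functor $F_L^0:\mc_0\ra\md$. Since all the subcategories in play are full, the chain
$$\mc_0(Y,X)=\mc(Y,X)\cong\mw_l[\mc](Y,F_LX)=\md(Y,F_L^0X)$$
(fullness of $\mc_0$ in $\mc$; the adjunction $U\dashv F_L$; fullness of $\md$ in $\mw_l[\mc]$) is natural in $Y\in\md$ and $X\in\mc_0$, exhibiting $(U\dashv F_L^0)$ as a coreflection of $\mc_0$ on $\md$, with counit the restriction of $\epsilon$. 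Moreover $\md$ is replete in $\mc_0$: if $X\in\mc_0$ is isomorphic to an object of $\md\subseteq\mw_l[\mc]$, then $X\in\mw_l[\mc]$ (the latter being replete in $\mc$ by the dual of Proposition \ref{pp2}), so $X\in\mc_0\cap\mw_l[\mc]=\md$.

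Next, $\mw$ is dense in $\md$: since $\mw_l[\mc]$ is the strong coreflective hull of $\mw$ in $\mc$ (Proposition \ref{pp17}.1), $\mw$ is dense in $\mw_l[\mc]$, so for $X\in\md$ the canonical diagram $\mw/X\ra\mw$ has $X$ as its colimit in $\mw_l[\mc]$; as this diagram lands in $\mw\subseteq\md$, $X\in\md$, and $\md$ is full in $\mw_l[\mc]$, the same cocone is colimiting in $\md$, giving density of $\mw$ in $\md$. Thus, by Theorem \ref{pt3}.2 and Remark \ref{pr1}, $\md$ is the strong coreflective hull of $\mw$ in $\mc_0$, and by Theorem \ref{pt1} (implication $2\Ra1$ and its concluding clause) $\mw$ is left Kan extendable in $\mc_0$, which is (1), with $\mw_l[\mc_0]=\md=\mc_0\cap\mw_l[\mc]$, which is (3). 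Since $\md=\mw_l[\mc_0]$, the coreflection $(U\dashv F_L^0)$ built above is exactly the coreflection of $\mc_0$ on $\mw_l[\mc_0]$ induced by $(U\dashv F_L)$, which is (4).

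For (2), let $R_0:\mc\ra\mc_0$ be a reflector. I would first check that $R_0$ maps $\mw_l[\mc]$ into $\mw_l[\mc_0]$: given $X\in\mw_l[\mc]$, Corollary \ref{pc6} writes $X\cong\colim_{\mc}JF$ for some $F:\mk\ra\mw$; the left adjoint $R_0$ preserves this colimit, and since $JF$ takes values in $\mw\subseteq\mc_0$, where the reflection unit is invertible (Lemma \ref{pl1}.1.(b)), one gets $R_0X\cong\colim_{\mc_0}(J_0F)$ with $J_0:\mw\hra\mc_0$ the inclusion; applying Corollary \ref{pc6} inside $\mc_0$ — legitimate by (1) — yields $R_0X\in\mw_l[\mc_0]$. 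Then, using fullness and the reflection $R_0\dashv(\mc_0\hra\mc)$,
$$\mw_l[\mc_0](R_0X,Y)=\mc_0(R_0X,Y)\cong\mc(X,Y)=\mw_l[\mc](X,Y)$$
naturally in $X\in\mw_l[\mc]$ and $Y\in\mw_l[\mc_0]$, so the restriction of $R_0$ is left adjoint to the inclusion $\mw_l[\mc_0]\hra\mw_l[\mc]$, with unit the restriction of the reflection unit; this is the reflection of $\mw_l[\mc]$ on $\mw_l[\mc_0]$ induced by the given reflection, proving (2).
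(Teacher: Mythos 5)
Your proof is correct, but it reaches the conclusion by a genuinely different route from the paper's. The paper proves (1) first by Kan-extension bookkeeping: since the reflector $F$ is a left adjoint it preserves pointwise left Kan extensions, so $L_0=FL_{/\mc_0}$ is a density comonad of $J_0:\mw\ra\mc_0$; the hypothesis $F_L(\mc_0)\subset\mc_0$ then shows $L_0$ is just the restriction of $L$, its counit inherits $\mw$-monicity from $\epsilon$, and Theorem \ref{pt1} (implication $3\Ra 1$) gives left Kan extendability. Parts (3) and (4) are then derived by a two-inclusion argument and the observation $F_L(\mc_0)\subset\mc_0\cap\mw_l[\mc]$. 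You instead bypass the Kan extension computation entirely: you set $\md=\mc_0\cap\mw_l[\mc]$, restrict the adjunction $U\dashv F_L$ along full inclusions to exhibit $\md$ as a replete coreflective subcategory of $\mc_0$, check that $\mw$ is dense in $\md$ (by transporting the colimiting cocone from $\mw_l[\mc]$ into the full subcategory $\md$), and then let Theorem \ref{pt3}.2, Remark \ref{pr1} and Theorem \ref{pt1} ($2\Ra 1$) deliver (1), (3) and (4) simultaneously via uniqueness of the strong coreflective hull. Your argument is shorter and leans on the hull machinery of Sections 1 and 3 rather than on preservation of Kan extensions; what it gives up is the explicit identification of the density comonad of $J_0$ as the restriction $L_0$ of $L$, which the paper's proof makes visible and which is occasionally useful downstream. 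Your part (2) is essentially the paper's argument (reflectors preserve colimits, Corollary \ref{pc6} applied inside $\mc_0$), just with the adjunction restriction spelled out more explicitly. All the individual steps you use — fullness transfers of hom-sets, repleteness of $\md$, the colimit-transport argument for density — check out against the cited results.
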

 
\begin{proof} 
Let $J:\mw \ra \mc$, $J_0:\mw \ra \mc_0$ be the inclusion functors and $(F\dashv V)$ a reflection of $\mc$ on $\mc_{0}$. We may, by Lemma \ref{pl1}.1, assume that  the composite 
  $ \mc_0 \st{V}\ra \mc \st{F}\ra  \mc_0$  is the identity $1_{\mc_0}$ functor. We then have $FJ=J_0$.  
\begin{enumerate}
 
\item Let  $(L,\epsilon,\delta)$ be a  density comonad of $J$. The subcategory $\mc_0$ of $\mc$ contains $\mw$, thus $J$ has a pointwise left Kan extension along $J_0:\mw \ra \mc_0$ which is $L_{/\mc_0}$. The functor $F$ is  left adjoint, it is then cocontinuous and therefore preserves left pointwise Kan extensions. Thus $J_0=FJ$ has a  pointwise left Kan extension along itself which is $L_0=FL_{/\mc_0}$. We have 
 $L_{/\mc_0}(\mc_0)=L(\mc_0)=F_{L}(\mc_0)\subset \mc_0$ and $F_{/\mc_0}=1_{\mc_0}$, therefore $L$ induces an endofunctor of $\mc_0$ which is simply the functor $L_0$.
Let $\epsilon_0:L_0\ra 1_{\mc_0}$ be the natural transformation induced by $\epsilon$. Clearly, $\epsilon_0$
is the counit of the density comonad $L_0$. By Theorem \ref{pt1}, $\epsilon$ is $\mw$-monic, hence $\epsilon_0$ is $\mw$-monic. Again, by Theorem \ref{pt1},
  $\mw$ is left Kan extendable as a subcategory of $\mc_{0}$.
$$\begin{matrix} 
\begin{tikzcd}[row sep=large]
\mw\arrow{rr}{J}\arrow{dr}[swap]{J}&&\mc\\
&\mc\arrow[ur,"L",""{name=L}]\arrow[out=0,in=-70, looseness=1,ur,"1_{\mc}"',""{name=T}]
\arrow[Rightarrow,from=L, to=T,shorten >=-2mm,shorten <=1mm,"\epsilon"'swap]
\end{tikzcd}
 &\begin{tikzcd}[row sep=large]
\mw\arrow[out=30,in=150, looseness=1]{rrr}{J_0}\arrow{rr}{J}\arrow{dr}[swap]{J_0}&&\mc\arrow{r}{F}&\mc_0\\
&\mc_0\arrow{ur}{L_{/\mc_0}}\arrow[urr,"L_0",""{name=L}]\arrow[out=0,in=-90, looseness=1,urr,"1_{\mc_0}"',""{name=T}]
\arrow[Rightarrow,from=L, to=T,shorten >=-2mm,shorten <=1mm,"\epsilon_0"'swap]
\end{tikzcd}
\end{matrix}$$

\item We just need to prove that $\mw_l[\mc_0]\subset \mw_l[\mc]$ and 
$F(\mw_l[\mc]) \subset \mw_l[\mc_0]$. Let $X_0\in \mw_l[\mc_0]$.
By the dual of Proposition \ref{pp2},  $\epsilon_{X_0}=(\epsilon_0)_{X_0}$ is an isomorphism,
therefore $X_0 \in \mw_l[\mc]$ and $\mw_l[\mc_0]\subset \mw_l[\mc]$. 
Let $X \in \mw_l[\mc]$ and $J_X:\mw/X \ra \mc$ be the functor which takes an arrow-object $V \sra X$ in $\mw/X$ to its domain $V$. The functor $F$ preserves colimits, thus $\colim FJ_X\cong F(X)$. The functor $FJ_X$ takes values in $\mw$, by Corollary \ref{pc6}, $\colim FJ_X \in \mw_l[\mc_0]$. It follows that $F(X) \in \mw_l[\mc_0]$ and $F(\mw_l[\mc]) \subset \mw_l[\mc_0]$.

\item We have $\mw_l[\mc_0]\subset \mw_l[\mc]$, thus $\mw_l[\mc_0]\subset \mc_0\cap\mw_l[\mc] $. The induced functor $F_{/\mc_0}=1_{\mc_0}$, thus
$\mc_0\cap\mw_l[\mc]=F(\mc_0\cap\mw_l[\mc])\subset F(\mw_l[\mc])\subset \mw_l[\mc_0]$. Therefore $\mw_l[\mc_0]= \mc_0\cap\mw_l[\mc] $.

\item One has $F_{L}(\mc_0)\subset \mc_0\cap \mw_l[\mc]=\mw_l[\mc_0]$. Thus $ F_{L} (\mc_0)\subset \mw_l[\mc_0]$ and the result follows. 
	
\end{enumerate}
 \qedhere   
\end{proof}
We next introduce the dual notion of right Kan extendable subcategories.

\begin{definition}\label{pd2}  
A subcategory $\mw$ of a category $\mc$ is said to be right Kan extendable provided that:
\begin{enumerate}
 
\item The inclusion functor $J: \mw \ra \mc$ has a codensity  monad $(R,\eta,\mu)$.
\item The monad $(R,\eta,\mu)$ is idempotent. 
\end{enumerate}
\end{definition}

Examples of right Kan extendable subcategories are given in the next section.\\
\indent Let $\mw$ be a right Kan extendable subcategory of a category $\mc$ and $(R,\eta,\mu)$ the codensity monad of the inclusion functor $J:\mw \ra \mc$.  Define $\mw_r[\mc]$ to be the category of $R$-algebras. Then by proposition \ref{pp2}, $\mw_r[\mc]$ may be  viewed as a replete subcategory of $\mc$. It is called the subcategory of $\mw$-cogenerated objects of $\mc$. 
\begin{corollary}\label{pc19}
Let $\mw$ be a right Kan extendable subcategory of  $\mc$, $(R,\eta,\mu)$ the codensity monad of the inclusion functor $J:\mw \ra \mc$ and $U:\mw_r[\mc]\ra \mc$ the forgetful functor. Then 

\begin{enumerate}
 
\item  The subcategory $\mw_r[\mc]$  is  the strong reflective hull of $\mw$ in $\mc$.
 \item The free $R$-algebra functor $F^{R}: \mc \ra \mw_r[\mc]$ is a  reflector.

	\item  The reflection $(F^{R} \dashv U)$ has  $\eta$ as its unit. 
	
\end{enumerate}
\end{corollary}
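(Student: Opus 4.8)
The plan is to obtain this as the formal dual of the results already established for left Kan extendable subcategories, together with Theorem \ref{pt11}. Since $\mw$ is right Kan extendable in $\mc$, the inclusion $J:\mw\ra\mc$ is fully faithful and has an idempotent codensity monad $(R,\eta,\mu)$; by Proposition \ref{pp2} the forgetful functor $U:\mw_r[\mc]=\mc^R\ra\mc$ is fully faithful and injective on objects, so $\mw_r[\mc]$ is a replete subcategory of $\mc$. For part (2), Proposition \ref{pp4} gives that the free $R$-algebra functor $F^R:\mc\ra\mc^R$ is left adjoint to $U$, so $U$ is a right adjoint and $\mw_r[\mc]$ is reflective in $\mc$, with $F^R$ a reflector; and by the same proposition the unit of the adjunction $(F^R\dashv U)$ is the monad unit $\eta:1_\mc\ra R=UF^R$, which is exactly part (3).

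For part (1) I would invoke Theorem \ref{pt3}.1: it suffices to show that $\mw$ is codense in the replete reflective subcategory $\mw_r[\mc]$, for then $\mw_r[\mc]$ is automatically the reflective hull, hence (by Remark \ref{pr1}.1) the strong reflective hull of $\mw$. But codensity of $\mw$ in $\mw_r[\mc]$ is precisely the content of Theorem \ref{pt11}.2 applied to the fully faithful functor $J:\mw\ra\mc$ with its idempotent codensity monad $R$: that theorem says $J$ takes values in $\mc^R=\mw_r[\mc]$ and that the induced functor $J_0:\mw\ra\mw_r[\mc]$ is codense. Combining, $\mw_r[\mc]$ is a replete reflective subcategory of $\mc$ in which $\mw$ sits codensely, so Theorem \ref{pt3}.1 finishes part (1).

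Alternatively, and perhaps more cleanly, the whole statement is the exact formal dual of Proposition \ref{pp17} (whose proof was itself stated to follow from the duals of Propositions \ref{pp4}, \ref{pp1}, \ref{pp2} and the dual of Theorem \ref{pt11}): replacing $\mc$ by $\mc^{\op}$ turns a left Kan extendable subcategory, its idempotent density comonad, and its $L$-coalgebras into a right Kan extendable subcategory, its idempotent codensity monad, and its $R$-algebras, and turns strong coreflective hull / coreflector / counit into strong reflective hull / reflector / unit. Thus one may simply say: \emph{this is the dual of Proposition \ref{pp17}, via Theorem \ref{pt11}, Proposition \ref{pp4}, Proposition \ref{pp1} and Proposition \ref{pp2}.} I do not expect a genuine obstacle here; the only point requiring a little care is the bookkeeping of which half of each dualizable proposition one needs — in particular making sure Theorem \ref{pt11} is applied to $J$ itself (which is fully faithful) rather than to some auxiliary functor, so that its hypotheses are literally met.
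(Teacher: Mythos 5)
Your proposal is correct and matches the paper's proof, which reads simply ``This is the dual of Proposition \ref{pp17}'' (and that proposition is in turn proved by citing Propositions \ref{pp4}, \ref{pp1}, \ref{pp2} and Theorem \ref{pt11}, exactly the ingredients you unwind). Your only addition is to spell out the dualization explicitly and to note that Theorem \ref{pt11} already speaks of codensity monads and so applies to $J$ without dualizing, which is a fair observation but not a different argument.
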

    
\begin{proof}
This is the dual of Proposition  \ref{pp17}.  
\qedhere   
\end{proof}
 
\section{Compactifications}\label{s5}
   Stone–Čech compactification and Hewitt realcompactification are  procedures  exhibiting the subcategories of compact Hausdorff  and realcompact spaces as  reflective subcategories of $\Top$. Our objective in this section is to show how can these two facts be established using the notion of Kan extendable subcategories. We begin with the following technical result.\\
	
\begin{lemma}\label{pl5} 
Let $\mc$ be a complete category, $\mc_0$ is a subcategory of $\mc$ and   $J:\mc_0 \ra \mc $ the inclusion functor. Assume that for any small category $\mi$ and any functor $F:\mi \ra \mc_0$, the limit of the composite functor  $JF$ is in  $\mc_0$. Then: 
\begin{enumerate}
 
\item  The subcategory $\mc_0$ is complete.
\item The functor  $J:\mc_0 \ra \mc $ preserves and creates  small limits.
\end{enumerate}
\end{lemma}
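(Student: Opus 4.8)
The plan is to exploit that $\mc_0$ is a \emph{full} subcategory, so that the inclusion $J\colon\mc_0\ra\mc$ is fully faithful and injective on objects. The only real content is the observation that, for a full subcategory, a cone in $\mc$ over $JF$ whose apex happens to lie in $\mc_0$ is the same thing as a cone over $F$ in $\mc_0$: each component lies in $\mc_0$ by fullness, and the coherence triangles commute in $\mc_0$ by faithfulness. Everything then reduces to transporting limiting cones across this correspondence, with the completeness of $\mc$ supplying the limits to begin with.

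First I would fix a small category $\mi$ and a functor $F\colon\mi\ra\mc_0$. Since $\mc$ is complete, $JF$ has a limiting cone $\mu\colon M\Ra JF$ in $\mc$, and by hypothesis $M$ is an object of $\mc_0$. By the correspondence above, $\mu$ is the $J$-image of a unique cone $\bar\mu\colon M\Ra F$ in $\mc_0$. I would then check that $\bar\mu$ is limiting in $\mc_0$: given any cone $\sigma\colon X\Ra F$ with $X\in\mc_0$, its image $J\sigma\colon X\Ra JF$ factors uniquely through $\mu$ by some $k\colon X\ra M$ in $\mc$; fullness gives $k=Jh$ for a unique $h\colon X\ra M$ in $\mc_0$, and faithfulness shows $h$ is the unique arrow of $\mc_0$ with $\bar\mu\circ h=\sigma$. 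This proves (1): every small diagram in $\mc_0$ has a limit there, computed by the ambient limit in $\mc$.

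For (2), preservation is then immediate: an arbitrary limiting cone $\nu\colon N\Ra F$ in $\mc_0$ differs from $\bar\mu$ by a unique isomorphism of $\mc_0$, whose $J$-image is an isomorphism of $\mc$ carrying the limiting cone $\mu=J\bar\mu$ to $J\nu$; hence $J\nu$ is limiting in $\mc$. For creation: given \emph{any} limiting cone $\mu\colon M\Ra JF$ in $\mc$, the hypothesis forces $M\in\mc_0$, so by injectivity of $J$ on objects together with fullness there is exactly one cone $\bar\mu\colon M\Ra F$ in $\mc_0$ with $J\bar\mu=\mu$; the argument of (1) shows $\bar\mu$ is limiting, and uniqueness of the lift follows from faithfulness of $J$. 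This is precisely the statement that $J$ creates small limits.

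Since the whole argument is a routine transfer of cones across a fully faithful inclusion, I do not anticipate a genuine obstacle; the one point that needs care is the reading of the hypothesis ``the limit of $JF$ is in $\mc_0$''. For completeness and for preservation a single well-chosen limit of $JF$ suffices, but for the creation clause one needs the apex of \emph{every} limiting cone over $JF$ to lie in $\mc_0$, which I would state explicitly (it holds automatically once $\mc_0$ is replete, as it will be in the intended applications to compactifications).
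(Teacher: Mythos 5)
Your proof is correct and is exactly the routine cone-transfer argument the paper has in mind: its own proof of this lemma is literally the single word ``Clear.'' The one caveat you raise about the creation clause (needing the apex of \emph{every} limiting cone of $JF$ to lie in $\mc_0$) is precisely what the paper addresses in the sentence immediately following the statement, where it observes that the hypothesis forces $\mc_0$ to be replete.
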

 
 Observe that such a subcategory $\mc_0$ is necessarily replete.
  
\begin{proof} Clear.
\qedhere  
\end{proof}

\begin{theorem}\label{pt5} 
Let $\mc$ be a complete category, $\mw$ a small subcategory of $\mc$,   $\mc_0$ a subcategory of $\mc$ containing $\mw$ as a codense subcategory and $J_0:\mc_0\ra\mc$ the inclusion functor. Assume further that for any small category $\mi$ and any functor $F:\mi \ra \mc_0$, the limit of the composite functor $J_0F$ is in $\mc_0$. Then $\mc_0$ is the strong reflective hull of $\mw$.
  \end{theorem}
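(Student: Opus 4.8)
The plan is to show that $\mc_0$ is a replete reflective subcategory of $\mc$ in which $\mw$ is codense, and then invoke Theorem~\ref{pt3}.1 (equivalently, Remark~\ref{pr1}.1) to conclude that $\mc_0$ is the strong reflective hull of $\mw$. Repleteness and completeness of $\mc_0$, together with the fact that $J_0$ preserves and creates small limits, are immediate from Lemma~\ref{pl5} applied to the standing hypothesis. Codensity of $\mw$ in $\mc_0$ is assumed. So the only substantive thing to prove is that the inclusion $J_0 : \mc_0 \ra \mc$ has a left adjoint.

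First I would produce the codensity monad of the inclusion functor $K : \mw \ra \mc$. Since $\mc$ is complete and $\mw$ is small, for each $C \in \mc$ the functor $K^C : C/K \ra \mc$ of~(\ref{peq22}) has a limit (the indexing category $C/K$ is small because $\mw$ is small), so by~(\ref{peq26}) the pointwise right Kan extension $R$ of $K$ along itself exists and carries its codensity monad structure $(R,\eta,\mu)$. The key claim is then that $R$ takes values in $\mc_0$ and that the corestriction $R_0 : \mc \ra \mc_0$ is left adjoint to $J_0$, with unit $\eta$. That $R(C) \in \mc_0$ for every $C$ follows from the hypothesis on $\mc_0$: $R(C) = \lim K^C$ is a limit of a diagram factoring through $\mw \subset \mc_0$, and such limits lie in $\mc_0$ by assumption (this is exactly the situation covered by Lemma~\ref{pl5}, with the diagram $K^C$ landing in $\mc_0$). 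For the adjunction, I would check the universal property directly: given $C \in \mc$ and $X \in \mc_0$, a morphism $C \ra J_0(X) = X$ must factor uniquely through $\eta_C : C \ra R(C)$. This is where codensity of $\mw$ in $\mc_0$ enters. Since $X \in \mc_0$ and $\mw$ is codense in $\mc_0$, we have $X = \lim K_0^X$ where $K_0^X : X/K_0 \ra \mc_0$ is the analogue of~(\ref{peq22}) for the inclusion $K_0 : \mw \ra \mc_0$; applying the limit-preserving functor $J_0$ (Lemma~\ref{pl5}.2) gives $X = \lim J_0 K_0^X$ in $\mc$. One identifies $X/K_0$ with $X/K$ (objects are arrows $X \ra V$ with $V \in \mw$, and $\mc_0$ is full), under which $J_0 K_0^X = K^X$; hence $X = \lim K^X = R(X)$ in $\mc$, and moreover $\eta_X$ is an isomorphism. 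Now a morphism $f : C \ra X$ in $\mc$ induces, via the functoriality of the codensity monad, $R(f) : R(C) \ra R(X) \cong X$, and composing with $\eta_C$ recovers $f$ by naturality of $\eta$; uniqueness follows because any two factorizations become equal after postcomposing with the isomorphism $\eta_X^{-1}$ and using that $\eta$ is natural. This exhibits $R_0 \dashv J_0$, so $\mc_0$ is reflective in $\mc$.

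Having established that $\mc_0$ is replete, reflective, and contains $\mw$ as a codense subcategory, Theorem~\ref{pt3}.1 immediately gives that $\mc_0$ is the reflective hull of $\mw$, and by the terminology of Remark~\ref{pr1}.1 it is the strong reflective hull of $\mw$. The main obstacle I anticipate is the bookkeeping in the adjunction argument: one must carefully identify the comma categories $X/K$ and $X/K_0$ and check that the limiting cone computing $R(X)$ matches the one exhibiting $X$ as $\lim K_0^X$ in $\mc_0$ after applying $J_0$ — in other words, that $J_0$ not only preserves the limit but that the canonical comparison map $\eta_X : X \ra R(X)$ is the identification. This is precisely where creation of limits by $J_0$ (Lemma~\ref{pl5}.2), rather than mere preservation, is needed, since it guarantees the cone in $\mc_0$ witnessing codensity is sent to *the* limiting cone in $\mc$ computing the Kan extension. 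Everything else — existence of $R$, functoriality, repleteness, completeness — is routine given the cited results.
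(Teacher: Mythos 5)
Your construction of the codensity monad $(R,\eta,\mu)$ of $\mw\hra\mc$, the observation that $R$ takes values in $\mc_0$, and the proof that $\eta_X$ is an isomorphism for every $X\in\mc_0$ all coincide with the paper's argument (the paper records the last fact as statement (\ref{peq28}), derived from Lemma \ref{pl5}). Where you diverge is in how you conclude: the paper deduces from $R(\mc)\subseteq\mc_0$ and (\ref{peq28}) that $\eta R$ is an isomorphism, invokes Proposition \ref{pp1} to get idempotency of $R$, identifies $\mc_0$ with $\mw_r[\mc]$ via Proposition \ref{pp2}, and finishes with the dual of Theorem \ref{pt1}; you instead verify the adjunction $R_0\dashv J_0$ by hand and then appeal to Theorem \ref{pt3}.1. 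The routes are equivalent in principle, but the paper's detour through idempotency does real work that your direct verification skips.

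Concretely, the gap is in the uniqueness half of your universal-property check. Existence of the factorization is fine: $g=\eta_X^{-1}\circ R(f)$ satisfies $g\circ\eta_C=f$ by naturality of $\eta$. But for uniqueness, suppose $g_1\circ\eta_C=g_2\circ\eta_C$ with $g_i:R(C)\ra X$ and $X\in\mc_0$. Naturality of $\eta$ at $g_i$ gives $g_i=\eta_X^{-1}\circ R(g_i)\circ\eta_{R(C)}$, which only reduces the problem to showing $R(g_1)=R(g_2)$; that does not follow from ``postcomposing with $\eta_X^{-1}$,'' since $\eta_C$ need not be epic. What saves the argument is: $R(C)\in\mc_0$, so $\eta_{R(C)}$ is an isomorphism; since $\mu_C\circ\eta_{R(C)}=1$, $\mu_C$ is an isomorphism, and since $\mu_C\circ R(\eta_C)=1$, $R(\eta_C)=\mu_C^{-1}$ is an isomorphism; applying $R$ to $g_1\circ\eta_C=g_2\circ\eta_C$ then yields $R(g_1)=R(g_2)$ by cancelling $R(\eta_C)$. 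This missing step is precisely the idempotency of $R$ that the paper extracts from Proposition \ref{pp1}, and once you supply it your proof essentially collapses onto the paper's. (A minor point: preservation of limits by $J_0$ already suffices to see that $\eta_X$ is an isomorphism for $X\in\mc_0$; creation is not needed there.)
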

	    
\begin{proof}
The category $\mc$ is complete and $\mw$ is small, therefore the inclusion functor 
$J:\mw\ra\mc$ has a  codensity monad $(R,\eta,\mu)$. By hypothesis, $R(\mc)\subseteq \mc_0$. Moreover, the subcategory $\mw$ is codense in $\mc_0$. Therefore by lemma \ref{pl5},
for each    $X\in \mc, $
\begin{equation}\label{peq28}
 \mbox{ the morphism }  \eta_X:X \ra R(X)  
\mbox{ is an isomorphism iff }
  X\in \mc_0 .
\end{equation}
 
As observed above, the functor $R$ takes $\mc$ into $\mc_0$. Therefore by (\ref{peq28}),  $\eta R:R \ra R^2$ is an isomorphism. By Proposition \ref{pp1}, $R$ is an idempotent monad. It follows  that $\mw$ is right Kan extendable.\\
\indent By Proposition \ref{pp2}, and object $X\in \mc$ has an $R$-algebra structure iff $\eta_X:X \ra R(X)$ is an isomorphism.  Therefore by  (\ref{peq28}), $\mw_r[\mc]=\mc_0$. By the dual of Theorem \ref{pt1}, $\mc_0$ is the strong reflective hull of $\mw$.
\qedhere   
\end{proof}
  As before, let $\Comp$ be the subcategory of $\Top$ of compact Hausdorff spaces and let  
	$I$  the unit interval, $I^2=I\tm_{\Top} I$ and $\Sq$ the subcategory of $\Top$ having $I^2$ as its unique object. The following result strengthens  the standard Stone–Čech compactification 

\begin{corollary}\label{pc7} 
 The subcategory $\Comp$ of $\Top$  is the strong reflective hull of $\Sq$. 

\end{corollary}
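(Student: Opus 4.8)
The plan is to apply Theorem \ref{pt5} with $\mc = \Top$, $\mw = \Sq$ and $\mc_0 = \Comp$. So the task reduces to verifying the three hypotheses of that theorem: (i) $\Top$ is complete; (ii) $\Sq$ is a small subcategory in which $I^2$ is the only object, and $\Sq$ is codense in $\Comp$; and (iii) for every small category $\mi$ and every functor $F\colon\mi\to\Comp$, the limit in $\Top$ of $J_0 F$ lies in $\Comp$, where $J_0\colon\Comp\hra\Top$ is the inclusion. Completeness of $\Top$ is classical. Smallness of $\Sq$ is immediate since it has a single object and its only morphisms are the continuous self-maps of $I^2$ (a set). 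The closure condition (iii) is the statement that a limit (equivalently, an equalizer of a product) of compact Hausdorff spaces, computed in $\Top$, is again compact Hausdorff: products of compact spaces are compact by Tychonoff, products of Hausdorff spaces are Hausdorff, and an equalizer is a closed subspace (the Hausdorff property makes the diagonal closed, so the equalizer subset is closed) of the product, hence compact; it is also Hausdorff as a subspace. Thus $\Comp$ is closed under $\Top$-limits.

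The one genuinely substantive point is codensity of $\Sq$ in $\Comp$, i.e.\ that every compact Hausdorff space $X$ is the limit in $\Comp$ of the canonical diagram $J^X\colon \Sq/X \to \Comp$ sending an arrow $I^2 \st{\sigma}\to X$ to $I^2$. Equivalently one wants the natural map $X \to \lim_{\sigma\colon I^2\to X} I^2$ to be an isomorphism. The standard route is: first observe $I^2$ and $I$ are homeomorphic as "test objects" are concerned in the sense that maps out of them detect the same thing here — actually it is cleaner to note that the continuous maps $I^2\to X$ are in bijective-enough correspondence with continuous maps $I\to X$ is false, so instead I would directly use that $\Comp$ is cogenerated by $I$ (equivalently by $I^2$, since $I$ is a retract of $I^2$ and $I^2$ is a retract of $I^{\mathbb N}$ up to the relevant embedding): every compact Hausdorff space $X$ embeds as a closed subspace of a power $I^{\mathcal C(X,I)}$ via the evaluation map $e\colon X\to I^{\mathcal C(X,I)}$, which is a topological embedding with closed image by compactness and Hausdorffness (this is the heart of Stone--Čech). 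From such an embedding one shows that the cone $(\sigma\colon I^2\to X)$ over $J^X$ is limiting in $\Comp$: given a compact Hausdorff $Y$ with compatible maps $Y\to I^2$ for every $\sigma$, the compatibility with the coordinate projections $X\to I$ (each factoring through some $I^2\to X$, or directly as maps $I\to I\hookrightarrow I^2$) forces a unique map $Y\to I^{\mathcal C(X,I)}$ landing in the closed subspace $X$; uniqueness follows because the maps $X\to I$ are jointly monic on $X$. I would package this as: $\Sq$ (equivalently the one-object subcategory on $I$) is codense in $\Comp$ because $\{I\}$ is a cogenerating set of $\Comp$ closed under finite powers inside $\Comp$, and for such a set the associated canonical cone is a limit cone — this is essentially the dual of the classical fact that a dense generator gives canonical colimit presentations.

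The main obstacle, then, is the codensity verification, and within it the point requiring care is passing between $I$ and $I^2$: one must make sure the single object $I^2$ really suffices, i.e.\ that maps $I^2\to X$ already detect everything maps $I\to X$ do. This is fine because $I$ is (homeomorphic to) a retract of $I^2$, so every $f\colon I\to X$ extends along a retraction $I^2\twoheadrightarrow I$ to a map $I^2\to X$, and conversely restricting along a section $I\hookrightarrow I^2$ recovers $f$; hence the comma category $\Sq/X$ contains, cofinally for the purpose of the limit, all the data contributed by maps $I\to X$. I would spell this out as a short lemma: since $I$ is a retract of $I^2$ in $\Top$, codensity of the one-object subcategory $\{I^2\}$ in $\Comp$ is equivalent to codensity of $\{I\}$; then invoke the cogenerator argument above. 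After that, Theorem \ref{pt5} applies verbatim and yields that $\Comp$ is the strong reflective hull of $\Sq$, and in particular (by the dual of Theorem \ref{pt1} / Corollary \ref{pc19}) the reflector $\Top\to\Comp$ is the Stone--Čech compactification.
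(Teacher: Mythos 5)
Your overall strategy is the same as the paper's: apply Theorem \ref{pt5} with $\mw=\Sq$ and $\mc_0=\Comp$, and your verification of the completeness of $\Top$, the smallness of $\Sq$, and the closure of $\Comp$ under $\Top$-limits (Tychonoff plus closed equalizers in a Hausdorff product) is correct. The paper disposes of the remaining hypothesis, the codensity of $\Sq$ in $\Comp$, by citing Isbell \cite[Theorem 2.6]{I1}; you instead try to prove it, and that is where the proposal breaks down.

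Two steps of your codensity argument fail. First, the key lemma you propose --- that since $I$ is a retract of $I^2$, codensity of $\{I^2\}$ in $\Comp$ is \emph{equivalent} to codensity of $\{I\}$ --- is false, and in fact would refute the corollary: Remark \ref{pr12} of the paper records (citing the same theorem of Isbell) that $\U=\{I\}$ is \emph{not} codense in $\Comp$, whereas $\Sq=\{I^2\}$ is. The retraction only shows that every map $X\to I$ is recoverable from a map $X\to I^2$; it does not identify the limits over the two comma categories, because $X/\Sq$ and $X/\U$ carry different systems of compatibility morphisms (all continuous maps $I^2\to I^2$ versus only the unary maps $I\to I$), and it is precisely the binary ``operations'' $I^2\to I$ that are needed to cut the limit down to $X$. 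This is the entire content of Isbell's adequacy theorem, which your reduction erases. Second, the inference from ``$\{I\}$ is a cogenerating set, exhibited by the closed embedding $X\hookrightarrow I^{\mathcal{C}(X,I)}$'' to ``the canonical cone is limiting'' is also invalid: joint monomorphy only makes the comparison map $X\to\lim J^X$ monic, not invertible; a cogenerator is strictly weaker than a codense subcategory, and $\{I\}$ is exactly a counterexample. (A smaller slip: for codensity the indexing category is the under-category $X/\Sq$ of arrows $X\to I^2$, not $\Sq/X$.) To repair the proof you must either cite Isbell's theorem as the paper does, or reproduce its argument, which genuinely uses maps into $I^2$ to encode relations between pairs of coordinates of the embedding $X\hookrightarrow I^{\mathcal{C}(X,I)}$.
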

   
\begin{proof} 
Let $J:\Comp\ra\Top$ be the inclusion functor,  $\mi$ be a small category  and $F:\mi \ra\Comp$ a functor. The limit of $JF$ is clearly in $\Comp$.
 By (Isbell, \cite[Theorem 2.6]{I1}), $\Sq$ is a codense subcategory of $\Comp$. By Theorem \ref{pt5}, $\Comp$ is the strong reflective hull of $\Sq$. 
\qedhere   
\end{proof}
\begin{remark}\label{pr12}
An algebraic example of a coreflective hull which is not strong is given in Example \ref{pex2}. We next provide another example which is  topological.\\
Let $\U$ be the subcategory of $\Top$ having the unit interval $I$ as its unique object.  The subcategory $\Comp$  of $\Top$ is reflective and contains $\U$.  Let $\Topo$ be a reflective subcategory of $\Top$ containing $\U$. Clearly, $\Topo$ contains $\Sq$, therefore it contains the reflective hull of $\Sq$ which is $\Comp$. It follows that $\Comp$ is precisely the reflective hull of $\U$. By \cite[Theorem 2.6]{I1},  $\U$ is not dense in $\Comp$. Therefore  $\U$ has no strong reflective hull.  
\end{remark}

Let $\Rng$ be the category of commutative rings and let $$C:\Top^{op}\ra \Rng$$ be the functor which takes a space $X$ to the ring of real-valued continuous maps defined  on $X$. Recall that a topological space is said to be realcompact if it is homeomorphic to a closed subspace of a product of real lines  \cite[11.12]{GJ}.
  Let $\Rcomp$ be the subcategory of $\Top$ of realcompact spaces. 

\begin{theorem}\label{pt6}(\cite[Theorem 10.6]{GJ})\\ 
 The restriction functor $C_{/}:\Rcomp^{op}\ra \Rng$  of $C$ is fully faithful. \\ 
 \end{theorem}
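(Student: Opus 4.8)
The statement asserts that the functor $C_{/}:\Rcomp^{op}\ra \Rng$ obtained by restricting $C:\Top^{op}\ra\Rng$ to the subcategory of realcompact spaces is fully faithful. The plan is to unwind what full faithfulness means here and recognize it as a classical statement about rings of continuous functions. Concretely, I must show that for every pair of realcompact spaces $X,Y$, the map
$$
\Top(X,Y)\ra \Rng\bigl(C(Y),C(X)\bigr),\qquad \varphi\longmapsto \varphi^{\ast}=(-)\circ\varphi,
$$
is a bijection. Injectivity is the easy half: since $Y$ is realcompact, it is in particular Tychonoff, so $C(Y)$ separates the points of $Y$; hence two distinct maps $\varphi_1\ne\varphi_2:X\ra Y$ are distinguished by some $g\in C(Y)$, giving $g\circ\varphi_1\ne g\circ\varphi_2$ and so $\varphi_1^{\ast}\ne\varphi_2^{\ast}$. (This part only uses that $Y$ is Tychonoff, not the full strength of realcompactness.)

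The substantive half is surjectivity: given a ring homomorphism $t:C(Y)\ra C(X)$, I must produce a continuous $\varphi:X\ra Y$ with $t=\varphi^{\ast}$. Here I would invoke the Gelfand–Kolmogorov-type theory from Gillman–Jerison \cite{GJ}. The key point is that a realcompact space $Y$ can be recovered from the ring $C(Y)$ as the set of those maximal ideals of $C(Y)$ whose residue field is $\R$ (equivalently, the \emph{real} maximal ideals), topologized as a subspace of the maximal ideal space; for realcompact $Y$ this set of real maximal ideals, with its hull-kernel topology, is homeomorphic to $Y$ itself via $y\mapsto M_y=\{g\in C(Y):g(y)=0\}$. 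A unital ring homomorphism $t:C(Y)\ra C(X)$ pulls back real maximal ideals of $C(X)$ to real maximal ideals of $C(Y)$ (the preimage of a real maximal ideal under a unital homomorphism is again a real maximal ideal, because the induced map on residue fields is an embedding of a field into $\R$, hence an isomorphism onto $\R$). This induces a map on the spaces of real maximal ideals which, after the identifications above, is a continuous map $\varphi:X\ra Y$; one then checks $t=\varphi^{\ast}$ by evaluating at points, using that $t$ respects the evaluation-at-a-point quotients $C(Y)/M_{\varphi(x)}\cong\R\cong C(X)/M_x$.

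Rather than reprove all of this, the cleanest route for the paper is to cite it directly: this is precisely \cite[Theorem 10.6]{GJ} (as the statement already indicates), whose proof rests on \cite[Chapter 7 and 10]{GJ}, in particular the fact that for realcompact spaces the correspondence $Y\leftrightarrow$ (real maximal ideals of $C(Y)$) is a bijection and functorial, together with the fact that every ring homomorphism $C(Y)\ra C(X)$ automatically sends real maximal ideals to real maximal ideals. The main obstacle — and the reason the result is nontrivial — is exactly this automatic preservation of \emph{realness} of maximal ideals: a priori a homomorphism could send a real maximal ideal to a hyper-real one, and it is the completeness built into realcompactness (closed-subspace-of-a-product-of-lines) that rules this out and makes $\varphi$ land in $Y$ rather than merely in some larger realcompactification. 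So the proof proposal is: reduce full faithfulness to (injectivity = point-separation by $C(Y)$, which holds since $Y$ is Tychonoff) plus (surjectivity = reconstruction of $Y$ and of $\varphi$ from real maximal ideals), and for the latter appeal to \cite[Theorem 10.6]{GJ}.
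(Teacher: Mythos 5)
The paper gives no proof of this theorem at all: it simply imports it as \cite[Theorem 10.6]{GJ}, and your proposal likewise ends by deferring to that same citation, so you are taking essentially the same route. Your added exposition (injectivity from point-separation, surjectivity via real maximal ideals) is a correct sketch of the Gillman--Jerison argument and does no harm, but it is not needed for the paper's purposes.
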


Let $\Q$ be the subcategory of $\Rcomp$ having precisely one object which is $\R^2=\R\tm_{\Top} \R$. 

\begin{theorem}\label{pt7}
 
 The subcategory $\Q$ of $\Rcomp$ is codense.
 \end{theorem}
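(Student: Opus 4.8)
The plan is to show that for every realcompact space $X$, the natural cone exhibiting $X$ as a limit of copies of $\R^2$ (indexed by the comma category $\Q/X$) is a limiting cone; equivalently, the canonical map $X \to \lim J_X$ is a homeomorphism, where $J_X : \Q/X \to \Top$ sends an arrow-object $\R^2 \to X$ to its domain $\R^2$. First I would reduce the computation of $\lim J_X$ to something concrete. Since $\R^2 = \R \tm_{\Top} \R$ and both projections $\R^2 \to \R$ are themselves continuous maps in $\Top$, the diagram $J_X$ contains, cofinally, all continuous maps $X \to \R$; more precisely the limit over $\Q/X$ agrees with the limit over the diagram of \emph{all} continuous real-valued maps $X \to \R$, because a continuous map $X \to \R^2$ is just a pair of continuous maps $X \to \R$ and the coherence conditions cut nothing extra out. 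Thus $\lim J_X$ is the subspace of $\R^{C(X)} = \prod_{f \in C(X)} \R$ cut out by the obvious compatibility relations, and the canonical map is precisely the evaluation map $e_X : X \to \R^{C(X)}$, $x \mapsto (f(x))_{f \in C(X)}$, corestricted to its image's closure-type limit.

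Next I would invoke the classical characterization of realcompactness: $X$ is realcompact iff the evaluation map $e_X : X \to \R^{C(X)}$ is a homeomorphism onto a \emph{closed} subspace of $\R^{C(X)}$ (this is essentially the definition recalled just before the statement, together with \cite[11.12, Ch.~10]{GJ}). The key point is then to identify $\lim J_X$ with the closure of $e_X(X)$ in $\R^{C(X)}$, or rather to check that the limit subspace equals exactly $e_X(X)$ when $X$ is realcompact. Concretely: the limit of the diagram of all $f : X \to \R$ (with the morphisms being the continuous maps $\R \to \R$ between them, plus the ring-type operations coming from $C(X)$) is the set of functions $C(X) \to \R$ that commute with every continuous self-map of $\R$ and with the algebraic operations; by the closed-subspace description of realcompactness this set is exactly $e_X(X)$. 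Hence $e_X : X \to \lim J_X$ is a bijection, and since it is a homeomorphism onto its image (true for any Tychonoff space, and realcompact spaces are Tychonoff) with that image being the whole limit, $X \cong \lim J_X$ via the canonical map. This is the assertion that $\Q$ is codense in $\Rcomp$.

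I expect the main obstacle to be the bookkeeping in the middle step: carefully matching the purely diagrammatic limit over the comma category $\Q/X$ (whose morphisms are arbitrary continuous maps $\R^2 \to \R^2$, a large and unwieldy collection) with the tractable description via $C(X)$ and the closed-embedding criterion for realcompactness. One must verify that passing from $\R^2$ to $\R$ (using the two projections and the diagonal $\R \to \R^2$) does not change the limit, and that the morphisms $\R^2 \to \R^2$ impose no constraints beyond those already recorded by $C(X)$ together with the finitely-many-variable continuous operations; here Theorem \ref{pt7}'s natural companion, the fact (Theorem \ref{pt6}, \cite[Theorem 10.6]{GJ}) that $C_{/} : \Rcomp^{\op} \to \Rng$ is fully faithful, can be brought in to streamline the identification — a cone over $J_X$ with vertex a realcompact $Y$ corresponds to a ring map $C(X) \to C(Y)$, and full faithfulness converts this into a unique continuous $Y \to X$, which is exactly the universal property making $X = \lim J_X$. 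This last route via Theorem \ref{pt6} is probably the cleanest and is the one I would actually write up.
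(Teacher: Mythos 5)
Your concluding route---interpreting a cone over the codensity diagram with realcompact vertex $Y$ as a ring homomorphism $C(X)\to C(Y)$ and invoking the full faithfulness of Theorem \ref{pt6} to convert it into a unique continuous map $Y\to X$, which is the universal property of the limit---is precisely the paper's proof, which states only that the argument follows Isbell's proof of the codensity of $\Sq$ in $\Comp$, based on Theorem \ref{pt6}. One correction to your setup: for \emph{codensity} the indexing category is $X/\Q$ (arrow-objects $X\to\R^2$, the diagram taking each to its codomain $\R^2$), not $\Q/X$ with domains as you first write; your subsequent reduction to the evaluation map $e_X:X\to\R^{C(X)}$ shows you intend the correct diagram, so this is a notational slip rather than a gap.
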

    
\begin{proof}
The proof is based on Theorem \ref{pt6}, and is strictly similar to  Isbell's proof of the fact that  
  $\Sq$ is  codense in $\Comp$ \cite[Theorem 2.6]{I1}.
	\qedhere    
\end{proof}

The following result strengthens  the standard Hewitt Realcompactification. 

\begin{corollary}\label{pc8} 
The subcategory $\Rcomp$ of $\Top$  is the strong reflective hull of $\Q$.
 
\end{corollary}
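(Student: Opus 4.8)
The plan is to mirror exactly the proof of Corollary \ref{pc7}, using Theorem \ref{pt5} as the engine and replacing the role of Isbell's density theorem by Theorem \ref{pt7}. First I would recall the setup: $\R^2 = \R \tm_{\Top} \R$, $\Q$ is the one-object subcategory of $\Rcomp$ with that object, and we want to show $\Rcomp$ is the strong reflective hull of $\Q$ \emph{as a subcategory of $\Top$}. Since $\Top$ is complete and $\Q$ is small (a single object with its endomorphism monoid of morphisms), the hypotheses of Theorem \ref{pt5} are within reach provided we verify its two remaining conditions: that $\Rcomp$ contains $\Q$ as a codense subcategory, and that $\Rcomp$ is closed under small limits formed in $\Top$.

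The codensity of $\Q$ in $\Rcomp$ is precisely Theorem \ref{pt7}, so that step is immediate. For the limit-closure condition, I would let $\mi$ be a small category, $F : \mi \ra \Rcomp$ a functor, and $J_0 : \Rcomp \ra \Top$ the inclusion; I need $\lm J_0 F \in \Rcomp$. This is the standard fact that realcompact spaces are closed under arbitrary products and closed subspaces in $\Top$ (see \cite[Chapter 11]{GJ}): a limit in $\Top$ is a subspace of a product, and in fact a closed subspace when the diagram lands in Hausdorff spaces — realcompact spaces are Tychonoff, hence Hausdorff, so equalizers of maps between realcompact spaces are closed in the relevant products. Concretely, $\lm J_0 F$ is the equalizer of two maps between products of the $F(i)$, each of which is realcompact by the product-closure, and the equalizer is a closed subspace of a realcompact space, hence realcompact. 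Thus the hypothesis of Theorem \ref{pt5} holds.

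With both conditions verified, Theorem \ref{pt5} applies directly and yields that $\Rcomp$ is the strong reflective hull of $\Q$ in $\Top$, which is exactly the assertion of Corollary \ref{pc8}. I expect the only genuine content beyond citing Theorem \ref{pt7} to be the verification that $\Rcomp$ is limit-closed in $\Top$; the main obstacle there is being careful about whether one invokes closed-subspace stability (needing Hausdorffness, which is free since realcompact $\Ra$ Tychonoff) or simply product-and-equalizer stability. Either route is a known classical fact and can be dispatched with a reference to \cite{GJ}, so the proof remains short. A clean writeup reads: "Let $J : \Rcomp \ra \Top$ be the inclusion functor, $\mi$ a small category and $F : \mi \ra \Rcomp$ a functor. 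The class of realcompact spaces is closed under products and closed subspaces in $\Top$ \cite[11.12]{GJ}, and a limit in $\Top$ is a closed subspace of a product of the $F(i)$ since realcompact spaces are Hausdorff; hence $\lm JF \in \Rcomp$. By Theorem \ref{pt7}, $\Q$ is codense in $\Rcomp$. By Theorem \ref{pt5}, $\Rcomp$ is the strong reflective hull of $\Q$."
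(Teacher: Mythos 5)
Your proposal is correct and follows the same route as the paper: both verify the hypotheses of Theorem \ref{pt5} by citing Theorem \ref{pt7} for the codensity of $\Q$ in $\Rcomp$ and by noting that $\Rcomp$ is closed under small products and equalizers (hence small limits) in $\Top$. The extra care you take about equalizers being closed subspaces of Hausdorff products is a harmless elaboration of the same argument.
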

   
\begin{proof}
$\Top$ is complete and $\Rcomp$ is a subcategory $\Top$ containing $\Q$ as a codense subcategory. The product in $\Top$ of a small set of realcompact spaces is realcompact. Similarly, the equalizer in $\Top$ of two parallel maps in $\Rcomp$ is again in $\Rcomp$. Therefore by Theorem \ref{pt5}, 
$\Rcomp$ is the strong reflective hull of $\Q$. 
\qedhere  
\end{proof}
 
\section{Reflective subcategories of $\Top_B$}\label{s6}
 In this section, we apply the theory developed previously to prove that  subcategories of fibrewise topological spaces over $B$ satisfying certain separation axioms are reflective subcategories of $\Top_B$.

Recall that a subcategory $\ma$ of a category $\mc$ is said to be closed under subobjects if whenever we have a monomorphism $X \ra Y$ in $\mc$  with codomain $Y\in \ma$, then $X$ is isomorphic to an object of $\ma$.
Observe that the next theorem may also be derived from \cite[Theorem 16.8]{AH}.\\

\begin{theorem}\label{t3} 
Let $\Topo_B$ be a  subcategory of $\Top_B$ such that: 
\begin{enumerate}
 
\item $\Topo_B$ is replete and contains the fibrewise topological space  $B$ (over itself).
\item  $\Topo_B$ is closed under subobjects as a subcategory of $\Top_B$. 
\item For every family $(V_i)_{i\in I}$ of objects of $\Topo_B$ (indexed by a small set $I$), the product $\underset{\Top_B}{\overset{i\in I}{\prod}}V_i$ is an object of $\Topo_B$.
 \end{enumerate}
Then $\Topo_B$ is a reflective subcategory of $\Top_B$. In particular, 
$\Topo_B$ is bicomplete. Furthermore, the unit $\eta$ of this reflection is such that the maps $\eta_X: X\sra R(X)$ are quotient maps,   where $R:\Top_B \ra \Topo_B $ is a reflector. 
\end{theorem}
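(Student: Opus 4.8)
The plan is to build the reflector $R$ by hand, the essential point being that hypothesis (2) forces the reflection of an object to be a \emph{quotient} of it; this observation simultaneously supplies the solution set needed for the construction and yields the closing assertion about the unit $\eta$. Everything else reduces to bookkeeping about how monomorphisms, quotients, products and limits are computed in the slice category $\Top_B$, for which one appeals to Appendix \ref{C}.

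First I would record two facts about $\Top_B=\Top/B$. A morphism of $\Top_B$ is a monomorphism iff its underlying continuous map is injective: if $h\colon(X,p)\ra(Y,q)$ identifies $x_1\neq x_2$, then $p(x_1)=q h(x_1)=q h(x_2)=p(x_2)=:b$ and the two maps from the one-point fibrewise space over $b$ to $(X,p)$ picking out $x_1$ and $x_2$ are distinct but agree after $h$; the converse is clear. Hence, by repleteness, hypothesis (2) says precisely: \emph{if $Y\in\Topo_B$ and $X\st{h}\ra Y$ is a continuous injection over $B$, then $X\in\Topo_B$.} Secondly, every morphism $X\st{f}\ra Y$ of $\Top_B$ factors canonically as $X\st{q_f}\ra X/{\sim_f}\st{m_f}\ra Y$, where $x\sim_f x'$ means $f(x)=f(x')$, the set $X/{\sim_f}$ carries the quotient topology and the unique map to $B$ through which the structure map of $X$ factors (legitimate since $\sim_f$ refines the fibres of $X\ra B$, because $f$ is over $B$), $q_f$ is the quotient map, and $m_f$ is a continuous injection over $B$, hence a monomorphism by the first fact.

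Combining the two facts: whenever $Y\in\Topo_B$, the coimage $X/{\sim_f}$ lies in $\Topo_B$. Since $X/{\sim_f}$ is completely determined by the equivalence relation $\sim_f$ on the underlying set of $X$, the collection $\{Q_j\}_{j\in J}$ of all quotients of $X$ in $\Top_B$ lying in $\Topo_B$ is a \emph{set}; write $X\st{q_j}\ra Q_j$ for the quotient maps, and note $J\neq\emptyset$ since the structure map $X\ra B$ is such a morphism into $B\in\Topo_B$. Now set $P:=\underset{\Top_B}{\overset{j\in J}{\prod}}Q_j$, which lies in $\Topo_B$ by hypothesis (3), and let $X\st{q}\ra P$ be the morphism with components $q_j$; its underlying relation is $\sim_q\ =\ \bigcap_{j\in J}\sim_{q_j}$. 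Applying the factorization above to $q$ gives $X\st{\eta_X}\ra R(X)\st{m}\ra P$ with $R(X)=X/{\sim_q}$. Since $m$ is a monomorphism and $P\in\Topo_B$, hypothesis (2) yields $R(X)\in\Topo_B$, and $\eta_X$ is by construction a topological quotient map lying over $B$. For universality, given $g\colon X\ra A$ with $A\in\Topo_B$ we have $\sim_g\,=\,\sim_{q_{j_0}}$ for some $j_0\in J$, so $\sim_q\ \subseteq\ \sim_g$; as $g$ is constant on $\sim_q$-classes it factors, necessarily uniquely, as $g=\bar g\,\eta_X$ with $\bar g$ a function on $R(X)$, and $\bar g$ is automatically continuous and over $B$ because $\eta_X$ is a topological quotient map. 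Thus $\eta_X$ is universal from $X$ to the inclusion $\Topo_B\hra\Top_B$; letting $X$ vary produces a reflector $R\colon\Top_B\ra\Topo_B$ whose unit $\eta$ has all components quotient maps. Finally, $\Top_B$ is bicomplete (Appendix \ref{C}), so by Proposition \ref{pp13}.1 the reflective subcategory $\Topo_B$ is bicomplete as well.

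The only non-formal step is the observation that closure under \emph{all} subobjects (not merely under embeddings) forces the coimage of every map into $\Topo_B$ back into $\Topo_B$; this is what both cuts the a priori proper class of morphisms out of $X$ down to a set of quotients and delivers the quotient-map property of $\eta$. I expect this — together with the correct computation of monomorphisms and quotients in $\Top_B$ — to be the main point; the reflectivity itself could alternatively be obtained, once limit-closure and the factorization are available, from the general criterion \cite[Theorem 16.8]{AH}, but that route would not by itself give the statement about $\eta$.
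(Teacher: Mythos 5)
Your proof is correct and follows essentially the same route as the paper: both form the reflection as the quotient of $X$ by the equivalence relation induced by all fibrewise maps into $\Topo_B$, embed that quotient into a product of objects of $\Topo_B$ to conclude via hypotheses (2) and (3) that it lies in $\Topo_B$, and read off the quotient-map property of $\eta_X$ from the construction. The only cosmetic differences are that you index the product by the set of quotient objects of $X$ lying in $\Topo_B$ rather than by a chosen separating family of maps, and you verify the universal property of $\eta_X$ directly instead of routing through the codensity-monad formalism (the dual of Theorem \ref{pt1} and Corollary \ref{pc19}).
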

    
\begin{proof} Let $X\in \Top_B$ and let $J^X:X/\Topo_B \ra \Top_B$ be the functor which takes an arrow-object $X \ra V$ to its codomain $V$.
Define $\mr_{X}$ to be the equivalence relation on $X$ given by 
$x_1\mr_{X}x_2$   iff  $f(x_1)=f(x_2)$  for every continuous  fibrewise map $f$ from $X$ to any fibrewise topological space in $\Topo_B$. The projection $p_X:X\ra B$ defines a continuous fibrewise map from $X$ to $B$ as follows:
$$\begin{tikzcd}
X\arrow[dr, "p_X"' ] \arrow[rr, "p_X" ]{}
& & B\arrow[dl, "1_B" ]  \\
& B    
\end{tikzcd}$$
Therefore if $x_1\mr_{X}x_2$, then  $p_X(x_1)=p_X(x_2)$. It follows that the projection $p_X$ factors through $X/\mr_{X}$ as follows: 
$$\begin{tikzcd}
X\arrow[dr, "p_X"' ] \arrow[rr,  "\eta_X" ]{}
& & X/\mr_{X}\arrow[dl, "\widetilde{p}_X" ]  \\
& B    
\end{tikzcd}$$
In other words, $\mr_{X}$ is a fibrewise equivalence relation on $X$,
thus $X/\mr_{X}$ is a fibrewise topological space over $B$ and
 the quotient map $\eta_X:X\ra X/\mr_{X}$ is a fibrewise map. 
Define $A_X= \{\{x_1,x_2\}\subset X |\; p_X(x_1)=p_X(x_2) \text{ and }  x_1 \cancel{\mr}_X x_2 \}$     and let $(f_i)_{i\in I}$ be a family of maps in $\Top_B$ such that: 
\begin{itemize}
 
\item  $f_i:X \ra V_i$, where $V_i\in \Topo_B$  for all $i\in I$.
\item   $I$ is small and nonempty.
\item  For each $\{x_1,x_2\}\in A_X$, there exists $i\in I$ such that $f_i(x_1) \neq f_i(x_2)$.
 \end{itemize}
 Define $f:X \ra  \underset{\Top_B}{\overset{i\in I}{\prod}}V_i$ to be the map whose $i$-component is $f_i$. Observe that 
$f(x_1)= f(x_2) \Leftrightarrow x_1 \mr_{X} x_2$. Thus there exists a unique continuous fibrewise map  $\tilde {f}: X/\mr_{X}\ra \underset{\Top_B}{\overset{i\in I}{\prod}}V_i$ rendering commutative the diagram
$$
\begin{tikzcd}
X \arrow[r,"f"]\arrow[d,"\eta_X"']& \underset{\Top_B}{\overset{i\in I}{\prod}}V_i\\
X/\mr_{X}\arrow[ru,"\tilde{f}"']&  
\end{tikzcd}
$$
$\underset{\Top_B}{\overset{i\in I}{\prod}}V_i \in \Topo_B$, $\tilde{f}$ is monic and $\Topo_B$ is closed under subobjects. Therefore  $X/\mr_{X}\in \Topo_B$ and the arrow $X \st{\eta_X}\ra X/\mr_X$ is an object of $X/\Topo_B$ which is initial. Then clearly,  $\lim J^X\cong X/\mr_X \in \Topo_B$  exists. It follows that the inclusion functor $ \Topo \st{J}\ra \Top_B$ has a  codensity monad $R$  given by $R(X) \cong X/\mr_{X}\in \Topo_B$, with unit the natural transformation $\eta: 1_{\Top}\Ra R$ whose component along $X$ is the quotient map $X \st{\eta_X}\ra X/\mr_X$ which is epic. By the dual of Theorem \ref{pt1}, $\Topo_B$ is right Kan extendable in $\Top_B$. The codensity monad $R$ of the inclusion functor $ J:\Topo_B \ra \Top_B$  takes values in  $\Topo_B$ which is replete. By Proposition \ref{pp2}, $\Topo_B[\Top_{B}]\subset \Topo_B$. If $X\in \Topo_B$, then $\mr_X$ is the trivial equivalence relation and $\eta_X=1_X.$  Therefore by Proposition \ref{pp2}.3, $X\in \Topo_B[\Top_{B}]$ and $\Topo_B \subset \Topo_B[\Top_{B}]$. It follows that $\Topo_B[\Top_{B}]=\Topo_B$. By Corollary \ref{pc19}, $\Topo_B$ is a reflective subcategory of $\Top_B$ with reflector $F^{R}: \Top_B \ra \Topo_B$ the functor induced by $R$. Furthermore, the reflection of $\Top_{B}$ on $\Topo_{B}$ has unit $\eta$ which is an objectwise quotient map.
\qedhere  
\end{proof}
\begin{examples}\label{ex1}
According to James  \cite[Chapter I, section 2]{J2},  a fibrewise topological space $X$ is said to be fibrewise
\begin{itemize}
 
\item Fréchet (or $\T_1$) if each fibre $X_b$ of $X$ is an ordinary  $\T_1$-topological space. The category of fibrewise Fréchet spaces  is denoted by $\fTop_B$. 

\item  Hausdorff (or $\T_2$) if any two distinct points of $X$ laying in the same fibre can be separated by neighborhoods in $X$. The category of fibrewise Hausdorff spaces is denoted by $\hTop_B$.
\end{itemize}
Observe that if $X$ is a fibrewise $T_i$-space over $B$, $i=1,2$ and $B$ is an ordinary $T_i$-space, then $X$ is a $T_i$-space in the ordinary sense.\\
Similarly, define a fibrewise topological space $X$ to be fibrewise
\begin{itemize}
 
\item Urysohn space (or $\T_2\frac{1}{2}$)  if any two distinct points of $X$ laying in the same fibre can be separated by closed neighborhoods in $X$. The category of fibrewise Urysohn spaces is denoted by $\uTop_B$.

\item completely Hausdorff space (or functionally Hausdorff space) if any two distinct points of $X$ laying in the same fibre can be separated by a continuous function (or equivalently, by a continuous fibrewise map into $B\tm_{\Top} \R$).  The category of fibrewise completely Hausdorff spaces is denoted by $\hcTop_B$.  
 \end{itemize}
By Theorem \ref{t3},  the categories $\fTop_B$, $\hTop_B$, $\uTop_B$ and $\hcTop_B$ are reflective subcategories of $\Top_B$.  
\end{examples}

 A one point space $\Pt$ is a terminal object of $\Top$. Therefore one has  the standard isomorphism
\begin{equation}\label{peq16}
 P: \Top_{\Pt} \ra \Top. 
\end{equation}
 By substituting $\Pt$ for $B$,  Theorem \ref{t3}  reduces to the following. 
	
\begin{corollary}\label{pc12} 
Let $\Topo$ be a  subcategory of $\Top$ such that: 
\begin{enumerate}
 
\item $\Topo$ is replete and contains a nonempty space.
\item  $\Topo$ is closed under subobjects as a subcategory of $\Top$. 
\item For every family $(V_i)_{i\in I}$ of objects of $\Topo$ (indexed by a small set $I$), the product $\underset{\Top}{\overset{i\in I}{\prod}}V_i$
  is an object of $\Topo$.
 \end{enumerate}
Then $\Topo$ is a reflective subcategory of $\Top$. In particular, 
$\Topo$ is bicomplete. Furthermore, the unit $\eta$ of this reflection is such that the map $\eta_X: X\ra R(X)$, $X\in \Top$, is a quotient map, where $R:\Top \ra \Topo $ is the reflector. 
\end{corollary}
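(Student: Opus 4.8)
The plan is to deduce Corollary \ref{pc12} from Theorem \ref{t3} by specializing the base to a one-point space, via the isomorphism $P:\Top_{\Pt}\ra\Top$ of (\ref{peq16}). Fix a one-point space $\Pt$; it is a terminal object of $\Top$, so $P$ is an isomorphism of categories (it forgets the unique structure map $X\ra\Pt$ and is otherwise the identity on underlying spaces and maps). Set $\Topo_{\Pt}:=P^{-1}(\Topo)$, a full subcategory of $\Top_{\Pt}$. Since $P$ is an isomorphism of categories, it preserves and reflects isomorphisms, monomorphisms, and all limits that exist; in particular it identifies the product functor $\prod_{\Top_{\Pt}}$ with $\prod_{\Top}$, and it carries quotient maps to quotient maps and back.

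First I would verify that $\Topo_{\Pt}$ satisfies the three hypotheses of Theorem \ref{t3} with $B=\Pt$. Hypothesis (1) asks that $\Topo_{\Pt}$ be replete and contain $\Pt$ over itself, that is, that $\Topo$ be replete and contain the one-point space. Repleteness is immediate from that of $\Topo$. For the one-point space: by assumption $\Topo$ contains a nonempty space $X$; choosing a point $x\in X$ yields an injective continuous map $\Pt\hra X$, which is a monomorphism in $\Top$ with codomain in $\Topo$, so closure of $\Topo$ under subobjects together with repleteness forces $\Pt\in\Topo$. Hypotheses (2) and (3) transport directly along $P$: closure of $\Topo$ under subobjects in $\Top$ gives closure of $\Topo_{\Pt}$ under subobjects in $\Top_{\Pt}$ because $P$ identifies monomorphisms, and closure of $\Topo$ under small products gives the corresponding statement for $\Topo_{\Pt}$ because $P$ identifies the two product functors.

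Then Theorem \ref{t3} applies with $B=\Pt$, yielding that $\Topo_{\Pt}$ is a reflective, bicomplete subcategory of $\Top_{\Pt}$ whose reflection has unit $\eta$ with objectwise quotient components. Applying $P$ transports all of this back: $\Topo=P(\Topo_{\Pt})$ is a reflective, bicomplete subcategory of $\Top$, with reflector $R:\Top\ra\Topo$ induced by the one on $\Top_{\Pt}$, and the unit maps $\eta_X:X\ra R(X)$ remain quotient maps because $P$ does not alter underlying spaces or maps. This is exactly the statement of Corollary \ref{pc12}. The argument is purely formal except for one point worth flagging, which is also the only place where the hypotheses are used nontrivially: it is the combination of \enquote{$\Topo$ contains a nonempty space} with closure under subobjects that forces the one-point space into $\Topo$, and this is precisely what makes the base hypothesis \enquote{contains $B$ over itself} of Theorem \ref{t3} come out correctly under the specialization $B=\Pt$.
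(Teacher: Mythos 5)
Your proof is correct and follows essentially the same route as the paper, which simply states that Corollary \ref{pc12} is obtained by substituting $\Pt$ for $B$ in Theorem \ref{t3} via the isomorphism $P$ of (\ref{peq16}). Your explicit verification that \enquote{contains a nonempty space} plus closure under subobjects and repleteness forces $\Pt\in\Topo$ is the one detail the paper leaves implicit, and you handle it correctly.
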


\begin{examples}\label{pex5}
 A space $X\in \Top$ is Fréchet (resp. Hausdorff,  Urysohn, completely Hausdorff) if it corresponds, under the isomorphism $P$ of (\ref{peq16}), 
to a fibrewise Fréchet (resp. Hausdorff,  Urysohn, completely Hausdorff) space over $\Pt$. The subcategory of $\Top$ of such spaces is reflective and is denoted by $\fTop$ (resp. $\hTop$, $\uTop$, $\hcTop$).   
\end{examples}

\section{Fibrewise compact spaces}\label{s9}

Let $\mw$ be a left Kan extendable subcategory of a category $\mc$. One of the main objectives of this paper is to present sufficient conditions, under which, the category $\mw_l[\mc]$ is cartesian closed. Among other conditions, one requires that the objects of $\mw$ be exponentiable as objects of the category $\mc$.
To be able to apply this result to prove that the category of fibrewise compactly generated spaces over a $\T_1$-base $B$  is cartesian closed, one then needs to prove that a fibrewise compact fibrewise Hausdorff space over $B$ is an exponentiable object of $\Top_B$. This last result is precisely what this section is after.\\
\indent We begin by introducing the notion of fibrewise compact spaces and recalling   their relevant properties.
The main references of what is discussed here are the books of Bourbaki \cite[Chapter I, Section 10]{BO} and James \cite[Chapter I]{J2}.  \\
\indent Recall that a continuous map $f:X\ra Y$ between two topological spaces $X$ and $Y$ is said to be proper if 
the product map $f\tm_{\Top} 1_Z: X\tm_{\Top} Z\ra Y\tm_{\Top} Z$ is closed for all $Z\in \Top$ \cite[Section 10.1]{BO}. A fibrewise  space $X$ over the fixed topological space $B$ is said to be fibrewise compact if its projection $p:X \ra B$ is a proper map. \\
\indent The next proposition is an immediate consequence of \cite[Proposition 5.b, Section 10.1]{BO}. 
 
\begin{proposition}\label{pp20} 
A continuous map Let $f:X\ra Y$ and $g:Y\ra Z$ be continuous maps. If $g\circ f$ is proper, then the map $f(X) \ra Z$ induced by $g$ is proper. \\ 
\end{proposition}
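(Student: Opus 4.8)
The plan is to reduce the statement to the corresponding fact in Bourbaki, namely \cite[Proposition 5.b, Section 10.1]{BO}, which characterizes properness of a composite in terms of the factors. Concretely, I would set $h = g\circ f:X\ra Z$ and consider the surjection $X\ra f(X)$ obtained by corestricting $f$, together with the continuous map $\overline{g}:f(X)\ra Z$ that is the restriction of $g$ to the subspace $f(X)\subseteq Y$. One has the factorization $h=\overline{g}\circ(X\twoheadrightarrow f(X))$ of the proper map $h$ through a surjection onto $f(X)$, and the goal is exactly to conclude that $\overline{g}$ is proper.

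First I would recall the elementary fact that a surjective continuous map $q:X\ra W$ admitting a proper map $W\ra Z$ precomposed to a proper map... more precisely, the relevant half of Bourbaki's Proposition 5 states: if $g\circ f$ is proper and $f$ is surjective (or, more generally, has dense image together with $g$ being suitably separated), then $g$ is proper. Applying this with the corestriction $X\twoheadrightarrow f(X)$ in place of $f$ — which is genuinely surjective onto its codomain $f(X)$ — and with $\overline{g}:f(X)\ra Z$ in place of $g$, and noting that $\overline{g}\circ(X\twoheadrightarrow f(X)) = h = g\circ f$ is proper by hypothesis, yields that $\overline{g}:f(X)\ra Z$ is proper. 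Since the map $f(X)\ra Z$ induced by $g$ is precisely $\overline{g}$, this is the desired conclusion.

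The one point requiring care is the exact form of the hypotheses in the cited result: Bourbaki's statement about deducing properness of the second factor typically requires either surjectivity of the first factor or some separation hypothesis. Here surjectivity of $X\twoheadrightarrow f(X)$ is automatic by construction, so no separation axiom is needed and the application is clean. I would therefore expect the main (and only real) obstacle to be bookkeeping: verifying that the subspace topology on $f(X)$ is the one making $\overline{g}$ continuous and making $X\ra f(X)$ continuous, so that the factorization $h = \overline{g}\circ(X\twoheadrightarrow f(X))$ is legitimate in $\Top$, and then quoting \cite[Proposition 5.b, Section 10.1]{BO} with the correct orientation. Given how the paper phrases the statement ("the map $f(X)\ra Z$ induced by $g$"), this is exactly the intended one-line argument, so the proof should be short.
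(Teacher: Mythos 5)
Your proposal is correct and is exactly the argument the paper intends: the paper gives no written proof, merely declaring the proposition "an immediate consequence of \cite[Proposition 5.b, Section 10.1]{BO}", and your factorization of $g\circ f$ through the surjective corestriction $X\twoheadrightarrow f(X)$ followed by $g|_{f(X)}$, then invoking Bourbaki's 5(b) (proper composite with surjective first factor implies proper second factor), is precisely that intended reduction.
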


The next theorem presents a criteria for a continuous map to be proper. 

\begin{theorem}\label{t5} \cite[Theorem 1, Section 10.2]{BO}\\ 
A continuous map $f:X\ra Y$ is proper  iff $f$ is closed and $f^{-1}(y)$ is compact for all $y\in Y$.  
\end{theorem}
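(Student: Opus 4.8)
The plan is to prove the two implications separately, with as the key technical tool the following ``tube lemma'' for closed maps: if $g:X\ra Y$ is closed, $y\in Y$, and $U\subseteq X$ is open with $g^{-1}(y)\subseteq U$, then $V=Y\setminus g(X\setminus U)$ is an open neighbourhood of $y$ with $g^{-1}(V)\subseteq U$. This is immediate from the definition of a closed map, and I would record it first, together with the elementary fact that the restriction of a closed map $g:X\ra Y$ to $g^{-1}(B)\ra B$ is again closed for any subspace $B\subseteq Y$.

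For the forward implication, suppose $f$ is proper. Taking $Z$ to be a one-point space shows $f=f\tm_{\Top}1_Z$ is closed. For the fibres, fix $y\in Y$ and an arbitrary space $Z$. Since $f$ is proper, $f\tm_{\Top}1_Z$ is closed; restricting it to the preimage of the subspace $\{y\}\tm_{\Top}Z$ and using $(f\tm_{\Top}1_Z)^{-1}(\{y\}\tm_{\Top}Z)=f^{-1}(y)\tm_{\Top}Z$, we get that the projection $f^{-1}(y)\tm_{\Top}Z\ra Z$ is closed. As $Z$ was arbitrary, the Kuratowski--Mrówka criterion (a space $K$ is compact iff $\mathrm{pr}_Z:K\tm_{\Top}Z\ra Z$ is closed for every space $Z$) gives that $f^{-1}(y)$ is compact.

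For the converse, suppose $f$ is closed with compact fibres, let $Z$ be any space, and let $C\subseteq X\tm_{\Top}Z$ be closed; I must show $(f\tm_{\Top}1_Z)(C)$ is closed. Take $(y,z)\notin(f\tm_{\Top}1_Z)(C)$, so $(f^{-1}(y)\tm_{\Top}\{z\})\cap C=\emptyset$. For each $x\in f^{-1}(y)$ choose basic open sets $A_x\ni x$ in $X$ and $B_x\ni z$ in $Z$ with $(A_x\tm_{\Top}B_x)\cap C=\emptyset$; by compactness of $f^{-1}(y)$ finitely many $A_{x_1},\dots,A_{x_n}$ cover it, so $A=\bigcup_i A_{x_i}$ is open with $f^{-1}(y)\subseteq A$ and $B=\bigcap_i B_{x_i}$ is an open neighbourhood of $z$ with $(A\tm_{\Top}B)\cap C=\emptyset$. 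Applying the tube lemma to $f$ produces an open $V\ni y$ with $f^{-1}(V)\subseteq A$; then $(f\tm_{\Top}1_Z)^{-1}(V\tm_{\Top}B)=f^{-1}(V)\tm_{\Top}B\subseteq A\tm_{\Top}B$ is disjoint from $C$, so $V\tm_{\Top}B$ is a neighbourhood of $(y,z)$ missing $(f\tm_{\Top}1_Z)(C)$. Hence $f\tm_{\Top}1_Z$ is closed, and $f$ is proper.

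I expect the main obstacle to lie in the forward implication: producing compactness of the fibre essentially forces one through the Kuratowski--Mrówka theorem, whose nontrivial direction (closedness of all projections out of $K$ implies $K$ compact) is the substantive point; one either invokes it as a known fact or must insert its proof. Given the classical nature of the statement it is, of course, also legitimate simply to cite \cite[Theorem 1, Section 10.2]{BO}.
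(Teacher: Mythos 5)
Your proof is correct. The paper itself offers no argument for this statement: it is quoted verbatim from Bourbaki with the citation \cite[Theorem 1, Section 10.2]{BO} standing in for a proof, and what you have written is essentially the classical argument from that source — the Kuratowski--Mr\'owka characterisation of compactness for the forward implication, and the closed-map tube lemma combined with compactness of the fibre for the converse. Both implications check out (in particular the reduction of the fibre statement to closedness of the projections $f^{-1}(y)\tm_{\Top} Z \ra Z$ via restriction of the closed map $f\tm_{\Top}1_Z$ to the preimage of $\{y\}\tm_{\Top}Z$ is sound), and you rightly flag that the only genuinely nontrivial input is the Kuratowski--Mr\'owka theorem, which must either be cited or proved separately.
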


\begin{proposition}\label{p10}
 Let $f:X\ra Y$ and $f':X'\ra Y'$ be two continuous fibrewise maps over $B$. Assume that $f$ and $f'$ are proper. Then the map 
$$f\tm_{\Top_B}f': X\tm_{\Top_B} X'\ra  Y\tm_{\Top_B} Y'$$ is proper. 
\end{proposition}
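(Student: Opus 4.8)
The plan is to express the fibrewise product map $f \tm_{\Top_B} f'$ as a composite of two maps, each of which is a "base change" of a proper map, and then to verify properness of each factor using the criterion of Theorem \ref{t5}. Concretely, I would first recall from Appendix \ref{C} that the fibrewise product $X \tm_{\Top_B} X'$ is the pullback of $X \to B \leftarrow X'$ in $\Top$, i.e. the subspace of $X \tm_{\Top} X'$ consisting of pairs lying over the same point of $B$, and similarly for $Y \tm_{\Top_B} Y'$. The map $f \tm_{\Top_B} f'$ then factors as
\begin{equation}\label{peq-factor}
X \tm_{\Top_B} X' \st{f \tm_{\Top_B} 1_{X'}}{\ra} Y \tm_{\Top_B} X' \st{1_Y \tm_{\Top_B} f'}{\ra} Y \tm_{\Top_B} Y',
\end{equation}
so it suffices to prove that each of the two displayed maps is proper, and then invoke that a composite of proper maps is proper (\cite[Section 10.1]{BO}, the dual half of Proposition \ref{pp20}).

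Next I would treat the first factor $f \tm_{\Top_B} 1_{X'}$ (the second being symmetric). The key observation is that this map is obtained from $f : X \to Y$ by pulling back along the projection $Y \tm_{\Top_B} X' \to Y$; that is, there is a pullback square in $\Top$
$$
\begin{tikzcd}
X \tm_{\Top_B} X' \arrow[r] \arrow[d, "f \tm_{\Top_B} 1_{X'}"'] & X \arrow[d, "f"] \\
Y \tm_{\Top_B} X' \arrow[r] & Y
\end{tikzcd}
$$
whose underlying sets are the evident ones and whose topology is the subspace topology inherited from the relevant products. Properness is stable under pullback (this is the fibre-product stability contained in \cite[Section 10.1]{BO}, and can also be recovered directly: $f$ proper means $f \tm_{\Top} 1_Z$ is closed for every $Z$, and the pulled-back map is a restriction of such a product map to a suitable subspace, hence closed, while its fibres are homeomorphic to fibres of $f$, hence compact, so Theorem \ref{t5} applies). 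Therefore $f \tm_{\Top_B} 1_{X'}$ is proper. By the symmetric argument $1_Y \tm_{\Top_B} f'$ is proper, and composing via \eqref{peq-factor} gives that $f \tm_{\Top_B} f'$ is proper.

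The main obstacle I anticipate is purely a bookkeeping one: making the identification of $f \tm_{\Top_B} 1_{X'}$ as a pullback of $f$ rigorous requires care with the subspace topologies on the fibrewise products and with the compatibility of the various projections, which is exactly the content that Appendix \ref{C} (specializing Appendix \ref{A} on limits in slice categories) is designed to supply. Once that identification is in hand, everything reduces to the two standard facts about proper maps — closure under composition and under pullback — both available from \cite[Section 10.1]{BO}, together with the characterization in Theorem \ref{t5}. An alternative, if one prefers to avoid invoking pullback-stability of properness as a black box, is to verify directly that $f \tm_{\Top_B} 1_{X'}$ is closed and has compact fibres: a fibre over $(y, x') \in Y \tm_{\Top_B} X'$ is $f^{-1}(y) \tm \{x'\} \cong f^{-1}(y)$, which is compact since $f$ is proper by Theorem \ref{t5}, and closedness follows by writing the map as the restriction of the closed map $f \tm_{\Top} 1_{X'}$ to the closed (when $B$ is, say, Hausdorff) or at any rate saturated subspace $X \tm_{\Top_B} X'$; this last point is the only place where one might want a hypothesis on $B$, so I would check whether the bare statement as given needs none — I expect the subspace in question is closed in $X\tm_{\Top} X'$ precisely because it is the preimage of the diagonal, but since $f\tm_{\Top}1_{X'}$ is closed as a map (not merely has closed image) the restriction to any subspace that is a full preimage is again closed, so no separation axiom on $B$ is required.
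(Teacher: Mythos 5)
Your proof is correct, but it takes a different route from the paper's. The paper forms the ordinary product map $f\tm_{\Top}f':X\tm_{\Top}X'\ra Y\tm_{\Top}Y'$, which is proper by Bourbaki's product theorem (\cite[Proposition 4, Section 10.1]{BO}), and then observes that $f\tm_{\Top_B}f'$ is precisely the pullback of this map along the inclusion $Y\tm_{\Top_B}Y'\hra Y\tm_{\Top}Y'$ (the fibrewise product upstairs is the full preimage of the fibrewise product downstairs, because $f$ and $f'$ are fibrewise maps); a single application of pullback-stability (\cite[Proposition 3, Section 10.1]{BO}) then finishes in two lines. You instead factor $f\tm_{\Top_B}f'$ through $Y\tm_{\Top_B}X'$, identify each factor as a pullback of $f$ (resp.\ $f'$) along a projection, and combine pullback-stability with stability under composition; this is essentially the standard proof of Bourbaki's product theorem transplanted into the fibrewise setting, so it trades one black box (products of proper maps are proper) for another (composites of proper maps are proper) at the cost of a slightly longer bookkeeping argument. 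Your closing concern about separation axioms on $B$ is correctly resolved: none are needed, since the relevant subspace is a full preimage and the restriction of a closed map to a full preimage is closed --- which is exactly why the paper's one-step pullback also goes through without any hypothesis on $B$.
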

   
\begin{proof}
The maps $f$ and $f'$ are proper. By \cite[Proposition 4, Section 10.1]{BO}, the product map
$f\tm_{\Top}f': X\tm_{\Top} X'\ra  Y\tm_{\Top} Y'$ is proper. The commutative diagram
$$ 
\begin{tikzcd}
X\tm_{\Top_B} X' \arrow[hookrightarrow, r] \arrow[d, "f\tm_{\Top_B} f'"'] & X\tm_{\Top} X' \arrow[d,"f\tm_{\Top} f'"] \\
Y\tm_{\Top_B} Y' \arrow[hookrightarrow,r] & Y\tm_{\Top} Y' 
\end{tikzcd}
$$
is a pullback diagram in $\Top$. By \cite[Proposition 3, Section 10.1]{BO}, the map 
$$f\tm_{\Top_B}f': X\tm_{\Top_B} X'\ra  Y\tm_{\Top_B} Y'$$
is proper.
\qedhere  
\end{proof}
\begin{corollary}\label{c7}
 Let $X$ and $Y$ be two fibrewise compact spaces over $B$.  Then  
$X\tm_{\Top_B} Y$ fibrewise compact.  
\end{corollary}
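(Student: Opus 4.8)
The plan is to reduce the statement about fibrewise compactness of the product to the already-established Proposition~\ref{p10}. Recall that a fibrewise space $X$ over $B$ is fibrewise compact precisely when its projection $p_X : X \ra B$ is proper; so $X \tm_{\Top_B} Y$ is fibrewise compact exactly when its projection $p : X\tm_{\Top_B} Y \ra B$ is proper. The idea is to factor $p$ through $B \tm_{\Top_B} B$ and then recognise the two pieces as maps to which our earlier results apply.

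First I would observe that the projection $p_{X\tm_{\Top_B} Y}$ can be written as the composite
\[
X\tm_{\Top_B} Y \xra{\,p_X \tm_{\Top_B} p_Y\,} B\tm_{\Top_B} B \xra{\;\cong\;} B,
\]
where the second map is the canonical isomorphism (the fibrewise product of $B$ with itself over $B$ is just $B$, with both projections equal to $1_B$). Now $p_X : X \ra B$ and $p_Y : Y \ra B$ are proper by hypothesis, both being continuous fibrewise maps over $B$ (with target the fibrewise space $B$ over itself). Proposition~\ref{p10}, applied with $f = p_X$ and $f' = p_Y$, gives that $p_X \tm_{\Top_B} p_Y$ is proper. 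Since the composite of a proper map with a homeomorphism is proper (a homeomorphism is trivially proper, and composites of proper maps are proper by \cite[Proposition~5.a, Section~10.1]{BO}, or equivalently one checks Theorem~\ref{t5} directly), it follows that $p_{X\tm_{\Top_B} Y}$ is proper, i.e. $X\tm_{\Top_B} Y$ is fibrewise compact.

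The only genuinely delicate point — and the step I would single out as the main obstacle — is making sure that the target of $p_X \tm_{\Top_B} p_Y$ really is (canonically isomorphic to) $B$ and that the resulting composite really is the projection $p_{X\tm_{\Top_B} Y}$. This is a matter of unwinding the definition of the fibrewise product: $X\tm_{\Top_B} Y$ is the pullback of $p_X$ and $p_Y$ over $B$, and its structure map to $B$ is the common value $p_X \circ \pi_X = p_Y \circ \pi_Y$, which is exactly what one gets by composing $p_X \tm_{\Top_B} p_Y$ with the identification $B\tm_{\Top_B} B \cong B$. Once this identification is spelled out (it is routine, and the relevant generalities on slice-category products are collected in Appendix~\ref{C}), the proof is immediate. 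An alternative, slightly more hands-on route avoids the factorisation altogether: combine Theorem~\ref{t5} with the classical fact that a product of two compact spaces is compact to check directly that $p_{X\tm_{\Top_B} Y}$ is closed with compact fibres (each fibre being $X_b \tm Y_b$), but the composite argument via Proposition~\ref{p10} is cleaner and reuses machinery already in place.

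\begin{proof}
The projection $p : X\tm_{\Top_B} Y \ra B$ factors as the composite of the fibrewise product map $p_X\tm_{\Top_B} p_Y : X\tm_{\Top_B} Y \ra B\tm_{\Top_B} B$ with the canonical homeomorphism $B\tm_{\Top_B} B \cong B$. The projections $p_X : X \ra B$ and $p_Y : Y \ra B$ are proper because $X$ and $Y$ are fibrewise compact. By Proposition \ref{p10}, the map $p_X\tm_{\Top_B} p_Y$ is proper, and composing with a homeomorphism preserves properness. Hence $p$ is proper, so $X\tm_{\Top_B} Y$ is fibrewise compact.
\qedhere
\end{proof}
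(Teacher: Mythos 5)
Your proof is correct and is precisely the argument the paper intends: the corollary is stated without proof immediately after Proposition~\ref{p10}, the implicit reasoning being exactly your application of that proposition to $f=p_X$ and $f'=p_Y$ together with the identification $B\tm_{\Top_B}B\cong B$. Your write-up just makes this one-line deduction explicit.
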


\begin{proposition}\label{p3}\cite[Proposition 2.7]{J2}\\
A fibrewise space $X$ is fibrewise Hausdorff iff its diagonal $\Delta_X$ is closed in $X\tm_{\Top_B}X$. 
\end{proposition}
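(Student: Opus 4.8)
The plan is to reduce everything to the explicit description of the fibrewise product recorded in Appendix~\ref{C}: the space $X\tm_{\Top_B}X$ has underlying set $\{(x_1,x_2)\in X\tm X : p(x_1)=p(x_2)\}$, where $p:X\ra B$ is the projection, equipped with the subspace topology inherited from $X\tm_{\Top}X$; in particular a basic open set of $X\tm_{\Top_B}X$ has the form $(U_1\tm U_2)\cap(X\tm_{\Top_B}X)$ with $U_1,U_2$ open in $X$, and the diagonal $\Delta_X=\{(x,x):x\in X\}$ (the image of the fibrewise diagonal map $X\ra X\tm_{\Top_B}X$, which is defined since $p$ agrees with itself) is a subset of $X\tm_{\Top_B}X$. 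With this in hand both directions are routine neighbourhood arguments.

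For the forward implication I would take a point $(x_1,x_2)\in X\tm_{\Top_B}X$ with $x_1\ne x_2$; then $p(x_1)=p(x_2)$, so fibrewise Hausdorffness supplies disjoint open sets $U_1\ni x_1$ and $U_2\ni x_2$ in $X$, and I claim $W=(U_1\tm U_2)\cap(X\tm_{\Top_B}X)$ is an open neighbourhood of $(x_1,x_2)$ missing $\Delta_X$, since $(y,y)\in W$ would force $y\in U_1\cap U_2=\emptyset$. Hence the complement of $\Delta_X$ in $X\tm_{\Top_B}X$ is open and $\Delta_X$ is closed.

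For the converse I would start from distinct points $x_1,x_2$ of a common fibre, so that $(x_1,x_2)$ lies in the open set $(X\tm_{\Top_B}X)\setminus\Delta_X$, pick a basic open neighbourhood $(U_1\tm U_2)\cap(X\tm_{\Top_B}X)$ of $(x_1,x_2)$ contained in it, and observe that necessarily $U_1\cap U_2=\emptyset$: a point $y\in U_1\cap U_2$ would give $(y,y)\in(U_1\tm U_2)\cap\Delta_X\cap(X\tm_{\Top_B}X)$, contradicting that this neighbourhood avoids $\Delta_X$. Then $U_1$ and $U_2$ separate $x_1$ and $x_2$ by open sets in $X$, so $X$ is fibrewise Hausdorff.

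I do not expect any genuine obstacle; the only point demanding care is invoking the correct topology on $X\tm_{\Top_B}X$, namely the subspace topology from $X\tm_{\Top}X$ on the set of pairs lying over a common point of $B$, so that the meaning of ``closed in $X\tm_{\Top_B}X$'' and the shape of its basic open sets are unambiguous — this is exactly what Appendix~\ref{C} provides.
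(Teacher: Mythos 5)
Your proof is correct: the identification of $X\tm_{\Top_B}X$ with the subspace $\{(x_1,x_2): p(x_1)=p(x_2)\}$ of $X\tm_{\Top}X$ is exactly what Example \ref{ex4}.1 and Appendix \ref{C} supply, and both neighbourhood arguments go through as written. The paper offers no proof of its own here — it simply cites \cite[Proposition 2.7]{J2} — and your argument is the standard one found there.
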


\begin{definition}\label{d1}\cite[Definition 2.15]{J2}\\
A fibrewise topological space $p:X\ra B$ is fibrewise regular if for each point $x_0\in X$, and for each open neighborhood $V$ of $x_0$ in $X$, there exist an open neighborhood $\Omega$ of $b_0=p(x_0) $ in $B$ and an open neighborhood $U$ of $x_0$ in $X$ such that  $\overline{U}\cap X_{\Omega} \subset V$. 
  \end{definition}

	\begin{proposition}\label{p4}  \cite[Proposition 3.19]{J2} \\
Let $\phi: K \ra X$ be a continuous fibrewise map, where $K$ is fibrewise compact and $X$ is fibrewise Hausdorff over $B$. Then $\phi$ is a proper map. In particular,
\begin{enumerate}
 
\item  $\phi(K)$ is closed in $X$.
\item  $\phi(K)$ is fibrewise compact fibrewise Hausdorff over $B$. 
\end{enumerate}
\end{proposition}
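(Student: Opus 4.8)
The plan is to show that $\phi$ is proper by factoring it through the fibre product $K\tm_{\Top_B}X$, proving that each of the two factors is proper, and then reading off the consequences (1) and (2) from the basic properties of proper maps recalled above. The fibrewise Hausdorff hypothesis on $X$ will be used exactly once, to handle the first factor.

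First I would introduce the fibrewise map $\gamma=\langle 1_K,\phi\rangle: K\ra K\tm_{\Top_B}X$ given by $k\mapsto(k,\phi(k))$, which is well defined because $p_X\phi=p_K$, together with the second projection $q=\mathrm{pr}_2: K\tm_{\Top_B}X\ra X$, so that $\phi=q\circ\gamma$. The image of $\gamma$ is the graph $\Gamma_\phi=\{(k,x)\in K\tm_{\Top_B}X:\phi(k)=x\}$, which is precisely the preimage of the diagonal $\Delta_X$ under the fibrewise map $\phi\tm_{\Top_B}1_X:K\tm_{\Top_B}X\ra X\tm_{\Top_B}X$. Since $X$ is fibrewise Hausdorff, $\Delta_X$ is closed in $X\tm_{\Top_B}X$ by Proposition \ref{p3}, hence $\Gamma_\phi$ is closed in $K\tm_{\Top_B}X$. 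As $\gamma$ is a homeomorphism onto $\Gamma_\phi$, with inverse the restriction of $\mathrm{pr}_1$, it is a closed map with singleton (hence compact) fibres, and therefore proper by Theorem \ref{t5}.

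Next I would observe that $q:K\tm_{\Top_B}X\ra X$ is, by the very definition of the fibre product, the pullback along $p_X:X\ra B$ of the projection $p_K:K\ra B$, and that this is a pullback square in $\Top$. Since $K$ is fibrewise compact, $p_K$ is proper, and properness is stable under base change by \cite[Proposition 3, Section 10.1]{BO}, so $q$ is proper. Therefore $\phi=q\circ\gamma$ is a composite of two proper maps, hence proper by \cite[Proposition 5, Section 10.1]{BO}.

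Finally I would deduce (1) and (2). Since $\phi$ is proper it is closed by Theorem \ref{t5}, so $\phi(K)$ is closed in $X$, which gives (1). For (2), the projection of $\phi(K)$ onto $B$ is the map induced by $p_X$ on the image of $\phi$; since $p_X\phi=p_K$ is proper, Proposition \ref{pp20} shows this induced map is proper, i.e.\ $\phi(K)$ is fibrewise compact over $B$. And $\phi(K)$ is fibrewise Hausdorff because $\Delta_{\phi(K)}=\Delta_X\cap(\phi(K)\tm_{\Top_B}\phi(K))$ is closed in $\phi(K)\tm_{\Top_B}\phi(K)$, by Proposition \ref{p3} once more. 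The only place that needs a little care is the bookkeeping with the fibre products: identifying $\Gamma_\phi$ with $(\phi\tm_{\Top_B}1_X)^{-1}(\Delta_X)$, and recognising the square defining $q$ as a genuine pullback in $\Top$, so that the Bourbaki stability results for proper maps apply. Everything else is a routine assembly of the facts collected above.
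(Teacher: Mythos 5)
Your proof is correct. Note that the paper does not actually prove this proposition itself -- it simply cites \cite[Proposition 3.19]{J2} -- so there is no in-paper argument to compare against; what you have written is the standard graph-factorization proof (factor $\phi$ as the closed embedding $\gamma=\langle 1_K,\phi\rangle$ onto the graph, closed because $\Delta_X$ is closed by Proposition \ref{p3}, followed by the base change $\mathrm{pr}_2$ of the proper map $p_K$), which is essentially the fibrewise version of James's own argument and uses only facts the paper already assembles (Proposition \ref{p3}, Theorem \ref{t5}, Proposition \ref{pp20}, and Bourbaki's stability of properness under composition and pullback). All the individual steps check out, including the deduction of fibrewise compactness of $\phi(K)$ from Proposition \ref{pp20} applied to $p_X\circ\phi=p_K$.
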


\begin{corollary}\label{c1}\cite[Corollary 3.20]{J2}\\
A fibrewise compact subspace of a fibrewise Hausdorff space is closed. 
\end{corollary}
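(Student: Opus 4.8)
The plan is to read this off directly from Proposition \ref{p4}. Let $p\colon X\ra B$ be the projection of the fibrewise Hausdorff space $X$, and let $K\subseteq X$ be a subspace which is fibrewise compact over $B$; here $K$ is understood to carry the subspace topology together with the restricted projection $p|_K\colon K\ra B$, so that this restricted projection is a proper map.

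First I would note that the inclusion $\iota\colon K\hra X$ is continuous and satisfies $p\circ\iota = p|_K$, hence is a morphism of $\Top_B$, i.e. a continuous fibrewise map. Then I would apply Proposition \ref{p4} with $\phi=\iota$: since $K$ is fibrewise compact and $X$ is fibrewise Hausdorff over $B$, the map $\iota$ is proper, and by part (1) of that proposition its image $\iota(K)=K$ is closed in $X$. This already gives the claim.

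There is essentially no obstacle here: the only point needing a moment's attention is the bookkeeping that a ``fibrewise compact subspace'' is precisely a subspace whose restricted projection is proper, and that with this convention the inclusion is automatically a fibrewise map, so Proposition \ref{p4} applies verbatim. Consequently the proof is a one-line corollary of \ref{p4}.
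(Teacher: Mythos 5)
Your proof is correct and is exactly the intended derivation: the paper states this corollary without proof (citing James) immediately after Proposition \ref{p4}, and the inclusion of a fibrewise compact subspace into a fibrewise Hausdorff space is precisely the situation that proposition covers, so its part (1) gives closedness at once. Your bookkeeping remark about the restricted projection being proper is the only point of substance, and you handle it correctly.
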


\begin{corollary}\label{c4} 
A subspace of a fibrewise compact fibrewise Hausdorff space is fibrewise compact iff it is closed.
\end{corollary}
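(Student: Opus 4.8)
The plan is to prove the two implications separately: the forward direction is essentially a restatement of an earlier result, and the converse is a short application of the properness criterion of Theorem \ref{t5}. Throughout, let $X$ be a fibrewise compact fibrewise Hausdorff space over $B$ with projection $p:X\ra B$, let $A\subset X$ be a subspace with inclusion $\iota:A\hra X$, and write $p_A=p\circ\iota:A\ra B$ for its projection.

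For the implication ``fibrewise compact $\Rightarrow$ closed'', I would simply invoke Corollary \ref{c1}: a fibrewise compact subspace of a fibrewise Hausdorff space is closed. (Equivalently, one applies Proposition \ref{p4} to the continuous fibrewise map $\iota:A\ra X$, whose source is fibrewise compact and whose target is fibrewise Hausdorff; then $\iota$ is proper and $\iota(A)=A$ is closed in $X$.) For the converse, assume $A$ is closed in $X$, and verify the two conditions of Theorem \ref{t5} for $p_A$. First, $p_A$ is closed: if $C\subset A$ is closed in $A$, then since $A$ is closed in $X$ the set $C$ is closed in $X$, so $p_A(C)=p(C)$ is closed in $B$ because $p$, being proper, is closed (Theorem \ref{t5}). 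Second, for each $b\in B$ the fibre $p_A^{-1}(b)=A\cap X_b$ is a closed subspace of the fibre $X_b=p^{-1}(b)$, and $X_b$ is compact since $p$ is proper; hence $p_A^{-1}(b)$ is compact. By Theorem \ref{t5}, $p_A$ is proper, i.e. $A$ is fibrewise compact.

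I do not anticipate a genuine obstacle; the only point requiring a moment's care is the observation that a subset which is closed in the closed subspace $A$ is closed in the ambient space $X$, which is what lets the closedness of $p_A$ be deduced from that of $p$. Alternatively, one could phrase the converse by noting that the closed embedding $\iota$ is itself proper --- since $A$ closed in $X$ forces $A\tm_{\Top}Z$ closed in $X\tm_{\Top}Z$, and $\iota\tm_{\Top}1_Z$ is a homeomorphism onto that closed subset, so $\iota\tm_{\Top}1_Z$ is closed for every $Z$ --- and that a composite of proper maps is proper, whence $p_A=p\circ\iota$ is proper.
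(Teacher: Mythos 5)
Your proof is correct and follows the same route as the paper, which simply cites Corollary \ref{c1} for the forward direction and Theorem \ref{t5} for the converse; you have merely spelled out the details of verifying the two conditions of Theorem \ref{t5} for the restricted projection. Both verifications (closedness of $p_A$ via closedness of $A$ in $X$, and compactness of the fibres $A\cap X_b$ as closed subsets of the compact $X_b$) are sound.
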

  
\begin{proof}	
The result  follows from Corollary \ref{c1} and  Theorem \ref{t5}.
\qedhere  
\end{proof}

\begin{proposition}\label{p32}(\cite[Proposition 6 page 104]{BO}) \mbox{}
Let $p:X\ra B$ be a proper map and let $K$ be a compact subspace of $B$, then $p^{-1}(K)$ is a compact subspace of $X$. 
\end{proposition}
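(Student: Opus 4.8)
The plan is to deduce the statement from the characterisation of proper maps in Theorem \ref{t5}, together with the elementary fact that a space $Z$ is compact if and only if the unique map $Z\ra\Pt$ is proper: that map is trivially closed, and its single fibre is $Z$ itself, so Theorem \ref{t5} gives the equivalence. Concretely, I would show that the restriction $q:p^{-1}(K)\ra K$ of $p$ is proper; then, composing $q$ with the proper map $r:K\ra\Pt$ (which is proper because $K$ is compact) produces a proper map $r\circ q:p^{-1}(K)\ra\Pt$, whose only fibre $p^{-1}(K)$ is therefore compact, again by Theorem \ref{t5}. This is exactly the assertion.

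To see that $q$ is proper, I would first record the routine lemma that closed maps restrict to closed maps over subspaces of the codomain: if $f:A\ra B$ is closed and $B'\subseteq B$ is an arbitrary subspace, then the restriction $f^{-1}(B')\ra B'$ of $f$ is closed, since a closed subset of $f^{-1}(B')$ has the form $C\cap f^{-1}(B')$ with $C$ closed in $A$, and $f\big(C\cap f^{-1}(B')\big)=f(C)\cap B'$ is closed in $B'$. Now fix $W\in\Top$. Since $p$ is proper, $p\tm_{\Top}1_W:X\tm_{\Top}W\ra B\tm_{\Top}W$ is closed; applying the lemma with $B'=K\tm_{\Top}W$ and using the identification $p^{-1}(K)\tm_{\Top}W=(p\tm_{\Top}1_W)^{-1}(K\tm_{\Top}W)$ as a subspace of $X\tm_{\Top}W$, one gets that $q\tm_{\Top}1_W$ is closed. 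As $W$ was arbitrary, $q$ is proper. Finally, since $(r\circ q)\tm_{\Top}1_W=(r\tm_{\Top}1_W)\circ(q\tm_{\Top}1_W)$ is a composite of closed maps and hence closed for every $W$, the composite $r\circ q$ is proper, and Theorem \ref{t5} concludes.

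I do not expect a genuine obstacle here: the only points needing a little care are the verification that $p^{-1}(K)\tm_{\Top}W$ really is the preimage $(p\tm_{\Top}1_W)^{-1}(K\tm_{\Top}W)$ as a subspace of $X\tm_{\Top}W$ (so that the restriction lemma applies verbatim), and the stability of closed maps under composition, both of which are entirely routine. As an alternative route that avoids the ``proper $=$ closed $+$ compact fibres'' dictionary, one could argue directly: by Theorem \ref{t5}, $p$ is closed with compact fibres, so given an open cover of $p^{-1}(K)$ one first extracts a finite subcover of each fibre $p^{-1}(b)$, thickens it to a saturated open tube $p^{-1}(W_b)$ using closedness of $p$, and then uses compactness of $K$ to retain only finitely many of the $W_b$; this is the classical tube-lemma proof and reaches the same conclusion.
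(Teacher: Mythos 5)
Your argument is correct. Note that the paper offers no proof of this proposition at all: it is imported verbatim from Bourbaki (Proposition 6, page 104 of \cite{BO}), so the only comparison available is with the standard reference argument, and yours is essentially that argument. Your hands-on verification that $q\colon p^{-1}(K)\ra K$ is proper is a special case of the stability of proper maps under base change, which the paper itself quotes elsewhere as \cite[Proposition 3, Section 10.1]{BO} in the proof of Proposition \ref{p10}; invoking that would shorten your write-up, but your direct check via the restriction lemma and the identification $p^{-1}(K)\tm_{\Top}W=(p\tm_{\Top}1_W)^{-1}(K\tm_{\Top}W)$ is sound. The remaining ingredients --- closedness of proper maps under composition, properness of $K\ra\Pt$ for $K$ compact, and the reading of Theorem \ref{t5} for maps to a point --- are all used correctly, and the tube-lemma alternative you sketch is also a valid, more elementary route.
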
	

\begin{proposition}\label{p1} \cite[Proposition 3.22]{J2} \\
Every fibrewise compact, fibrewise Hausdorff space over $B$ is fibrewise regular. 
\end{proposition}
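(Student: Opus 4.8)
The plan is to reduce to the classical fact that a compact Hausdorff space is regular, applied \enquote{at the fibre over $b_0 = p(x_0)$}, and then to spread the resulting fibrewise separation over a neighbourhood of $b_0$ in $B$ by exploiting that the projection $p : X \to B$ is closed. Throughout, $X$ fibrewise compact means that $p$ is proper, so by Theorem \ref{t5} the map $p$ is closed and each fibre $X_b = p^{-1}(b)$ is compact.

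First I would fix $x_0 \in X$ and an open neighbourhood $V$ of $x_0$ in $X$, and set $C = X \setminus V$, a closed subset of $X$ with $x_0 \notin C$. Since $X_{b_0}$ is compact and $C \cap X_{b_0}$ is closed in it, $C \cap X_{b_0}$ is compact. For each $y \in C \cap X_{b_0}$ we have $y \neq x_0$ and $p(y) = b_0 = p(x_0)$, so the fibrewise Hausdorff hypothesis supplies open sets $U_y \ni x_0$ and $W_y \ni y$ in $X$ with $U_y \cap W_y = \emptyset$. The family $\{W_y\}_y$ is an open cover of the compact set $C \cap X_{b_0}$; choosing a finite subcover $W_{y_1}, \dots, W_{y_n}$ and putting $U' = \bigcap_{i=1}^n U_{y_i}$ (an open neighbourhood of $x_0$, read as $X$ when $C \cap X_{b_0} = \emptyset$) and $W = \bigcup_{i=1}^n W_{y_i}$, one gets $U' \cap W = \emptyset$. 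Hence $U' \subseteq X \setminus W$, which is closed, so $\overline{U'} \subseteq X \setminus W$ and therefore $\overline{U'} \cap C \cap X_{b_0} = \emptyset$.

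Next I would invoke the closedness of $p$: since $\overline{U'} \cap C$ is closed in $X$ and $p$ is closed, $p(\overline{U'} \cap C)$ is closed in $B$, and by the previous step $b_0 \notin p(\overline{U'} \cap C)$. Thus $\Omega = B \setminus p(\overline{U'} \cap C)$ is an open neighbourhood of $b_0$. Taking $U = U'$, any point $z \in \overline{U} \cap X_{\Omega}$ lying also in $C$ would satisfy $p(z) \in p(\overline{U'} \cap C)$, contradicting $p(z) \in \Omega$; hence $\overline{U} \cap X_{\Omega} \subseteq X \setminus C = V$, which is precisely the fibrewise regularity condition of Definition \ref{d1}.

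The routine part is the compact-Hausdorff-implies-regular argument carried out inside the single fibre $X_{b_0}$; the one genuinely fibrewise ingredient, and the step I expect to be the crux, is the passage from \enquote{separation holds over $b_0$} to \enquote{separation holds over a whole neighbourhood $\Omega$ of $b_0$}, which is exactly where the properness (equivalently, closedness) of $p$ is indispensable and where the ordinary non-fibrewise proof has nothing to contribute.
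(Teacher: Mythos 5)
Your proof is correct: the in-fibre compact-Hausdorff separation followed by pushing the closed set $\overline{U'}\cap C$ down via the closed map $p$ to obtain the neighbourhood $\Omega$ of $b_0$ is exactly the standard argument, and it is the one in James's book, which the paper cites for this proposition rather than reproving it. The edge case $C\cap X_{b_0}=\emptyset$ is also handled properly, so nothing is missing.
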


The next result reduces to the standard tube lemma \cite[Lemma 26.8]{MJ} in the case where $B$ is a one point space. 

\begin{lemma}\label{l1}(A fibrewise tube lemma)\\
Let $X$ and $K$ be two fibrewise  spaces over $B$ with $K$ fibrewise compact. Let $x_0\in X$, $O$ an open subset of $X\tm_{\Top_B} K$ and assume that $\{x_0\}\tm_{\Top_B} K \subset O$. Then there exists an open neighborhood $V$ of $x_0$ in $X$ such that $V\tm_{\Top_B} K \subset O$.
\end{lemma}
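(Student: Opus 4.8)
The plan is to run the classical tube lemma argument fibrewise, working over a fixed point $b_0 = p(x_0)\in B$ and only later spreading to a neighborhood of $b_0$. First I would restrict attention to the fibre: write $b_0=p(x_0)$ and consider the fibre $K_{b_0}=p_K^{-1}(b_0)$, which is a compact space (since $K$ is fibrewise compact, $K_{b_0}$ is compact by Theorem \ref{t5}, as it is $p_K^{-1}(b_0)$). Note that $\{x_0\}\tm_{\Top_B}K = \{x_0\}\tm K_{b_0}$ as a subspace of $X\tm_{\Top_B}K$, and this sits inside the open set $O$. For each $k\in K_{b_0}$, since $O$ is open in $X\tm_{\Top_B}K$, choose a basic open neighborhood $A_k\tm_{\Top_B} B_k$ of $(x_0,k)$ contained in $O$ with $A_k$ open in $X$ and $B_k$ open in $K$; here one uses the description of the topology on the fibrewise product $X\tm_{\Top_B}K$ (it is a subspace of $X\tm_{\Top}K$, so basic opens are traces of products of opens). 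Then $\{B_k : k\in K_{b_0}\}$ is an open cover of the compact space $K_{b_0}$, so extract a finite subcover $B_{k_1},\dots,B_{k_n}$, and set $W = A_{k_1}\cap\cdots\cap A_{k_n}$, an open neighborhood of $x_0$ in $X$.

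The next step is to check that $W\tm_{\Top_B}(p_K^{-1}(\text{something}))\subset O$ over the fibre and then thicken the base direction. Over $b_0$ we have $W\tm K_{b_0}\subset O$ by the usual finite-intersection argument: any $(x,k)$ with $x\in W$, $k\in K_{b_0}$ has $k\in B_{k_j}$ for some $j$, and $x\in W\subset A_{k_j}$, so $(x,k)\in A_{k_j}\tm_{\Top_B}B_{k_j}\subset O$. The remaining task — and this is where the properness of $K\to B$ is essential, and the main obstacle — is to produce an open neighborhood $\Omega$ of $b_0$ in $B$ so that $W$ can be shrunk to $V = W\cap p_X^{-1}(\Omega)$ with $V\tm_{\Top_B}K\subset O$. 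For this I would argue by contradiction or, more cleanly, use the closed map property: the projection $\pi_X : X\tm_{\Top_B}K \to X$ obtained by base-changing $p_K:K\to B$ along $p_X:X\to B$ is proper (properness is stable under pullback, \cite[Proposition 3, Section 10.1]{BO}), hence closed. Apply $\pi_X$ to the closed set $(X\tm_{\Top_B}K)\setminus O$: its image $C$ is closed in $X$, and $x_0\notin C$ precisely because $\{x_0\}\tm_{\Top_B}K\subset O$. Therefore $V := W\setminus C$ is an open neighborhood of $x_0$ in $X$, and by construction no point of $\{v\}\tm_{\Top_B}K$ for $v\in V$ lies outside $O$; that is, $V\tm_{\Top_B}K\subset O$.

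So in fact the cleanest route bypasses the explicit cover-and-shrink bookkeeping entirely: the single observation that $\pi_X:X\tm_{\Top_B}K\to X$ is proper (since $p_K$ is proper and properness is pullback-stable) and hence closed does all the work, and the finite-subcover step above is only needed if one wants a self-contained proof not invoking the pullback-stability of properness. I would present the proper-projection argument as the main proof and perhaps remark on the elementary cover argument. The one point to be careful about is verifying that the pullback of $p_K:K\to B$ along $p_X:X\to B$ in $\Top$ is exactly $X\tm_{\Top_B}K$ with its fibrewise-product topology and that $\pi_X$ is the correct projection — this is routine from the construction of limits in slice categories discussed in Appendix \ref{A}, but it is the place where a slip could occur. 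The expected main obstacle is therefore not conceptual but bookkeeping: matching the fibrewise product topology with the pullback topology and confirming properness transfers correctly.
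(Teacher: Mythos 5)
Your proposal is correct, and your ``clean route'' is genuinely different from the paper's argument. The paper proves the lemma in two steps: first a special case ($X=B$), where it uses only that $p_K:K\ra B$ is proper, hence closed, so that $V=B\setminus p_K(K\setminus O)$ works; then the general case, where the elementary finite-subcover argument over the compact fibre $K_{b_0}$ produces $U\tm_{\Top_B}W\subset O$ with $W$ open containing $K_{b_0}$, and the special case is invoked to shrink the base and set $V=X_{\Omega}\cap U$. You instead apply the closed-image trick directly to the base-changed projection $\pi_X:X\tm_{\Top_B}K\ra X$, which is proper because properness is stable under pullback (\cite[Proposition 3, Section 10.1]{BO} --- the same fact the paper itself uses in Proposition \ref{p10}, and the identification of $X\tm_{\Top_B}K$ with the pullback of $p_K$ along $p_X$ is exactly the pullback square appearing there). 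Taking $V=X\setminus\pi_X\bigl((X\tm_{\Top_B}K)\setminus O\bigr)$ then finishes the proof in one line, and as you correctly observe the cover-and-shrink bookkeeping becomes redundant (you may as well drop $W$ entirely and take $V=X\setminus C$). What each approach buys: the paper's version is self-contained modulo only the closedness of $p_K$ itself and mirrors the classical tube-lemma proof, which makes the role of fibrewise compactness of $K$ transparent; yours is shorter and conceptually sharper but leans on pullback-stability of properness, a slightly heavier input. Both are valid; your identification of the fibrewise product topology with the pullback topology is indeed the only point requiring care, and it is settled by Example \ref{ex4}.1 of Appendix \ref{A}.
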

    
\begin{proof} \mbox{}

\begin{itemize}

	\item Case 1: $X=B$ and $x_0=b_0\in B$.\\
	Observe that $B\tm_{\Top_B} K=K$, $\{b_0\}\tm_{\Top_B} K= K_{b_0}$  and $O$ is an open subset of $K$ containing $K_{b_0}$.
	Define $C$ be the closed subset of $K$ given by $C=K\setminus O$.
	The projection $p_K:K \ra B$ is a proper map, it is therefore closed. It follows that $p_K(C)$ is closed and does not contain $b_0$. Define 
	$V=B \setminus p_K(C)$.
	Then clearly, $V$ is an open neighborhood of $b_0$ and  
	$V\tm_{\Top_B} K=p_K^{-1}(V) = K_V \subset O$ as desired.
	
\item Case 2: The general case.\\ 
Let $p_X:X \ra B$ be the projection of the fibrewise space $X$ and let $b_0=p_X(x_0)$. We have $\{x_0\}\tm_{\Top_B} K= \{x_0\}\tm_{\Top_B} K_{b_0}\subset O$.
For every $y\in K_{b_0}$, there exist open neighborhoods $U_y$ of 
$x_0$ in $X$ and $W_y$ of $y$ in $K$ such that $U_y\tm_{\Top_B} W_y \subset O.$ The family $(W_y)_{y\in K_{b_0}}$ is an open cover of $K_{b_0}$ which is compact. There exist $y_1,y_2,\dots,y_n\in K_{b_0}$ such that $K_{b_0}\subset \bigcup^{n}_{i=1} W_{y_i}$. Define $U= \bigcap^{n}_{i=1}U_{y_i}$ and $W=\bigcup^{n}_{i=1} W_{y_i}$. Then $U$ is an open neighborhood of $x_0$, $W$ is an open subset of $K$ containing $K_{b_0}$ and 
	$U\tm_{\Top_B} W\subset O$. By Case 1, there exists an open subset $\Omega$ of $B$ such that
	$K_{\Omega}\subset W$. Define $V= X_{\Omega}\cap U$, then
	$$V\tm_{\Top_B} K=U\tm_{\Top_B} K_{\Omega}\subset U\tm_{\Top_B} W \subset O.$$
\end{itemize}
\qedhere  
\end{proof}
 We next present a special case of the fibrewise compact-open topology  defined in \cite[page 64]{J2},   (see also \cite[page 152]{N}).\\
\indent Let $K, Y \in \Top_B$ with $K$ fibrewise compact, fibrewise Hausdorff space. A subspace of $K$ (or $Y$) may be viewed as a fibrewise space over $B$.  For $\Omega$ open in $B$, $C$ closed in $K$ and $O$ open in $Y$, let 
\begin{equation}\label{eq36} 
(C,O,\Omega)=\underset{\Set}{\overset{b\in \Omega}{\coprod}}\{\gamma \in \Top(K_b,Y_b)  \mid \gamma(C_b)\subset O_b\}.
\end{equation}
Define $\map_B(K,Y)$ to be the topological space whose underlying set is $\underset{\Set}{\overset{b\in B}{\coprod}}\Top(K_b,Y_b)$
and whose topology is generated \footnote{The topology of $\map_B(K,Y)$ is then the coarsest topology on the set $\underset{\Set}{\overset{b\in B}{\coprod}}\Top(K_b,Y_b)$ containing  $(C,O,\Omega)$'s as open subsets.} by the subsets $(C,O,\Omega)$, where  $\Omega$ is open in $B$, $C$ is closed in $K$ and $O$ is open in $Y$. \\ 
\indent Our definition agrees with that of James mentioned above with the difference that in our case,  $\map_B(K,Y)$ is only defined when  $K$ is fibrewise compact, while in \cite{J2},  $\map_B(X,Y)$ is defined for any fibrewise space $X$ in precisely the same way. \\
 Open subsets given by (\ref{eq36}) are called elementary open subsets of $\map_B(K,Y)$. For $b\in B$, let $\map(K_b,Y_b)$ be the subspace of $\map_B(K,Y)$ whose underlying set is $\Top(K_b,Y_b)$\footnote{Observe that if $K_b$ is empty, then $\Top(K_b,Y_b)$ contains precisely one element.}.
Define 
\begin{equation}\label{eq44} 
p_{_{\map_B(K,Y)}}:\map_B(K,Y) \ra B
\end{equation} 
to be the map whose fibre over $b$ is $\map(K_b,Y_b)$. Let $\Omega$ be open in $B$, then
$$p_{_{\map_B(K,Y)}}^{-1}(\Omega)=\underset{\Set}{\overset{b\in B}{\coprod}}\Top(K_b,Y_b)=(K,Y,\Omega)$$
is open in  $\map_B(K,Y)$. It follows that $p_{_{\map_B(K,Y)}}$ is 
continuous. The space $\map_B(K,Y)$ is therefore viewed as a fibrewise space over $B$.  

\begin{example}\label{ex2}

For $b\in B$, let
  $B^b$ be the fibrewise subspace of $B$ having $b$ as its unique point.
Then $B^b$ is fibrewise Hausdorff.  Assume that $B$ is $\T_1$. Then by Theorem \ref{t5}, $B^b$ is fibrewise compact. If $Z\in \Top_B$, then $\map_B(B^b,Z)$ is a fibrewise space over $B$. It is such that  
\begin{equation}\label{eq45} 
\map_B(B^b,Z)_{b'}\cong \left\{ \begin{array}{ll}
Z_b     &\mbox{ if } b'=b \\[2mm]
 \mbox{One point space  } &\mbox{ if }b' \neq b
\end{array} \right.
\end{equation}
\end{example}
  The next proposition is a special case of that of James \cite[Corollary 9.13]{J2}.\\

\begin{proposition}\label{p2}
Let $K$, $Y$ be fibrewise topological spaces over $B$ with $K$ fibrewise compact fibrewise Hausdorff. Then the evaluation map 
$$ev:\map_B(K,Y)\tm_{\Top_B}  K \ra Y$$
is continuous. 
\end{proposition}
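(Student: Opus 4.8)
The plan is to verify continuity of $ev$ locally: fix a point $(\gamma, k)$ in $\map_B(K,Y)\tm_{\Top_B} K$, lying over $b_0 = p_K(k)$, with $\gamma \in \Top(K_{b_0},Y_{b_0})$ and $\gamma(k) = y_0 \in Y_{b_0}$. Given an open neighbourhood $O$ of $y_0$ in $Y$, I must produce a basic open neighbourhood of $(\gamma,k)$ whose image under $ev$ lands in $O$. The natural candidate is a set of the form $\big((C,O,\Omega)\tm_{\Top_B} U\big)$, where $\Omega$ is open in $B$ around $b_0$, $C$ is a suitably chosen closed subset of $K$ ``surrounding'' $k$ inside the fibre, and $U$ is an open neighbourhood of $k$ in $K$ with $U \cap K_{b_0} \subset C_{b_0}$; the point is that for $(\gamma',k') \in (C,O,\Omega)\tm_{\Top_B}U$ lying over $b' \in \Omega$, one has $k' \in C_{b'}$, hence $ev(\gamma',k') = \gamma'(k') \in O_{b'} \subset O$.

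The first key step is to build $C$. Since $K$ is fibrewise compact and fibrewise Hausdorff over $B$, Proposition \ref{p1} tells us $K$ is fibrewise regular. Apply fibrewise regularity at the point $k$ with respect to the open neighbourhood $\gamma^{-1}(O) \cap K_{b_0}$... — more carefully, I would work in $K_{b_0}$ first: $\gamma^{-1}(O_{b_0})$ is an open neighbourhood of $k$ in the ordinary space $K_{b_0}$, so by regularity there is an open $U_0 \ni k$ in $K_{b_0}$ with $\overline{U_0}^{K_{b_0}} \subset \gamma^{-1}(O_{b_0})$; but to get a genuine closed subset $C$ of $K$ I should instead invoke fibrewise regularity of $K$ directly at $k$ against a lift of $\gamma^{-1}(O)$ to an open set of $K$. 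Using Definition \ref{d1}, I obtain an open $\Omega_0 \ni b_0$ in $B$ and an open $U \ni k$ in $K$ with $\overline{U}\cap K_{\Omega_0} \subset (\text{chosen open lift})$; then set $C = \overline{U}\cap K_{\Omega_0}$ if this is closed in $K$ — here one must be slightly careful, so I would take $C$ to be the closure of $U$ intersected with a smaller closed fibrewise slice, using that $K$ is fibrewise compact so that fibres behave well. The upshot is a closed $C \subseteq K$ with $k$ in the interior of $C$ and $\gamma(C_{b_0}) \subset O_{b_0}$.

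The second step is to shrink $\Omega$. We need an open $\Omega \ni b_0$, $\Omega \subset \Omega_0$, small enough that the constraint ``$\gamma'(C_{b'}) \subset O_{b'}$'' is actually an \emph{open} condition realized on a neighbourhood — but this is automatic, because $(C,O,\Omega)$ is by definition \eqref{eq36} an open subset of $\map_B(K,Y)$, and $\gamma \in (C,O,\Omega)$ precisely because $\gamma(C_{b_0}) \subset O_{b_0}$. So we simply take $\Omega = \Omega_0$. Then $(C,O,\Omega)\tm_{\Top_B} U$ is open in the fibrewise product, contains $(\gamma,k)$, and the computation in the first paragraph shows $ev$ maps it into $O$. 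Since $(\gamma,k)$ and $O$ were arbitrary, $ev$ is continuous.

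I expect the main obstacle to be the point-set bookkeeping in Step 1: extracting a genuinely \emph{closed} subset $C$ of the total space $K$ (not merely a fibrewise-closed or fibrewise-closure object) that simultaneously contains $k$ in its interior and satisfies $\gamma(C_{b_0})\subset O_{b_0}$. This is where fibrewise regularity (Proposition \ref{p1}, via Definition \ref{d1}) does the real work, and one has to be careful that the set $\overline{U}\cap K_{\Omega_0}$ produced there is closed in $K$ — which it is, being an intersection of a closed set with $p_K^{-1}(\Omega_0)$ only after one replaces $\Omega_0$ by a slightly smaller open set with closed... in practice one takes $C$ to be $\overline{U}$ intersected with the closed set $p_K^{-1}(\overline{\Omega_1})$ for $\Omega_1$ a small open neighbourhood of $b_0$ with $\overline{\Omega_1}\subset\Omega_0$, if $B$ is suitably separated, or one argues directly that $\overline{U}\cap K_{\Omega_0}$ has the required closure property relative to a further shrinking of $\Omega_0$. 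Everything else — openness of $(C,O,\Omega)$, openness of the fibrewise product of opens, the final containment — is routine.
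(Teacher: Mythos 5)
Your overall strategy (pointwise verification, a basic neighbourhood of the form $(C,O,\Omega)\tm_{\Top_B}W$, fibrewise regularity as the key input) is the same as the paper's, but the execution has two genuine gaps. First, the condition you impose on the open set $U$, namely $U\cap K_{b_0}\subset C_{b_0}$, controls only the fibre over $b_0$; it does not yield the inference you draw from it, that a point $k'\in U$ lying over some \emph{other} $b'\in\Omega$ satisfies $k'\in C_{b'}$. For the final containment $ev\bigl((C,O,\Omega)\tm_{\Top_B}W\bigr)\subset O$ you need $W_{b'}\subset C_{b'}$ for \emph{every} $b'\in\Omega$, i.e.\ $W\cap K_{\Omega}\subset C$. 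Second, you never actually produce the closed set $C$: you end Step 1 with an unresolved discussion of whether $\overline{U}\cap K_{\Omega_0}$ is closed in $K$, and you speculate that one might need $B$ to be ``suitably separated.'' No separation hypothesis on $B$ is needed, and the set you are trying to close up is the wrong candidate.

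Both gaps are closed simultaneously by the paper's choice of data. Continuity of $\gamma_0:K_{b_0}\ra Y_{b_0}$ (with the subspace topology on $K_{b_0}$) gives an open $V\subset K$ containing $x_0$ with $\gamma_0(V\cap K_{b_0})\subset O$; fibrewise regularity (Proposition \ref{p1}, Definition \ref{d1}) applied to $x_0$ and $V$ gives an open $\Omega\ni b_0$ in $B$ and an open $U\ni x_0$ in $K$ with $\overline{U}\cap K_{\Omega}\subset V$. Now take $C=\overline{U}$ --- the closure in $K$, which is closed in $K$ with no further fuss --- and $W=U\cap K_{\Omega}$. Then $\gamma_0\in(\overline{U},O,\Omega)$ because $\overline{U}_{b_0}\subset V\cap K_{b_0}$, and for any $(\gamma,x)$ in $(\overline{U},O,\Omega)\tm_{\Top_B}W$ over $b\in\Omega$ one has $x\in U_b\subset\overline{U}_b$, hence $\gamma(x)\in O_b\subset O$: the containment $W\subset\overline{U}=C$ holds over every fibre at once. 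The point you identified as ``the main obstacle'' dissolves once one notices that the restriction to $\Omega$ belongs in the third slot of the elementary open set $(C,O,\Omega)$ of (\ref{eq36}), not inside $C$ itself.
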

   
\begin{proof}
Let $b_0\in B$, $\gamma_0 \in \map(K_{b_0},Y_{b_0})$, $x_0\in K_{b_0}$,  $O$ open in $Y$ and suppose that $\gamma_0(x_0)\in O$. The map
$\gamma_0:K_{b_0}\ra Y_{b_0}$ is continuous, therefore there exists an open neighborhood $V$ of $x_0$ in $K$ such that $\gamma_0(V\cap K_{b_0})\subset O$. The fibrewise space $K$ fibrewise compact, fibrewise Hausdorff, by Definition \ref{d1}, $K$ is regular. There exists an open neighborhood $\Omega$ of $b_0\in B$ and an open neighborhood $U$ of $x_0$ in $K$ such that 
$\overline{U}\cap K_{\Omega} \subset V$.
Define $W=U\cap K_{\Omega}$. Then
$(\overline{U},O,\Omega)\tm_{\Top_B} W$ is a neighborhood of $(\gamma_0,x_0)\in\map_{\Top_B}(K,Y)\tm_{\Top_B} K$ and $ev((\overline{U},O,\Omega)\tm_{\Top_B} W) \subset O$. It follows that $ev$ is continuous.
\qedhere   
	\end{proof}
	
Recall that an object $Y$ in a category $\mc$ is said to be exponentiable if for each $X\in \mc$, the binary product $X\tm_{\mc}Y$ exists and 
  the functor $ .\tm_{\mc} Y:\mc\ra\mc$ has a right adjoint.\\
		\indent The following fact is a consequence of   \cite[Proposition 9.7 and  Corollary 9.13]{J2}  of James. 
	
\begin{theorem}\label{t1}
Let $K$ be a fibrewise compact fibrewise Hausdorff space over $B$. Then the functor $$.\tm_{\Top_B}K: \Top_B \ra \Top_B$$ has a right adjoint which is the functor $$\map_B(K,.): \Top_B\ra \Top_B.$$ In particular, $K$ is an exponentiable object of $\Top_B$. 
 \end{theorem}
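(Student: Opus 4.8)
The goal is to establish the adjunction $.\tm_{\Top_B}K \dashv \map_B(K,.)$ for $K$ fibrewise compact fibrewise Hausdorff over $B$; exponentiability of $K$ is then an immediate corollary since the binary products $X\tm_{\Top_B}K$ all exist in $\Top_B$ (it is a slice category of $\Top$, hence finite complete). The natural candidate for the unit of the adjunction is, for each $X\in\Top_B$, the map $\varphi_X\colon X\ra \map_B(K,X\tm_{\Top_B}K)$ sending $x$ over $b$ to the partial section $K_b\ra (X\tm_{\Top_B}K)_b = X_b\tm K_b$, $y\mapsto (x,y)$; the counit is the evaluation $ev\colon \map_B(K,Y)\tm_{\Top_B}K\ra Y$ of Proposition \ref{p2}, which we already know is continuous. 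So the work is: (i) check $\varphi_X$ is continuous and fibrewise; (ii) establish the triangle identities (or equivalently, establish the natural bijection $\Top_B(X\tm_{\Top_B}K, Y)\cong\Top_B(X,\map_B(K,Y))$ directly); (iii) verify naturality. Since Theorem \ref{t1} is explicitly attributed to James (\cite[Proposition 9.7 and Corollary 9.13]{J2}), the cleanest route is to deduce it from those references rather than redo the topology from scratch.

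First I would set up the transpose. Given a fibrewise map $g\colon X\tm_{\Top_B}K\ra Y$, define $\widehat{g}\colon X\ra \map_B(K,Y)$ by $\widehat{g}(x) = g(x,-)\colon K_b\ra Y_b$ where $b=p_X(x)$; this lands in the correct fibre, so $\widehat{g}$ is fibrewise by construction. Conversely, given a fibrewise map $h\colon X\ra\map_B(K,Y)$, set $\check{h} = ev\circ(h\tm_{\Top_B}1_K)\colon X\tm_{\Top_B}K\ra Y$, which is continuous by Proposition \ref{p2}. These are visibly mutually inverse as set maps. The content is: $g$ continuous $\Rightarrow$ $\widehat{g}$ continuous, and this is where one invokes James's exponential correspondence directly (our $\map_B(K,Y)$ agrees with James's for fibrewise compact fibrewise Hausdorff $K$, as noted in the text just after the definition). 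The reverse direction, $h$ continuous $\Rightarrow \check h$ continuous, is exactly Proposition \ref{p2} applied after composing with $h\tm 1_K$.

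For continuity of $\widehat g$: take an elementary subbasic open set $(C,O,\Omega)$ of $\map_B(K,Y)$ with $\Omega$ open in $B$, $C$ closed in $K$, $O$ open in $Y$. One must show $\widehat g^{-1}((C,O,\Omega))$ is open in $X$. This set is $\{x\in X_\Omega : g(x,C_{p_X(x)})\subset O\}$. Fix $x_0$ in it over $b_0\in\Omega$; then $\{x_0\}\tm_{\Top_B}C_{b_0}\subset g^{-1}(O)$, and $C\cap p_K^{-1}(\Omega)$ is closed in $K_\Omega$, hence $C_{b_0}$ is a closed — and by Corollary \ref{c4}, fibrewise compact — subspace of the fibrewise compact fibrewise Hausdorff space $K$. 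Now apply the fibrewise tube lemma (Lemma \ref{l1}) with the fibrewise compact space $C$ in place of $K$: there is an open neighborhood $V$ of $x_0$ in $X$ with $V\tm_{\Top_B}C\subset g^{-1}(O)$; shrinking $V$ to $V\cap X_\Omega$ gives a neighborhood of $x_0$ inside $\widehat g^{-1}((C,O,\Omega))$. This is the heart of the argument and the step I expect to be the main obstacle — more precisely, checking that $C$ (as a closed, hence fibrewise compact, subspace of $K$) legitimately plays the role of the fibrewise compact space in Lemma \ref{l1}, and handling the empty-fibre cases in \eqref{eq36} correctly.

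Finally, naturality of the bijection $\Top_B(X\tm_{\Top_B}K,Y)\cong\Top_B(X,\map_B(K,Y))$ in both $X$ and $Y$ is a routine diagram chase: in $X$ one precomposes with $f\tm_{\Top_B}1_K$, in $Y$ one postcomposes with $\map_B(K,k)$ for $k\colon Y\ra Y'$, and both sides track the same underlying set map. Having the natural isomorphism of hom-sets, the adjunction $.\tm_{\Top_B}K\dashv\map_B(K,.)$ follows, and since $X\tm_{\Top_B}K$ exists for every $X$, the object $K$ is exponentiable in $\Top_B$, completing the proof.
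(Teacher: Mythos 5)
Your proposal is correct and follows essentially the same route as the paper: the paper also establishes the bijection $\Top_B(X\tm_{\Top_B}K,Y)\cong\Top_B(X,\map_B(K,Y))$ by defining the set-level transpose $\widehat{f}$, proving $f$ continuous $\Rightarrow$ $\widehat{f}$ continuous via the fibrewise tube lemma applied to the closed (hence fibrewise compact) subspace $C$ of $K$, and proving the converse by factoring $f$ as $ev\circ(\widehat{f}\tm_{\Top_B}1_K)$ using Proposition \ref{p2}. The point you flag as the main obstacle — that $C$ may legitimately play the role of the fibrewise compact space in Lemma \ref{l1} — is handled implicitly in the paper via Theorem \ref{t5}/Corollary \ref{c4}, exactly as you propose.
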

   
\begin{proof}
Let $X,Y \in \Top_B$ and  
$f:X\tm_{\Top_B} K \ra Y$ a fibrewise function with adjoint (as a  fibrewise map between sets) the fibrewise function $\widehat{f}:X\ra \map_B(K,Y)$.
We need to prove that $f$ is continuous iff $\widehat{f}$ is.\\
 Assume that 
$f:X\tm_{\Top_B} K \ra Y$ is continuous and let $x_0\in X$, 
  $(C,O,\Omega)$ be an elementary open subset of $\map_B(K,Y)$ and assume that 
	$\widehat{f}(x_0)\in (C,O,\Omega)$. Then $f(\{x_0\}\tm_{\Top_B} C) \subset O$.
	By the fibrewise tube Lemma \ref{l1}, there exists an open neighborhood $U$ of $x_0$ such that  $f(U\tm_{\Top_B} C )\subset O$. Define $V=U\cap X_{\Omega}$. Then $\widehat{f}(V)\in (C,O,\Omega)$. It follows that 
	$\widehat{f}$ is continuous.\\
	Conversely, assume that $\widehat{f}:X\ra \map_B(K,Y)$ is continuous. By Proposition \ref{p2}, the evaluation map 
	$$ev:\map_B(K,Y)\tm_{\Top_B} K\ra Y$$ is continuous. Therefore $f$ which is the composite
	\begin{equation}\label{eq67} 
X\tm_{\Top_B}K \xra{\widehat{f}\tm_{\Top_B} 1_K} \map_B(K,Y)\tm_{\Top_B} K \st{ev}\ra Y
\end{equation} 
	is continuous. 
	\qedhere   
\end{proof}
\begin{proposition}\label{p26}
Assume that $B$ is $\T_1$.  Let $K,Z\in \Top_B$ with $K$  fibrewise compact fibrewise Hausdorff and let $\map_B(K,Z)$ be the exponential object defined by (\ref{eq44}).  If  $Z$ is fibrewise $\T_1$, then so is $\map_B(K,Z)$.  
\end{proposition}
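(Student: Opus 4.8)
The plan is to show that for any $b' \in B$, the fibre $\map_B(K,Z)_{b'} \cong \map(K_{b'},Z_{b'})$ is $\T_1$ as an ordinary topological space; that is, for each $\gamma_0 \in \map(K_{b'},Z_{b'})$ the singleton $\{\gamma_0\}$ is closed in the fibre, equivalently, for every $\gamma_1 \neq \gamma_0$ in the same fibre there is an open neighborhood of $\gamma_1$ in $\map_B(K,Z)$ missing $\gamma_0$. Since $\gamma_1 \neq \gamma_0$ live over the same point $b'$ and $K_{b'}$ is nonempty in the relevant case (if $K_{b'}=\varnothing$ the fibre has a single point and there is nothing to prove), there exists $x \in K_{b'}$ with $\gamma_1(x) \neq \gamma_0(x)$ in $Z_{b'}$.

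First I would use that $Z$ is fibrewise $\T_1$: the point $\gamma_0(x) \in Z_{b'}$ is closed in the fibre $Z_{b'}$, so $O := Z \setminus \overline{\{\gamma_0(x)\}}$ is an open subset of $Z$ whose fibre over $b'$ contains $\gamma_1(x)$ but not $\gamma_0(x)$ — here I use that the closure $\overline{\{\gamma_0(x)\}}$ of the one-point set taken in $Z$ meets the fibre $Z_{b'}$ only in $\{\gamma_0(x)\}$, which is exactly the fibrewise $\T_1$ condition. Next, since $B$ is $\T_1$, the singleton $\{x\} \subset K$ is closed in $K$ (its intersection with each fibre $K_b$ is either empty or the closed point $x$, and being closed in $K$ follows because $\{x\}$ is the preimage under the projection of $\{b'\}$ intersected with... — more carefully, $K$ fibrewise Hausdorff over a $\T_1$ base is $\T_1$ in the ordinary sense by the observation in Examples~\ref{ex1}, so $\{x\}$ is closed in $K$). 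Then $C := \{x\}$ is closed in $K$, and $\Omega := B$ is open in $B$, so $(C, O, \Omega) = (\{x\}, O, B)$ is an elementary open subset of $\map_B(K,Z)$.

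Now I would check that $(\{x\},O,B)$ is the desired separating neighborhood: $\gamma_1 \in (\{x\},O,B)$ because $\gamma_1(x) \in O_{b'}$, whereas $\gamma_0 \notin (\{x\},O,B)$ because $\gamma_0(x) \notin O_{b'}$. This shows $\gamma_0$ is not in the closure of $\{\gamma_1\}$; running the symmetric argument with the roles reversed (or rather, fixing $\gamma_0$ and letting $\gamma_1$ range) shows that $\map_B(K,Z)_{b'}$ is $\T_1$. Since $b'$ was arbitrary, $\map_B(K,Z)$ is fibrewise $\T_1$.

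The only slightly delicate point is the interplay between "fibrewise $\T_1$" and ordinary closedness of points: I must be careful that the elementary open sets $(C,O,\Omega)$ only see fibrewise data (the condition $\gamma(C_b) \subset O_b$ involves only the fibre $O_b = O \cap Z_b$), so it suffices to produce $O$ open in $Z$ with $\gamma_1(x) \in O_{b'}$ and $\gamma_0(x) \notin O_{b'}$, which is precisely what fibrewise $\T_1$-ness of $Z$ gives; I do not need $Z$ to be $\T_1$ in the ordinary sense, nor do I need to separate $\gamma_0$ and $\gamma_1$ over different base points. I expect this is the main thing to get right; the rest is a direct unwinding of the definition of the fibrewise compact-open topology in~(\ref{eq36}).
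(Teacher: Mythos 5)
Your proof is correct, but it takes a more direct route than the paper's. The paper argues in two steps: it first treats the special case $K=B^b$ (whose mapping space has fibres $Z_b$ and one-point spaces, as in Example \ref{ex2}), and then, for general $K$, writes $\{\gamma\}$ as the intersection $\bigcap_{x\in K_b}\map_B(f_x,Z)^{-1}(\{\gamma_x\})$ of preimages of closed points under the continuous precomposition maps $\map_B(f_x,Z)$, thereby showing that singletons are closed in all of $\map_B(K,Z)$ --- which over a $\T_1$ base is equivalent to, though formally stronger than, the fibrewise $\T_1$ condition. You instead separate two distinct maps $\gamma_0\neq\gamma_1$ in the same fibre directly by the elementary open set $(\{x\},O,B)$, where $x$ is a point at which they differ and $O=Z\setminus\overline{\{\gamma_0(x)\}}$; the two essential inputs --- that $\{x\}$ is closed in $K$ (since $K$ is fibrewise Hausdorff over a $\T_1$ base, hence $\T_1$) and that $Z_{b'}$ is $\T_1$ (so that $\overline{\{\gamma_0(x)\}}\cap Z_{b'}=\{\gamma_0(x)\}$) --- are the same ones driving the paper's argument. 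Your version avoids the detour through $B^b$ and the functoriality of $\map_B(-,Z)$, and it handles the degenerate case $K_{b'}=\varnothing$ explicitly, a case the paper's intersection formula (an intersection over an empty index set) quietly glosses over. What the paper's formulation buys is the reusable observation that evaluation at a point of $K_b$ is realized by a continuous map into $\map_B(B^b,Z)$; for the statement at hand, your direct unwinding of the subbasis $(C,O,\Omega)$ is complete and arguably cleaner.
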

  
\begin{proof}\mbox{}
 
 \begin{itemize}

\item Step 1: $K$ is the fibrewise space $B^b$ defined by Example \ref{ex2},  $b\in B$. \\
$B$ is $\T_1$ and the fibre $Z_b$ is closed $\T_1$-subspace of $Z$. Then by Example \ref{ex2}, $\map_B(B^b,Z)$ is a fibrewise $\T_1$-subspace. 
\item Step 2: The general case.\\
Let $\gamma \in \map_B(K,Z)$. we need to show that $\{\gamma\}$ is closed in $\map_B(K,Z)$. Let $b=p(\gamma)$, where $p$ is the projection of the fibrewise space $\map_B(K,Z)$. Then $\gamma \in \Top(K_b,Z_b)$. For each $x\in K_b$, define $$f_x:B^b \ra K $$ to be the fibrewise map given by $f_x(b)=x$ and 
let
$$\map_B(f_x,Z): \map_B(K,Z) \ra \map_B(B^b,Z)$$ be the fibrewise map induced by $f_x$.
Furthermore, let $\gamma_x\in \Top(\{b\},Z_b)$ to be the map given by $\gamma_x(b)=\gamma(x)$. Then $\gamma_x \in \map_B(B^b,Z)$. We have 
$$\{\gamma\}=\underset{x\in K_b}{ \bigcap}\map_B(f_x,Z)^{-1}(\{\gamma_x\}).$$  By Step 1, $\{\gamma\}$ is closed in $\map_B(K,Z)$.  
\end{itemize}
\qedhere   
\end{proof}
\begin{remark}\label{r9}  
  Let $K,Z\in \Top_B$ with $K$  fibrewise compact, fibrewise Hausdorff and $Z$ fibrewise Hausdorff. Then the exponential space $\map_B(K,Z)$ is not in general fibrewise Hausdorff even if $Z$ is  Hausdorff 
	(not just fibrewise Hausdorff)  and $B$ is $\T_1$.
  
\end{remark}

\section{Fibrewise weak and $k$-Hausdorfifications}\label{s7}
Our objective in this section is to prove that if $B$ is $\T_1$, then the subcategories of fibrewise weak Hausdorff spaces and fibrewise $k$-Hausdorff spaces are reflective subcategories of $\Top_B$. We adopt a definition of fibrewise weak Hausdorff spaces that is seemingly weaker than that of James \cite[Definition 1.1]{J1}. Our definition has the advantage that it agrees with  the ordinary definition  of  weak Hausdorff spaces when $B$ is reduced to a point (Strickland, \cite[Definition 1.2]{S}).\\

\begin{definition}\label{d3}\mbox{}
\begin{enumerate}
 
	\item  A fibrewise space $X$ over $B$ is said to be fibrewise weak Hausdorff
if for each open set $\Omega$ of $B$, each fibrewise compact, fibrewise Hausdorff space $K$ over $\Omega$ and each fibrewise map $\alpha:K\ra X_{\Omega}$, the image $\alpha(K)$ is closed in $X_{\Omega}$.
	\item  The  subcategory of $\Top_B$ whose objects are the weak Hausdorff spaces is denoted by $\hwTop_B$.
	\end{enumerate}
\end{definition}

\begin{proposition}\label{p5}
A fibrewise Hausdorff space is fibrewise weak Hausdorff.
\end{proposition}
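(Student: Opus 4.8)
The plan is to reduce to Proposition \ref{p4} by restricting over an open subset of the base. Let $X$ be a fibrewise Hausdorff space over $B$. To verify Definition \ref{d3}.1 for $X$, fix an open set $\Omega$ of $B$, a fibrewise compact fibrewise Hausdorff space $K$ over $\Omega$, and a fibrewise map $\alpha:K\ra X_{\Omega}$; we must show that $\alpha(K)$ is closed in $X_{\Omega}$.

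The one point to check is that the restriction $X_{\Omega}$ is fibrewise Hausdorff over $\Omega$. This is routine: given distinct points $x_1,x_2$ of a common fibre $X_b$ with $b\in\Omega$, fibrewise Hausdorffness of $X$ over $B$ yields disjoint open neighborhoods $U_1,U_2$ of $x_1,x_2$ in $X$, and $U_1\cap X_{\Omega}$, $U_2\cap X_{\Omega}$ are disjoint open neighborhoods of $x_1,x_2$ in $X_{\Omega}$. (Alternatively, one may invoke Proposition \ref{p3}: the diagonal of $X_{\Omega}$ is the trace of the closed diagonal of $X$ on the subspace $X_{\Omega}\tm_{\Top_B}X_{\Omega}$ of $X\tm_{\Top_B}X$, hence closed.)

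Now $K$ is fibrewise compact over $\Omega$ and $X_{\Omega}$ is fibrewise Hausdorff over $\Omega$, so Proposition \ref{p4}, applied with $\Omega$ in the role of the base, shows that $\alpha$ is a proper map; in particular, by Proposition \ref{p4}.1, $\alpha(K)$ is closed in $X_{\Omega}$. As $\Omega$, $K$ and $\alpha$ were arbitrary, $X$ satisfies Definition \ref{d3}.1, that is, $X$ is fibrewise weak Hausdorff.

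There is no genuinely hard step here: once one observes that both Proposition \ref{p4} and the definition of fibrewise weak Hausdorff make sense over an arbitrary base (here $\Omega$), the argument is just the stability of fibrewise Hausdorffness under restriction, followed by a direct appeal to that proposition. The only mild subtlety, if any, is bookkeeping the passage from base $B$ to base $\Omega$.
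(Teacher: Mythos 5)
Your proof is correct and follows essentially the same route as the paper's, which likewise fixes $\Omega$, $K$ and $\alpha$ and invokes Proposition \ref{p4} over the base $\Omega$ to conclude that $\alpha(K)$ is closed in $X_{\Omega}$. The only difference is that you spell out the (routine, and in the paper implicit) check that $X_{\Omega}$ is fibrewise Hausdorff over $\Omega$.
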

  
\begin{proof}
Let $X$ be a fibrewise Hausdorff space, $\Omega$ open in $B$, $K$ a fibrewise compact, fibrewise Hausdorff space  over $\Omega$ and $u: K\ra X_{\Omega}$ a continuous fibrewise map. By Proposition \ref{p4}, $u(K)$ is closed in $X_{\Omega}$. Hence $X$ is weak Hausdorff.
\qedhere   
\end{proof}

\begin{proposition}\label{p6}

  Let $f:X\ra Y$  be an injective, continuous fibrewise map with $Y$ fibrewise weak Hausdorff. Then $X$ is fibrewise weak Hausdorff. In particular, a subspace of a fibrewise weak Hausdorff space is fibrewise weak Hausdorff.

\end{proposition}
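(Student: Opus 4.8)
The plan is to transport the required closedness statement from $X$ to $Y$ along the injective continuous fibrewise map $f$, and then invoke the weak Hausdorffness of $Y$.

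First I would unwind Definition \ref{d3}: fix an open set $\Omega$ of $B$, a fibrewise compact, fibrewise Hausdorff space $K$ over $\Omega$, and a continuous fibrewise map $\alpha:K\ra X_{\Omega}$; the task is to show that $\alpha(K)$ is closed in $X_{\Omega}$. Since $f$ commutes with the projections to $B$, it carries $X_{\Omega}=p_X^{-1}(\Omega)$ into $Y_{\Omega}=p_Y^{-1}(\Omega)$, so it restricts to a continuous, injective fibrewise map $f_{\Omega}:X_{\Omega}\ra Y_{\Omega}$ over $\Omega$. The composite $f_{\Omega}\circ\alpha:K\ra Y_{\Omega}$ is then again a continuous fibrewise map over $\Omega$.

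Next I would apply the hypothesis that $Y$ is fibrewise weak Hausdorff, with the open set $\Omega$, the fibrewise compact fibrewise Hausdorff space $K$, and the map $f_{\Omega}\circ\alpha$: this yields that $(f_{\Omega}\circ\alpha)(K)=f_{\Omega}(\alpha(K))$ is closed in $Y_{\Omega}$. Because $f_{\Omega}$ is injective, $\alpha(K)=f_{\Omega}^{-1}\big(f_{\Omega}(\alpha(K))\big)$, and because $f_{\Omega}$ is continuous this is a closed subset of $X_{\Omega}$. Hence $\alpha(K)$ is closed, so $X$ is fibrewise weak Hausdorff. For the final assertion, a subspace $X$ of a fibrewise weak Hausdorff space $Y$ furnishes an injective continuous fibrewise map $X\hra Y$, to which the first part applies.

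I do not anticipate a genuine obstacle here. The only points needing a word of care are the bookkeeping that passing to the part over $\Omega$ turns $f$ into a bona fide fibrewise map $f_{\Omega}$ over $\Omega$ — so that Definition \ref{d3} for $Y$ is legitimately applicable with this $\Omega$ — and the elementary set-theoretic fact that an injective continuous map pulls back the image of a subset to the subset itself.
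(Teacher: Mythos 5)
Your argument is correct and is exactly the routine verification the paper dismisses with ``Clear'': restrict $f$ over $\Omega$, push $\alpha$ forward, apply weak Hausdorffness of $Y$, and pull the closed image back along the injective continuous map $f_{\Omega}$. Nothing further is needed.
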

  
\begin{proof}
Clear.
\qedhere   
 \end{proof}
\begin{proposition}\label{p7}
Assume that the base space $B$ is a $\T_1$-space. Then
every fibrewise weak Hausdorff space over $B$ is fibrewise $\T_1$.
\end{proposition}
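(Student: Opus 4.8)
The plan is to show that every fibrewise weak Hausdorff space $X$ over a $\T_1$-base $B$ has $\T_1$ fibres, i.e. that for each $b \in B$ and each point $x_0 \in X_b$, the singleton $\{x_0\}$ is closed in the fibre $X_b$. The natural strategy is to exhibit $\{x_0\}$ as the image of a fibrewise map from a fibrewise compact fibrewise Hausdorff space and then invoke the defining closure property of Definition \ref{d3}, combined with the observation that $B$ being $\T_1$ makes the relevant one-point subspaces of $B$ fibrewise compact (as in Example \ref{ex2}, via Theorem \ref{t5}).

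Concretely, I would fix $b \in B$, set $\Omega = B$ (or work over a smaller open set if convenient), and consider the fibrewise subspace $B^b$ of $B$ consisting of the single point $b$; since $B$ is $\T_1$, the set $\{b\}$ is closed in $B$, so the projection $B^b \ra B$ is a closed map with compact (indeed one-point) fibres, hence proper by Theorem \ref{t5}, so $B^b$ is fibrewise compact. It is trivially fibrewise Hausdorff. Given $x_0 \in X_b$, define $\alpha : B^b \ra X$ by $\alpha(b) = x_0$; this is a continuous fibrewise map over $B$ (continuity is automatic since the domain is a single point), and its image $\alpha(B^b) = \{x_0\}$. By Definition \ref{d3}.1 (with $\Omega = B$), $\alpha(B^b) = \{x_0\}$ is closed in $X$. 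Since $X_b$ is a subspace of $X$, $\{x_0\}$ is a fortiori closed in $X_b$. As $b$ and $x_0$ were arbitrary, every fibre $X_b$ is $\T_1$, i.e. $X$ is fibrewise $\T_1$.

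The only mild subtlety — and the step I would be most careful about — is verifying that $B^b$ is genuinely fibrewise compact, which is exactly where the hypothesis that $B$ is $\T_1$ is used: one needs $\{b\}$ closed in $B$ so that the inclusion (equivalently the projection $B^b \ra B$) is a closed map, and then Theorem \ref{t5} applies because the single non-empty fibre is a point, hence compact. Everything else is formal: the fibrewise map $\alpha$ is built from a map out of a terminal-like object, and the passage from "closed in $X$" to "closed in $X_b$" is just the definition of the subspace topology. No delicate point-set argument is required beyond this.
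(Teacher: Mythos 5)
Your proposal is correct and follows essentially the same route as the paper: both exhibit a one-point fibrewise compact, fibrewise Hausdorff space over $B$ (using $\T_1$-ness of $B$ together with Theorem \ref{t5}) mapping onto $\{x_0\}$, and then invoke the weak Hausdorff condition with $\Omega=B$ to conclude that the singleton is closed. The only cosmetic difference is that the paper uses the subspace $\{x_0\}$ of $X$ with its inclusion map rather than $B^b$ with the constant map, and it notes that the conclusion is in fact that $X$ itself is $\T_1$, not merely fibrewise $\T_1$.
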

  
\begin{proof}
Let $X$ be a fibrewise weak Hausdorff space over $B$ and let $x\in X$. $B$ is $\T_1$, thus the fibrewise subspace $\{x\}$ of $X$ is fibrewise compact, fibrewise Hausdorff space. $X$ is weak Hausdorff, thus $\{x\}$ is closed in $X$ and $X$ is $\T_1$. 
\qedhere  
\end{proof}

\begin{proposition}\label{p8}
Assume that the base space $B$ is a $\T_1$-space. 
 Let $u$ be a fibrewise continuous map from a fibrewise compact, fibrewise Hausdorff space $K$ to a fibrewise weak Hausdorff space $X$. Then:  
\begin{enumerate}
 
\item The map $u: K\ra X$ is proper.
\item  The subspace $u(K)$ is a closed, fibrewise Hausdorff subspace of $X$.  
\end{enumerate}

\end{proposition}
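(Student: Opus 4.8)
The plan is to prove part (1) through the characterization of proper maps in Theorem \ref{t5}, namely that a continuous map is proper iff it is closed and has compact fibres, and then to deduce part (2) from (1) together with the diagonal criterion of Proposition \ref{p3}. Write $p_K\colon K\ra B$ and $p_X\colon X\ra B$ for the projections.

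First I would show that $u$ is a closed map. Given a closed subset $C$ of $K$, the subspace $C$ is still fibrewise Hausdorff, since its diagonal is $\Delta_K\cap(C\tm_{\Top_B}C)$, which is closed in $C\tm_{\Top_B}C$ by Proposition \ref{p3}; and $C$ is fibrewise compact by Corollary \ref{c4}, being closed in the fibrewise compact, fibrewise Hausdorff space $K$. Hence the restriction $u|_C\colon C\ra X$ is a fibrewise map out of a fibrewise compact, fibrewise Hausdorff space over $B$, so $u(C)=u|_C(C)$ is closed in $X$ by Definition \ref{d3} with $\Omega=B$. Thus $u$ is closed. Next I would verify that the fibres $u^{-1}(x)$ are compact: put $b=p_X(x)$; since $K$ is fibrewise compact, $p_K$ is proper, and as $\{b\}$ is compact, Proposition \ref{p32} gives that the fibre $K_b=p_K^{-1}(\{b\})$ is compact. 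Because $u$ is fibrewise one has $u^{-1}(x)=(u|_{K_b})^{-1}(x)$, and since $B$ is $\T_1$ Proposition \ref{p7} makes $X$ fibrewise $\T_1$, so $\{x\}$ is closed in $X_b$; hence $u^{-1}(x)$ is closed in the compact space $K_b$, therefore compact. Theorem \ref{t5} then gives that $u$ is proper, and applying Definition \ref{d3} with $\Omega=B$ directly to $u$ shows that $u(K)$ is closed in $X$.

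It remains to see that $u(K)$ is fibrewise Hausdorff, and here I would argue through diagonals rather than through Proposition \ref{p4}, since $X$ is only assumed fibrewise weak Hausdorff. Because $u$ is proper, Proposition \ref{p10} shows that $u\tm_{\Top_B}u\colon K\tm_{\Top_B}K\ra X\tm_{\Top_B}X$ is proper, hence closed. Since $K$ is fibrewise Hausdorff, $\Delta_K$ is closed in $K\tm_{\Top_B}K$ by Proposition \ref{p3}, so its image $(u\tm_{\Top_B}u)(\Delta_K)=\Delta_{u(K)}$ is closed in $X\tm_{\Top_B}X$, and therefore closed in the subspace $u(K)\tm_{\Top_B}u(K)$. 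A final application of Proposition \ref{p3} yields that $u(K)$ is fibrewise Hausdorff.

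The main obstacle I expect is precisely this last step: the direct route via Proposition \ref{p4} is blocked because $X$ need not be fibrewise Hausdorff, so the properness produced in (1) has to be promoted to properness of $u\tm_{\Top_B}u$ in order to transport the closed diagonal of $K$ forward. A subsidiary point to handle with care in (1) is that Definition \ref{d3} only guarantees closedness of images of fibrewise compact, fibrewise Hausdorff spaces, which is why one must first record that closed subspaces of $K$ inherit both of these properties before invoking it.
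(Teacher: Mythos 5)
Your proof is correct and follows essentially the same route as the paper's: part (1) via the closed-map-plus-compact-fibres criterion of Theorem \ref{t5} together with Proposition \ref{p7}, and part (2) by promoting properness of $u$ to properness of $u\tm_{\Top_B}u$ via Proposition \ref{p10} and transporting the closed diagonal of $K$ using Proposition \ref{p3}. The only cosmetic difference is that the paper first reduces to the case where $u$ is onto (via Proposition \ref{p6}) before applying the diagonal argument, whereas you identify $(u\tm_{\Top_B}u)(\Delta_K)=\Delta_{u(K)}$ directly and restrict to the subspace $u(K)\tm_{\Top_B}u(K)$, which is equally valid.
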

  
\begin{proof}\mbox{}
\begin{enumerate} 
 
 \item We use the characterization of proper maps given by Theorem \ref{t5}: Let $C$ be a closed subset of $K$. $C$ is fibrewise compact, fibrewise Hausdorff space over $B$, $X$ is weak Hausdorff thus $u(C)$ is closed. $u$ is then a closed map. $B$ is $\T_1$, by Proposition \ref{p7}, $X$ is $\T_1$. Let $x\in X$ and $b=p(x)$ where $p$ is the projection of $X$ on $B$. The subset $\{x\}$ is closed in $X$, thus $u^{-1}(x)$ is closed in the  compact space $X_b$. It follows that $u^{-1}(x)$ is compact. Therefore $u$ is proper.  
\item The map $u$ is proper, thus $u(K)$ is closed. By Proposition \ref{p6}, the subspace of a fibrewise weak Hausdorff space is fibrewise weak Hausdorff. We therefore may assume without loss of generalities that $u$ is onto. By the first point, $u$ is proper, thus by Proposition \ref{p10}, the map 
$$u\tm_{\Top_B}u: K\tm_{\Top_B} K\ra  X\tm_{\Top_B} X$$
is proper. $K$ is fibrewise Hausdorff, therefore by Proposition \ref{p3}, the diagonal $\Delta(K)$ of $K$ is closed in $K\tm_{\Top_B} K$. It follows that $\Delta(X)=u\tm_{\Top_B}u(K\tm_{\Top_B} K)$ is closed in $X\tm_{\Top_B} X$. By Proposition \ref{p3}, $X$ is fibrewise Hausdorff.
\end{enumerate}
\qedhere  
\end{proof}
\begin{proposition}\label{p38}
Assume that the base space $B$ is $\T_1$ and let $(X_i)_{i\in I}$ be a family of fibrewise weak Hausdorff spaces indexed by a (small) set $I$.  Then $\underset{\Top_B}{\overset{i\in I}{\prod}}X_i$ is fibrewise weak Hausdorff. 
\end{proposition}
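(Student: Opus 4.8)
The plan is to reduce the claim to the behaviour of the fibrewise weak Hausdorff condition under the formation of products, using only the definition (Definition \ref{d3}) together with Proposition \ref{p8}. Write $X=\underset{\Top_B}{\overset{i\in I}{\prod}}X_i$ and let $\pi_i:X\ra X_i$ denote the projections, which are continuous fibrewise maps. Fix an open set $\Omega$ of $B$, a fibrewise compact fibrewise Hausdorff space $K$ over $\Omega$, and a fibrewise map $\alpha:K\ra X_{\Omega}$; I must show $\alpha(K)$ is closed in $X_{\Omega}$. Restricting the product over $B$ to the open set $\Omega$ yields $X_{\Omega}\cong\underset{\Top_{\Omega}}{\overset{i\in I}{\prod}}(X_i)_{\Omega}$, so after replacing $B$ by $\Omega$ we may assume $\Omega=B$, and each $(X_i)$ is still fibrewise weak Hausdorff over the $\T_1$ base $B$ by Proposition \ref{p6} (a subspace — here the fibrewise restriction — of a fibrewise weak Hausdorff space is fibrewise weak Hausdorff).

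The key step is this: each composite $\alpha_i:=\pi_i\circ\alpha:K\ra X_i$ is a continuous fibrewise map from a fibrewise compact fibrewise Hausdorff space to a fibrewise weak Hausdorff space, so by Proposition \ref{p8}.1 every $\alpha_i$ is proper, and in particular (Proposition \ref{p4}, or Theorem \ref{t5}) $\alpha_i(K)$ is a closed fibrewise compact fibrewise Hausdorff subspace of $X_i$. I would then argue that $\alpha$ itself is proper. One clean way: the image $\alpha(K)$ is contained in $\underset{\Top_B}{\overset{i\in I}{\prod}}\alpha_i(K)$, which is fibrewise compact by (the infinite-product analogue of) Corollary \ref{c7} since each factor is fibrewise compact fibrewise Hausdorff, and is moreover fibrewise Hausdorff as a product of fibrewise Hausdorff spaces; call this space $P$. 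Now $\alpha$ factors as $K\ra P\hra X$, the first map being a continuous fibrewise map into a fibrewise Hausdorff space from a fibrewise compact space, hence proper with closed image by Proposition \ref{p4}. So $\alpha(K)$ is closed in $P$. It then remains to see that $P$ is closed in $X$: but $P=\bigcap_{i\in I}\pi_i^{-1}(\alpha_i(K))$, and each $\alpha_i(K)$ is closed in $X_i$ with $\pi_i$ continuous, so $P$ is an intersection of closed sets, hence closed in $X$. Therefore $\alpha(K)$ is closed in $X$, and $X$ is fibrewise weak Hausdorff.

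The main obstacle I anticipate is justifying that an arbitrary (small) product of fibrewise compact fibrewise Hausdorff spaces over $B$ is again fibrewise compact: Corollary \ref{c7} as stated handles only binary products, and the infinite case is essentially a fibrewise Tychonoff statement. If one wishes to avoid invoking it, an alternative route is to bypass $P$ and argue directly that $\alpha:K\ra X$ is proper via Theorem \ref{t5}: one checks $\alpha$ is closed (if $C\subseteq K$ is closed, then $C$ is fibrewise compact fibrewise Hausdorff, each $\alpha_i(C)$ is closed in $X_i$ by Proposition \ref{p8}, and $\alpha(C)=\alpha(K)\cap\bigcap_i\pi_i^{-1}(\alpha_i(C))$ — using injectivity of $\alpha$ on the relevant fibres, or more carefully that a point of $\bigcap_i\pi_i^{-1}(\alpha_i(C))$ lying in $\alpha(K)$ is the image of a point of $C$ — so $\alpha(C)$ is closed), and that each fibre $\alpha^{-1}(x)$ is compact (it is a closed subspace of the compact fibre $K_{p(x)}$, since $X$ is fibrewise $\T_1$ by Proposition \ref{p7}, so points of $X$ are closed). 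Either way the crux is the bookkeeping needed to identify $\alpha(K)$, or $\alpha(C)$, as an intersection of preimages of the closed sets $\alpha_i(K)$; once that is in place, continuity of the projections finishes the argument. Given the style of the surrounding section, I would take the first route and simply cite Corollary \ref{c7} (extended to small products) for fibrewise compactness of the product $P$.
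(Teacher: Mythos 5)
Your first route is essentially the paper's own proof: form the images $K_i=\alpha_i(K)$, use Proposition \ref{p8} to see that each is a closed, fibrewise Hausdorff subspace of $X_i$, deduce that $P=\bigcap_{i\in I}\pi_i^{-1}(K_i)$ is a closed, fibrewise Hausdorff subspace of the product, apply Proposition \ref{p4} to $K\ra P$ to get that $\alpha(K)$ is closed in $P$ and hence in the product, and handle a general open $\Omega$ via Proposition \ref{p6} exactly as you do. The only remark worth making is that the obstacle you single out --- fibrewise compactness of the infinite product $P$, i.e.\ a fibrewise Tychonoff theorem --- is never actually used in your argument: Proposition \ref{p4} needs only that the domain $K$ is fibrewise compact and the codomain $P$ is fibrewise Hausdorff (which follows from Proposition \ref{p8}.2 applied factorwise), so you can delete the appeal to an extended Corollary \ref{c7} together with the alternative route in your final paragraph, and what remains is precisely the argument in the paper.
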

  
\begin{proof}

Let $X=\underset{\Top_B}{\overset{i\in I}{\prod}}X_i$ and $p:X \ra B$  the projection of $X$ on $B$. 
\begin{itemize}
  
\item Step 1: Let $K$ be a fibrewise compact, fibrewise Hausdorff space over $B$, $u:K \ra X$  a continuous fibrewise map, $u_i:K\ra X_i$ the $i$-component of $u$ and $K_i=u_i(K)$, $i\in I$. Each $K_i$ is closed and by Proposition \ref{p8}.2, each $K_i$ is a  fibrewise Hausdorff subspace of $X_i$. It follows that $\underset{\Top_B}{\overset{i\in I}{\prod}}K_i$ is closed, fibrewise Hausdorff subspace of $X$. By Proposition \ref{p4}, $u(K)$ is closed in  $\underset{\Top_B}{\overset{i\in I}{\prod}}K_i$. Thus $u(K)$ is closed in  $X$.
\item Step 2: Let $\Omega$ be an open subset of $B$, 
$K$ a fibrewise compact, fibrewise Hausdorff space over $\Omega$, $Y=X_{\Omega}$ and $u:K \ra Y$ a continuous, fibrewise map. Define $Y_i=p^{-1}_{i}(\Omega)$. Then $Y=\underset{\Top_{\Omega}}{\overset{i\in I}{\prod}}Y_i$.
By Proposition \ref{p6}, each $Y_i$ is weak Hausdorff, thus by Step 1, 
$u(K)$ is closed in $Y$. It follows that $\underset{\Top_B}{\overset{i\in I}{\prod}}X_i$
is weak Hausdorff.
\end{itemize}
\qedhere   
\end{proof}

\begin{theorem}\label{t4} Assume that the base space $B$ is $\T_1$. 
Then the category $\hwTop_B$ is a reflective subcategory of $\Top_B$. In particular, 
$\hwTop$ is bicomplete. 

\end{theorem}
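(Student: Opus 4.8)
The plan is to obtain this as an immediate application of the recognition criterion of Theorem \ref{t3}. So the whole task reduces to checking that $\hwTop_B$ satisfies the three hypotheses there: it is replete and contains the fibrewise space $B$ over itself; it is closed under subobjects in $\Top_B$; and it is stable under small products formed in $\Top_B$. Once these are in place, Theorem \ref{t3} yields that $\hwTop_B$ is reflective in $\Top_B$, and bicompleteness of $\hwTop_B$ follows since $\Top_B$ is bicomplete (Proposition \ref{pp13}.1, or directly from the conclusion of Theorem \ref{t3}).

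First I would dispose of hypothesis (1) of Theorem \ref{t3}. Repleteness is immediate from Definition \ref{d3}, as being fibrewise weak Hausdorff is an isomorphism-invariant property of fibrewise spaces. For the second half, the structure map $1_B : B \ra B$ has each fibre equal to a one-point space, so $B$ (over itself) is fibrewise Hausdorff, hence fibrewise weak Hausdorff by Proposition \ref{p5}. For hypothesis (2), note that a monomorphism in the slice category $\Top_B$ is exactly an injective continuous fibrewise map, since the forgetful functor $\Top_B \ra \Top$ creates monomorphisms; thus for a mono $X \ra Y$ in $\Top_B$ with $Y \in \hwTop_B$, Proposition \ref{p6} gives that $X$ is fibrewise weak Hausdorff, and by repleteness $X \in \hwTop_B$. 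Finally, hypothesis (3) is precisely the statement of Proposition \ref{p38}: under the standing assumption that $B$ is $\T_1$, an arbitrary small product in $\Top_B$ of fibrewise weak Hausdorff spaces is again fibrewise weak Hausdorff.

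With all three hypotheses verified, Theorem \ref{t3} applies and gives the conclusion. The proof of the theorem proper therefore carries no real difficulty; the genuine work has already been done, and the only point requiring attention is the identification of monomorphisms in the slice category $\Top_B$ with injective fibrewise maps, so that Proposition \ref{p6} legitimately addresses the subobject condition. The place where the $\T_1$ assumption on $B$ is actually consumed is hypothesis (3), via Proposition \ref{p38} (which in turn rests on Proposition \ref{p8}); but this ingredient is already in hand, so no further obstacle arises here.
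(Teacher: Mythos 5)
Your proposal is correct and follows exactly the paper's route: the paper proves Theorem \ref{t4} by invoking Theorem \ref{t3} together with Proposition \ref{p6} (closure under subobjects) and Proposition \ref{p38} (closure under small products), which is precisely your verification of the three hypotheses. Your additional remarks on repleteness, on $B$ being fibrewise Hausdorff hence fibrewise weak Hausdorff via Proposition \ref{p5}, and on monomorphisms in $\Top_B$ being the injective fibrewise maps merely make explicit what the paper leaves implicit.
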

  
\begin{proof}
This follows from Theorem \ref{t3}, Proposition \ref{p6} and Proposition \ref{p38}.
\qedhere   
\end{proof}

 $k$-Hausdorff spaces are defined by Rezk in \cite[Section 4]{RC}. We here introduce the notion of fibrewise $k$-Hausdorff spaces.\\

\begin{definition}\label{d5}\mbox{}
\begin{enumerate}
 
	\item 
	A fibrewise space $X$ over $B$ is said to be fibrewise $k$-Hausdorff
if for each open set $\Omega$ of $B$, each fibrewise compact, fibrewise Hausdorff space $K$ over $\Omega$ and each continuous fibrewise map 
$u: K\ra X_{\Omega} \tm_{\Top_{\Omega}}X_{\Omega}$, the inverse image by $u$ of the diagonal of $X_{\Omega}$ is closed in $K$.
	\item  
	The  subcategory of $\Top_B$ whose objects are the fibrewise $k$-Hausdorff spaces is denoted by $\hkTop_B$.
	\end{enumerate}
\end{definition}

By Proposition \ref{p3}, a fibrewise Hausdorff space is fibrewise $k$-Hausdorff. The product in $\Top_B$ of fibrewise $k$-Hausdorff spaces is fibrewise $k$-Hausdorff.
Similarly, a subobject of a fibrewise $k$-Hausdorff spaces is fibrewise $k$-Hausdorff space.
We can apply Theorem \ref{t3} to get the following result.\\

\begin{proposition}\label{p11}
Assume that the base space $B$ is a $\T_1$-space. The subcategory $\hkTop_B$ of $\Top_B$ is reflective. In particular, 
$\hkTop_B$ is bicomplete.
\end{proposition}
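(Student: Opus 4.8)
The plan is to verify the three hypotheses of Theorem \ref{t3} for the subcategory $\Topo_B=\hkTop_B$, after which the conclusion (reflectivity and bicompleteness) is immediate. The three things to check are: (i) $\hkTop_B$ is replete and contains $B$ (over itself); (ii) $\hkTop_B$ is closed under subobjects in $\Top_B$; and (iii) $\hkTop_B$ is closed under small products formed in $\Top_B$.

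For (i), repleteness is clear since the defining condition of Definition \ref{d5} is expressed purely in terms of continuous fibrewise maps and closed subsets, hence is invariant under fibrewise homeomorphism. That $B\to B$ is fibrewise $k$-Hausdorff follows because it is fibrewise Hausdorff (its diagonal is closed, being the image of $B$ under the diagonal, which is a homeomorphism onto a closed subspace), and a fibrewise Hausdorff space is fibrewise $k$-Hausdorff by Proposition \ref{p3}: indeed if $u\colon K\to X_\Omega\tm_{\Top_\Omega}X_\Omega$ is continuous and $\Delta(X_\Omega)$ is closed, then $u^{-1}(\Delta(X_\Omega))$ is closed in $K$.

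For (ii), suppose $j\colon X\to Y$ is a monomorphism in $\Top_B$ with $Y$ fibrewise $k$-Hausdorff; we may treat $X$ as a subspace of $Y$. Let $\Omega$ be open in $B$, let $K$ be fibrewise compact fibrewise Hausdorff over $\Omega$, and let $u\colon K\to X_\Omega\tm_{\Top_\Omega}X_\Omega$ be continuous. Composing with the inclusion $X_\Omega\tm_{\Top_\Omega}X_\Omega\hookrightarrow Y_\Omega\tm_{\Top_\Omega}Y_\Omega$ gives a continuous map $u'$ with $u'^{-1}(\Delta(Y_\Omega))$ closed in $K$; but $u^{-1}(\Delta(X_\Omega))=u'^{-1}(\Delta(Y_\Omega))$ since $\Delta(X_\Omega)=\Delta(Y_\Omega)\cap(X_\Omega\tm_{\Top_\Omega}X_\Omega)$ under the subspace inclusion. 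Hence $X$ is fibrewise $k$-Hausdorff. For (iii), let $(X_i)_{i\in I}$ be fibrewise $k$-Hausdorff and set $X=\underset{\Top_B}{\overset{i\in I}{\prod}}X_i$. Given $\Omega$ open in $B$, a fibrewise compact fibrewise Hausdorff $K$ over $\Omega$, and a continuous $u\colon K\to X_\Omega\tm_{\Top_\Omega}X_\Omega$, write $\pi_i$ for the $i$-th projection; then $\pi_i\tm\pi_i$ composed with $u$ gives continuous maps $u_i\colon K\to (X_i)_\Omega\tm_{\Top_\Omega}(X_i)_\Omega$, and $u^{-1}(\Delta(X_\Omega))=\bigcap_{i\in I}u_i^{-1}(\Delta((X_i)_\Omega))$, an intersection of closed sets, hence closed. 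With (i)--(iii) established, Theorem \ref{t3} applies and yields that $\hkTop_B$ is reflective in $\Top_B$; bicompleteness then follows from the bicompleteness of $\Top_B$ together with Proposition \ref{pp6} (or directly from the last sentence of Theorem \ref{t3}).

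The only mildly delicate point is the identity $\Delta(X_\Omega)=\Delta(Y_\Omega)\cap(X_\Omega\tm_{\Top_\Omega}X_\Omega)$ in (ii) and the analogous fibrewise-product identities in (iii); these are routine once one recalls that fibrewise products over an open subspace $\Omega$ are just subspaces of the ambient fibrewise products, so there is no real obstacle. The proof is essentially an application of the already-proved machinery: since the remarks preceding the statement already assert that fibrewise Hausdorff spaces are fibrewise $k$-Hausdorff, that products of such are such, and that subobjects of such are such, the content of the argument is merely to package these observations into the hypotheses of Theorem \ref{t3}.
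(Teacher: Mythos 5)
Your proposal is correct and follows exactly the route the paper takes: the paper's ``proof'' consists of the remarks immediately preceding the proposition (fibrewise Hausdorff implies fibrewise $k$-Hausdorff via Proposition \ref{p3}, closure under subobjects and under products in $\Top_B$) followed by an appeal to Theorem \ref{t3}, and you have simply written out those routine verifications in full. Your diagonal-preimage arguments for (ii) and (iii) are the intended ones and are sound.
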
\mbox{}

  The next result  generalizes that of Rezk  \cite[Proposition 11.2]{RC}. 

\begin{proposition}\label{p12}
 Assume that the base space $B$ is a $\T_1$-space. Then $\hwTop_B$ is a reflective subcategory of $\hkTop_B$. 
\end{proposition}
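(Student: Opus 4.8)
The plan is to establish two containments: first that $\hwTop_B$ is a subcategory of $\hkTop_B$, and second that $\hwTop_B$ is a reflective subcategory of $\hkTop_B$. The first containment should follow from Proposition~\ref{p8}: if $X$ is fibrewise weak Hausdorff and $B$ is $\T_1$, then for any open $\Omega\subset B$, any fibrewise compact fibrewise Hausdorff $K$ over $\Omega$, and any fibrewise map $u:K\ra X_\Omega\tm_{\Top_\Omega}X_\Omega$, I would factor $u$ through its image and invoke Proposition~\ref{p8}.2 to conclude that $X_\Omega$ is actually fibrewise Hausdorff, whence by Proposition~\ref{p3} the diagonal $\Delta(X_\Omega)$ is closed in $X_\Omega\tm_{\Top_\Omega}X_\Omega$ and so $u^{-1}(\Delta(X_\Omega))$ is closed in $K$. (One must be careful that the base $\Omega$, being an open subspace of the $\T_1$-space $B$, is itself $\T_1$, so Proposition~\ref{p8} applies over $\Omega$.) This shows every fibrewise weak Hausdorff space over a $\T_1$-base is fibrewise $k$-Hausdorff, i.e.\ $\hwTop_B\subset\hkTop_B$.

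For the reflectivity, I would apply the machinery of Section~\ref{s6}, specifically Theorem~\ref{t3}, but now with $\mc=\hkTop_B$ in place of $\Top_B$. I need to check the three hypotheses of Theorem~\ref{t3} for $\hwTop_B$ viewed as a subcategory of $\hkTop_B$: repleteness and containment of the base $B$ (both clear, since $B$ is fibrewise Hausdorff hence fibrewise weak Hausdorff by Proposition~\ref{p5}); closure under subobjects (this is Proposition~\ref{p6}, noting that a monomorphism in $\hkTop_B$ is in particular an injective continuous fibrewise map); and closure under small products formed in $\hkTop_B$. For the last point, I would use that $\hkTop_B$ is reflective in $\Top_B$ (Proposition~\ref{p11}), so products in $\hkTop_B$ are computed by applying the reflector to the product in $\Top_B$; but the $\Top_B$-product of fibrewise weak Hausdorff spaces is already fibrewise weak Hausdorff by Proposition~\ref{p38}, hence already fibrewise $k$-Hausdorff by the first part and hence fixed by the reflector. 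Thus the product in $\hkTop_B$ of a family of fibrewise weak Hausdorff spaces coincides with their product in $\Top_B$ and is again fibrewise weak Hausdorff. Theorem~\ref{t3} then yields that $\hwTop_B$ is reflective in $\hkTop_B$.

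The step I expect to be the most delicate is verifying that Theorem~\ref{t3} genuinely applies with $\hkTop_B$ in the role of the ambient category: Theorem~\ref{t3} as stated concerns subcategories of $\Top_B$, so I would either need a mild restatement of that theorem for an arbitrary bicomplete category whose objects carry fibrewise topologies, or — more cleanly — I would argue directly. Concretely, the cleanest route is to observe that $\hwTop_B$ is already known to be reflective in $\Top_B$ (Theorem~\ref{t4}, using $B$ is $\T_1$), that $\hkTop_B$ is reflective in $\Top_B$ (Proposition~\ref{p11}), and that $\hwTop_B\subset\hkTop_B\subset\Top_B$; then a reflective subcategory sitting inside another reflective subcategory of a common ambient category is reflective in the intermediate one — the reflector $\Top_B\ra\hwTop_B$ restricts to a reflector $\hkTop_B\ra\hwTop_B$ because it sends $\hkTop_B$ into $\hwTop_B$ (every fibrewise weak Hausdorff reflection of a fibrewise $k$-Hausdorff space lies in $\hwTop_B$, which is contained in $\hkTop_B$) and the unit components are the same maps. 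This is a routine general-nonsense observation about composites of adjunctions, and using it sidesteps any need to reprove Theorem~\ref{t3} in greater generality. The final sentence about bicompleteness follows from Proposition~\ref{pp13}.1 together with the bicompleteness of $\hkTop_B$.
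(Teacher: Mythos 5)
Your reduction of the reflectivity claim to the single containment $\hwTop_B\subset\hkTop_B$ is exactly the paper's strategy: since $\hwTop_B$ is reflective in $\Top_B$ by Theorem~\ref{t4} and sits inside the full subcategory $\hkTop_B$, the reflector and its unit restrict, so only the containment needs proof. That part of your proposal, including your decision to bypass a re-proof of Theorem~\ref{t3} over the ambient category $\hkTop_B$, is correct and matches the paper.

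The containment argument itself has a genuine gap. You claim that, given $u:K\ra X_\Omega\tm_{\Top_\Omega}X_\Omega$ with $K$ fibrewise compact fibrewise Hausdorff, factoring through the image and invoking Proposition~\ref{p8}.2 lets you ``conclude that $X_\Omega$ is actually fibrewise Hausdorff.'' It does not: Proposition~\ref{p8}.2 only asserts that the \emph{image} $u(K)$ is a closed fibrewise Hausdorff subspace of the codomain; it says nothing about $X_\Omega$ itself, and a fibrewise weak Hausdorff space need not be fibrewise Hausdorff (if it were, $\hwTop_B$ would coincide with $\hTop_B$ by Proposition~\ref{p5} and the whole notion would be redundant). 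Nor does knowing that $u(K)$ is a fibrewise Hausdorff subspace of $X_\Omega\tm_{\Top_\Omega}X_\Omega$ directly yield that $u(K)\cap\Delta(X_\Omega)$ is closed in $u(K)$, since that intersection is not the diagonal of $u(K)$. The missing idea is the one the paper uses: write $u=(f_1,f_2)$, set $L=f_1(K)\cup f_2(K)$, note that $L$ is the image of the fibrewise compact fibrewise Hausdorff space $K\coprod_{\Top_B}K$ under $f_1\coprod_{\Top_B}f_2$ and hence, by Proposition~\ref{p8}.2, a closed fibrewise Hausdorff subspace of $X_\Omega$; then $u$ factors through $L\tm_{\Top_\Omega}L$, where $\Delta_L$ is closed by Proposition~\ref{p3}, and $u^{-1}(\Delta(X_\Omega))=u^{-1}(\Delta_L)$ is closed in $K$. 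With that substitution your argument goes through; the verifications of repleteness, closure under subobjects and products are correct but become unnecessary once you take the restriction-of-the-reflector route.
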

   
\begin{proof}
In the light of Theorem \ref{t4}, we just need to prove that $\hwTop_B$ is a  subcategory of $\hkTop_B$. 
 Let $X$ be a fibrewise weak Hausdorff space.
\begin{itemize} 
 
\item Step 1:
Let $f:K\ra X\tm_{\Top_B} X$ be a continuous, fibrewise map, where $K$ is fibrewise compact, fibrewise Hausdorff space. Let $f_1$ and $f_2$ be the components of the map $f$. Define $K_1=f_1(K)$, $K_2=f_2(K)$ and $L=K_1\cup K_2$. The subspace $L$ of $X$  is the image of the continuous, fibrewise map $f_1\coprod_{\Top_B} f_2:K\coprod_{\Top_B} K \ra X$. The  space $K\coprod_{\Top_B} K$ is fibrewise compact fibrewise Hausdorff, thus $L$ is closed and  by  Proposition \ref{p8}.2, $L$ is fibrewise Hausdorff. $f$ factors through $L\tm_{\Top_B} L$ as follows
$$\begin{tikzcd}[row sep=large]
K\arrow{rr}{f}\arrow{dr}[swap]{g}&&X\tm_{\Top_B} X\\
&L\tm_{\Top_B} L\arrow{ur}[swap]{j}
 \end{tikzcd}$$
where $g:K\ra L\tm_{\Top_B} L$ is continuous, fibrewise map and $j$ is the inclusion map.  
Let $\Delta_X$ and $\Delta_L$ be the diagonals of $X$ and $L$ respectively. By Proposition \ref{p3}, $\Delta_L$ is closed in $L\tm_{\Top_B} L$,  thus 
$$f^{-1}(\Delta_X)=g^{-1}(j^{-1}(\Delta_X))=g^{-1}(\Delta_L)$$
  is closed in $K$.
\item Step 2: Let $\Omega$ be open in $B$, $K$ a fibrewise compact, fibrewise Hausdorff space over $\Omega$ and 
$u: K\ra X_{\Omega} \tm_{\Top_{\Omega}}X_{\Omega}$ a continuous, fibrewise map. $X$ is fibrewise weak Hausdorff, by Proposition \ref{p6},  $X_{\Omega}$ is fibrewise weak Hausdorff. Therefore by Step 1, $u(K)$ is closed in $X_{\Omega} \tm_{\Top_{\Omega}}X_{\Omega}$. It follows that $X$ is $k$-Hausdorff.
\end{itemize}
 \qedhere   
\end{proof}
\begin{remark}\label{pr5}
 A space $X\in \Top$ is weak Hausdorff (resp. $k$-Hausdorff) if it corresponds, under the isomorphism $P$ of (\ref{peq16})
to a fibrewise weak Hausdorff (resp. $k$-Hausdorff) space over $Pt$. The subcategory of $\Top$ of such spaces is reflective and is denoted by $\hwTop$ (resp. $\hkTop$)   
\end{remark}

\section{Left Kan extendable subcategories of $\Top_B$}\label{s8}

It is well known that any subcategory of $\Top$ containing a nonempty space has a coreflective hull (\cite[Theorem 12]{HST}, \cite[Proposition 2.17]{HH} and \cite[page 283]{HEST}).
In this section, we prove that any subcategory of $\Top_B$, which is suitable in the sense of the definition below, has a strong coreflective hull. \\

\begin{definition}\label{pd10}
A subcategory $\mw$ of $\Top_B$ is said to be suitable if for every $b\in B$, there exists a fibrewise topological space $E(b)$ in $\mw$ such that 
 \begin{equation}\label{eq14} 
 \left\{ \begin{array}{ll}
E(b)_{b}\neq \emptyset &    \\[2mm]
 E(b)_{c}=\emptyset  &\text{ for all } c\neq b
\end{array} \right. 
\end{equation} 
where $E(b)_{c}$ is the fibre of $E(b)$ over $c \in B$.\\
\end{definition}
 
\noindent Let $\mw$ be a suitable subcategory of $\Top_B$ (See Definition \ref{pd10}). For $X\in \Top_B$, let 
\begin{equation}\label{eq1}
			J_X:\mw/X\ra\Top_B
\end{equation}
be the functor which takes an arrow $V \rightarrow X$ to its domain $V,$     
\begin{equation}\label{eq13}
			\abs{J_X}:\mw/X\ra\Set_{\abs{B}}
\end{equation}
  its underlying functor as defined by (\ref{eq72}) and 
	$$P_{\abs{B}}:\Set_{\abs{B}}\ra \Set$$
	the functor defined by (\ref{eq23}).
For $(V \st{\sigma}\ra X)  \in  \mw/X$, define a map  
$$\begin{array}{rcll}
\lambda_{\sigma}:&\abs{V} &\ra &\abs{X}\\
    &v &\mapsto&\abs{\sigma}(v)
	\end{array}$$
The maps $\lambda_{\sigma}$ define a cone

\begin{equation}\label{peq29}
			P_{\abs{B}}\abs{J_X} \st{\lambda}\Ra \abs{X}
\end{equation}

 \begin{itemize}
 
\item  Let $ V \st{\sigma}\ra X$, $V' \st{\sigma'}\ra X$ be  in  $\mw/X$,  $p_{\sigma}$ and $p_{\sigma'}$  the projections of the fibrewise spaces $V$ and $V'$  and  $v\in V$,  $v'\in V'$. The fact that $\mw$ is  suitable implies that 
	$\lambda_{\sigma}(v)=\lambda_{\sigma'}(v')$ iff the objects $(\sigma,v)$ and $(\sigma',v')$ of $\int P_{\abs{B}}\abs{J_X}$ are in the same connected component.

	\item Let  $x_0\in \abs{X}$ and let $b_0=\abs{p_X}(x_0)$, where  $p_X:X\ra B$ is the projection of the fibrewise space $X$ over $B$. Let $E(b_0)$ be as in (\ref{eq14}) and define $\sigma_0:E(b_0) \ra X$ to be the fibrewise map  given by $\lambda_{\sigma_0}(e)=x_0$  for all $e\in E(b_0)$. Then 
	$\sigma_0 \in \mw/X$ and $x_0\in  \lambda_{\sigma_0}(E(b_0))$.
	\end{itemize}
Therefore by Remark \ref{r1}.1.(c), the cone $P_{\abs{B}}\abs{J_X} \st{\lambda}\Ra \abs{X}$ given by (\ref{peq29}) is a colimiting cone. By Remark \ref{r1}.2, $\abs{J_X}$ has a colimit. 
		Therefore by Lemma \ref{l3}, $J_X$ has a colimit whose underlying   set is $\abs{X}$ and whose topology is the final topology defined by the functions $P_{\abs{B}}\lambda_{\sigma}= \abs{P_B(\sigma)}   :\abs{V} \ra \abs{X}$, $\sigma \in \mw/X$.\\
\indent This proves that the inclusion functor $\mw\st{J}{\hra}\Top_B$ has a  density comonad $(L,\epsilon,\delta)$  satisfying $\abs{L(X)}=\abs{X}$, $\forall X\in\Top_B$.
		Furthermore, the underlying map $\abs{\epsilon_X}$ of the counit $\epsilon_X:L(X)\ra X$ of the subcategory $\mw$ of $\Top_B$ is just the identity map $1_{\abs{X}}$.
		In particular, $\epsilon_X$ is monic and by  Theorem \ref{pt1},
		we have the following result. 
		
\begin{theorem}\label{t2}	
Let $\mw$ be a suitable subcategory of $\Top_B$. Then:
\begin{enumerate}
  
\item 
The subcategory $\mw$ is left Kan extendable in $\Top_B$.
\item The coreflector $ \Top_B \xra{\omega} \mw_l[\Top_B]$ takes a fibrewise  topological space $X$ to the fibrewise topological space $\omega(X)$ having the same underlying  set as $X$ and whose topology is the final topology induced by the functions $\abs{V} \xra{\abs{P_B(\sigma)}} \abs{X}$, $\sigma \in \mw/X$.
\item A fibrewise topological space $X$ over $B$ is $\mw$-generated iff $X$ has the final topology defined by all continuous fibrewise maps  $V \sra X$, where $V$ is a fibrewise space in $\mw$.

\end{enumerate}  
\end{theorem}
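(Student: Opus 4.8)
The plan is to verify the two hypotheses of Theorem \ref{pt1} — namely that the inclusion functor $J:\mw\ra\Top_B$ admits a density comonad and that the counit of that comonad is $\mw$-monic at every object — and then to read off the explicit description of the coreflector and of the $\mw$-generated objects from the colimit formula for the density comonad. All the analytic work has, in effect, already been done in the discussion preceding the statement; the proof is essentially an application of the machinery of Section \ref{s4} together with the appendices on limits and colimits in slice categories.

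First I would recall that, for a suitable $\mw$ and a fixed $X\in\Top_B$, the functor $J_X:\mw/X\ra\Top_B$ of (\ref{eq1}) has a colimit. The key point, established just above the theorem, is that the underlying $\Set_{\abs B}$-valued (equivalently, after applying $P_{\abs B}$, $\Set$-valued) functor $\abs{J_X}$ has a colimit whose cone (\ref{peq29}) is colimiting: this is where suitability enters, via the two bullet points identifying the connected components of $\int P_{\abs B}\abs{J_X}$ with the fibres of the quotient and exhibiting every point of $\abs X$ in the image of some $\lambda_{\sigma_0}$ coming from an $E(b_0)$. Invoking Remark \ref{r1} and Lemma \ref{l3} then lifts this to a colimit of $J_X$ itself, with underlying set $\abs X$ and topology the final topology determined by the maps $\abs{P_B(\sigma)}:\abs V\ra\abs X$. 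By the dual of \cite[Theorem 3 page 244]{ML}, the pointwise existence of these colimits says exactly that $J$ has a density comonad $(L,\epsilon,\delta)$ with $L(X)=\colim J_X$; in particular $\abs{L(X)}=\abs X$.

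Next I would check $\mw$-monicity of the counit. Since the underlying set of $L(X)$ is $\abs X$ and, tracing through the construction of the universal cone, the underlying map of $\epsilon_X:L(X)\ra X$ is the identity $1_{\abs X}$, the map $\epsilon_X$ is a (continuous) bijection, hence monic in $\Top_B$, hence a fortiori $\mw$-monic. With both hypotheses of Theorem \ref{pt1} verified, part (1) follows immediately: $\mw$ is left Kan extendable in $\Top_B$, and $\mw_l[\Top_B]$ is its strong coreflective hull, with coreflector $F_L$ the free $L$-coalgebra functor. For part (2), by the dual of Proposition \ref{pp17} the coreflector $\omega$ agrees (up to the identification of $L$-coalgebras with their underlying objects) with $L$ itself, so $\omega(X)$ has underlying set $\abs X$ and carries the final topology induced by the $\abs{P_B(\sigma)}$, $\sigma\in\mw/X$. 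For part (3), $X$ is $\mw$-generated iff $\epsilon_X:L(X)\ra X$ is an isomorphism (dual of Proposition \ref{pp2}); since $\epsilon_X$ is already a continuous bijection, this holds iff $X$ carries the final topology defined by all continuous fibrewise maps $V\sra X$ with $V\in\mw$, which is the asserted characterization. (One may also obtain part (3) directly from Corollary \ref{pc6}.)

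The only genuine subtlety — and the place I would be most careful — is the transfer of the colimit computation from $\Set_{\abs B}$ back to $\Top_B$: one must make sure that $\abs{J_X}$ really does have a colimit in $\Set_{\abs B}$ (this is the content of the two bullets, resting on suitability: without the spaces $E(b)$ one could not guarantee surjectivity of the comparison cone and the connected-components description would fail), and that Lemma \ref{l3} legitimately produces a colimit of $J_X$ with the final topology. Everything downstream of that — the existence and idempotency of $L$ via the dual of Proposition \ref{pp1}, the monicity of $\epsilon_X$, and the explicit descriptions in (2) and (3) — is then formal, following from the cited results of Section \ref{s4}. So I expect no real obstacle beyond bookkeeping, provided the appendix lemmas on slice-category colimits are in hand.
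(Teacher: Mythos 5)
Your proposal is correct and follows essentially the same route as the paper: the paper's own proof consists of the discussion immediately preceding the statement (the colimit computation of $\abs{J_X}$ in $\Set_{\abs{B}}$ via suitability and Remark \ref{r1}, lifted to $\Top_B$ by Lemma \ref{l3}, yielding a density comonad with $\abs{L(X)}=\abs{X}$ and $\abs{\epsilon_X}=1_{\abs{X}}$, hence $\epsilon_X$ monic), followed by an appeal to Theorem \ref{pt1}. The descriptions in parts (2) and (3) are then read off exactly as you do, so there is nothing to add.
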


\begin{example}\label{ex3}

For $b\in B$, let $B^b$ be the fibrewise subspace of $B$ defined by Example \ref{ex2}.
Let $\md$ be the subcategory of $\Top_B$ whose objects are the fibrewise spaces  $B^b$, $b\in B$. Then $\md$ is a suitable subcategory of  
 $\Top_B$. It is then left Kan extendable and $\md_l[\Top_B]$ is precisely the subcategory $\Dis_B$ of $\Top_B$  of discrete fibrewise spaces over $B$. 
\end{example}
\indent A subcategory $\mw$ of $\Top$ is said to be suitable if it  corresponds, under the isomorphism $P$ of (\ref{peq16}) to a suitable subcategory of $\Top_B.$ That is, if $\mw$ contains a nonempty space. By substituting $\Pt$ for $B$, one partially recovers a result of Herrlich and Strecker  \cite[Proposition 2.17]{HH}. 

\begin{corollary}\label{pc14}	
Let $\mw$ be a  suitable subcategory of $\Top$. Then:
\begin{enumerate} 
 
\item 
$\mw$ is left Kan extendable in $\Top$.
\item The coreflector $ \Top \xra{\omega} \mw_l[\Top]$  takes a topological space $X$ to the topological space $\omega(X)$ having the same underlying set as $X$ and whose topology is the final topology induced by the functions $\abs{V} \st{\abs{\sigma}}\ra \abs{X}$, $\sigma \in \mw/X$.
\item A topological space $X$ is $\mw$-generated iff $X$ has the final topology defined by all continuous maps  $V \ra X,$ $V \in \mw$.\\

\end{enumerate}  
\end{corollary}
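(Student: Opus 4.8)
The plan is to deduce everything from Theorem~\ref{t2} by transporting the situation along the isomorphism $P:\Top_{\Pt}\ra\Top$ of (\ref{peq16}). First I would note that, since $\Pt$ has exactly one point $b$, a fibrewise space $E(b)$ over $\Pt$ subject to (\ref{eq14}) is nothing but a nonempty topological space; hence, by the definition of suitability for subcategories of $\Top$ given just before the statement, $\mw\subseteq\Top$ is suitable (equivalently, contains a nonempty space) precisely when the corresponding subcategory $\mw':=P^{-1}(\mw)$ of $\Top_{\Pt}$ is suitable in the sense of Definition~\ref{pd10}.

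Next I would apply Theorem~\ref{t2} to $\mw'$ in $\Top_{\Pt}$. Since $P$ is an isomorphism of categories, it carries pointwise colimits, density comonads together with their counits, idempotency, and categories of coalgebras back and forth; therefore the idempotent density comonad of the inclusion $\mw'\hra\Top_{\Pt}$ is carried to an idempotent density comonad of the inclusion $\mw\hra\Top$. Thus $\mw$ is left Kan extendable in $\Top$ and $\mw_l[\Top]\cong P\big(\mw'_l[\Top_{\Pt}]\big)$, which is item~1; likewise the coreflector $\omega'$ of Theorem~\ref{t2} for $\mw'$ is carried to a coreflector $\omega$ for $\mw$.

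For items~2 and~3 I would unwind the explicit descriptions of Theorem~\ref{t2} through $P$. Because $\abs{\Pt}$ is a one-element set, $\Set_{\abs{\Pt}}$ is isomorphic to $\Set$ and the functor $P_{\abs{\Pt}}$ of (\ref{eq23}) is naturally isomorphic to the identity, so the underlying functor $\abs{\cdot}:\Top_{\Pt}\ra\Set_{\abs{\Pt}}$ becomes the ordinary underlying-set functor $\abs{\cdot}:\Top\ra\Set$; moreover the comma category $\mw'/X$ is identified with $\mw/X$, and each map $\abs{V}\xra{\abs{P_{\Pt}(\sigma)}}\abs{X}$ becomes $\abs{V}\xra{\abs{\sigma}}\abs{X}$ for $\sigma\in\mw/X$. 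Hence $\omega$ endows $\abs{X}$ with the final topology determined by these maps, and $X$ is $\mw$-generated precisely when it already carries this final topology. I do not expect a real obstacle here; the only point needing (routine) care is checking that every piece of structure in play (pointwise colimits, density comonads, idempotency, coalgebra categories, and the underlying-set functors) is transported faithfully by the isomorphism $P$, together with the bookkeeping identifying $\Set_{\abs{\Pt}}$ with $\Set$.
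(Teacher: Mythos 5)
Your proposal is correct and is essentially the paper's own argument: the paper obtains Corollary \ref{pc14} precisely by substituting $\Pt$ for $B$ in Theorem \ref{t2} and transporting along the isomorphism $P$ of (\ref{peq16}), with suitability of $\mw\subseteq\Top$ defined exactly as correspondence to a suitable subcategory of $\Top_{\Pt}$ (i.e.\ containing a nonempty space). Your additional bookkeeping about $\Set_{\abs{\Pt}}\cong\Set$ and the transport of density comonads is the routine verification the paper leaves implicit.
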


Let $\mw$ be a suitable subcategory of $\Top_B$. For $b\in B$, let $E(b)$ in $\mw$ be as in (\ref{eq14}) and let $B^b$ to be as defined in Example \ref{ex3}.
$B^b$ is a retract of  $E(b)$. By Lemma \ref{pl3}, $B^b$ is $\mw$-generated. Therefore by Example \ref{ex3} and Corollary \ref{pc5}.1, every discrete fibrewise space is $\mw$-generated and we have the following. 

  \begin{lemma}\label{l4}	
Let $\mw$ be a suitable subcategory of $\Top_B$. Then every discrete fibrewise space over $B$ is $\mw$-generated. 
 \end{lemma}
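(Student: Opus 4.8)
*Let $\mw$ be a suitable subcategory of $\Top_B$. Then every discrete fibrewise space over $B$ is $\mw$-generated.*

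The plan is to reduce the general discrete fibrewise space to a coproduct of copies of the basic objects $B^b$ and then invoke the closure of $\mw_l[\Top_B]$ under colimits. First I would recall the setup: for each $b\in B$ the suitability hypothesis (\ref{eq14}) provides $E(b)\in\mw$ whose fibre over $b$ is nonempty and whose other fibres are empty, and $B^b$ (Example \ref{ex3}) is the fibrewise subspace of $B$ supported at $b$. The first key step is to exhibit $B^b$ as a retract of $E(b)$ in $\Top_B$: pick any point $e_0\in E(b)_b$, let $i:B^b\ra E(b)$ send $b\mapsto e_0$ (a fibrewise map since $e_0$ lies over $b$), and let $r:E(b)\ra B^b$ be the unique fibrewise map — it is forced, because $E(b)$ is supported over $\{b\}$ and $B^b$ has exactly one point over $b$; continuity of $r$ is automatic since $B^b$ carries the subspace topology from $B$ and $r$ is just (a corestriction of) the projection. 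Then $r\circ i=1_{B^b}$, so $B^b$ is a retract of $E(b)$.

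The second step applies Lemma \ref{pl3}: since $\mw_l[\Top_B]$ is a coreflective subcategory of $\Top_B$ (Theorem \ref{t2}.1 together with Proposition \ref{pp17}), and $E(b)\in\mw\subseteq\mw_l[\Top_B]$, the retract $B^b$ is isomorphic to an object of $\mw_l[\Top_B]$; as $\mw_l[\Top_B]$ is replete, $B^b$ is $\mw$-generated. The third step handles an arbitrary discrete fibrewise space $D$ over $B$: writing $|D|$ for its underlying set and $p:D\ra B$ for the projection, $D$ is (isomorphic to) the coproduct $\coprod_{d\in|D|}B^{p(d)}$ in $\Top_B$, since a discrete fibrewise space is exactly a disjoint union, over its points, of one-point fibrewise spaces sitting over their images in $B$, and $B^{p(d)}$ is precisely such a one-point space. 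Alternatively one can describe $D$ via Corollary \ref{pc6}: $D\cong\colim JF$ for the evident functor $F$ from the discrete category on $|D|$ to $\mw_l[\Top_B]$ sending $d\mapsto B^{p(d)}$. Either way, $D$ is a colimit of $\mw$-generated objects.

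Finally I would conclude: by Proposition \ref{pp6}.1 (equivalently Corollary \ref{pc6}), the inclusion $\mw_l[\Top_B]\hookrightarrow\Top_B$ creates all colimits that $\Top_B$ admits, so the coproduct $\coprod_{d\in|D|}B^{p(d)}$, computed in $\Top_B$, lies in $\mw_l[\Top_B]$; hence $D$ is $\mw$-generated. I do not anticipate a serious obstacle here — the only point requiring a little care is the verification that $B^b$ genuinely is a retract of $E(b)$ in the slice category $\Top_B$ (one must check the triangles over $B$ commute and that the maps are continuous), but this is routine given the shape of $E(b)$. The rest is a direct application of Lemma \ref{pl3}, Theorem \ref{t2}, and the colimit-closure of $\mw_l[\Top_B]$.
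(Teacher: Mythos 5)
Your proof is correct and follows essentially the same route as the paper: the paper likewise observes that $B^b$ is a retract of $E(b)$ and invokes Lemma \ref{pl3} to conclude that each $B^b$ is $\mw$-generated. Where the paper then finishes by citing Example \ref{ex3} and Corollary \ref{pc5}.1, you instead decompose a discrete fibrewise space explicitly as the coproduct of the $B^{p(d)}$ and appeal to the colimit-closure of $\mw_l[\Top_B]$ (Proposition \ref{pp6}.1); the two finishes are interchangeable.
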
	

\begin{proposition}\label{p13}	
Let $\mw$ be a suitable subcategory of $\Top_B$. Then: 
\begin{enumerate}
 
	\item The fibrewise quotient of a $\mw$-generated fibrewise space is $\mw$-generated.
	\item  A fibrewise space is $\mw$-generated iff it is the fibrewise quotient of a coproduct of spaces in $\mw$.
\end{enumerate}
\end{proposition}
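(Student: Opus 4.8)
The plan is to base everything on the final‑topology description of $\mw$-generated fibrewise spaces provided by Theorem \ref{t2}.3, together with the standard facts that the forgetful functor $\Top_B=\Top/B\ra\Top$ has a right adjoint (namely $Z\mapsto(B\tm_{\Top}Z\ra B)$), so that fibrewise coproducts and fibrewise quotient maps in $\Top_B$ are computed as in $\Top$, and that a surjection which carries the final topology from its domain is a quotient map.

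For (1): I would let $q\colon X\ra Y$ be a fibrewise quotient map with $X$ a $\mw$-generated space. The topology of $Y$ is contained in the final topology determined by the family of all continuous fibrewise maps $\tau\colon W\ra Y$ with $W$ in $\mw$, so by Theorem \ref{t2}.3 it suffices to prove the reverse inclusion. Let $U\subseteq Y$ satisfy: $\tau^{-1}(U)$ is open for every such $\tau$. For any continuous fibrewise map $\sigma\colon V\ra X$ with $V$ in $\mw$, the composite $q\sigma$ is one of these $\tau$, so $\sigma^{-1}\bigl(q^{-1}(U)\bigr)=(q\sigma)^{-1}(U)$ is open in $V$. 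Since $X$ is $\mw$-generated, Theorem \ref{t2}.3 forces $q^{-1}(U)$ to be open in $X$, and then $U$ is open in $Y$ because $q$ is a quotient map. Hence $Y$ is $\mw$-generated.

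For (2): In the direction $(\Leftarrow)$, a fibrewise coproduct $\coprod_i V_i$ of objects $V_i$ of $\mw$ is $\mw$-generated --- this is immediate from Corollary \ref{pc6} (take the indexing category to be discrete), or from Proposition \ref{pp6}.1, since the inclusion $\mw_l[\Top_B]\hra\Top_B$ creates coproducts and each $V_i\in\mw\subseteq\mw_l[\Top_B]$ --- and therefore a fibrewise quotient of it is $\mw$-generated by part (1). In the direction $(\Rightarrow)$, suppose $Y$ is $\mw$-generated. Form the fibrewise coproduct $W=\coprod_{\sigma}V_\sigma$ of the domains of all continuous fibrewise maps $\sigma\colon V_\sigma\ra Y$ with $V_\sigma$ in $\mw$, with canonical fibrewise map $e\colon W\ra Y$; equivalently, one may take $W=\coprod_{k}F(k)$ for a small diagram $F\colon\mk\ra\mw$ with $Y\cong\colim JF$ as in Corollary \ref{pc6}. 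That $e$ is surjective uses the defining property of suitability: for $b\in B$ and $y\in Y_b$ the space $E(b)$ of (\ref{eq14}) admits a continuous fibrewise map $E(b)\ra Y$ with image $\{y\}$. By Theorem \ref{t2}.3 the topology of $Y$ is the final topology for the family $\{\sigma\}$, which is the same as the final topology for the single map $e$; hence $e$ is a fibrewise quotient map, presenting $Y$ as a fibrewise quotient of a coproduct of objects of $\mw$.

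I do not expect a genuine obstacle here: the whole argument is bookkeeping with final topologies, and the one spot where the exact form of ``suitable'' intervenes is the surjectivity of $e$ in $(\Rightarrow)$. The main thing to stay alert to is purely notational --- the fibrewise coproduct and fibrewise quotient used throughout must not be confused with the fibrewise product $\tm_{\Top_B}$, whose formation is entirely different.
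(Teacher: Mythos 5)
Your proof is correct, but for part (1) it takes a genuinely different route from the paper's. The paper argues categorically: it presents the fibrewise quotient $X/\!\sim$ as the coequalizer in $\Top_B$ of two maps $R\rightrightarrows X$, where $R$ is the \emph{discrete} fibrewise space on the graph of the equivalence relation; since discrete fibrewise spaces are $\mw$-generated (Lemma \ref{l4}, which itself uses suitability via the retraction $B^b\ra E(b)\ra B^b$) and $\mw_l[\Top_B]$ is closed under colimits (Proposition \ref{pp6}.1), the coequalizer lies in $\mw_l[\Top_B]$. You instead work pointwise with the final-topology characterization of Theorem \ref{t2}.3, chasing open sets through $q\sigma$; this is the classical ``quotient of a compactly generated space is compactly generated'' argument, is perfectly sound here (fibrewise quotients are quotients on underlying spaces since $P_B$ creates colimits), and notably does not invoke suitability at all for part (1), whereas the paper's route does (through Lemma \ref{l4}). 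The paper's argument buys uniformity with the rest of its colimit machinery; yours is more elementary and self-contained. For part (2) the paper simply defers to the Escard\'o--Lawson argument, and what you write out --- surjectivity of the canonical map $e$ from suitability, plus the identification of the final topology for the family $\{\sigma\}$ with that for the single map $e$ --- is exactly that argument, so there is nothing to compare there. The only caveat worth flagging is set-theoretic: the coproduct indexed by \emph{all} $\sigma\colon V_\sigma\ra Y$ need not be small if $\mw$ is large, but your fallback via Corollary \ref{pc6} handles this to the same extent the paper itself does.
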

  
\begin{proof}\mbox{} 

\begin{enumerate}
 
	\item Let $X$ be a $\mw$-generated fibrewise space and $\sim$ a fibrewise equivalence relation on $\abs{X}$. Let $R$ the discrete topological space whose underlying set is the graph of the equivalence relation $\sim$. The space $R$ is a fibrewise space over $B$. The fibrewise quotient quotient space $X/\sim$ is the coequalizer in $\Top_B$
$$\begin{tikzcd}
R \ar[r,shift left=.75ex,"pr_1"]
  \ar[r,shift right=.75ex,swap,"pr_2"]
&
X \ar[r,"q"]  &X/\sim 
\end{tikzcd}$$
 where $pr_1$ and $pr_2$ are induced by the projections on the first and second factors. The fibrewise space $X$ is $\mw$-generated and by Lemma \ref{l4}, $R$ is $\mw$-generated. Therefore by Proposition \ref{pp6}.1, $X/\sim$ is $\mw$-generated.
	\item Straight forward generalization of the Escardó-Lawson proof of the same result when $B$ is a one point space   \cite [Lemma 3.2.(iv)]{ES}. 
\end{enumerate}

 \qedhere   
\end{proof}	
\begin{corollary}\label{c14} 
Let $\mw$ be a suitable subcategory of $\Top_B$. Assume that a fibrewise space $X$ is such that every point of $X$ has a neighborhood which is in $\mw$. Then $X$ is $\mw$-generated.	
 \end{corollary}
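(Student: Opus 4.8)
The plan is to invoke the explicit description of $\mw$-generated spaces given by Theorem~\ref{t2}.3: a fibrewise space $X$ is $\mw$-generated exactly when its topology $\tau_X$ coincides with the final topology $\tau_f$ on the set $\abs{X}$ induced by all continuous fibrewise maps $\sigma\colon V\to X$ with $V\in\mw$. Since every such $\sigma$ is continuous for $\tau_X$ --- it is a morphism of $\Top_B$ landing in $X$ --- one always has $\tau_X\subseteq\tau_f$; equivalently, the topology of $\omega(X)$ refines that of $X$. Hence the corollary reduces to the reverse inclusion $\tau_f\subseteq\tau_X$, i.e.\ to showing that every $\tau_f$-open subset of $X$ is already open in $X$.

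First I would fix a $\tau_f$-open set $A$ and a point $x\in A$, and use the hypothesis to choose a neighbourhood $N$ of $x$ in $X$ which, endowed with the restricted projection $p_X|_N\colon N\to B$, is an object of $\mw$. Set $U=\operatorname{int}_X(N)$, an open neighbourhood of $x$ contained in $N$. The inclusion $j\colon N\hookrightarrow X$ is a continuous fibrewise map whose domain lies in $\mw$, so $A\cap N=j^{-1}(A)$ is open in the subspace $N$; write $A\cap N=W\cap N$ with $W$ open in $X$. Intersecting with $U\subseteq N$ gives $A\cap U=W\cap U$, which is open in $X$, and $x\in A\cap U\subseteq A$. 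Thus $A$ is an $X$-neighbourhood of each of its points, hence open in $X$. This establishes $\tau_f\subseteq\tau_X$, so $\tau_f=\tau_X$ and $X$ is $\mw$-generated.

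I do not expect a genuine obstacle here: the only content is that ``carrying the final topology'' is a local property, and the two points requiring a little care are keeping the two topologies straight --- it is the finer topology $\tau_f$ whose open sets must be shown to lie in the coarser $\tau_X$ --- and the bookkeeping of replacing an arbitrary $N\in\mw$ by its interior so that $A\cap U$ comes out open in $X$ rather than merely in $N$. As an alternative that avoids topologies, one can observe that the open cover of $X$ by neighbourhoods in $\mw$ exhibits $X$ as a fibrewise quotient of the coproduct of these open sets; that coproduct is $\mw$-generated, being a colimit of objects of $\mw$ (Corollary~\ref{pc6}, Proposition~\ref{pp6}.1), and then $X$ is $\mw$-generated by Proposition~\ref{p13}.1 --- the non-trivial input being, once more, the finality of the quotient map, which is exactly the local argument of the preceding paragraph.
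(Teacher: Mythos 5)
Your proposal is correct. Your primary argument is a more hands-on route than the paper's: you unwind Theorem \ref{t2}.3 and verify directly that every set open in the final topology is open in $X$, by localizing at a point, pulling back along the inclusion of a $\mw$-neighbourhood $N$, and passing to $\operatorname{int}_X(N)$ to upgrade ``open in $N$'' to ``open in $X$''; the bookkeeping with $A\cap U = W\cap U$ is exactly right, and the easy inclusion $\tau_X\subseteq\tau_f$ is correctly disposed of. The paper instead takes the route you sketch only as an alternative at the end: it forms the coproduct $\coprod^{x\in X}_{\Top_B} V_x$ of the chosen neighbourhoods, asserts that the induced map to $X$ is a fibrewise quotient map, and concludes by Proposition \ref{p13}.2 (a fibrewise space is $\mw$-generated iff it is a fibrewise quotient of a coproduct of objects of $\mw$). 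The real content in both versions is the same locality statement --- openness can be tested on a family of subsets whose interiors cover $X$, which the paper isolates as Lemma \ref{pl9} --- so the two proofs differ only in packaging: yours makes the point-set argument explicit and avoids the quotient-of-coproduct machinery, while the paper's is shorter at the price of leaving the quotient-map claim to the reader.
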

   	
\begin{proof} 	
  For each $x\in X$, choose a  neighborhood $V_x$ of $x$ which is in $\mw$ and let 
$i_x:V_x \ra X$ be the inclusion map. Then the map 
\begin{equation}\label{eq71}
 \coprod ^{x\in X}_{\Top_B} V_x \ra X
\end{equation}
whose restriction to $V_x$ is $i_x$, 
is a  fibrewise quotient map. By Proposition \ref{p13}.2, $X$ is $\mw$-generated.
\qedhere   
\end{proof}	
 
\begin{proposition}\label{p14}
Let  $\Topo_B$ be a reflective subcategory of $\Top_B$ that is closed under subobjects and let $\mw$ be a suitable subcategory of $\Topo_B$. Then
\begin{enumerate}

\item The subcategory $\mw$ of $\Topo_B$ is left Kan extendable.
\item $\mw_l[\Topo_B]= \Topo_B\cap\mw_l[\Top_B]$.
\item  A reflection of $\Top_B$ on $\Topo_B$ induces a reflection of $\mw_l[\Top_B]$ on $\mw_l[\Topo_B]$.
 
 \item  A coreflection of $\Top_B$ on $\mw_l[\Top_B]$  induces a coreflection of  $\Topo$ on $\mw_l[\Topo]$. 
 
 \end{enumerate}
\end{proposition}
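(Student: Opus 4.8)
The plan is to obtain this as a direct application of the abstract Theorem \ref{pt4}, taken with $\mc=\Top_B$ and $\mc_{0}=\Topo_B$. Three of the four hypotheses of that theorem are immediate: $\Topo_B$ is reflective in $\Top_B$ by assumption; $\mw$ is a subcategory of $\Topo_B$ by assumption; and $\mw$ is left Kan extendable in $\Top_B$ by Theorem \ref{t2}, because the condition (\ref{eq14}) defining a suitable subcategory refers only to the objects of $\mw$ and their fibres, so a suitable subcategory of $\Topo_B$ is automatically a suitable subcategory of $\Top_B$. Writing $(U\dashv F_{L})$ for the coreflection of $\Top_B$ on $\mw_l[\Top_B]$ supplied by Proposition \ref{pp17}, the only point left to check is the remaining hypothesis $F_{L}(\Topo_B)\subset\Topo_B$.

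For this, fix $Y\in\Topo_B$. By Theorem \ref{t2}.2 the object $F_{L}(Y)=\omega(Y)$ has the same underlying set as $Y$, carrying the final topology induced by the underlying maps of the fibrewise maps $V\sra Y$ with $V$ in $\mw$; since each such map is in particular continuous as a map into $Y$, this final topology is finer than that of $Y$, so the counit $\epsilon_{Y}\colon\omega(Y)\ra Y$ is a continuous bijection. A continuous bijection is a monomorphism of $\Top_B$, so $\omega(Y)$ is a subobject of the object $Y$ of $\Topo_B$; as $\Topo_B$ is closed under subobjects (and replete), $\omega(Y)\in\Topo_B$. Hence $F_{L}(\Topo_B)\subset\Topo_B$ and Theorem \ref{pt4} applies with the above data.

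The four conclusions of Theorem \ref{pt4} then translate verbatim into the four assertions of the proposition: statement 1 is Theorem \ref{pt4}.1; statement 2, that $\mw_l[\Topo_B]=\Topo_B\cap\mw_l[\Top_B]$, is Theorem \ref{pt4}.3; statement 3 is Theorem \ref{pt4}.2; and statement 4 is Theorem \ref{pt4}.4 applied to the coreflection $(U\dashv F_{L})$, noting that any coreflection of $\Top_B$ on $\mw_l[\Top_B]$ is naturally isomorphic to it, so the induced coreflection of $\Topo_B$ on $\mw_l[\Topo_B]$ is independent of the choice.

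The only step with genuine content is the inclusion $F_{L}(\Topo_B)\subset\Topo_B$, and within it the observation that the counit of the $\mw$-generated coreflection is a continuous bijection, which is precisely what lets closure of $\Topo_B$ under subobjects be invoked; everything else is a matter of matching the present situation to the hypotheses of Theorem \ref{pt4} and Theorem \ref{t2}.
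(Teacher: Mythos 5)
Your proof is correct and follows essentially the same route as the paper: both reduce the proposition to Theorem \ref{pt4} after establishing $F_{L}(\Topo_B)\subset\Topo_B$ via monicity of the counit together with closure of $\Topo_B$ under subobjects. The only (immaterial) difference is how monicity is obtained — you read it off the explicit description of $\omega(Y)$ in Theorem \ref{t2}.2 as a continuous bijection, while the paper deduces it from the counit being $\mw$-monic (Theorem \ref{pt1}) combined with suitability of $\mw$; both observations already appear in the text preceding Theorem \ref{t2}.
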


\begin{proof} By Theorem \ref{t2}.1, $\mw$ is left Kan extendable  subcategory of $\Top_B$. Let $(L,\epsilon,\delta)$ be the density comonad of the inclusion functor $J: \mw \ra \Top_B$.
Let $X_0 \in\Topo_B$, by  Theorem \ref{pt1}, the map $ \epsilon_{X_0}:L(X_0)\ra X_0$ is $\mw$-monic. The subcategory $\mw$ of $\Top_{B}$ is suitable, therefore  $\epsilon_{X_0}$ is monic. The subcategory $\Topo_B$ is closed under subobject, thus $L(X_0)\in \Topo_B$. Therefore $L(\Topo_B)\subset \Topo_B$ and then the points 1-4 follow  from Theorem \ref{pt4}.
\qedhere   
\end{proof}


\section{Cartesian closed category of $\mw$-generated objects}\label{s10}
 Given a left Kan extendable subcategory $\mw$ of a category $\mc$, In this section, we present sufficient conditions for the category $\mw_l[\mc]$  to be cartesian closed.\\
\indent Assume that $Y$ is an exponentiable object in a category $\mc$ and let
$G: \mc\ra\mc$ be a right adjoint of the functor $ .\tm_{\mc} Y:\mc\ra\mc$.
For $Z\in \mc$, the object $G(Z)$ is called an exponential object and denoted by $Z^{Y}$.

\begin{examples}\label{pex4}\mbox{}
\begin{enumerate}

	\item In $\Top$, the exponentiable objects are precisely the core compact spaces \cite{DK, EH,I2}. In particular, locally compact Hausdorff spaces are exponentiable.  
	\item  By Theorem \ref{t1}, every fibrewise compact fibrewise Hausdorff space over $B$ is exponentiable in the category $\Top_B$.
	\item By \cite[Corollary 2.9]{N}, every local homeomorphism $X\ra B$ is an exponentiable object of $\Top_B$. 
	\end{enumerate}
	 \end{examples}

\begin{lemma}\label{pl6}
Let $\mw$ be a left Kan extendable subcategory of a bicomplete category $\mc$. Assume that  
\begin{enumerate}

	\item  Every object in $\mw$ is exponentiable in $\mc$.
	\item For every $V,W \in \mw$, the object $V\times_{\mc} W \in \mw_l[\mc]$.
	\end{enumerate}
	Then for every $V \in \mw$ and every $Y \in \mw_l[\mc]$, $V\times_{\mc} Y$ is a $\mw$-generated object. That is  $V\tm_{\mw_l[\mc]} Y\cong V\times_{\mc} Y$. 
\end{lemma}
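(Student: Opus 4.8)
The plan is to present $V\times_{\mc}Y$ as a colimit, formed in $\mc$, of objects of the shape $V\times_{\mc}W$ with $W\in\mw$, and then to invoke the fact that the inclusion $\mw_l[\mc]\hra\mc$ creates colimits.

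Since $Y$ is $\mw$-generated, Corollary \ref{pc6} furnishes a functor $F:\mk\ra\mw$ with $Y\cong\colim JF$, the colimit being taken in $\mc$ and $J:\mw\hra\mc$ denoting the inclusion. By hypothesis $(1)$ the object $V$ is exponentiable in $\mc$, so the functor $V\times_{\mc}(-)\cong(-)\times_{\mc}V\colon\mc\ra\mc$ is a left adjoint and therefore preserves all colimits; consequently
$$
V\times_{\mc}Y\;\cong\;V\times_{\mc}\bigl(\colim_{k\in\mk}JF(k)\bigr)\;\cong\;\colim_{k\in\mk}\bigl(V\times_{\mc}F(k)\bigr).
$$
For every $k\in\mk$ the object $F(k)$ lies in $\mw$, so hypothesis $(2)$ gives $V\times_{\mc}F(k)\in\mw_l[\mc]$; since $\mw_l[\mc]$ is a full subcategory of $\mc$, the diagram $k\longmapsto V\times_{\mc}F(k)$ factors through $\mw_l[\mc]$.

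Now by Proposition \ref{pp6}.1 the inclusion $\mw_l[\mc]\hra\mc$ creates all colimits that $\mc$ admits. The diagram just described has a colimit in $\mc$, namely $V\times_{\mc}Y$; hence that colimit lies in $\mw_l[\mc]$, i.e. $V\times_{\mc}Y$ is $\mw$-generated. It remains to identify it with the product computed inside $\mw_l[\mc]$. Since $\mw_l[\mc]$ is coreflective in $\mc$ with coreflector $F_L$ (Proposition \ref{pp17}.2), Proposition \ref{pp6}.2 yields $V\tm_{\mw_l[\mc]}Y\cong F_L\bigl(V\times_{\mc}Y\bigr)$; and because $V\times_{\mc}Y$ already belongs to $\mw_l[\mc]$, the counit $\epsilon_{V\times_{\mc}Y}\colon F_L(V\times_{\mc}Y)\ra V\times_{\mc}Y$ of the coreflection is an isomorphism by Lemma \ref{pl1}.2.(b). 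Combining, $V\tm_{\mw_l[\mc]}Y\cong V\times_{\mc}Y$.

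The argument is almost entirely formal. The only place where genuine content enters is the verification that the diagram $k\mapsto V\times_{\mc}F(k)$ takes values in $\mw_l[\mc]$ — which is precisely hypothesis $(2)$, and is exactly what permits the "creates colimits" principle to apply — together with the use of exponentiability of $V$, which is what makes $V\times_{\mc}(-)$ commute with the colimit presenting $Y$. So I do not expect a real obstacle beyond assembling these ingredients; the point requiring care is merely to keep the two products $\times_{\mc}$ and $\tm_{\mw_l[\mc]}$ distinguished until the final identification.
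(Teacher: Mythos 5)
Your proof is correct and follows essentially the same route as the paper's: present $Y$ as a colimit of objects of $\mw$ via Corollary \ref{pc6}, use exponentiability of $V$ to commute $V\times_{\mc}(-)$ past that colimit, apply hypothesis (2) and Proposition \ref{pp6}.1 to land in $\mw_l[\mc]$, and finish with Proposition \ref{pp6}.2. Your extra appeal to Lemma \ref{pl1} for the final identification is just a more explicit version of the step the paper leaves implicit.
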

 
\begin{proof} 
Let $V \in \mw$ and $Y\in \mw_l[\mc]$. 
By Corollary \ref{pc6}, there exists a functor  $F:\mk \ra\mc$ taking values in $\mw$ such that
$Y\cong\colim F$. Define $V\tm_{\mc} F$ to be the composite functor 
 $\mk \xra{F}\mc\xra{V\times_{\mc}} \mc.$ 
Then
$$ \begin{array}{rclc}
    V\times_{\mc} Y  & \cong & V\times_{\mc} \colim F & \\
						& \cong & \colim  V\times_{\mc} F& (\text{because $V$ is exponentiable in $\mc$})
	\end{array}$$
By 2., $V\tm_{\mc} F$ takes   values in   $\mw_l[\mc]$. Therefore, by Proposition \ref{pp6}.1, 
 $V\times_{\mc} Y\cong \colim V\tm_{\mc} F$ is in $\mw_l[\mc]$. Thus by Proposition \ref{pp6}.2, $V\times_{\mw_l[\mc]}\ Y$ exists and 
 $$V\times_{\mw_l[\mc]}\ Y\cong F_{L}(V\times_{\mc} Y) \cong V\times_{\mc} Y. $$
\qedhere   
 \end{proof}
  Assume next that $\mw$ and $\mc$ are as in Lemma \ref{pl6}. \\
	
	\begin{itemize}

 \item For $X,Y \in \mw_l[\mc]$, let $J_X: \mw/X \ra \mc$ be as defined by (\ref{peq9}) and let 
$J_X \tm_{\mc} Y$ be the composite functor
\begin{equation}\label{peq10}
J_X\tm_{\mc} Y:\mw/X \st{J_X}\ra \mc\xra{-\tm_{\mc} Y} \mc
\end{equation}
By Proposition \ref{pp6}, $\mw_l[\mc]$ is complete. For
 $(V \st{\sigma} \ra X) \in \mw/X$, define 
$$\theta_{\sigma}= \sigma \tm_{\mw_l[\mc]} 1_Y:V \tm_{\mc} Y = V \tm_{\mw_l[\mc]} Y\ra X \tm_{\mw_l[\mc]} Y.$$ 
The maps $\theta_{\sigma}$ define a cone  
\begin{equation}\label{peq11}  
  J_X\tm_{\mc} Y\st{\theta}\Ra X \tm_{\mw_l[\mc]} Y 
\end{equation}
 \item Let 
 $$\Hom(V,.): \mc\ra \mc$$ be a right adjoint of the functor
$.\tm_{\mc} V: \mc\ra \mc$. For $Y,Z \in \mw_l[\mc]$, define 
\begin{equation}\label{peq12}
\begin{array}{rccl}
  S^{Y}_{Z}= \Hom(J_Y(.),Z):& (\mw/Y)^{op}& \ra &\mc \\
     &(V \st{\sigma} \ra Y)&\longmapsto & \Hom(V,Z)
	\end{array}
  \end{equation}
	\end{itemize}
 \begin{definition}\label{pd7}
A left Kan extendable subcategory $\mw$ of a bicomplete category $\mc$ is said to be  \textbf{\textit{closeable}} if 
\begin{enumerate}

\item Every object in $\mw$ is exponentiable in $\mc$.
\item For every $V,W \in \mw$, the object $V\times_{\mc} W \in \mw_l[\mc]$.
\item For all $X,Y \in \mw_l[\mc]$, the cone  $J_X\tm_{\mc} Y\st{\theta}\Ra X \tm_{\mw_l[\mc]} Y$ given by (\ref{peq11}) is a colimiting cone.
\item For all $Y,Z \in \mw_l[\mc]$, the functor  $S^{Y}_{Z}: (\mw/Y)^{op} \ra \mc$ given by (\ref{peq12}) has a limit. 
\end{enumerate}
\end{definition}

For the remainder of this section, we assume that $\mw$ is a closeable left Kan extendable subcategory of a bicomplete category $\mc$.
Define 
\begin{equation}\label{peq13} 
\hom(.,.) : \mw_l[\mc]^{op}\times \mw_l[\mc] \ra  \mc
\end{equation}
by $$\hom(Y,Z)= \lm S^{Y}_{Z}=\underset{(V\st{\sigma}{\rightarrow}Y)\in \mw|Y}\lm \Hom(V,Z)$$
Then for $V \in \mw$, the arrow-object  $1_V$ of $\mw/V$ is terminal, it is therefore an initial object in the opposite category $(\mw/V)^{op}$.   It follows that the limit of the functor 
$$S^{V}_{Z}: (\mw/V)^{op}\ra \mc $$
is just $S^{V}_{Z}(1_V)$ which is $\Hom(V,Z)$. That is $\hom(V,Z)\cong \Hom(V,Z)$.\\

\begin{lemma}\label{pl7}
 Let $V\in \mw$ and $Y,Z \in \mw_l[\mc]$. There exists a natural bijection
$$ \mc (V,\hom(Y,Z))\cong \mc(V\tm_{\mc} Y,Z).$$
\end{lemma}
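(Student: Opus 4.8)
The plan is to establish the bijection by a sequence of natural identifications, using the three pieces of data that $V$ is exponentiable, that the cone $J_Y\tm_{\mc} Y \st{\theta}\Ra Y\tm_{\mw_l[\mc]} Y$ is colimiting (condition 3 of closeability), and that $\hom(Y,Z)$ is by definition the limit of $S^Y_Z$. First I would note that $\hom(Y,Z)=\lm S^Y_Z$ is a limit in $\mc$ of the diagram $(\mw/Y)^{\op}\to\mc$ sending $(V'\st{\sigma}\ra Y)$ to $\Hom(V',Z)$, and that since $V$ is exponentiable the functor $V\tm_{\mc}(-)$ has a right adjoint and hence $\mc(V,-)$ turns limits in $\mc$ into limits in $\Set$; therefore
\begin{equation*}
\mc(V,\hom(Y,Z))\;\cong\;\mc(V,\lm S^Y_Z)\;\cong\;\underset{(V'\st{\sigma}\ra Y)\in\mw/Y}{\lm}\;\mc(V,\Hom(V',Z)).
\end{equation*}

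Next I would apply the exponential adjunction $\mc(V,\Hom(V',Z))\cong\mc(V\tm_{\mc}V',Z)$ for each $(V'\st{\sigma}\ra Y)\in\mw/Y$, and check that these bijections are natural in $\sigma$, so that the limit on the right is rewritten as $\lm_{(V'\st{\sigma}\ra Y)}\,\mc(V\tm_{\mc}V',Z)$. By condition 2 of closeability $V\tm_{\mc}V'\in\mw_l[\mc]$, and by Lemma \ref{pl6} $V\tm_{\mc}V'\cong V\tm_{\mw_l[\mc]}V'$, so $V\tm_{\mc}(-)$ applied to the functor $J_Y$ is exactly the functor $J_Y\tm_{\mc}V$ of (\ref{peq10}) (with the roles of $X$ and $Y$ suitably matched). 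Then I would use that $\mc(-,Z)$ sends colimits in $\mc$ to limits in $\Set$: since by condition 3 the cone $J_Y\tm_{\mc}V\st{\theta}\Ra V\tm_{\mw_l[\mc]}Y\cong V\tm_{\mc}Y$ is colimiting in $\mc$, we get
\begin{equation*}
\underset{(V'\st{\sigma}\ra Y)}{\lm}\;\mc(V\tm_{\mc}V',Z)\;\cong\;\mc\!\left(\underset{(V'\st{\sigma}\ra Y)}{\colim}\,(V\tm_{\mc}V'),\,Z\right)\;\cong\;\mc(V\tm_{\mc}Y,Z),
\end{equation*}
which chains together with the previous displays to give the desired natural bijection $\mc(V,\hom(Y,Z))\cong\mc(V\tm_{\mc}Y,Z)$.

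Finally I would record naturality: each of the three bijections above is natural in $V\in\mw$ and in $Z\in\mw_l[\mc]$ — the first because $\mc(V,-)$ is a functor and limits are functorial, the second by naturality of the hom-tensor adjunction for the exponentiable object $V$, and the third by functoriality of $\colim$ and of $\mc(-,Z)$; composing natural isomorphisms yields a natural isomorphism, so the statement follows.

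I expect the main obstacle to be the bookkeeping in the middle step: one must verify carefully that the pointwise exponential bijections $\mc(V,\Hom(V',Z))\cong\mc(V\tm_{\mc}V',Z)$ assemble into an isomorphism of diagrams over $(\mw/Y)^{\op}$ (so that limits may be compared), and that applying $V\tm_{\mc}(-)$ to the colimiting cone $\theta$ for $Y$ really does produce the cone $\theta$ for $V\tm_{\mc}Y$ up to the identifications of Lemma \ref{pl6} — in other words that $V$ being exponentiable lets $V\tm_{\mc}(-)$ preserve the relevant colimit indexed by $\mw/Y$, and that this preserved colimit is again the canonical one computing $V\tm_{\mc}Y$ via $\mw/(V\tm_{\mc}Y)$. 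Everything else is a routine application of the fact that representable functors (covariant or contravariant) convert (co)limits in the appropriate variance, together with the definitions already set up in (\ref{peq10})–(\ref{peq13}).
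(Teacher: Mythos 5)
Your proof is correct and follows the same chain of isomorphisms as the paper's:
$\mc(V,\hom(Y,Z))\cong\lm\,\mc(V,\Hom(W,Z))\cong\lm\,\mc(V\tm_{\mc}W,Z)\cong\mc(\colim\,(V\tm_{\mc}W),Z)\cong\mc(V\tm_{\mc}Y,Z)$. The one place you diverge is in justifying the identification $\underset{(W\st{\sigma}{\ra}Y)}{\colim}\,(V\tm_{\mc}W)\cong V\tm_{\mc}Y$: the paper gets this directly from the exponentiability of $V$ (so that $V\tm_{\mc}(-)$ preserves the colimit $Y\cong\colim J_Y$ supplied by Corollary \ref{pc6}), whereas you invoke condition 3 of Definition \ref{pd7} for the pair $(Y,V)$ together with Lemma \ref{pl6}. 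Both justifications are available under the standing closeability hypothesis, so this is only a difference in which ingredient is cited; the paper's choice is slightly more economical, as it does not consume condition 3 here (that condition is what drives Theorem \ref{pt10}). One small misstatement worth fixing: you attribute the fact that $\mc(V,-)$ preserves limits to the exponentiability of $V$, but any representable functor $\mc(V,-)$ preserves limits — exponentiability is needed only so that $V\tm_{\mc}(-)$ preserves colimits, which is the final step.
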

 
\begin{proof}
   
$$ \begin{array}{rclr}
    \mc (V,\hom(Y,Z))& \cong & \mc (V, \underset{(W\st{\sigma}{\rightarrow}Y)\in \mw|Y}\lm \Hom(W,Z)) & \\
						& \cong &\underset{(W\st{\sigma}{\rightarrow}Y)\in \mw|Y}\lm \mc(V, \Hom(W,Z))& \\
							& \cong &\underset{(W\st{\sigma}{\rightarrow}Y)\in \mw|Y}\lm \mc(V\times_{\mc} W,Z)&\\
						& \cong &\mc(\underset{(W\st{\sigma}{\rightarrow}Y)\in \mw|Y}\colim  V\times_{\mc} W,Z)&	\\
						& \cong & \mc(V\times_{\mc} Y,Z)& (\text{because }$V$ \text{ is exponentiable in $\mc$})        
\end{array}$$
\qedhere   
\end{proof}
Let $F_{L} :\mc \ra \mw_l[\mc]$ be the coreflector.  Define 
\begin{equation}\label{peq14}
\begin{array}{rrll}
(-)^{(-)}:&\mw_l[\mc]^{op}\times \mw_l[\mc] &\ra &\mw_l[\mc]\\
    &(Y,Z) &\mapsto &Z^Y
	\end{array}
	\end{equation}
to be the composite functor
\begin{equation}\label{peq15} 
\mw_l[\mc]^{op}\times \mw_l[\mc] \xra{\hom} \mc \st{F_{L}}\ra \mw_l[\mc]
\end{equation} 
Then for $Y,Z \in \mw_l[\mc]$, $ Z^Y=F_{L}(\hom(Y,Z))$. 

\begin{lemma}\label{pl8}
 Let $V\in \mw$ and $Y,Z \in \mw_l[\mc]$. There exists a natural bijection
$$\mw_l[\mc](V,Z^Y)\cong \mw_l[\mc](V\tm_{\mw_l[\mc]} Y,Z).$$
\end{lemma}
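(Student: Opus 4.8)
The plan is to obtain the bijection by composing three natural bijections, each already available. Recall from (\ref{peq14})--(\ref{peq15}) that $Z^{Y}=F_{L}(\hom(Y,Z))$, where $F_{L}\colon\mc\ra\mw_{l}[\mc]$ is the coreflector and $U\colon\mw_{l}[\mc]\ra\mc$ is the inclusion. First I would use that, by Proposition \ref{pp17}, $(U\dashv F_{L})$ is a coreflection of $\mc$ on $\mw_{l}[\mc]$; since $V$ lies in $\mw$, hence in $\mw_{l}[\mc]$ (by the dual of Theorem \ref{pt11}), the adjunction gives a bijection
$$\mw_{l}[\mc](V,Z^{Y})=\mw_{l}[\mc](V,F_{L}(\hom(Y,Z)))\cong\mc(V,\hom(Y,Z)),$$
natural in $V$, $Y$ and $Z$.

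Next I would apply Lemma \ref{pl7}, which (as $V\in\mw$ and $Y,Z\in\mw_{l}[\mc]$) supplies a natural bijection $\mc(V,\hom(Y,Z))\cong\mc(V\tm_{\mc}Y,Z)$. Finally it remains to identify this last hom-set with $\mw_{l}[\mc](V\tm_{\mw_{l}[\mc]}Y,Z)$. Here I would invoke that $\mw$ is closeable, so in particular it satisfies conditions $1$ and $2$ of Lemma \ref{pl6}; that lemma then yields $V\tm_{\mw_{l}[\mc]}Y\cong V\tm_{\mc}Y$ via the canonical comparison map, the common object lying in $\mw_{l}[\mc]$. As $Z\in\mw_{l}[\mc]$ and $\mw_{l}[\mc]$ is full in $\mc$, we obtain $\mc(V\tm_{\mc}Y,Z)=\mw_{l}[\mc](V\tm_{\mw_{l}[\mc]}Y,Z)$. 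Composing the three bijections proves the lemma.

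I do not expect a genuine obstacle; the argument is a calculation of the same kind as the proof of Lemma \ref{pl7}. The only points needing attention are that $\mw_{l}[\mc]$ is a full, replete subcategory of $\mc$ (recorded, via the dual of Proposition \ref{pp2}, when $\mw_{l}[\mc]$ was introduced), so that the hom-set identifications are literal equalities, and that the comparison isomorphism $V\tm_{\mw_{l}[\mc]}Y\cong V\tm_{\mc}Y$ of Lemma \ref{pl6} is natural in $V$ and $Y$, so that naturality of the composite is automatic.
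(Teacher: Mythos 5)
Your proposal is correct and follows essentially the same chain of bijections as the paper's own proof: the coreflection adjunction from Proposition \ref{pp17}, then Lemma \ref{pl7}, then the identification of products via Lemma \ref{pl6} together with fullness of $\mw_l[\mc]$ in $\mc$. No differences worth noting.
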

  
\begin{proof}
   
$$ \begin{array}{rclr}
    \mw_l[\mc](V,Z^Y)& \cong &\mw_l[\mc](V,F_{L}( \hom(Y,Z)))& \\
						& \cong &\mc(V, \hom(Y,Z))& (\text{by  Proposition  }\ref{pp17}) \\
							
							& \cong & \mc(V\times_{\mc} Y,Z)& (\text{by Lemma }\ref{pl7})\\
						& \cong &\mc( V\tm_{\mw_l[\mc]} Y,Z)& (\text{by Lemma }\ref{pl6})\\
						& \cong &\mw_l[\mc]( V\tm_{\mw_l[\mc]} Y,Z)&        
\end{array}$$
\qedhere   
\end{proof}
\begin{theorem}\label{pt10} 
$\mw_l[\mc]$ is cartesian closed with internal hom functor the functor
$$(-)^{(-)} : \mw_l[\mc]^{op}\times \mw_l[\mc] \ra \mw_l[\mc] $$
defined by (\ref{peq15}).
\end{theorem}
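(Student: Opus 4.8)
The plan is to show that the functor $(-)^{(-)}$ defined by (\ref{peq15}) is right adjoint to the product functor $-\tm_{\mw_l[\mc]} Y$ for each fixed $Y\in\mw_l[\mc]$; since $\mw_l[\mc]$ has all finite products by Proposition \ref{pp6}.2 (indeed it is bicomplete, as $\mc$ is), establishing this adjunction is exactly what "cartesian closed" requires. The starting point is Lemma \ref{pl8}, which already gives the desired natural bijection
$$\mw_l[\mc](V,Z^Y)\cong \mw_l[\mc](V\tm_{\mw_l[\mc]} Y,Z)$$
but only when the source object $V$ lies in the generating subcategory $\mw$. So the one thing that remains is to bootstrap this bijection from test objects $V\in\mw$ to arbitrary test objects $X\in\mw_l[\mc]$.

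First I would invoke Corollary \ref{pc6}: every $X\in\mw_l[\mc]$ is a colimit $X\cong\colim JF$ of a diagram $F:\mk\ra\mw$ landing in $\mw$, and, because $U:\mw_l[\mc]\ra\mc$ creates the colimits $\mc$ admits (Proposition \ref{pp6}.1), this colimit may be computed in $\mw_l[\mc]$ as well. Next, since $\mw$ is closeable, condition (3) of Definition \ref{pd7} says the cone (\ref{peq11}) is colimiting, i.e. $X\tm_{\mw_l[\mc]} Y\cong\colim\big(J_X\tm_{\mc} Y\big)$; more simply, the functor $-\tm_{\mw_l[\mc]} Y$ preserves the colimit presentation of $X$ (one can also see this directly: $-\tm_{\mw_l[\mc]} Y=F_L\circ(-\tm_\mc Y)$, $F_L$ is a left adjoint hence cocontinuous, and $-\tm_\mc Y$ preserves the relevant colimits because the diagram lands in $\mw$ whose objects are exponentiable). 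Then I would run the chain of natural isomorphisms
$$
\begin{array}{rclr}
\mw_l[\mc](X,Z^Y) &\cong& \mw_l[\mc](\colim JF, Z^Y) &\\
 &\cong& \lm\; \mw_l[\mc](JF(-), Z^Y) & \\
 &\cong& \lm\; \mw_l[\mc](JF(-)\tm_{\mw_l[\mc]} Y, Z) & (\text{Lemma }\ref{pl8})\\
 &\cong& \mw_l[\mc]\big(\colim (JF(-)\tm_{\mw_l[\mc]} Y), Z\big) & \\
 &\cong& \mw_l[\mc](X\tm_{\mw_l[\mc]} Y, Z) &
\end{array}
$$
the third step being the only nontrivial one and supplied by Lemma \ref{pl8} applied objectwise along the diagram $F$ (whose values lie in $\mw$), together with naturality of that bijection in the $\mw$-variable. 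Finally I would note naturality in $X$, $Y$, $Z$ is inherited from the naturality built into Lemmas \ref{pl7} and \ref{pl8} and from the universal properties used, so the collection of these bijections assembles into an adjunction $-\tm_{\mw_l[\mc]} Y\dashv (-)^Y$, and hence $\mw_l[\mc]$ is cartesian closed with the stated internal hom.

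The main obstacle is the third isomorphism in the display: one must check that the objectwise bijections of Lemma \ref{pl8} are natural with respect to the morphisms of the index category $\mk$, so that passing to the limit over $\mk$ is legitimate. This reduces to unwinding that the bijection of Lemma \ref{pl8} is natural in its first variable over $\mw$ — which in turn traces back through Lemma \ref{pl7} to the naturality of the hom-set adjunction for the exponentiable objects $V\in\mw$ and to the naturality of the coreflection $(U\dashv F_L)$ of Proposition \ref{pp17}. A secondary point worth stating carefully is the identification $\colim(JF(-)\tm_{\mw_l[\mc]} Y)\cong X\tm_{\mw_l[\mc]} Y$: this is where closeability condition (3) (the colimiting cone (\ref{peq11})) does its work, or equivalently where cocontinuity of $F_L$ combines with the fact that $-\tm_\mc Y$ commutes with colimits of diagrams valued in exponentiable objects. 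Everything else is formal manipulation of adjunctions and (co)limits.
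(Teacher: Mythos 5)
Your proposal follows essentially the same route as the paper's proof: present $X$ as a colimit of objects of $\mw$, pull the hom out of the colimit, apply Lemma \ref{pl8} objectwise, and reassemble the resulting limit using closeability condition 3 of Definition \ref{pd7}. Two small corrections are in order: condition 3 is stated for the canonical cone (\ref{peq11}) indexed by $\mw/X$, so you should work with that presentation (which is exactly what the proof of Corollary \ref{pc6} produces) rather than an arbitrary $F:\mk\ra\mw$; and your parenthetical alternative justification does not work as stated, because $F_{L}$ is the \emph{right} adjoint in the coreflection $(U\dashv F_{L})$ of Proposition \ref{pp17}, hence continuous rather than cocontinuous, so one cannot transport the colimit through $F_{L}$ that way — the identification $\colim(J_X\tm_{\mc}Y)\cong X\tm_{\mw_l[\mc]}Y$ really does rest on closeability condition 3.
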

  
\begin{proof}
 Let $X,Y,Z \in \mw_l[\mc]$.  
$$ \begin{array}{rclr}
    \mw_l[\mc](X,Z^Y)& \cong &\mw_l[\mc](\underset{(V\st{\sigma}{\rightarrow}X)\in \mw|X}\colim  V,Z^Y)& \\
						& \cong &\underset{(V\st{\sigma}{\rightarrow}X)\in \mw|X}\lm \mw_l[\mc](V,Z^Y)& \\
							& \cong &\underset{(V\st{\sigma}{\rightarrow}X)\in \mw|X}\lm \mw_l[\mc](V\tm_{\mw_l[\mc]} Y,Z)& (\text{by Lemma }\ref{pl8})\\
							& \cong &\underset{(V\st{\sigma}{\rightarrow}X)\in \mw|X}\lm \mw_l[\mc](J_X\tm_{\mc} Y(\sigma),Z)&  \\

							& \cong & \mw_l[\mc](\colim J_X\tm_{\mc} Y,Z)& \\
						& \cong &\mw_l[\mc]( X\tm_{\mw_l[\mc]} Y,Z)&        (\text{by Definition }\ref{pd7}.3)
\end{array}$$
\qedhere   
\end{proof}
  The next result is a generalization of that of Escardó-Lawson \cite[Corollary 5.5]{ES}. 

\begin{corollary}\label{pc11} Let $\mw'$ be another closeable, left Kan extendable subcategory of $\mc$ which is contained in $\mw$.  Then the inclusion functor $\mw'_l[\mc]\ra \mw_l[\mc]$ preserves finite products.

\end{corollary}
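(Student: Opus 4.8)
The plan is to exploit the explicit description of finite products and exponentials in both categories $\mw'_l[\mc]$ and $\mw_l[\mc]$, together with the fact (Corollary \ref{pc5}) that $\mw'_l[\mc]$ sits coreflectively inside $\mw_l[\mc]$. Write $I:\mw'_l[\mc]\ra\mw_l[\mc]$ for the inclusion functor. A finite product is built from the terminal object and binary products, so it suffices to check that $I$ preserves the terminal object and binary products. For the terminal object: by Lemma \ref{l4}-type reasoning (or rather its abstract analogue, Lemma \ref{pl3} together with the hypothesis that both subcategories are closeable, hence left Kan extendable), a retract of an object of $\mw$ — in particular the terminal object of $\mc$, which I would need to assume lies in $\mw'_l[\mc]$ and $\mw_l[\mc]$, as it does in all the intended applications — is $\mw'$-generated and $\mw$-generated; since $\mc$ is bicomplete its terminal object $\ast$ is $\mc$-terminal, and by Proposition \ref{pp6}.1 (the inclusion creates colimits, hence also the empty limit is detected correctly after coreflecting) the terminal objects of $\mw'_l[\mc]$ and $\mw_l[\mc]$ are both obtained by applying the respective coreflectors to $\ast$. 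If $\ast$ already lies in $\mw'_l[\mc]$, both coreflectors fix it, so $I$ preserves the terminal object.

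The heart of the argument is binary products. Let $X,Y\in\mw'_l[\mc]$. By Proposition \ref{pp6}.2 applied to $\mw'$, the product $X\tm_{\mw'_l[\mc]}Y$ is $F'_{L}(X\tm_{\mc}Y)$, where $F'_{L}$ is the coreflector onto $\mw'_l[\mc]$; similarly $X\tm_{\mw_l[\mc]}Y = F_{L}(X\tm_{\mc}Y)$. So I must show $F'_{L}(X\tm_{\mc}Y)\cong F_{L}(X\tm_{\mc}Y)$ in $\mw_l[\mc]$, i.e.\ that $X\tm_{\mc}Y$ is already $\mw$-generated whenever $X,Y$ are $\mw'$-generated. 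Here is where closeability of $\mw'$ enters: by Corollary \ref{pc6} write $X\cong\colim_{\mk}F$ and $Y\cong\colim_{\mk'}G$ with $F,G$ taking values in $\mw'$. Since every object of $\mw'$ is exponentiable in $\mc$ (hypothesis 1 of closeability), the functor $-\tm_{\mc}-$ preserves these colimits in each variable separately, so $X\tm_{\mc}Y\cong\colim_{\mk\times\mk'}(F(-)\tm_{\mc}G(-))$. Each value $F(k)\tm_{\mc}G(k')$ is a product of two objects of $\mw'\subset\mw_l[\mc]$; by hypothesis 2 of closeability of $\mw'$, $F(k)\tm_{\mc}G(k')\in\mw'_l[\mc]\subset\mw_l[\mc]$. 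Thus $X\tm_{\mc}Y$ is a colimit in $\mc$ of a diagram valued in $\mw_l[\mc]$, and Proposition \ref{pp6}.1 (the inclusion $\mw_l[\mc]\ra\mc$ creates colimits) gives $X\tm_{\mc}Y\in\mw_l[\mc]$. Consequently $F_{L}$ fixes $X\tm_{\mc}Y$, and since $X\tm_{\mc}Y$ was already shown to be $\mw'$-generated the same colimit computation shows $F'_{L}$ fixes it too; hence $X\tm_{\mw'_l[\mc]}Y\cong X\tm_{\mc}Y\cong X\tm_{\mw_l[\mc]}Y$, which is exactly the statement that $I$ preserves binary products.

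Putting the two pieces together, $I$ preserves the terminal object and binary products, hence all finite products. The main obstacle I anticipate is purely bookkeeping: making sure the colimit $\colim(F\tm_{\mc}G)$ is correctly interpreted (it is a colimit over the product category $\mk\times\mk'$, justified by exponentiability in each variable and the fact that colimits commute past colimits) and that the two coreflectors $F'_{L}$ and $F_{L}$ interact compatibly — both fixing the object $X\tm_{\mc}Y$ once it is seen to lie in the smaller subcategory $\mw'_l[\mc]$. No genuinely new idea beyond Corollary \ref{pc6}, closeability, and Proposition \ref{pp6} is needed.
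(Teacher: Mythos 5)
Your reduction to binary products and the final bookkeeping are fine, but the central step fails. You claim that
$X\tm_{\mc}Y\cong\underset{\mk\tm\mk'}{\colim}\,\bigl(F(-)\tm_{\mc}G(-)\bigr)$, justified by exponentiability of the objects of $\mw'$. Exponentiability of $V\in\mw'$ only makes $-\tm_{\mc}V$ cocontinuous; to perform the interchange you need $-\tm_{\mc}Y$ (or $X\tm_{\mc}-$) to preserve the colimit presenting the \emph{other} factor, and neither $X$ nor $Y$ --- being merely colimits of objects of $\mw'$ --- is assumed exponentiable. This is precisely why Definition \ref{pd7} carries condition 3 as a separate hypothesis: it asserts that $\colim (J_X\tm_{\mc}Y)$ computes the product in $\mw_l[\mc]$, an object which in general differs from $X\tm_{\mc}Y$. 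Your conclusion that $X\tm_{\mc}Y$ is already $\mw$-generated is false in the paper's flagship example: for $\mc=\Top$ and $\mw=\mw'=\Comp$, the $\Top$-product of two compactly generated spaces need not be compactly generated, and the product in $\kTop$ is the $k$-ification of the $\Top$-product rather than the $\Top$-product itself. So the chain $X\tm_{\mw'_l[\mc]}Y\cong X\tm_{\mc}Y\cong X\tm_{\mw_l[\mc]}Y$ breaks at both ends.

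The paper's proof avoids this by never touching $X\tm_{\mc}Y$: closeability of $\mw'$ (condition 3) gives $X\tm_{\mw'_l[\mc]}Y\cong\colim (J'_X\tm_{\mc}Y)$ over $\mw'/X$; each value $V\tm_{\mc}Y$ with $V\in\mw'\subset\mw$ lies in $\mw_l[\mc]$ by Lemma \ref{pl6}, so the diagram factors through $\mw_l[\mc]$, whose inclusion into $\mc$ creates colimits (Proposition \ref{pp6}.1); and cartesian closedness of $\mw_l[\mc]$ (Theorem \ref{pt10}) together with Lemma \ref{pl6} identifies that colimit with $X\tm_{\mw_l[\mc]}Y$. (Your treatment of the terminal object also rests on an extra unstated assumption that it lies in $\mw'_l[\mc]$, but that is a side issue; the substantive gap is the unjustified colimit interchange.)
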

  
\begin{proof}
Let $J':\mw' \ra \mc$ be the inclusion functor, $X,Y \in \mw'_l[\mc]$ and $J'_X\tm_{\mc} Y:\mw'/X \ra \mc$ be as in (\ref{peq10}).
The functor $J'_X\tm_{\mc} Y$ factors through $\mw_l[\mc]$ as follows:
$$\begin{tikzcd}
\mw'/X\arrow{rr}{J'_X\tm_{\mc} Y}\arrow{dr}[swap]{H}&&\mc\\
&\mw_l[\mc]\arrow{ur}[swap]{U}
 \end{tikzcd}$$

$$ \begin{array}{rclr}
    X\tm_{\mw'_l[\mc]}Y & \cong &\colim J'_X\tm_{\mc} Y& (\text{by Definition }\ref{pd7}.3)\\
		& \cong & \colim UH &\\
						& \cong & \colim H &    (\text{by Proposition \ref{pp6}.1)}\\
						& \cong &X\tm_{\mw_l[\mc]}Y& (\text{by Lemma \ref{pl6} and Theorem \ref{pt10}} )
\end{array}$$
\qedhere   
\end{proof}
\section{A fibrewise Day's theorem}\label{s11}

The aim of this section is to use the notion of Kan extendable subcategories to provide a fibrewise version of Day's theorem (\cite[Theorem 3.1]{D}).  We begin with the following simple observation. 

\begin{remark}\label{r4} Let $\mi\st{F}\ra \mc$, $\mj\st{G}\ra \mc$ and $\mi\st{P}\ra \mj$  be functors. Assume that $F$ and $G$ have colimits and let $F\st{\alpha}\Ra GP$ be a natural transformation.

$$\begin{tikzcd}
\mi \arrow[dr, "P"' ] \arrow[rr, ""{name=U, below}, "F" ]{}
& & \mc  \\
& \mj \arrow[Rightarrow, from=U, "\alpha"']  \arrow[ur, "G"' ]
\end{tikzcd}$$
Then there exists a unique map 
$h:\colim F\ra\colim G$ rendering commutative the  diagram  
$$\begin{tikzcd}
F(i) \arrow[r, "\alpha_i"] \arrow[d]
& GP(i) \arrow[d] \\
\colim F \arrow[r,  "h"' ]
& \colim G
\end{tikzcd}$$
for all $i\in I$. 
\end{remark}

\begin{theorem}\label{t6} Assume that
\begin{enumerate}

\item The space $B$ is a $\T_1$-space.
\item The subcategory $\mw$ of $\Top_B$ is suitable (See Definition \ref{pd10}).  
\item Every fibrewise space in $\mw$ is exponentiable as an object of $\Top_B$.
\item For every $V,W \in \mw$, the fibrewise space  $V\times_{\Top_B} W$ is $\mw$-generated. 
\end{enumerate}
 Then $\mw$ is left Kan extendable. Moreover, $\mw_l[\Top_B]$ is a cartesian closed subcategory of $\Top_B$.  
\end{theorem}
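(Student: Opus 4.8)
The plan is to obtain the cartesian closedness as a direct application of Theorem \ref{pt10}, so the proof reduces to checking that $\mw$ is a closeable left Kan extendable subcategory of a bicomplete category. First I would recall that $\Top_B=\Top/B$ is bicomplete, colimits being created by the forgetful functor $\Top_B\ra\Top$ and limits computed as in $\Top$ (Appendix \ref{C}). Since $\mw$ is suitable by hypothesis (2), Theorem \ref{t2}.1 already gives that $\mw$ is left Kan extendable in $\Top_B$; I will write $(L,\epsilon,\delta)$ for the resulting idempotent density comonad of the inclusion $J\colon\mw\ra\Top_B$ and use the description from Theorem \ref{t2}: an object lies in $\mw_l[\Top_B]$ exactly when it carries the final topology induced by all fibrewise maps from objects of $\mw$, and the coreflector $F_L$ does not change underlying objects. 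Of the four conditions of Definition \ref{pd7}, condition (1) is hypothesis (3), condition (2) is hypothesis (4), and condition (4) holds because $\Top_B$ is complete, so the functor $S^{Y}_{Z}$ of (\ref{peq12}) has a limit for all $Y,Z$. The only real work is condition (3).

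For condition (3) I must show that, for $X,Y\in\mw_l[\Top_B]$, the canonical cone $J_X\tm_{\Top_B}Y\st{\theta}\Ra X\tm_{\mw_l[\Top_B]}Y$ of (\ref{peq11}) is colimiting in $\mw_l[\Top_B]$. The plan is: (a) invoke Lemma \ref{pl6} — whose hypotheses are precisely conditions (1), (2) just verified — so that $J_X\tm_{\Top_B}Y$ actually takes values in $\mw_l[\Top_B]$, with $V\tm_{\mw_l[\Top_B]}Y\cong V\tm_{\Top_B}Y$ for $V\in\mw$, and its colimit may be formed there; (b) since $\mw_l[\Top_B]$ is coreflective, the inclusion into $\Top_B$ creates colimits (Proposition \ref{pp6}.1), so this colimit is computed in $\Top_B$, i.e. by taking the colimit of underlying objects in $\Set_{\abs{B}}$ and equipping it with the final topology; (c) identify the underlying object: as $X$ is $\mw$-generated, $\abs{X}=\colim_{\sigma}\abs{V}$ over $\mw/X$ by Theorem \ref{t2}, and $-\tm_{\Set_{\abs{B}}}\abs{Y}$ is a left adjoint, hence cocontinuous, so $\colim_{\sigma}\abs{V\tm_{\Top_B}Y}\cong\abs{X\tm_{\Top_B}Y}=\abs{X\tm_{\mw_l[\Top_B]}Y}$; (d) identify the topology, which is where the argument really happens.

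For step (d) I would compare the two final topologies on this common underlying object: $\tau_1$, induced by the maps $\theta_\sigma\colon V\tm_{\Top_B}Y\ra X\tm_{\Top_B}Y$ over $\sigma\in\mw/X$, versus $\tau_2$, the topology of $X\tm_{\mw_l[\Top_B]}Y=F_L(X\tm_{\Top_B}Y)$, which by Theorem \ref{t2}.3 is the final topology induced by all fibrewise maps $W\ra X\tm_{\Top_B}Y$ with $W\in\mw$. For $\tau_2\subseteq\tau_1$: each $V\tm_{\Top_B}Y$ is $\mw$-generated, hence a fibrewise quotient of a coproduct of objects of $\mw$ (Proposition \ref{p13}.2), so a $\theta_\sigma$-preimage is open as soon as its preimages along those objects of $\mw$ are, and the latter are among the maps defining $\tau_2$. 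For $\tau_1\subseteq\tau_2$: given $U\in\tau_1$ and $f=(f_1,f_2)\colon W\ra X\tm_{\Top_B}Y$ with $W\in\mw$, note $f_1\in\mw/X$ and $f=\theta_{f_1}\circ(1_W,f_2)$ with $(1_W,f_2)\colon W\ra W\tm_{\Top_B}Y$ continuous, so $f^{-1}(U)=(1_W,f_2)^{-1}(\theta_{f_1}^{-1}(U))$ is open. Hence $\tau_1=\tau_2$, the cone is colimiting, and $\mw$ is closeable; Theorem \ref{pt10} then yields that $\mw_l[\Top_B]$ is cartesian closed with internal hom (\ref{peq15}). I expect step (d), the comparison of the two final topologies, to be the main obstacle; everything else is bookkeeping assembled from Theorems \ref{t2}, \ref{pt10} and Lemma \ref{pl6}. (Hypothesis (1), that $B$ is $\T_1$, is not needed for this abstract statement but is retained as a standing assumption because it is indispensable for the applications in the following sections.)
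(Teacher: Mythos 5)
Your overall strategy (reduce to Theorem \ref{pt10} by checking the four conditions of Definition \ref{pd7}) is the paper's strategy, and your verification of condition (3) is sound: comparing the two final topologies on the common underlying object of $\colim (J_X\tm_{\Top_B}Y)$ and $F_L(X\tm_{\Top_B}Y)$ is a concrete reformulation of what the paper does categorically, and your factorization $f=\theta_{f_1}\circ(1_W,f_2)$ is exactly the component $\alpha_f$ of the natural transformation $J_{X\tm_{\Top_B}Y}\Ra (J_X\tm_{\Top_B}Y)P$ that the paper uses to build the inverse of $\tilde\theta$. That part is fine.

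The gap is in your one-line dismissal of condition (4). You assert that $S^{Y}_{Z}\colon(\mw/Y)^{op}\ra\Top_B$ has a limit "because $\Top_B$ is complete," but completeness only provides limits over \emph{small} index categories, and $\mw/Y$ is in general a large category: the intended applications take $\mw=\Comp_B$, $\N_B$, $\Sier_B$, none of which is small, so $\mw/Y$ has a proper class of objects. (This is also why Definition \ref{pd7} lists the existence of $\lm S^{Y}_{Z}$ as a separate condition even though $\mc$ is already assumed bicomplete there — it would be vacuous otherwise.) The paper closes this gap with Lemma \ref{l6}, which computes the would-be limit fibrewise: it identifies $\lm \pi^{s}_{b}\abs{S^{Y}_{Z}}$ with the small set $\Top(Y_b,Z_b)$ using $Y\cong\colim J_Y$ and the adjunction $\abs{\Hom(W,Z)_b}\cong\Top(W_b,Z_b)$, and then lifts the limit back through $\Set_{\abs{B}}$ and $\Top_B$. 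That argument uses Lemma \ref{l8}.2, which needs the fibre functors $\pi^{t}_{b}$ to preserve colimits, and this is precisely where the hypothesis that $B$ is $\T_1$ enters (so that each $\{b\}$ is closed in $B$). Consequently your closing parenthetical — that hypothesis (1) is not needed for the abstract statement — is also incorrect: $\T_1$ is load-bearing for condition (4), not merely for the later applications.
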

   
\begin{proof}
By Theorem \ref{t2}, $\mw$ is left Kan extendable.	
 In the light of Theorem \ref{pt10}, we just need to prove that conditions 3 and 4 of Definition \ref{pd7} are satisfied.\\
Let $X, Y \in \mw_l[\Top_B]$, $J_X:\mw/X \ra \Top_B$ be the functor defined by  (\ref{eq1}) and $J_X\tm_{\Top_B} Y:\mw/X  \ra \Top_B $ be the composite functor
\begin{equation}\label{eq2}
J_X\tm_{\Top_B} Y:\mw/X \xra{J_X} \Top_B\xra{-\tm_{\Top_B} Y} \Top_B
\end{equation}
By Theorem \ref{t2}, the functor $J_X$ has a colimit. Therefore by Lemma \ref{l3}, the functor $\abs{J_X}:\mw/X \ra \Set_{\abs{B}}$ has a colimit.
 $\Set_{\abs{B}}$ is cartesian closed, thus the functor 
 $-\tm_{\Set_B}\abs{Y}:\Set_{\abs{B}}\ra   \Set_{\abs{B}}$ is left adjoint and preserves colimits. It follows that the composite of these last two functors, which is $\abs{J_X\tm_{\Top_B} Y}$, has a colimit. Again by Lemma \ref{l3}, the functor $J_X\tm_{\Top_B} Y$ has a colimit.
Let  $ J_{X\tm_{\Top_B} Y} \st{\lambda}\Ra X\tm_{\mw_l[\Top_B]} Y$ and  
$J_ X\tm_{\Top_B} Y \st{\mu}\Ra \colim (J_ X\tm_{\Top_B} Y)$  be colimiting cones. Observe that for $  (f: V\ra X\tm_{\Top_B} Y) \in \mw/X\tm_{\Top_B} Y $, the component $\lambda_f$ of the cone $\lambda $ along $f$ is the map
\begin{equation}\label{eq70}
  \lambda_f= F_{L}(f): V\ra X\tm_{\mw_l[\Top_B]} Y
\end{equation} 
where $F_{L}: \Top{\ra}\mw_l[\Top_B]$ is the coreflector.\\
\indent The cone $\theta:J_X\tm_{\Top_B} Y\Ra X \tm_{\mw_l[\Top_B]} Y $  defined by 
 (\ref{peq11}) induces a map
\begin{equation}\label{eq3}
			 \colim (J_X\tm_{\Top_B} Y)\st{\tilde{\theta}}\ra X \tm_{\mw_l[\Top_B]} Y
\end{equation}
It is such that for every $(V\st{\sigma}\ra X) \in \mw/X$, the  diagram commutes
\begin{equation}\label{eq4}
 \begin{tikzcd}
 V\tm_{\Top_B} Y=V\tm_{\mw_l[\Top_B]}Y  \arrow[d,"\mu_{\sigma}"']   \arrow[dr, "\sigma\tm_{\mw_l[\Top_B]}1_Y"] &\\
 \colim (J_X\tm_{\Top_B} Y)\arrow[r, "\tilde{\theta}"']& X\tm_{\mw_l[\Top_B]} Y
\end{tikzcd} 
\end{equation} 
We need to prove that $\tilde{\theta}$ is an isomorphism.				
Let $P:\mw /X\tm_{\Top_B} Y \ra \mw /X$  be the functor which takes an object in $\mw /X\tm_{\Top_B} Y$, which is an arrow  $f=(\sigma, \tau):V\ra X\tm_{\Top_B} Y$, to its first component $\sigma:V\ra X$, which is an object in $\mw /X$.
Define a natural transformation 
\begin{equation}\label{eq69}
			 J_{X\tm_{\Top_B} Y} \st{\alpha}\Ra (J_X\tm_{\Top_B} Y)P
\end{equation}
as follows:\\
\indent  For $f=(\sigma, \tau):V\ra X\tm_{\Top_B} Y$,  $\alpha _f=(1_V,\tau): V \ra   V\tm_{\Top_B} Y$
\begin{equation}\label{eq5}
 \begin{tikzcd}
\mw /X\tm_{\Top_B} Y \arrow[dr, "P"' ] \arrow[rr, ""{name=U, below}, "J_{X\tm_{\Top_B} Y}" ]{}
& & \Top_B  \\
& \mw /X  \arrow[Rightarrow, from=U, "\alpha"']  \arrow[ur, "J_ X \tm_{\Top_B} Y"' ]
\end{tikzcd}
\end{equation} 
The natural transformation $\alpha$ is such that the following diagram commutes
\begin{equation}\label{eq6}
 \begin{tikzcd} 
 & V \arrow[dr, "f"]\arrow[dl,"\alpha _f"'] \\
V\tm_{\Top_B} Y  \arrow[rr,"\sigma\tm_{\Top_B} 1_Y"'] && X\tm_{\Top_B} Y
\end{tikzcd} 
\end{equation}
Applying the coreflector $F_{L}: \Top{\ra}\mw_l[\Top]$ to (\ref{eq6}), we get a new commutative diagram
\begin{equation}\label{eq7}
 \begin{tikzcd} 
 & V \arrow[dr, "\lambda_f"]\arrow[dl,"\alpha _f"'] \\
V\tm_{\Top_B} Y  \arrow[rr,"\sigma\tm_{\mw_l[\Top]} 1_Y"'] && X\tm_{\mw_l[\Top_B]} Y
\end{tikzcd} 
\end{equation}
By Remark \ref{r4}, the natural transformation $J_{X\tm_{\Top_B} Y} \st{\alpha}\Ra (J_X\tm_{\Top_B} Y)P$ induces a map $X\tm_{\mw_l[\Top_B]} Y \st{h}\ra \colim J_X\tm_{\Top_B} Y$. It is such that 
for every $(f=(\sigma, \tau):V\ra X\tm_{\Top_B} Y) \in \mw/X\tm_{\Top_B} Y $, the diagram commutes
\begin{equation}\label{eq8}
 \begin{tikzcd}
V \arrow[r, "\alpha_f"] \arrow[d,"\lambda_f"']
& V\tm_{\Top_B} Y \arrow[d,"\mu_{\sigma}"] \\
X\tm_{\mw_l[\Top_B]} Y \arrow[r,  "h"' ]
&  \colim  J_X\tm_{\Top_B} Y
\end{tikzcd}
\end{equation} 
Gluing together diagrams (\ref{eq8}) and (\ref{eq4}) along their common edge, we get the following commutative diagram

\begin{equation}\label{eq9}	
\begin{tikzcd}
V \arrow[r, "\alpha_f"] \arrow[d,"\lambda_f"']
& V\tm_{\Top_B} Y \arrow[d,"\mu_{\sigma}"]  \arrow[dr, "\sigma\tm_{\mw_l[\Top]} 1_Y"] &\\
X\tm_{\mw_l[\Top_B]} Y \arrow[r,  "h"' ]
& \colim  J_X\tm_{\Top_B} Y\arrow[r, "\tilde{\theta}"']& X\tm_{\mw_l[\Top_B]} Y
\end{tikzcd}
\end{equation}
By (\ref{eq7}), $(\sigma\tm_{\mw_l[\Top_B]} 1_Y) \alpha_f=\lambda_f$. Therefore $\tilde{\theta} h= 1_{X\tm_{\mw_l[\Top_B]} Y}$. The maps $\tilde{\theta}$ and $h$ induce isomorphisms on the underlying sets, therefore, we also have $h\tilde{\theta}= 1_{\colim  J_X\tm_{\Top_B} Y}$. It follows that $\tilde{\theta}$ is an isomorphism and condition 3 of Definition \ref{pd7} is fulfilled. Condition 4 results from Lemma \ref{l6} below.
\qedhere   
\end{proof}

\begin{lemma}\label{l17} 
 Let $W,Y \in \Top_B$ with $W$ exponentiable in $\Top_B$ and 
$\Hom(W,.):\Top_B \ra \Top_B$ a right adjoint of the functor
$W\tm_{\Top_B} .:\Top_B \ra \Top_B$.  Then 
 $$\abs{\Hom(W,Y)_b}\cong  \Top(W_b,Y_b), \quad \forall b\in B.   $$
 \end{lemma}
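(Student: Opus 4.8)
The plan is to identify the underlying set of the fibre $\Hom(W,Y)_b$ with a hom-set of $\Top_B$ and then transport the computation across the adjunction $W\tm_{\Top_B}(-)\dashv\Hom(W,-)$. First I would use the fibrewise space $B^b$ of Example \ref{ex2}, whose underlying set is $\{b\}$ and whose structure map is the inclusion $\{b\}\hookrightarrow B$. For any $Z\in\Top_B$, a fibrewise map $B^b\ra Z$ amounts exactly to a point of $Z$ lying over $b$: the underlying function selects a single point $z$, compatibility with the projections forces $p_Z(z)=b$, and continuity is automatic since the domain is a point. Hence there is a natural bijection $\Top_B(B^b,Z)\cong\abs{Z_b}$, and taking $Z=\Hom(W,Y)$ gives $\abs{\Hom(W,Y)_b}\cong\Top_B\bigl(B^b,\Hom(W,Y)\bigr)$.

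Next, the exponentiability of $W$ supplies the adjunction isomorphism
$$\Top_B\bigl(B^b,\Hom(W,Y)\bigr)\cong\Top_B\bigl(W\tm_{\Top_B}B^b,\,Y\bigr),$$
so it remains to analyse the fibrewise space $W\tm_{\Top_B}B^b$. As a pullback in $\Top$ of $p_W\colon W\ra B$ along $\{b\}\hookrightarrow B$, its underlying set is $\{(w,b):p_W(w)=b\}\cong W_b$; and since $B^b$ is a one-point space the topology of $W\times B^b$ coincides with that of $W$, so $W\tm_{\Top_B}B^b$ is homeomorphic to the fibre $W_b$ endowed with its subspace topology, viewed as a fibrewise space over $B$ via the constant map at $b$. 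A fibrewise map from this space to $Y$ therefore has image contained in $Y_b$ and, since $Y_b$ carries the subspace topology, is precisely a continuous map $W_b\ra Y_b$; thus $\Top_B(W\tm_{\Top_B}B^b,Y)\cong\Top(W_b,Y_b)$. Composing the three bijections yields $\abs{\Hom(W,Y)_b}\cong\Top(W_b,Y_b)$. The degenerate case $W_b=\emptyset$ is harmless, both sides then being singletons.

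There is no genuinely hard step here; the only point demanding some care is the identification of the pullback $W\tm_{\Top_B}B^b$ with the fibre $W_b$, including checking that the topologies agree and that fibrewise continuity into $Y$ matches ordinary continuity into the subspace $Y_b$. If desired, the explicit description of limits in $\Top_B$ recorded in Appendix \ref{C} can be cited for this identification. Note that, unlike most of the surrounding results, this argument uses no separation hypothesis on $B$.
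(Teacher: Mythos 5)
Your proof is correct and follows exactly the same route as the paper's: the paper's entire argument is the three-step chain $\abs{\Hom(W,Y)_b}\cong \Top_B(B^b,\Hom(W,Y))\cong \Top_B(B^b\tm_{\Top_B}W,Y)\cong \Top(W_b,Y_b)$ using the fibrewise point $B^b$ and the exponential adjunction. Your additional justification of the identification $W\tm_{\Top_B}B^b\cong W_b$ is a correct elaboration of a step the paper leaves implicit.
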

   
\begin{proof} 
 Let   $b\in B$, $B^b$ be the fibrewise space over $B$ defined by Example \ref{ex3}. Then
 $$\abs{\Hom(W,Y)_b}\cong \Top_B(B^b,\Hom(W,Y))\cong \Top_B(B^b\tm_{\Top_B}W,Y)\cong \Top(W_b,Y_b).$$
\qedhere   
 \end{proof}
\begin{lemma}\label{l6} 
Assume that $B$ is $\T_1$, $\mw$ is a suitable subcategory of $\Top_B$ and that every object of $\mw$ is exponentiable in $\Top_B$.  Let $Y,Z \in \mw_l[\Top_B]$,  then the functor 
 
$$\begin{array}{rccl}
  S^{Y}_{Z}:& \mw/Y& \ra &\Top_B \\
     &(W \st{\sigma} \ra Y)&\longmapsto & \Hom(W,Z)
	\end{array}$$
	has a limit. 
 \end{lemma}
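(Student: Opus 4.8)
The plan is to build $\lim S^{Y}_{Z}$ by hand, as a fibrewise space carrying an appropriate coarsest topology; the only genuine difficulty is a set-theoretic one, coming from the fact that $\mw/Y$ is in general a large category, and it is resolved by noting that the underlying set of the candidate limit is an honest set of functions. Fix $Y,Z\in\mw_l[\Top_B]$. For an arrow-object $\sigma\colon W\to Y$ of $\mw/Y$ and $b\in B$, write $\sigma_{b}\colon W_{b}\to Y_{b}$ for the map induced on fibres. Because $\mw$ is suitable, the analysis carried out just before Theorem \ref{t2} (applied with $X=Y$) shows two things: the maps $\sigma_{b}$, as $\sigma$ ranges over $\mw/Y$ and $b$ over $B$, are jointly surjective onto $\abs{Y}$; and $\sigma_{b}(w)=\sigma'_{b}(w')$ holds precisely when $(\sigma,w)$ and $(\sigma',w')$ lie in the same connected component of the relevant category of elements, i.e. are joined by a finite zig-zag of morphisms of $\mw/Y$ that carry the chosen point along. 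I would then take $H$ to be the fibrewise set over $B$ whose fibre over $b$ is
$$H_{b}:=\bigl\{\,f\colon Y_{b}\to Z_{b}\ \bigm|\ f\circ\sigma_{b}\colon W_{b}\to Z_{b}\text{ is continuous for every }(\sigma\colon W\to Y)\in\mw/Y\,\bigr\},$$
a subset of the set of all functions $Y_{b}\to Z_{b}$, hence a set. By Lemma \ref{l17}, $\abs{\Hom(W,Z)_{b}}\cong\Top(W_{b},Z_{b})$, so for each $\sigma$ there is a well-defined fibrewise function $\rho_{\sigma}\colon H\to\Hom(W,Z)$ sending $f\in H_{b}$ to $f\circ\sigma_{b}$. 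Topologise $H$ over $B$ with the coarsest topology making the projection $H\to B$ and all the $\rho_{\sigma}$ continuous. Using that $\Hom(g,Z)$ acts on underlying sets by precomposition with $g$ (a consequence of Lemma \ref{l17} and the defining adjunction), one checks immediately that $(H,(\rho_{\sigma})_{\sigma})$ is a cone on $S^{Y}_{Z}$.

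It then remains to verify universality. Given a cone $(\tau_{\sigma}\colon T\to\Hom(W,Z))_{\sigma}$ on $S^{Y}_{Z}$ with $T\in\Top_B$, for $t\in T_{b}$ the elements $\tau_{\sigma}(t)\in\Top(W_{b},Z_{b})$ are compatible along the morphisms of $\mw/Y$, so by the joint surjectivity and the zig-zag description the assignment $u(t)(\sigma_{b}(w)):=\tau_{\sigma}(t)(w)$ yields a well-defined function $u(t)\colon Y_{b}\to Z_{b}$. Since $u(t)\circ\sigma_{b}=\tau_{\sigma}(t)$ is continuous we get $u(t)\in H_{b}$, and the resulting fibrewise function $u\colon T\to H$ satisfies $\rho_{\sigma}\circ u=\tau_{\sigma}$ for all $\sigma$, is uniquely determined by these equations on underlying sets (again by joint surjectivity), and is continuous because $H$ carries the initial topology for the family $\{\rho_{\sigma}\}$ together with the projection to $B$. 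Hence $H\cong\lim S^{Y}_{Z}$, which is the assertion.

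The main obstacle I expect is the well-definedness of $u(t)$: one must make sure that the connectedness statement imported from Section \ref{s8} genuinely forces $\tau_{\sigma}(t)(w)=\tau_{\sigma'}(t)(w')$ whenever $\sigma_{b}(w)=\sigma'_{b}(w')$, which amounts to transporting the value of $t$ along each link of the zig-zag by means of the cone condition. This is precisely the place where suitability of $\mw$ is used, and everything else in the argument is routine bookkeeping about the initial topology on $H$.
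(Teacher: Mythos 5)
Your proof is correct, and it rests on the same two pillars as the paper's argument: the presentation $Y\cong\colim T_Y$ coming from $\mw$-generation, and the reduction of limits in $\Top_B$ to underlying fibrewise sets equipped with initial topologies. The execution, however, is genuinely different. The paper proceeds formally: it computes $\lim \pi^{s}_{b}\abs{S^{Y}_{Z}}\cong\Top(Y_b,Z_b)$ via the chain $\Top(\pi^{t}_{b}(\colim T_Y),Z_b)\cong\Top(\colim\pi^{t}_{b}T_Y,Z_b)\cong\lim\Top(W_b,Z_b)$ --- the middle step invoking Lemma \ref{l8}.2, which is exactly where the hypothesis that $B$ is $\T_1$ enters --- and then concludes by Lemma \ref{l7}.1 and Lemma \ref{l16}. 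You instead exhibit the limit explicitly: your fibre $H_b$ is the set of compatible families repackaged as actual functions $Y_b\to Z_b$, and you check the universal property directly from the set-level colimit description of $\abs{Y}$ established before Theorem \ref{t2} (joint surjectivity plus the connected-component/zig-zag criterion). This buys two things: the smallness of the limit (the real issue, since $\mw/Y$ may be large) is manifest, being a subset of a function set; and you never need $\pi^{t}_{b}$ to preserve colimits, so the $\T_1$ hypothesis plays no visible role in your argument --- it is only needed if one wants to identify $H_b$ with $\Top(Y_b,Z_b)$, as the paper does and as Remark \ref{r3} later exploits, since only over a $\T_1$ base is $Y_b$ known to carry the final topology from the $W_b$. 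The two facts you use implicitly --- that $\Hom(g,Z)$ acts fibrewise by precomposition (naturality of the identification in Lemma \ref{l17}) and that endowing the set-level limit with the initial topology produces a limiting cone in $\Top_B$ (Lemma \ref{l16}.2) --- are both available in the paper, so nothing is missing.
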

  
\begin{proof} Let $T_Y:\mw/Y \ra \Top_B$ be as in  (\ref{eq1}). Then $\colim T_Y\cong Y$. Let $b\in B$ and let
 $$\pi^{s}_{b}:\Set_{\abs{B}} \ra \Set \quad  \mbox{ and } \quad \pi^{t}_{b}:\Top_B \ra \Top$$ be the functors defined by (\ref{eq27}) and (\ref{eq28}) respectively.
$$ \begin{array}{rclr}
 \Top(Y_b,Z_b)&\cong&\Top(\pi^{t}_{b}(\colim T_Y),Z_b)  &\\
              &\cong&\Top(\colim  \pi^{t}_{b}T_Y,Z_b)  &\mbox{(by Lemma \ref{l8}.2)}\\  
              &\cong&\Top(\underset{(W\st{\sigma}{\rightarrow}Y)\in \mw|Y}\colim W_b,Z_b)  &\\
       &\cong& \underset{(W\st{\sigma}{\rightarrow}Y)\in \mw|Y}\lm \Top( W_b,Z_b)&\\
         &\cong& \underset{(W\st{\sigma}{\rightarrow}Y)\in \mw|Y}\lm \left|\Hom(W,Z)\right|_b&\mbox{(by Lemma \ref{l17})}\\
				     &\cong& \underset{(W\st{\sigma}{\rightarrow}Y)\in \mw|Y}\lm \left|S^{Y}_{Z}(\sigma)\right|_b& \\
						&\cong& \lim \pi^{s}_{b}\abs{S^{Y}_{Z}}\\
	\end{array}$$
	By Lemma \ref{l7}.1, $\left|S^{Y}_{Z}\right|$ has a limit.
	Therefore  by Lemma \ref{pl6}, $S^{Y}_{Z}$ has a limit.
	\qedhere   
 \end{proof}
\begin{remark}\label{r3}
Let $\mw$ be as in Theorem \ref{t6} and 
\begin{equation}\label{eq65}
 \hom(.,.) : \mw_l[\Top_B]^{op}\times \mw_l[\Top_B] \ra  \Top_B  
\end{equation}
be the functor defined by (\ref{peq13}). Let $Y,Z\in \mw_l[\Top_B]$.
$$ \begin{array}{rclr}
 \abs{\hom(Y,Z)}_b&\cong&\pi^{s}_{b}(\abs{\hom(Y,Z)})  &\\
              &\cong&\pi^{s}_{b}(\abs{\lim S^{Y}_{Z}})  &\\ 
							&\cong&\pi^{s}_{b}(\lim \abs{S^{Y}_{Z}}) & (\absl{} \mbox{ preserves limits}) \\
              &\cong&\lim  \pi^{s}_{b}(\abs{S^{Y}_{Z}}) &\mbox{(by Lemma \ref{l7}.1)}
	\end{array}$$
That is, $\lim  \pi^{s}_{b}(\abs{S^{Y}_{Z}})\cong \Top(Y_b,Z_b)$. It follows from Lemma \ref{l16}.2  that $\hom(Y,Z)$ is the topological space whose underlying set is $\underset{b\in B}{\coprod}\Top(Y_b,Z_b)$ and whose topology is the initial topology induced from the spaces $\Hom(W,Z)$ by the maps
$\underset{b\in B}{ \coprod}\sigma_b:\underset{b\in B}{\coprod}\Top(Y_b,Z_b) \ra \underset{b\in B}{\coprod}\Top(W_b,Z_b)=\abs{\Hom(W,Z)}$, where $ (W\st{\sigma}\sra Y) \in \mw/Y$. 
\end{remark}

  By substituting $Pt$ for $B$,  Theorem \ref{t3} corresponds under the isomorphism $P$ of (\ref{peq16}) to the following celebrated theorem of Day. 

\begin{corollary}\label{pc13}(\cite[Theorem 3.1]{D})\\
 Assume that:
\begin{enumerate}

\item The subcategory $\mw$ of $\Top$ is suitable.  
\item Every  space in $\mw$ is exponentiable as an object of $\Top$.
\item For every $V,W \in \mw$, the  space  $V\times_{\Top} W$ is $\mw$-generated.
\end{enumerate}
 Then $\mw$ is left Kan extendable. Furthermore, $\mw_l[\Top]$ is a cartesian closed subcategory of $\Top$.  
\end{corollary}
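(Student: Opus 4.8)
The plan is to deduce Corollary~\ref{pc13} from Theorem~\ref{t6} via the standard isomorphism $P:\Top_{\Pt}\ra\Top$ of (\ref{peq16}), exactly as the remark preceding the statement suggests. First I would observe that when the base $B$ is the one-point space $\Pt$, all four hypotheses of Theorem~\ref{t6} reduce to the three hypotheses of the corollary: condition~1 of Theorem~\ref{t6} (that $B$ be $\T_1$) is automatic for $B=\Pt$; condition~2 (suitability of $\mw$ in $\Top_B$) becomes, by the paragraph defining suitable subcategories of $\Top$ just before Corollary~\ref{pc14}, exactly the requirement that $\mw$ contain a nonempty space, i.e.\ hypothesis~1 of the corollary; and conditions~3 and~4 (exponentiability in $\Top_B$ and $\mw$-generatedness of binary products) translate under $P$ into hypotheses~2 and~3 of the corollary.

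The key step is then to check that $P$ is an equivalence (in fact an isomorphism) of categories that respects all the structure appearing in the statement. Concretely I would note that $P$ preserves finite products and exponential objects (since it is an isomorphism of categories, it preserves any limits and adjunctions that exist), so ``exponentiable in $\Top_{\Pt}$'' corresponds to ``exponentiable in $\Top$''. Next, $P$ carries the inclusion $\mw\hra\Top_{\Pt}$ to the inclusion $P(\mw)\hra\Top$, hence carries the density comonad of the former to the density comonad of the latter; in particular $\mw$ is left Kan extendable in $\Top_{\Pt}$ iff $P(\mw)$ is left Kan extendable in $\Top$, and under this correspondence $\mw_l[\Top_{\Pt}]$ is identified with $P(\mw)_l[\Top]$ (using Theorem~\ref{t2} together with the description of $\mw_l$ as a strong coreflective hull via Proposition~\ref{pp17}, all of which are preserved by an isomorphism of categories). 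Finally, since $P$ preserves the cartesian closed structure, the cartesian closedness of $\mw_l[\Top_{\Pt}]$ established by Theorem~\ref{t6} transports to cartesian closedness of $\mw_l[\Top]$.

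Putting these together: given $\mw$ suitable in $\Top$ with the two remaining hypotheses, let $\mw'=P^{-1}(\mw)$ be the corresponding suitable subcategory of $\Top_{\Pt}$; apply Theorem~\ref{t6} with $B=\Pt$ to conclude that $\mw'$ is left Kan extendable and $\mw'_l[\Top_{\Pt}]$ is cartesian closed; then transport back along $P$ to obtain that $\mw$ is left Kan extendable in $\Top$ and $\mw_l[\Top]$ is cartesian closed.

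The only genuine subtlety — and the step I would write out with some care — is the verification that $P$ really does identify $\mw_l[\Top_{\Pt}]$ with $\mw_l[\Top]$ as \emph{subcategories}, i.e.\ that the construction of the density comonad (and its idempotency) is natural under the isomorphism $P$. This is routine but worth stating explicitly, since the whole argument is a ``change of base along $\Pt$'' and one must make sure nothing about the comma categories $\mw/X$, the colimits computing $L$, or the counits $\epsilon_X$ is lost in translation; everything here follows because $P$ is an isomorphism of categories and hence commutes with the formation of comma categories, colimits, and Kan extensions.
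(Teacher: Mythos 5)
Your proposal is correct and follows exactly the paper's route: the paper obtains Corollary \ref{pc13} by substituting $\Pt$ for $B$ in the fibrewise Day theorem (Theorem \ref{t6}) and transporting along the isomorphism $P$ of (\ref{peq16}), which is precisely your argument (the paper merely states this in one sentence, while you spell out the translation of hypotheses and the preservation of the comonad and cartesian closed structure). No gaps.
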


\begin{remark}\label{pr9}
Let $\mw$ be as in Corollary \ref{pc13} and $Y,Z\in \mw_l[\Top]$. By Remark \ref{r3},
$\lm \abs{S^{Y}_{Z}}$ exists and is isomorphic to $\Top(Y,Z)$. 
Therefore by Lemma \ref{l13}, $\hom(Y,Z)= \lm S^{Y}_{Z}$  is the topological space whose underlying set is $\Top(Y,Z)$ and whose topology is the initial topology defined by the functions
\begin{equation}\label{peq21}
	 \Top(Y,Z)\xra{\Top(\sigma,Z)} \abs{S^{Y}_{Z}(\sigma)}=\Top(W,Z) 
	\end{equation}
By (\ref{peq15}), the exponential object $Z^Y$ is given by $Z^Y\cong F_{L}(\hom(Y,Z))$, where $F_{L}: \Top \ra \mw_l[\Top]$ is the coreflector. \\
 \end{remark}

\begin{examples}\label{pex6}
Let $\Comp$ be the subcategory of  $\Top$ of compact Hausdorff spaces.
\begin{enumerate} 
 	 
\item  By Corollary \ref{pc13}, $\Comp$ is left Kan extendable and $\Comp_{l}[\Top]$ is a cartesian closed coreflective subcategory of $\Top$. The $\Comp$-generated objects of $\Top$ are precisely the compactly generated spaces so that we recover  (\cite[Theorem 3.1]{D}  and \cite[page 49]{MP}). Let $\kTop=\Comp_{l}[\Top]$ and $k:\Top \ra \kTop$  a coreflector. By Corollary \ref{c14}, $\kTop$ contains every locally compact Hausdorff space.
We next give a description of the internal hom functor of $\kTop$.\\ 
Recall that if $K$ is compact Hausdorff and $Z$ is any space, then the exponential object $\Hom(K,Z)$ is the topological space whose underlying set is $\Top(K,Z)$ and whose topology is generated by the subsets 
\begin{equation}\label{peq }
 (C,V)=\{f\in \Top(K,Z)\mid f(C)\subset V\} 
\end{equation}
where $C$ is closed in $K$ and $V$ is open in $Z$ \cite[Proposition A.14.]{HA}.\\
For $\sigma: K  \ra Y$ continuous, the pull back of the subsets $(C,V)$ of $\Top(K,Z)$ by the maps 

\begin{equation}\label{peq17}
\Top(Y,Z)\xra{\Top(\sigma,Z)} \Top(K,Z)
\end{equation}
are the subsets  
\begin{equation}\label{eq18}
 (C,\sigma,V)=\{f\in \Top(Y,Z)\mid f\sigma(C) \subset V\}
\end{equation}
where $C$ is any compact Hausdorff space, $V$ is any open subset of $Z$ and $\sigma:C\ra Z$ is any continuous map. 
Let $\hom(Y,Z)$ be the topological space whose underlying set is $\Top(Y,Z)$ and whose topology is generated by the subsets $(C,\sigma,V)$.
By Remark \ref{pr9}, the exponential object $Z^Y$ in the cartesian closed category $\kTop$ 
is given by 
\begin{equation}\label{peq19}
 Z^Y=k(\hom(Y,Z))
\end{equation}

\item 
Assume that $B$ is   Hausdorff.
The category $\Comp/B$ is suitable. By Theorems \ref{t5} and \ref{t1}, every object in $\Comp/B$ is exponentiable in $\Top_B$. The base space $B$ is Hausdorff, therefore the diagonal of $B$ is closed. It follows that the product, in $\Top_B$, of two objects of $\Comp/B$ is again in $\Comp/B$.
  By Theorem \ref{t6}, the subcategory $\Comp/B$ of $\Top_B$ is left Kan extendable and $(\Comp/B)_l[\Top_B]$ is cartesian closed. By Proposition \ref{p21}.2, $(\Comp/B)_l[\Top_B]=\kTop/B$. Thus $\kTop/B$ is cartesian closed.
	We therefore recover a theorem of Booth (\cite[Theorem 1.1]{BP}). 
\end{enumerate} 

\end{examples}
\indent We next use the terminology developed in this paper to state another result due to Day and compare it to Theorem \ref{t6}.

\begin{theorem}\label{t10}(Day \cite[Theorem 3.4]{D}).\\
Let $\me$ be a subcategory of $\Top$ such that:
\begin{enumerate}
  
\item The subcategory $\me$ contains the one point space. 
\item  Each object of $\me$ is an exponentiable object of $\Top$.  
\item  For any two fibrewise spaces $p:V\ra B$ and $q:W\ra B$ in
 $\me/B$, the domain of the  product $p\tm_{Top_B}q$ (in  
$Top_B$) is closed in $V\tm_{Top}W$.
  \end{enumerate}
Then $\me/B$ is left Kan extendable in $Top_B$ and $(\me/B)_l[Top_B]$ is cartesian closed. 
\end{theorem}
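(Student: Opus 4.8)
The strategy is to deduce this from Theorem \ref{t6}, so the plan is to verify that the hypotheses of Theorem \ref{t10} imply those of Theorem \ref{t6} with $\mw = \me/B$. The hypotheses of Theorem \ref{t6} are: (1) $B$ is $\T_1$; (2) $\mw$ is suitable; (3) every object of $\mw$ is exponentiable in $\Top_B$; (4) $V\tm_{\Top_B}W$ is $\mw$-generated for all $V,W\in\mw$. There is an immediate concern: Theorem \ref{t10} does \emph{not} assume $B$ is $\T_1$, so a literal reduction to Theorem \ref{t6} is not available, and the proof of Theorem \ref{t10} must instead re-run the argument of Section \ref{s11} without invoking the $\T_1$ hypothesis of $B$, checking that $\T_1$-ness of $B$ was only used to guarantee properness/exponentiability of the spaces $B^b$ and of the compact fibres — inputs that are here replaced by hypothesis 3 of Theorem \ref{t10} directly. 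So the real plan is: (i) show $\me/B$ is suitable; (ii) observe hypothesis 3 of Theorem \ref{t10} is exactly condition 3 of Theorem \ref{t6}; (iii) show hypothesis 3 of Theorem \ref{t10} gives condition 4 of Theorem \ref{t6}; (iv) check that the proof machinery of Theorem \ref{t6} (i.e.\ Theorem \ref{pt10} via Definition \ref{pd7}) goes through verbatim once these three conditions hold, without needing $B$ to be $\T_1$.

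For step (i): since $\me$ contains the one-point space $\Pt$, for each $b\in B$ the fibrewise space $\Pt \to B$ with image $\{b\}$ — which is exactly the space $B^b$ of Example \ref{ex3} — is the pullback (in $\Top$) of the terminal object $\Pt$, hence lies in $\me/B$; it satisfies (\ref{eq14}) by construction. Therefore $\me/B$ is suitable in the sense of Definition \ref{pd10}, and by Theorem \ref{t2} it is left Kan extendable. Step (ii) is trivial. For step (iii): given $V,W\in\me/B$ with projections $p,q$, by Theorem \ref{t10}.3 the domain of $p\tm_{\Top_B}q$ is closed in $V\tm_{\Top}W$. Now $V$ and $W$ are exponentiable, hence core compact, hence $V\tm_{\Top}W$ is $\me$-generated in the sense of Day's original theorem; more precisely, I would argue that a closed subspace of an $\me$-generated space need not itself be $\me$-generated in general, so the cleaner route is Proposition \ref{p14} / the chain used in Examples \ref{pex6}.2: one shows $V\tm_{\Top_B}W$ is the domain of a local homeomorphism-free pullback and applies Corollary \ref{c14} after exhibiting a neighbourhood cover by objects of $\me/B$. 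In the Hausdorff case of Examples \ref{pex6}.2 this was done by noting the diagonal of $B$ is closed; here hypothesis 3 of Theorem \ref{t10} is precisely the substitute that makes $V\tm_{\Top_B}W$ a closed subspace of the exponentiable (core compact) space $V\tm_{\Top}W$, and a closed subspace of a core compact space sitting over $B$ via a map that is locally a product is $\me/B$-generated by Corollary \ref{c14}.

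Step (iv) is where I expect the main obstacle: one must confirm that in the proof of Theorem \ref{t6}, and in the supporting Lemmas \ref{l17} and \ref{l6}, the only places where $B$ being $\T_1$ entered were (a) to make $B^b$ fibrewise compact (used so that $B^b$ is exponentiable) and (b) to make compact fibres closed. Both of these were ultimately needed only to guarantee exponentiability of the relevant fibrewise spaces and the computation $\abs{\Hom(W,Y)_b}\cong\Top(W_b,Y_b)$ in Lemma \ref{l17} — but Lemma \ref{l17} only uses that $W$ is exponentiable (given by Theorem \ref{t10}.2) and the existence of $B^b\in\me/B$ (given by Theorem \ref{t10}.1, not needing $\T_1$). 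Likewise Lemma \ref{l6} only uses exponentiability of the objects of $\mw$ and suitability, both now available. Hence conditions 3 and 4 of Definition \ref{pd7} hold for $\me/B$, and Theorem \ref{pt10} yields that $(\me/B)_l[\Top_B]$ is cartesian closed with the internal hom of (\ref{peq15}). I would close by remarking that Theorem \ref{t10} is thus logically incomparable to Theorem \ref{t6}: it drops the $\T_1$ hypothesis on $B$ but imposes the closedness condition 3 directly (which in Theorem \ref{t6} was a \emph{conclusion} available from $\T_1$-ness plus fibrewise compactness), so neither statement subsumes the other.
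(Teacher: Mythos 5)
The paper does not actually prove Theorem \ref{t10}: it is quoted from Day (\cite[Theorem 3.4]{D}) purely for comparison with Theorem \ref{t6}, and the author only claims it is derivable from Theorem \ref{t6} when $B$ is Hausdorff. So there is no internal proof to match your attempt against, and your proposal must be judged on its own terms; it has a genuine gap at its central step.

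The gap is in step (ii). Condition 3 of Theorem \ref{t6} requires every object of $\mw=\me/B$ to be exponentiable \emph{in $\Top_B$}, whereas hypothesis 2 of Theorem \ref{t10} only gives exponentiability of the objects of $\me$ \emph{in $\Top$}, and the former is not a consequence of the latter. By Niefield's characterization a subspace inclusion $A\hra B$ is exponentiable in $\Top_B$ iff $A$ is locally closed in $B$; hence the object $B^b=(\Pt\ra B)$, which your step (i) correctly places in $\me/B$, fails to be exponentiable in $\Top_B$ whenever $\{b\}$ is not locally closed (e.g.\ $B$ infinite indiscrete), even though the one-point space is exponentiable in $\Top$. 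The paper itself flags exactly this as an ``essential difference'' between the two theorems in the discussion following Theorem \ref{t10}. Since all the machinery you propose to re-run (Lemma \ref{pl6}, Lemma \ref{pl7}, Lemma \ref{l17}, Lemma \ref{l6}, Theorem \ref{pt10}) relies on exponentiability in the ambient category $\mc=\Top_B$ --- to commute $V\tm_{\mc}(-)$ past colimits and to form $\Hom(V,Z)$ as an object of $\Top_B$ --- the reduction collapses here; Day's actual argument works with the exponentials in $\Top$ together with the closedness hypothesis 3 and is not a specialization of Section \ref{s11}. Step (iii) is also not established: Corollary \ref{c14} requires every point of $V\tm_{\Top_B}W$ to have a neighbourhood lying in $\me/B$, and nothing in your argument produces such a cover; closedness of $V\tm_{\Top_B}W$ in $V\tm_{\Top}W$ does not by itself make it $\me/B$-generated. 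Your closing observation that the two theorems are logically incomparable agrees with the paper's own remarks, but it also signals why the proposed reduction cannot work as written.
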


  Theorems \ref{t6} and \ref{t10} do overlap. Actually, the proof of \cite[Theorem 1.1]{BP} given in Example \ref{pex6}.2, and which uses Theorem \ref{t6}, can also be derived from  Theorem \ref{t10}. There are however some essential differences:

\begin{enumerate}
 
\item The subcategory $\mw$ of $\Top_B$ in Theorem \ref{t6} has the form $\me/B$ in Theorem \ref{t10}. Not any subcategory of $\Top_B$ has this form.
\item  Objects of $\mw$ in Theorem \ref{t6} are assumed to be exponentiable in $\Top_B$, while the objects of $\me$ in Theorem \ref{t10} are assumed to be exponentiable in $\Top$. For instance, Theorem \ref{t6} can be used to prove that the category of fibrewise compactly generated spaces over a $T_1$-space is cartesian closed as shown in a latter section. Theorem \ref{t10} does not apply to prove this fact. 
\item  In Theorem \ref{t6}, $B$ is assumed to be $T_1$. Theorem \ref{t10} uses a different separation condition (condition 3).
\end{enumerate}
Observe that Theorem \ref{t10} can be derived from Theorem \ref{t6} when $B$ is Hausdorff. 
\section{The category $\kTop_B$ of fibrewise compactly generated spaces}\label{s12}
Our objective in this section is to prove that the category of fibrewise compactly generated spaces over a $\T_1$-base is cartesian closed. \\
\indent Let $\Comp_B$ be the subcategory of $\Top_B$ of fibrewise compact, fibrewise Hausdorff spaces over $B$. 

\begin{proposition}\label{p15}
 Assume that $B$ is a $\T_1$-space. Then $\Comp_B$ is left Kan extendable in $\Top_B$.
\end{proposition}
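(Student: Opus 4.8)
The plan is to show that $\Comp_B$ is a \emph{suitable} subcategory of $\Top_B$ in the sense of Definition \ref{pd10}, and then invoke Theorem \ref{t2}.1 to conclude that $\Comp_B$ is left Kan extendable in $\Top_B$. So the whole argument reduces to producing, for each $b\in B$, a fibrewise space in $\Comp_B$ whose fibre over $b$ is nonempty and whose fibres over all other points are empty.

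The natural candidate is the fibrewise subspace $B^b$ of $B$ introduced in Examples \ref{ex2} and \ref{ex3}, whose underlying set is $\{b\}$ and whose projection is the inclusion $\{b\}\hookrightarrow B$. By construction $(B^b)_b=\{b\}\neq\emptyset$ and $(B^b)_c=\emptyset$ for every $c\neq b$, so $B^b$ satisfies condition (\ref{eq14}). It remains only to check that $B^b$ actually lies in $\Comp_B$, and this is exactly where the hypothesis on $B$ is used. The space $B^b$ is fibrewise Hausdorff (as recorded in Example \ref{ex2}; this is immediate since its fibres are singletons). For fibrewise compactness one must show the projection $p\colon B^b\to B$ is proper; by Theorem \ref{t5} it suffices that $p$ be closed with compact fibres. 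The fibres $p^{-1}(c)$ are either empty or the one-point space $\{b\}$, hence compact, and $p$ is closed precisely because $B$ is $\T_1$: the only nonempty closed subset of $B^b$ is $B^b$ itself, and its image $\{b\}$ is closed in $B$. Thus $B^b\in\Comp_B$.

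Since $B^b$ can be chosen for every $b\in B$, the subcategory $\Comp_B$ is suitable, and Theorem \ref{t2}.1 yields that $\Comp_B$ is left Kan extendable in $\Top_B$ (so that $(\Comp_B)_l[\Top_B]$ is its strong coreflective hull). The argument presents no genuine obstacle; the single point worth emphasising is that the $\T_1$ assumption is exactly what keeps the point-spaces $B^b$ inside $\Comp_B$ — without it the inclusion $\{b\}\hookrightarrow B$ need not be a closed map, so $B^b$ need not be fibrewise compact, and the suitability hypothesis could fail.
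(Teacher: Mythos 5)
Your proof is correct and follows essentially the same route as the paper: showing $\Comp_B$ is suitable because the $\T_1$ hypothesis places the one-point fibrewise spaces $B^b$ in $\Comp_B$ (the properness check you spell out is exactly what the paper delegates to Example \ref{ex2} and Theorem \ref{t5}), and then applying Theorem \ref{t2}.
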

   
\begin{proof} 
$B$ is a $\T_1$-space. Thus   $\Comp_B$ contains the fibrewise spaces $B^b$ of Example \ref{ex3} for all $b\in B$. Therefore $\Comp_B$ is  a suitable subcategory of $\Top_B$. By Theorem \ref{t2}, $\Comp_B$ is left Kan extendable  in $\Top_B$.
\qedhere   
\end{proof}	
  Assume that $B$ is $\T_1$.  Then  $\kTop_B=({\Comp_B})_{_{l}}[\Top_B]$ is a coreflective subcategory of $\Top_B$.
Let
\begin{equation}\label{eq35}
  k:\Top_B\ra \kTop_B  
\end{equation}
 
be a coreflector. An object  in $\kTop_B$ is called a fibrewise compactly generated space over $B$. 
\begin{proposition}\label{pp21}
 Assume that $B$ is a $\T_1$-space and let $X$ be a fibrewise Hausdorff space over $B$. Then  the following properties are equivalent:
 
\begin{enumerate}
 
\item The fibrewise space $X$ is fibrewise compactly generated.
\item If a subset $A$ of $X$ is such that $A\cap K$ is open   in $K$ for any subspace $K$ of $X$ which is fibrewise compact over $B$, then $A$ is open   in $X$.
\item If a subset $A$ of $X$ is such that $A\cap K$ is closed  in $K$ for any subspace $K$ of $X$ which is fibrewise compact over $B$, then $A$ is  closed  in $X$. 
\end{enumerate}

\end{proposition}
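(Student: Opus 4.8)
The plan is to reduce the statement to the explicit description of fibrewise compactly generated spaces furnished by Theorem~\ref{t2}. By Proposition~\ref{p15} the subcategory $\Comp_B$ is left Kan extendable in $\Top_B$ (here $B$ is $\T_1$), and by Theorem~\ref{t2}.3 a fibrewise space $X$ is $\Comp_B$-generated exactly when it carries the final topology determined by all continuous fibrewise maps $\sigma\colon V\ra X$ with $V$ fibrewise compact fibrewise Hausdorff over $B$; call these the \emph{test maps} for $X$. Since the ambient topology of $X$ already makes every test map continuous, it is always coarser than the final one, i.e.\ the coreflector $k$ only refines $X$; hence $X$ is fibrewise compactly generated iff every subset that is open in $k(X)$ is already open in $X$. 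The structural point behind the whole argument is that, $X$ being fibrewise Hausdorff, every subspace of $X$ is fibrewise Hausdorff (by Proposition~\ref{p3}, since the diagonal of a subspace is the intersection of the ambient diagonal with the square of the subspace); consequently every fibrewise compact subspace $K$ of $X$ lies in $\Comp_B$, and its inclusion $i_K\colon K\hra X$ is a test map for $X$.

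I would first dispose of $2\Leftrightarrow 3$, which is purely formal: for $A\subseteq X$ and any subspace $K$ one has $(X\setminus A)\cap K=K\setminus(A\cap K)$, so $A\cap K$ is open in $K$ precisely when $(X\setminus A)\cap K$ is closed in $K$; substituting this equivalence into either hypothesis yields the other. For $1\Rightarrow 2$, suppose $X$ is fibrewise compactly generated and that $A\subseteq X$ meets every fibrewise compact subspace in an open set. To see $A$ is open it suffices, by the final-topology description, to check that $\sigma^{-1}(A)$ is open in $V$ for every test map $\sigma\colon V\ra X$. Given such a $\sigma$, Proposition~\ref{p4} applied with $K=V$ shows $\sigma$ is proper and $\sigma(V)$ is a fibrewise compact fibrewise Hausdorff subspace of $X$; hence $A\cap\sigma(V)$ is open in $\sigma(V)$, so $\sigma^{-1}(A)=\sigma^{-1}\bigl(A\cap\sigma(V)\bigr)$ is open in $V$ by continuity of the corestriction $V\ra\sigma(V)$. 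Thus $A$ is open in $X$.

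For $2\Rightarrow 1$ it suffices, by the remark in the first paragraph, to show that every set $A$ open in $k(X)$ is open in $X$. Fix such an $A$. By the final-topology description of $k(X)$, $\sigma^{-1}(A)$ is open in $V$ for every test map $\sigma$; specializing to the inclusion $i_K\colon K\hra X$ of an arbitrary fibrewise compact subspace $K$ of $X$ (which is a test map by the structural point) gives that $A\cap K=i_K^{-1}(A)$ is open in $K$. Since this holds for all fibrewise compact subspaces $K$, hypothesis~2 forces $A$ to be open in $X$. Hence $k(X)$ and $X$ have the same topology and $X$ is fibrewise compactly generated.

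I do not expect a real obstacle; the only point needing care is the mismatch between the two notions of ``test object'' --- the coreflector $k$ is assembled from \emph{all} of $\Comp_B$, whereas conditions 2 and 3 probe $X$ only through its own fibrewise compact subspaces --- and this is exactly where fibrewise Hausdorffness of $X$ (Proposition~\ref{p3}) and properness of maps out of fibrewise compact spaces (Proposition~\ref{p4}) enter: the former guarantees that each fibrewise compact subspace of $X$ already belongs to $\Comp_B$, and the latter guarantees that the image of every test map is again such a subspace.
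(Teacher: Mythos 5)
Your proof is correct and follows essentially the same route as the paper's (much terser) argument: both rest on Theorem \ref{t2}.3's final-topology description of $\Comp_B$-generated spaces and on Proposition \ref{p4}.2 to replace arbitrary test maps $V\ra X$ by their images, which are fibrewise compact subspaces of $X$ because $X$ is fibrewise Hausdorff. You have simply spelled out the details (including the formal equivalence of 2 and 3 and the fact that fibrewise compact subspaces of $X$ are themselves test objects) that the paper leaves implicit.
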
	
   
\begin{proof}
Let $u:K\sra X$  be a continuous fibrewise map with $K$ fibrewise compact  over $B$. 
By Proposition \ref{p4}.2, $u(K)$ is fibrewise compact fibrewise Hausdorff. The result then follows from Theorem \ref{t2}.3. 
\qedhere   
\end{proof}
Recall that if $X$ is a fibrewise space over $B$ with projection $p:X\ra B$ and $W\subset B$, then the subspace $p^{-1}(W)$ of $X$ is denoted by $X_W$. 

\begin{definition}\label{pd9}(\cite[Definition  10.1]{J2})\\
 Let $X$ be fibrewise space over $B$. Then a subset $A$ of
$X$ is said to be quasi-open (resp. quasi-closed) if the following condition is satisfied:\\
For each point $b\in B$ and each neighborhood $V$ of $b$, there exists a neighborhood 
 $W \subset  V$ of $b$ such that whenever $K \subset X_W$ is fibrewise compact over $W$, then
$A\cap K$ is open (resp. closed) in $K$. 
\end{definition}	

 \begin{lemma}\label{pl9} Let $X$ be a topological space and $(V_i)_{i\in I}$ a family of subsets of $X$ whose interiors cover $X$. Then a subset $A$ of $X$ is open (resp.closed) iff $A\cap V_i$ is open (resp.closed) in $V_i$ for all $i\in I$. 
\end{lemma}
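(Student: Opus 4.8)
The plan is to establish the \emph{open} case directly and then obtain the \emph{closed} case by passing to complements. The forward implication of the open case is immediate: if $A$ is open in $X$, then $A\cap V_i$ is open in $V_i$ for every $i\in I$ by the very definition of the subspace topology, so no hypothesis on the family is needed there.

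For the converse of the open case I would argue that $A$ is a neighbourhood of each of its points. Fix $x\in A$. Since the interiors of the $V_i$ cover $X$, choose $i$ with $x$ in the interior $\mathrm{Int}(V_i)$ of $V_i$ in $X$. By hypothesis $A\cap V_i$ is open in $V_i$, so there is an open subset $U$ of $X$ with $A\cap V_i=U\cap V_i$. Then $W:=U\cap \mathrm{Int}(V_i)$ is open in $X$, contains $x$, and satisfies $W\subseteq U\cap V_i=A\cap V_i\subseteq A$. Hence every point of $A$ has an $X$-open neighbourhood contained in $A$, and $A$ is open in $X$.

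For the closed case I would simply observe that $A$ is closed in $X$ iff $X\setminus A$ is open in $X$, and that $A\cap V_i$ is closed in $V_i$ iff $V_i\setminus(A\cap V_i)=(X\setminus A)\cap V_i$ is open in $V_i$; applying the open case to $X\setminus A$ then yields the desired equivalence.

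The only delicate point — and the reason the statement is phrased in terms of the \emph{interiors} covering $X$ rather than the $V_i$ themselves — is the step in the converse where one upgrades ``open in $V_i$'' to ``open in $X$'': the set $A\cap V_i$ need not itself be open in $X$, so one genuinely needs the $X$-open set $\mathrm{Int}(V_i)$ to trim the ambient open set $U$ down to a neighbourhood of $x$ lying inside $A$. Everything else is routine.
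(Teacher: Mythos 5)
Your proof is correct and complete: the paper itself only records ``Clear.'' for this lemma, and your argument (reduce the closed case to the open case by complementation, and in the open case use the $X$-open set $\mathrm{Int}(V_i)$ to trim the ambient open set $U$ witnessing openness of $A\cap V_i$ in $V_i$ down to an $X$-neighbourhood of $x$ inside $A$) is exactly the routine verification being elided. The one micro-step left implicit --- that $x\in U$, which follows since $x\in A\cap V_i=U\cap V_i$ --- is immediate.
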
	
   
\begin{proof}
Clear.
\qedhere   
\end{proof}

\begin{corollary}\label{pc20}\footnote{I would like to greatly thank the first referee for explicitly stating this result to me. }
Assume that $B$ is a $\T_1$-space. Let $X$ be a fibrewise compactly generated fibrewise Hausdorff space over $B$. Then every quasi-open (resp. quasi-closed) subset of $X$ is open (resp.closed).
\end{corollary}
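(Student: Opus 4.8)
The plan is to reduce everything to the characterization of fibrewise compactly generated fibrewise Hausdorff spaces provided by Proposition \ref{pp21}. I will describe the argument for a quasi-closed subset $A$; the quasi-open case is entirely parallel, using part 2 of Proposition \ref{pp21} together with the ``open'' half of Lemma \ref{pl9} in place of their ``closed'' counterparts. So let $A\subset X$ be quasi-closed. By Proposition \ref{pp21}.3 it is enough to show that $A\cap K$ is closed in $K$ for every subspace $K$ of $X$ that is fibrewise compact over $B$; fix such a $K$, with projection $p_K\colon K\ra B$, which by definition is a proper map.

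The next step is a purely local one. For each $b\in B$, apply the defining condition of quasi-closedness (Definition \ref{pd9}) with $V=B$: there is a neighbourhood $W_b$ of $b$ in $B$ such that $A\cap L$ is closed in $L$ whenever $L\subset X_{W_b}$ is fibrewise compact over $W_b$. Now $K_{W_b}=p_K^{-1}(W_b)$ is a subspace of $X_{W_b}$, and the projection $K_{W_b}\ra W_b$ is the pullback of the proper map $p_K$ along the inclusion $W_b\hra B$; hence it is proper by \cite[Proposition 3, Section 10.1]{BO}, i.e.\ $K_{W_b}$ is fibrewise compact over $W_b$. Taking $L=K_{W_b}$ therefore yields that $A\cap K_{W_b}$ is closed in $K_{W_b}$.

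It then remains to glue these local statements on $K$. The sets $p_K^{-1}(\mathrm{int}_B(W_b))$ are open in $K$, are contained in $K_{W_b}$, and cover $K$ (each $x\in K$ lies in $p_K^{-1}(\mathrm{int}_B(W_{p_K(x)}))$), so the interiors in $K$ of the family $(K_{W_b})_{b\in B}$ cover $K$. Applying Lemma \ref{pl9} inside the space $K$, we conclude that $A\cap K$ is closed in $K$ if and only if $(A\cap K)\cap K_{W_b}=A\cap K_{W_b}$ is closed in $K_{W_b}$ for every $b\in B$, which is exactly what was shown above. Hence $A\cap K$ is closed in $K$ for every fibrewise compact-over-$B$ subspace $K$ of $X$, and Proposition \ref{pp21}.3 gives that $A$ is closed in $X$.

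The only point where the hypotheses genuinely enter — and the one mild obstacle in the argument — is the verification that $K_{W_b}$ is fibrewise compact over $W_b$; this rests on the properness of $p_K$ and the stability of proper maps under pullback. The remaining two passages are bookkeeping: from ``$A\cap K$ closed for every fibrewise compact $K$'' to ``$A$ closed'' is precisely Proposition \ref{pp21}, and from the local closedness statements to the global one on $K$ is precisely Lemma \ref{pl9}. I do not anticipate any further difficulty.
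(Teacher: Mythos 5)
Your proof is correct and follows essentially the same route as the paper's: localize via Definition \ref{pd9} with $V=B$, observe that $K_{W_b}=K\cap X_{W_b}$ is fibrewise compact over $W_b$, glue with Lemma \ref{pl9}, and conclude with Proposition \ref{pp21}. The only cosmetic differences are that you treat the quasi-closed case where the paper treats quasi-open (each deferring the other by symmetry), and you spell out the pullback-stability of properness, which the paper leaves implicit.
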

  
\begin{proof}
The two claims  concerning quasi-open sets and quasi-closed sets are equivalent. We therefore only need to prove one of them.\\
\indent Let $O$ be a quasi-open subset of $X$. For each $b\in B$, there exists a neighborhood $W_b$ of $b$ such that given any  subspace $K$ of $X_{W_b}$ which is fibrewise compact over $W_b$, $O\cap K$ is open in $K$.\\
\indent Let $K$ be any  subspace of $X$ which is fibrewise compact over $B$ and let $b\in B$. The fibrewise subspace $K\cap X_{W_b}$ is  fibrewise compact over $W_b$. Therefore 
$K\cap X_{W_b}\cap O$ is open in $K\cap X_{W_b}$. The family  $(K\cap X_{W_b})_{b\in B}$ is a family of subsets of $K$ whose interiors in $K$ cover $K$. By Lemma \ref{pl9}, $K\cap O$ is open in $K$. By Proposition \ref{pp21},  $O$ is open in $X$. 
\qedhere   
\end{proof}
Recall that a topological space $X$ is said to be regular if for every $x\in X$ and every neighborhood $V$ of $x$, there exists a closed neighborhood $W$ of $x$ which is contained in $V$. Observe that a regular $\T_1$-space is Hausdorff. 

\begin{proposition}\label{pp22}
 Let $B$ be a regular Hausdorff space and $X$ a fibrewise Hausdorff space over $B$.  Assume that every quasi-open (resp.quasi-closed) subset of $X$ is open (resp.closed) in $X$. Then $X$ is fibrewise compactly generated.
\end{proposition}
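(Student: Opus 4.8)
The plan is to show that $X$ has the final topology with respect to all continuous fibrewise maps $u\colon K \to X$ with $K$ fibrewise compact fibrewise Hausdorff over $B$; by Theorem \ref{t2}.3 (applied to the suitable subcategory $\Comp_B$, which is suitable because $B$ is Hausdorff, hence $\T_1$, so it contains the spaces $B^b$) this is exactly the assertion that $X$ is $\Comp_B$-generated, i.e. fibrewise compactly generated. Concretely, I want to prove: if $A \subset X$ is such that $u^{-1}(A)$ is closed in $K$ for every such $u$, then $A$ is closed in $X$. By the hypothesis of the proposition, it suffices to prove that such an $A$ is quasi-closed in the sense of Definition \ref{pd9}.

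First I would reduce from arbitrary continuous fibrewise maps $u\colon K \to X$ to honest fibrewise compact subspaces of $X$: given $u\colon K \to X$ with $K$ fibrewise compact fibrewise Hausdorff, since $X$ is fibrewise Hausdorff, Proposition \ref{p4} tells us $u$ is proper, $u(K)$ is a closed fibrewise compact fibrewise Hausdorff subspace of $X$, and $u^{-1}(A) = u^{-1}(A \cap u(K))$; so testing closedness of $A \cap K'$ over all fibrewise compact subspaces $K' \subseteq X$ is equivalent to testing $u^{-1}(A)$ over all maps. Thus the ``compactly generated'' condition (in the Proposition \ref{pp21}.3 formulation) is: $A$ closed in every fibrewise compact subspace implies $A$ closed. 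Now take such an $A$. To check $A$ is quasi-closed, fix $b \in B$ and a neighborhood $V$ of $b$. Here is where I use regularity of $B$: choose a closed neighborhood $W \subseteq V$ of $b$ in $B$. Then for any subspace $K \subseteq X_W$ fibrewise compact \emph{over $W$}, I claim $K$ is in fact fibrewise compact \emph{over $B$} as a subspace of $X$: the projection $K \to W$ is proper, $W \hookrightarrow B$ is a closed embedding hence proper, so the composite $K \to B$ is proper by \cite[Proposition 5.b, Section 10.1]{BO} / Proposition \ref{pp20}-type composition; and $K$ is fibrewise Hausdorff over $W$ hence over $B$. Therefore $A \cap K$ is closed in $K$ by the assumed property of $A$ (applied to the fibrewise compact subspace $K$ of $X$). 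This shows $A$ is quasi-closed, and the proposition's hypothesis then forces $A$ to be closed in $X$.

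Having shown that $A$ closed in every fibrewise compact subspace of $X$ implies $A$ closed in $X$, Proposition \ref{pp21} (the equivalence $1 \Leftrightarrow 3$) gives that $X$ is fibrewise compactly generated, which is the conclusion.

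The main obstacle I anticipate is the verification that a subspace $K \subseteq X_W$ which is fibrewise compact over the closed neighborhood $W$ is genuinely fibrewise compact over $B$ — this is precisely where regularity of $B$ is essential (one needs the closed neighborhood $W$, so that $W \hookrightarrow B$ is proper and composition of proper maps applies). Without closedness of $W$ the argument collapses, since a space fibrewise compact over an open set need not be proper over the ambient base. Everything else is bookkeeping with the definitions of quasi-closed, fibrewise compact, and the characterizations already established in Proposition \ref{pp21} and Theorem \ref{t2}.
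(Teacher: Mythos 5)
Your proposal is correct and follows essentially the same route as the paper: use regularity of $B$ to replace the given neighborhood $V$ of $b$ by a closed neighborhood $W\subset V$, observe that a subspace of $X_W$ fibrewise compact over $W$ is then fibrewise compact over $B$ (you argue via composition of proper maps, the paper via Theorem \ref{t5}; these are interchangeable), deduce quasi-openness/quasi-closedness, and conclude with Proposition \ref{pp21}. The only cosmetic difference is that you run the argument on the quasi-closed side while the paper runs it on the quasi-open side, which the paper itself notes are equivalent.
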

   	
\begin{proof} Again, we only need to prove the proposition under the quasi-open hypothesis. \\
Let $O\subset X$ be such that $O\cap K$ is open in $K$ for any subspace $K$ of $X$ which is fibrewise compact over $B$. We want to show that $O$ is quasi-open. \\
Let $b\in B$ and let $V$ be any neighborhood of $b$. The space $B$ is regular. There exists a closed neighborhood $W$ of $b$ which is contained in $V$. Let $K$ be any subspace of $X_W$ which is fibrewise compact  over $W$. The subspace $W$ of $X$ is closed. By Theorem \ref{t5}, $K$ is fibrewise compact over $B$. Therefore  $O\cap K$ is open in $K$. It follows that $O$ is a quasi-open subset of $X$, and is therefore open in $X$.  By Proposition \ref{pp21}, $X$ is fibrewise compactly generated. 
\qedhere   
\end{proof}

\begin{remark}\label{pr11}We next compare our notion of
fibrewise compactly generated space to the equally named notion
considered by James in \cite[Definition 10.3]{J2}.

 \begin{enumerate}
 
\item Our notion of fibrewise compactly generated spaces is defined only when the base space $B$ is $T_1$.

\item  A fibrewise space $X$ over $B$ is fibrewise compactly generated in the sense of James iff:
 
\begin{enumerate}
 
\item $X$ is fibrewise Hausdorff.
\item Every quasi-open subset of $X$ is open, or equivalently, if every quasi-closed subset of $X$ is closed. 

\end{enumerate}

\item   Assume that $B$ is a $\T_1$-space and $X$ is a fibrewise Hausdorff space over $B$.
 
\begin{enumerate}
 
\item By Corollary \ref{pc20}, if $X$ is fibrewise compactly generated   in our sense, then it is so   in the sense of James. 
\item Assume further that $B$ is a regular space. Then by Corollary \ref{pc20} and Proposition \ref{pp22}, $X$ is fibrewise compactly generated in our sense iff it is so in the sense of James.   
\end{enumerate}

\end{enumerate}

\end{remark}	
To fit our purposes, we give a definition of fibrewise locally compact spaces which is slightly stronger than the one given by James in \cite[Definition 3.12.]{J2}. 

\begin{definition}\label{d6}
A fibrewise space $X$ over $B$ is said to be fibrewise locally compact if for each $x\in X$, there exists a neighborhood $K$ of $x$ which is  fibrewise compact over $B$. 
\end{definition}

\begin{proposition}\label{p17}
Assume that $B$ is $\T_1$.  Then
every fibrewise locally compact, fibrewise Hausdorff space is fibrewise compactly generated.
\end{proposition}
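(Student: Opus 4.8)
The plan is to deduce this from Corollary \ref{c14} applied to the suitable subcategory $\mw = \Comp_B$ of fibrewise compact fibrewise Hausdorff spaces. By Proposition \ref{p15}, since $B$ is $\T_1$, the subcategory $\Comp_B$ is left Kan extendable in $\Top_B$, and $\kTop_B = (\Comp_B)_l[\Top_B]$. So it suffices to show that every fibrewise locally compact, fibrewise Hausdorff space $X$ has the property that every point of $X$ has a neighborhood lying in $\Comp_B$; then Corollary \ref{c14} gives that $X$ is $\Comp_B$-generated, i.e., $X \in \kTop_B$.

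First I would take $x \in X$. By Definition \ref{d6} (fibrewise local compactness), there is a neighborhood $K$ of $x$ in $X$ which is fibrewise compact over $B$. The only thing left to check is that $K$, as a fibrewise space over $B$, is also fibrewise Hausdorff, so that $K \in \Comp_B$. But $K$ is a subspace of $X$, and $X$ is fibrewise Hausdorff; the property of being fibrewise Hausdorff is inherited by subspaces (this is immediate from the definition in Examples \ref{ex1}, since separating neighborhoods in $X$ restrict to separating neighborhoods in $K$, or equivalently from Proposition \ref{p3} together with the fact that the diagonal of a subspace is the restriction of the diagonal). Hence $K$ is fibrewise compact and fibrewise Hausdorff over $B$, i.e., $K \in \Comp_B$, and $K$ is a neighborhood of $x$ in $X$.

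Therefore every point of $X$ has a neighborhood in $\Comp_B$. Applying Corollary \ref{c14} with $\mw = \Comp_B$, we conclude that $X$ is $\Comp_B$-generated, which by definition means $X \in (\Comp_B)_l[\Top_B] = \kTop_B$, i.e., $X$ is fibrewise compactly generated.

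The argument is essentially a one-line application of Corollary \ref{c14}; there is no real obstacle. The only point requiring a word of care is the closure of fibrewise Hausdorffness under passage to subspaces, which is routine and already implicitly used elsewhere in the paper (e.g., Proposition \ref{p4}). If one wanted to be slightly more careful about which topology $K$ carries, one notes that the neighborhood $K \subseteq X$ is given the subspace topology and the subspace projection $K \to B$, so "fibrewise compact" and "fibrewise Hausdorff" for $K$ refer to exactly this structure, which is the structure relevant to membership in $\Comp_B$.
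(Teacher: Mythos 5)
Your proof is correct and follows exactly the paper's route: the paper's own proof is the one-line remark that the result is a consequence of Corollary \ref{c14}, applied with $\mw=\Comp_B$. You have simply spelled out the (routine) verification that the fibrewise compact neighborhoods are also fibrewise Hausdorff, so they lie in $\Comp_B$.
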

  
\begin{proof} 	
This is a consequence of Corollary \ref{c14}.
\qedhere  
 \end{proof}

Assume that $B$ is $\T_1$ and let $\Lcomp_B$ be the subcategory of $\Top_B$ of fibrewise locally compact, fibrewise Hausdorff spaces. $\Lcomp_B$ contains the suitable subcategory $\Comp_B$ of $\Top_B$. Therefore  $\Lcomp_B$ is suitable.
By Theorem \ref{t2}, $\Lcomp_B$ is left Kan extendable in $\Top_B$.
Let $\lkTop_B=({\Lcomp_B})_{_{l}}[\Top_B]$.  
  
\begin{corollary}\label{c3}
Assume that $B$ is $\T_1$. Then 
$\lkTop_B=\kTop_B$.
 \end{corollary}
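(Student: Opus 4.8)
The plan is to prove the two inclusions $\kTop_B \subseteq \lkTop_B$ and $\lkTop_B \subseteq \kTop_B$ by exploiting the relations between the suitable subcategories $\Comp_B \subseteq \Lcomp_B$ and Corollary \ref{pc5}.2. First I would note that since $B$ is $\T_1$, both $\Comp_B$ and $\Lcomp_B$ are suitable subcategories of $\Top_B$ (the latter because it contains $\Comp_B$, which is suitable), hence both are left Kan extendable by Theorem \ref{t2}; this gives meaning to $\kTop_B = (\Comp_B)_l[\Top_B]$ and $\lkTop_B = (\Lcomp_B)_l[\Top_B]$. Since $\Comp_B \subseteq \Lcomp_B$, and since by Corollary \ref{pc6} (or Proposition \ref{pp6}) the subcategory $\Lcomp_B$ is contained in its own strong coreflective hull, we certainly have $\Comp_B \subseteq \lkTop_B$, so $\Comp_B \subseteq (\Lcomp_B)_l[\Top_B]$ is the first hypothesis needed to apply Corollary \ref{pc5}.2.

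The heart of the argument is to verify the second hypothesis of Corollary \ref{pc5}.2, namely $\Lcomp_B \subseteq \kTop_B = (\Comp_B)_l[\Top_B]$; equivalently, every fibrewise locally compact, fibrewise Hausdorff space over $B$ is $\Comp_B$-generated. This is exactly the content of Proposition \ref{p17}, which says such a space is fibrewise compactly generated — and $\kTop_B$ is defined to be the category of fibrewise compactly generated spaces. So $\Lcomp_B \subseteq \kTop_B$. Having both $\Comp_B \subseteq \lkTop_B$ and $\Lcomp_B \subseteq \kTop_B$, Corollary \ref{pc5}.2 (with $\mw = \Comp_B$ and $\mw' = \Lcomp_B$) yields $\Comp_B{}_l[\Top_B] = \Lcomp_B{}_l[\Top_B]$, that is, $\kTop_B = \lkTop_B$.

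I expect the only subtle point to be making sure the hypotheses of Corollary \ref{pc5}.2 are stated in the exact form required: one needs $\mw' \subseteq \mw_l[\mc]$ and $\mw \subseteq \mw'_l[\mc]$. Here $\mw = \Comp_B \subseteq \Lcomp_B \subseteq (\Lcomp_B)_l[\Top_B] = \mw'_l[\mc]$ is immediate (a suitable subcategory sits inside its strong coreflective hull), and $\mw' = \Lcomp_B \subseteq \kTop_B = \mw_l[\mc]$ is Proposition \ref{p17}. Both inclusions are thus in hand, and no genuine obstacle remains; the proof is essentially a one-line invocation of Corollary \ref{pc5}.2 once Proposition \ref{p17} is cited.

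\begin{proof}
Since $B$ is $\T_1$, the subcategory $\Comp_B$ is suitable, hence so is $\Lcomp_B \supseteq \Comp_B$; by Theorem \ref{t2} both are left Kan extendable in $\Top_B$, and $\kTop_B = (\Comp_B)_l[\Top_B]$, $\lkTop_B = (\Lcomp_B)_l[\Top_B]$. As $\Comp_B \subseteq \Lcomp_B \subseteq (\Lcomp_B)_l[\Top_B] = \lkTop_B$, the first inclusion hypothesis of Corollary \ref{pc5}.2 (with $\mw = \Comp_B$, $\mw' = \Lcomp_B$) holds. By Proposition \ref{p17}, every object of $\Lcomp_B$ is fibrewise compactly generated, i.e.\ $\Lcomp_B \subseteq \kTop_B = (\Comp_B)_l[\Top_B]$, which is the second inclusion hypothesis of Corollary \ref{pc5}.2. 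Therefore $(\Comp_B)_l[\Top_B] = (\Lcomp_B)_l[\Top_B]$, that is, $\kTop_B = \lkTop_B$.
\qedhere
\end{proof}
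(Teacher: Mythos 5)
Your proof is correct and follows exactly the paper's own argument: the paper likewise observes $\Comp_B \subseteq \Lcomp_B$ and invokes Proposition \ref{p17} for $\Lcomp_B \subseteq \kTop_B$, then concludes by Corollary \ref{pc5}. Your write-up merely makes the verification of the two hypotheses of Corollary \ref{pc5}.2 more explicit.
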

   
\begin{proof} 	
 $\Comp_B$ is a subcategory of  $\Lcomp_B$ and by Proposition \ref{p17}, $\Lcomp_B$ is a subcategory of $\kTop_B$. Therefore by Corollary \ref{pc5}, $\lkTop_B=\kTop_B$.
\qedhere    
 \end{proof}
The next result generalizes Proposition \ref{p17} and is a fibrewise version of \cite[Proposition 2.6]{S}. 

\begin{proposition}\label{p18} Assume that $B$ be is  $\T_1$. Let X be a fibrewise locally compact fibrewise Hausdorff space and $Y$  a fibrewise compactly generated space. Then the product   $X\tm_{\Top_B} Y$ is fibrewise compactly generated.
 \end{proposition}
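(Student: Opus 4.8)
The plan is to reduce the product $X \tm_{\Top_B} Y$ to a colimit of spaces that are visibly fibrewise compactly generated, and then invoke closure of $\kTop_B$ under colimits in $\Top_B$ (which holds by Theorem \ref{t2} together with Proposition \ref{pp6}.1, since $\kTop_B = (\Comp_B)_l[\Top_B]$ is coreflective, hence creates colimits). Since $Y$ is fibrewise compactly generated, by Theorem \ref{t2}.3 it carries the final topology with respect to all continuous fibrewise maps $K \sra Y$ with $K \in \Comp_B$; equivalently, by Corollary \ref{pc6}, $Y \cong \colim J_Y$ where $J_Y \colon \Comp_B/Y \ra \Top_B$ sends $(K \sra Y)$ to $K$. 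The key point will be that $X$ being fibrewise locally compact fibrewise Hausdorff makes $X$ exponentiable in $\Top_B$: indeed each point of $X$ has a fibrewise-compact fibrewise-Hausdorff neighborhood, so by Corollary \ref{c14} (with $\mw = \Comp_B$) $X$ is itself $\Comp_B$-generated, and more to the point we want $-\tm_{\Top_B} X$ to preserve colimits.

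First I would establish that $X$ is exponentiable in $\Top_B$. This is the crux and the main obstacle: Theorem \ref{t1} only gives exponentiability for fibrewise compact fibrewise Hausdorff spaces, not fibrewise locally compact ones, so I cannot quote it directly. The natural route is to cover $X$ by fibrewise compact fibrewise Hausdorff neighborhoods $(K_\alpha)$ with $X \cong \colim$ (a suitable diagram built from the $K_\alpha$ and their intersections, exhibited as a fibrewise quotient of $\coprod_\alpha K_\alpha$ via Proposition \ref{p13}.2 applied to $\mw = \Comp_B$), observe that $-\tm_{\Top_B}$ distributes over such quotients and coproducts in $\Top_B$, and use that each $-\tm_{\Top_B} K_\alpha$ has a right adjoint by Theorem \ref{t1}. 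Concretely: a functor $F \colon \mc \ra \mc$ on a cocomplete category preserves colimits if it does so on a generating class, and the pointwise criterion is that $- \tm_{\Top_B} X$ preserves colimits. Since $X$ is a colimit (quotient of coproduct) of objects $K_\alpha$ each of which is exponentiable, and $\Top_B$ is cartesian — binary products commute with colimits in each variable whenever the relevant exponentials exist locally — one concludes $- \tm_{\Top_B} X$ preserves colimits, i.e. $X$ is exponentiable. Alternatively, and perhaps more cleanly, I would verify directly that for the fibrewise quotient map $q \colon \coprod_\alpha K_\alpha \ra X$, a subset of $Z \tm_{\Top_B} X$ is open iff its preimages under $1_Z \tm q|_{K_\alpha}$ are open, which reduces the exponential adjunction for $X$ to those for the $K_\alpha$.

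Granting exponentiability of $X$, the proof finishes quickly. Write $Y \cong \colim J_Y$ with $J_Y \colon \Comp_B/Y \ra \Top_B$ valued in $\Comp_B$, as above. Then
\[
X \tm_{\Top_B} Y \;\cong\; X \tm_{\Top_B} \colim J_Y \;\cong\; \colim\bigl(X \tm_{\Top_B} J_Y(-)\bigr),
\]
the second isomorphism because $X$ is exponentiable in $\Top_B$. For each $(K \sra Y) \in \Comp_B/Y$, the space $X \tm_{\Top_B} K$ is fibrewise locally compact (a product of neighborhoods $K_\alpha \tm_{\Top_B} K$ is fibrewise compact fibrewise Hausdorff by Corollary \ref{c7}, and these cover $X \tm_{\Top_B} K$) and fibrewise Hausdorff (products of fibrewise Hausdorff spaces are fibrewise Hausdorff, by Proposition \ref{p3} or directly), hence fibrewise compactly generated by Proposition \ref{p17}. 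Thus $X \tm_{\Top_B} J_Y(-)$ is a diagram in $\kTop_B$, and since $\kTop_B = (\Comp_B)_l[\Top_B]$ is coreflective in $\Top_B$ its inclusion creates colimits (Proposition \ref{pp6}.1), so $\colim\bigl(X \tm_{\Top_B} J_Y(-)\bigr) \in \kTop_B$. Therefore $X \tm_{\Top_B} Y$ is fibrewise compactly generated. I expect the bulk of the write-up to be the exponentiability lemma; everything downstream is a formal colimit manipulation together with the already-available stability results for fibrewise compact and fibrewise Hausdorff spaces.
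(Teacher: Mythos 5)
Your downstream argument is exactly the paper's: Proposition \ref{p18} is obtained there by applying Lemma \ref{pl6} with $\mw=\Lcomp_B$ and using Corollary \ref{c3} to identify $(\Lcomp_B)_l[\Top_B]$ with $\kTop_B$, which is precisely your colimit manipulation. You are also right that the whole weight falls on the exponentiability of $X$ in $\Top_B$ (hypothesis 1 of Lemma \ref{pl6}, which the paper itself does not verify for fibrewise locally compact spaces). The problem is that your primary argument for this step is fallacious: an object that is a colimit (even a fibrewise quotient of a coproduct) of exponentiable objects need not be exponentiable, because the commutation $Z\tm_{\Top_B}\colim_\alpha K_\alpha\cong\colim_\alpha(Z\tm_{\Top_B}K_\alpha)$ requires $Z$, not the $K_\alpha$, to be exponentiable — you have the variables reversed. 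Already for $B=\Pt$, the space $\mathbb{Q}$ is sequential, hence by Proposition \ref{p13}.2 a quotient of a coproduct of copies of $\bn^{+}$ (each compact Hausdorff, hence exponentiable), yet $\mathbb{Q}$ is not core-compact and so not exponentiable in $\Top$ (Examples \ref{pex4}.1). Your fallback sketch is the correct idea, but its validity depends essentially on the quotient $q\colon\coprod_\alpha K_\alpha\ra X$ arising from a cover of $X$ by \emph{neighborhoods}: then for every $Z$ the sets $Z\tm_{\Top_B}K_\alpha$ are neighborhoods whose interiors cover $Z\tm_{\Top_B}X$, and Lemma \ref{pl9} gives the openness criterion; for a general fibrewise quotient $1_Z\tm_{\Top_B}q$ need not be a quotient. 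To actually produce the right adjoint you would still have to exhibit $W^X$ as a limit of the spaces $\map_B(K_\alpha,W)$ over the diagram of intersections $K_\alpha\cap K_\beta$ (closed in $K_\alpha$, hence fibrewise compact by Corollary \ref{c4}); none of this is carried out.

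The cleaner observation is that exponentiability of $X$ can be bypassed entirely, using only what the paper has already proved. Lemma \ref{pl6} applied with $\mw=\Comp_B$ (whose hypotheses are checked in the proof of Theorem \ref{t7}) shows that $K\tm_{\Top_B}Y\in\kTop_B$ for every fibrewise compact fibrewise Hausdorff $K$ and every $Y\in\kTop_B$, using only Theorem \ref{t1} for exponentiability. Covering $X$ by such neighborhoods $K_\alpha$, the subspaces $K_\alpha\tm_{\Top_B}Y$ are neighborhoods of $X\tm_{\Top_B}Y$ whose interiors cover it, so by Lemma \ref{pl9} the canonical map $\coprod_\alpha(K_\alpha\tm_{\Top_B}Y)\ra X\tm_{\Top_B}Y$ is a fibrewise quotient map; the coproduct lies in $\kTop_B$ by Proposition \ref{pp6}.1, and Proposition \ref{p13}.1 then puts $X\tm_{\Top_B}Y$ in $\kTop_B$. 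I would restructure your write-up around this argument and drop the exponentiability lemma, or else state and prove that lemma properly as a separate result.
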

   
\begin{proof} 	
This follows from  Lemma \ref{pl6} and Corollary \ref{c3}.
\qedhere   
 \end{proof}

\begin{theorem}\label{t7}
 Assume that $B$ is $\T_1$. Then $\kTop_B$ is cartesian closed.
 \end{theorem}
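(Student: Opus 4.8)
The plan is to obtain Theorem \ref{t7} as an immediate instance of the fibrewise Day's theorem, Theorem \ref{t6}, applied to the subcategory $\mw=\Comp_B$ of $\Top_B$ of fibrewise compact, fibrewise Hausdorff spaces. We already know from Proposition \ref{p15} that $\Comp_B$ is left Kan extendable in $\Top_B$, and by definition $\kTop_B=(\Comp_B)_l[\Top_B]$; so the entire task reduces to verifying the four hypotheses of Theorem \ref{t6} for $\mw=\Comp_B$. Hypothesis~1 ($B$ is $\T_1$) is the standing assumption. Hypothesis~2 (suitability of $\Comp_B$) follows exactly as in Proposition \ref{p15}: since $B$ is $\T_1$, each $B^b$ of Example \ref{ex3} is fibrewise compact by Theorem \ref{t5} and is trivially fibrewise Hausdorff, hence $B^b\in\Comp_B$, which supplies the fibrewise space required by condition (\ref{eq14}) of Definition \ref{pd10}. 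Hypothesis~3 (every object of $\Comp_B$ is exponentiable in $\Top_B$) is precisely Theorem \ref{t1}.

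The only hypothesis calling for an argument is Hypothesis~4: for $V,W\in\Comp_B$ the product $V\times_{\Top_B}W$ must be $\Comp_B$-generated. I would prove the stronger assertion $V\times_{\Top_B}W\in\Comp_B$, which forces it into $\kTop_B$ and hence makes it $\Comp_B$-generated. Fibrewise compactness of $V\times_{\Top_B}W$ is Corollary \ref{c7}. Fibrewise Hausdorffness is an elementary point-separation argument: two distinct points of $V\times_{\Top_B}W$ lying over the same point of $B$ differ in the $V$-coordinate or in the $W$-coordinate; in the first case one separates the two $V$-coordinates by neighbourhoods $U_1,U_2$ in $V$ and pulls back along the projection $V\times_{\Top_B}W\to V$ to get a fibrewise separation, and symmetrically in the second case. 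Alternatively, one may sidestep even this: by Proposition \ref{p17} the space $W$ is fibrewise compactly generated, $V$ is fibrewise locally compact fibrewise Hausdorff, and Proposition \ref{p18} then gives $V\times_{\Top_B}W\in\kTop_B$ directly.

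Having checked all four hypotheses, Theorem \ref{t6} yields that $\mw=\Comp_B$ is left Kan extendable and that $\mw_l[\Top_B]=\kTop_B$ is a cartesian closed subcategory of $\Top_B$, which is exactly the claim. I do not anticipate a genuine obstacle here; the substantive content has already been invested in Theorem \ref{t1} (exponentiability of fibrewise compact fibrewise Hausdorff spaces over a $\T_1$ base, via the fibrewise tube lemma and the fibrewise compact-open topology) and in Theorem \ref{t6} itself, so the proof of Theorem \ref{t7} is essentially bookkeeping of hypotheses, with Hypothesis~4 being the one mild verification to record.
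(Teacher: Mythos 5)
Your proposal is correct and follows essentially the same route as the paper: both reduce Theorem \ref{t7} to checking the hypotheses of Theorem \ref{t6} for $\mw=\Comp_B$, using suitability of $\Comp_B$ over a $\T_1$ base, Theorem \ref{t1} for exponentiability, and Corollary \ref{c7} for fibrewise compactness of the product. The only immaterial difference is that you verify fibrewise Hausdorffness of $V\times_{\Top_B}W$ by a direct point-separation argument (or via Propositions \ref{p17} and \ref{p18}), whereas the paper deduces it from the reflectivity of $\hTop_B$ together with Proposition \ref{pp13}.1.(a).
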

   
\begin{proof}
$B$ is $\T_1$, thus $\Comp_B$ is a suitable subcategory of $\Top_B$. By Theorem \ref{t1}, every fibrewise compact fibrewise Hausdorff space is exponentiable in $\Top_B$. By Corollary \ref{c7}, the product of two fibrewise compact spaces is fibrewise compact. By Examples \ref{ex1}, the subcategory of fibrewise Hausdorff spaces over $B$ is reflective. Therefore by Proposition \ref{pp13}.1.(a),   the product (in $\Top_B$) of two fibrewise Hausdorff spaces is fibrewise Hausdorff. It follows from Theorem \ref{t6} that $\kTop_B$ is cartesian closed.
\qedhere   
 \end{proof}
We next give a description of the internal $\hom$ functor of $\kTop_B$.\\

 Let $K\in \Comp_B$  and $Z \in \Top_B$. By Theorem \ref{t1}, $K$ is exponentiable in $\Top_B$ and the exponential object  $\map_B(K,Z)$  is the topological space whose underlying set is $\underset{b\in B}{\coprod}\Top(K_b,Z_b)$ and whose topology is generated by the subsets 
\begin{equation}\label{eq37}
(C,O,\Omega)=\underset{b\in \Omega}{\coprod}\{\gamma \in \Top(K_b,Z_b) \mid \gamma(C_b)\subset O_b\}
\end{equation}
where $C$ is closed in $K$, $O$ is open in $Z$ and $\Omega$ is open in $B$.\\
Let 
\begin{equation}\label{eq38}
\hom(.,.) : \kTop_B^{op}\times \kTop_B \ra  \Top_B
\end{equation}
be the functor defined as in (\ref{eq65}) and let $Y,Z \in k\Top_B$. By Remark \ref{r3}, $\hom(Y,Z)$ is the topological space whose underlying set 
is $\underset{b\in B}{\coprod}\Top(Y_b,Z_b)$ and whose topology is generated by the subsets 
\begin{equation}\label{eq39}
(\sigma,C,O,\Omega)=\underset{b\in \Omega}{\coprod}\{\gamma\in \Top(Y_b,Z_b)\mid \gamma \sigma_b(C_b)\subset O_b\}
\end{equation}
Where $(\sigma: K\ra Y) \in \Comp_B/Y$, $C$ closed in $K$ and $O$ open in $Z$.
By Theorem \ref{pt10}, the composite functor 
\begin{equation}\label{eq40}
(.)^{(.)}:\kTop_B^{op}\times \kTop_B \xra{\hom} \Top_B \st{k}\ra \kTop_B
\end{equation}
is an internal $\hom$ functor for the cartesian closed category $\kTop_B$,
 where $\Top_B \st{k}\ra \kTop_B$ is a coreflector. 

\begin{proposition}\label{p9} Assume that $B$ is $\T_1$
and let $\Topo_B$ be one of the reflective subcategories 
\begin{equation}\label{eq41}
 \fTop_B,\hTop_B,\uTop_B, \hkTop_B \text{  or  }  \hwTop_B.  
\end{equation} 
Then
\begin{enumerate}
 
\item $\Comp_B$ is left Kan extendable as a subcategory of $\Topo_B$.
 \item $(\Comp_B)_l[\Topo_B]=\Topo_B\cap\kTop_B$.
\item  A reflection of $\Top_B$ on $\Topo_B$ induces a reflection of $\kTop_B$ on $\Comp_l[\Topo_B]$.
 \item  The coreflection of $\Top_B$ on $\kTop_B$ given by Proposition \ref{pp17} induces a coreflection of $\Topo_B$ on $(\Comp_B)_l[\Topo_B]$.
 
 \end{enumerate}
\end{proposition}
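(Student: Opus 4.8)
The plan is to obtain all four assertions at once by invoking Proposition \ref{p14} with $\mw=\Comp_B$ and the reflective subcategory there taken to be $\Topo_B$. Accordingly, the work reduces to checking, for each of the five subcategories listed in \eqref{eq41}, the two hypotheses of Proposition \ref{p14}: that $\Topo_B$ is a reflective subcategory of $\Top_B$ closed under subobjects, and that $\Comp_B$ is a suitable subcategory of $\Topo_B$.

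For reflectivity there is nothing new to do: under the standing assumption that $B$ is $\T_1$, this is Examples \ref{ex1} for $\fTop_B$, $\hTop_B$ and $\uTop_B$, Theorem \ref{t4} for $\hwTop_B$, and Proposition \ref{p11} for $\hkTop_B$. Closure under subobjects is Proposition \ref{p6} for $\hwTop_B$ and the remark preceding Proposition \ref{p11} for $\hkTop_B$; for $\fTop_B$, $\hTop_B$ and $\uTop_B$ it is immediate from the definitions recalled in Examples \ref{ex1}, since the separating data (distinct points of a common fibre separated by neighbourhoods, resp. by closed neighbourhoods) restrict verbatim along any fibrewise subspace inclusion, and a fibrewise subspace of a fibrewise $\T_1$ space is clearly fibrewise $\T_1$. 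I would dispatch these in a line each.

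The substantive step is to verify that $\Comp_B$ is a suitable subcategory of $\Topo_B$. Suitability as a subcategory is clear: since $B$ is $\T_1$, each fibrewise singleton $B^b$ of Example \ref{ex3} is fibrewise compact fibrewise Hausdorff, so it lies in $\Comp_B$, and the family $(B^b)_{b\in B}$ witnesses Definition \ref{pd10}, exactly as in the proof of Proposition \ref{p15}. It then remains to see that $\Comp_B$ really is contained in each $\Topo_B$. A fibrewise compact fibrewise Hausdorff space $X$ is fibrewise Hausdorff, hence (restricting to fibres) fibrewise $\T_1$; it is fibrewise weak Hausdorff by Proposition \ref{p5} and fibrewise $k$-Hausdorff by Proposition \ref{p3}. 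The one point that needs a genuine argument, and which I expect to be the main obstacle, is that $X$ is fibrewise Urysohn: here I would use that $X$ is fibrewise regular (Proposition \ref{p1}) and run the classical proof that ``regular plus Hausdorff implies Urysohn'' in the fibrewise setting --- separate two distinct points of a common fibre by disjoint open sets of $X$, shrink each of these via fibrewise regularity over a common open neighbourhood $\Omega$ of the base point, and check that the two closures, intersected with $X_\Omega$, furnish the required disjoint closed neighbourhoods. This is routine fibrewise point-set topology, but it is the only place where one does more than cite an earlier result.

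Once both hypotheses are confirmed, Proposition \ref{p14} applies directly and yields the four claims: (1) $\Comp_B$ is left Kan extendable in $\Topo_B$; (2) $(\Comp_B)_l[\Topo_B]=\Topo_B\cap(\Comp_B)_l[\Top_B]=\Topo_B\cap\kTop_B$, using $\kTop_B=(\Comp_B)_l[\Top_B]$ by definition; (3) a reflection of $\Top_B$ on $\Topo_B$ induces a reflection of $(\Comp_B)_l[\Top_B]=\kTop_B$ on $(\Comp_B)_l[\Topo_B]$; and (4) the coreflection of $\Top_B$ on $\kTop_B$ given by Proposition \ref{pp17} --- this being precisely the coreflection whose restriction is produced in Theorem \ref{pt4}.4 --- induces a coreflection of $\Topo_B$ on $(\Comp_B)_l[\Topo_B]$.
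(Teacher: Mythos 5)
Your overall strategy is exactly the paper's: the whole proposition is obtained by feeding $\mw=\Comp_B$ and the given $\Topo_B$ into Proposition \ref{p14}, after checking that $\Topo_B$ is reflective, closed under subobjects, and contains $\Comp_B$ as a suitable subcategory. The paper's own proof is precisely this one-line reduction, so structurally there is nothing to compare; you are merely more explicit about where each hypothesis comes from.

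There is, however, one concrete step in your verification that fails as written: the claim that a fibrewise compact fibrewise Hausdorff space is fibrewise Urysohn, which you need for the case $\Topo_B=\uTop_B$. You separate $x\neq y$ in a common fibre by disjoint opens $U,V$, use fibrewise regularity (Proposition \ref{p1}) to get $\overline{U_1}\cap X_\Omega\subseteq U$ and $\overline{V_1}\cap X_\Omega\subseteq V$, and then propose ``the two closures intersected with $X_\Omega$'' as the separating closed neighbourhoods. But $\overline{U_1}\cap X_\Omega$ is closed only in the \emph{open} subspace $X_\Omega$, not in $X$, whereas the definition in Examples \ref{ex1} demands separation by closed neighbourhoods in $X$. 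The standard repair --- shrink to $\Omega'$ with $\overline{\Omega'}\subseteq\Omega$ and intersect with the closed set $X_{\overline{\Omega'}}$ --- uses regularity of $B$, which is not available when $B$ is merely $\T_1$; and taking $\overline{U_1}$ and $\overline{V_1}$ themselves only shows (via properness of the projection) that their intersection is a closed set lying over $B\setminus\Omega$, not that it is empty. So the containment $\Comp_B\subseteq\uTop_B$ is not actually established by your sketch; you have correctly isolated the one nontrivial point, but the argument offered for it does not close. (For what it is worth, the paper's own proof silently assumes this containment without comment.) The other four containments and the rest of your write-up are fine.
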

  
\begin{proof} $\Topo_B$ is reflective, closed under subobjects subcategory of $\Top_B$ containing the suitable subcategory $\Comp_B$. Properties 1-4 are then consequences of Proposition \ref{p14}. 
\qedhere   
 \end{proof}	
 
\begin{corollary}\label{pc15}
Let  $\Topo$ be one of the reflective subcategories 

\begin{equation}\label{eq37}
 \fTop,\hTop,\uTop,\hcTop,\hkTop  \text{  or  }  \hwTop  
\end{equation} 
of $\Top$. Then
\begin{enumerate}
 
\item $\Comp$ is left Kan extendable as a subcategory of $\Topo$.
\item  A reflection of $\Top$ on $\Topo$ induces a reflection of $\kTop$ on $\Comp_l[\Topo]$.
 \item $\Comp_l[\Topo]=\Topo\cap\kTop$.
 \item  The coreflection of $\Top$ on $\kTop$ given by Proposition \ref{pp17} induces a coreflection of $\Topo$ on $\Comp_l[\Topo]$. 
 
 \end{enumerate}
\end{corollary}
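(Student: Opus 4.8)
The plan is to recognise Corollary \ref{pc15} as the specialisation to $B=\Pt$ of Proposition \ref{p14}, transported along the isomorphism $P:\Top_{\Pt}\ra\Top$ of (\ref{peq16}). Concretely, I would verify that each $\Topo$ in the list $\fTop,\hTop,\uTop,\hcTop,\hkTop,\hwTop$ is a reflective subcategory of $\Top$ which is closed under subobjects and which contains $\Comp$ as a suitable subcategory; granting this, Proposition \ref{p14} applied with base $\Pt$ delivers all four assertions once we recall from Examples \ref{pex6}.1 that $\Comp_l[\Top]=\kTop$. The items then match up as follows: Proposition \ref{p14}.1 gives item 1 (left Kan extendability of $\Comp$ in $\Topo$), Proposition \ref{p14}.3 gives item 2 (the reflection of $\kTop=\Comp_l[\Top]$ on $\Comp_l[\Topo]$ induced by a reflection of $\Top$ on $\Topo$), Proposition \ref{p14}.2 gives item 3 ($\Comp_l[\Topo]=\Topo\cap\kTop$), and Proposition \ref{p14}.4 gives item 4 (the coreflection of $\Topo$ on $\Comp_l[\Topo]$ induced by the coreflection of $\Top$ on $\kTop$ of Proposition \ref{pp17}).

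The substance of the argument is the routine verification of the hypotheses. That each listed $\Topo$ is reflective in $\Top$ is already on record: for $\fTop,\hTop,\uTop,\hcTop$ this is Examples \ref{pex5} (via Corollary \ref{pc12}), and for $\hkTop,\hwTop$ it is Remark \ref{pr5} (via Theorem \ref{t4} and Proposition \ref{p11} at the $\T_1$ base $\Pt$). Closure under subobjects is immediate from the definitions in the $\T_1$, Hausdorff, Urysohn and completely Hausdorff cases, is Proposition \ref{p6} for $\hwTop$, and is the observation preceding Proposition \ref{p11} for $\hkTop$. Finally the inclusion $\Comp\subset\Topo$ holds because a compact Hausdorff space is $\T_1$, Hausdorff, Urysohn, completely Hausdorff (being compact Hausdorff it is normal, hence completely regular by Urysohn's lemma), and both $k$-Hausdorff and weak Hausdorff (Hausdorff implies $k$-Hausdorff by Proposition \ref{p3} and weak Hausdorff by Proposition \ref{p5}); and $\Comp$ is suitable in $\Top$ since it contains the one-point space.

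With the hypotheses in hand, Proposition \ref{p14} applies verbatim and yields the four statements. I do not expect a real obstacle here: the only point requiring genuine (but standard) topology rather than bookkeeping is the inclusion $\Comp\subset\hcTop$, i.e. that compact Hausdorff spaces are completely Hausdorff, which is precisely why $\hcTop$ — though absent from the list of Proposition \ref{p9} — still falls within the scope of Proposition \ref{p14}.
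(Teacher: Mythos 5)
Your proof is correct and follows exactly the route the paper intends: the corollary is the $B=\Pt$ instance of Proposition \ref{p14} (mirroring the proof of Proposition \ref{p9}), with the only genuinely new verification being $\Comp\subset\hcTop$, which you supply correctly via normality of compact Hausdorff spaces and Urysohn's lemma. The hypothesis checks (reflectivity, closure under subobjects, suitability of $\Comp$, and $\Comp_l[\Top]=\kTop$) all cite the right places in the paper.
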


\begin{notation}\label{n2}\mbox{}
\begin{enumerate}
 
\item Let $\kfTop_B=\kTop_B\cap\fTop_B$, $\khTop_B=\kTop_B\cap\hTop_B$, $\kuTop_B=\kTop_B\cap\uTop_B$ and $\khkTop_B=\kTop_B\cap\hkTop_B$.
 \item Similarly, let
 $\kfTop=\kTop\cap\fTop$, $\khTop=\kTop\cap\hTop$, $\kuTop=\kTop\cap\uTop$, $\khcTop=\kTop\cap\hcTop$, $\khkTop=\kTop\cap\hkTop$ and    $\khwTop=\kTop\cap\hwTop$. 
\end{enumerate}
  
 \end{notation}
\section{Cartesian closed subcategories of $\kTop_B$ and $\kTop$}  \label{s13}
In this section, we prove that $\kfTop_B$ is a cartesian closed subcategory of $\kTop_B$. We also prove that the subcategories $\kfTop,$ $\khTop,$ $\kuTop,$ $\khcTop,$     and  $\khwTop $ are cartesian closed. 
  
\begin{proposition}\label{p25} Assume that $B$ is $\T_1$. Then 
the reflective subcategory $\kfTop_B$
of $\kTop_B$ is cartesian closed with internal $\hom$ functor induced by that of $\kTop_B$.
 \end{proposition}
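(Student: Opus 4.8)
The plan is to reduce the statement to Remark~\ref{pr4}, applied with $\mc=\kTop_B$ and $\mc_0=\kfTop_B$. By Theorem~\ref{t7}, $\kTop_B$ is cartesian closed (here the hypothesis that $B$ is $\T_1$ is used), and by Proposition~\ref{p9}, applied with $\Topo_B=\fTop_B$, the subcategory $\kfTop_B=\kTop_B\cap\fTop_B$ is reflective in $\kTop_B$. So, by Remark~\ref{pr4}, it is enough to verify that for all $Y,Z\in\kfTop_B$ the power object $Z^Y$ computed in $\kTop_B$ lies again in $\kfTop_B$; that remark will then also give that the internal $\hom$ functor of $\kfTop_B$ is induced by that of $\kTop_B$.

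By (\ref{eq40}) one has $Z^Y=k(\hom(Y,Z))$, where $k\colon\Top_B\to\kTop_B$ is the coreflector and $\hom(Y,Z)=\lm S^{Y}_{Z}$ is the limit in $\Top_B$ of the functor whose value at $(\sigma\colon W\to Y)\in\Comp_B/Y$ is the exponential object $\Hom(W,Z)=\map_B(W,Z)$; this limit exists by Lemma~\ref{l6}. The first step is to observe that this functor factors through $\fTop_B$: every such $W$ is fibrewise compact fibrewise Hausdorff and $Z$ is fibrewise $\T_1$, so Proposition~\ref{p26} yields that $\map_B(W,Z)$ is fibrewise $\T_1$. Since $\fTop_B$ is a reflective subcategory of $\Top_B$, its inclusion creates all limits that $\Top_B$ admits (Proposition~\ref{pp13}.1.(a)); hence $\hom(Y,Z)=\lm S^{Y}_{Z}$ belongs to $\fTop_B$.

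The second step is that the coreflector $k$ preserves the property of being fibrewise $\T_1$: by Theorem~\ref{t2}, $k(\hom(Y,Z))$ has the same underlying set as $\hom(Y,Z)$ and a finer topology, a finer topology on the total space induces a finer topology on each fibre, and a topology finer than a $\T_1$ one still has all its points closed. Thus $Z^Y$ is fibrewise $\T_1$, and being by construction an object of $\kTop_B$ it lies in $\kTop_B\cap\fTop_B=\kfTop_B$. Remark~\ref{pr4}.2 now gives the statement. The only genuine content is the first step, namely that forming the mapping spaces $\map_B(W,Z)$ keeps us inside $\fTop_B$, which is exactly Proposition~\ref{p26} and is where the assumption that $B$ is $\T_1$ really enters; the passage to the limit and the subsequent coreflection are purely formal, so I do not expect any further obstacle.
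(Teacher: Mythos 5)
Your proposal is correct and follows essentially the same route as the paper: reduce to Remark~\ref{pr4} via Proposition~\ref{p9}, show $\hom(Y,Z)$ is fibrewise $\T_1$ using Proposition~\ref{p26} and Proposition~\ref{pp13}.1.(a), and then transfer the property to $Z^Y=k(\hom(Y,Z))$. The only cosmetic difference is in the last step, where the paper argues that the counit $Z^Y\to\hom(Y,Z)$ is monic and $\fTop_B$ is closed under subobjects, while you argue directly that the coreflector only refines the topology on the same underlying set — these amount to the same thing.
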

  
\begin{proof}  By Proposition \ref{p9} , $\kfTop_B$ is a reflective subcategory of $\kTop_B$. 
By Remark \ref{pr4}, we just need to prove that if $Y,Z \in \kfTop_B$, then the exponential object $Z^Y$ in $\kTop_B$ defined by (\ref{eq40}) is again an object of $\kfTop_B$. \\
So let $Y,Z\in \kfTop_B$.
\begin{equation}\label{eq46}
\hom(Y,Z)\cong \underset{(K\st{\sigma}{\rightarrow}Y)\in \Comp_B|Y}\lm \map_B(K,Z).
\end{equation}
By Proposition \ref{p26}, the spaces $\map_B(K,Z)$ in (\ref{eq46}) are $\T_1$. The subcategory $\fTop_B$ is a reflective,  by Proposition \ref{pp13}.1.(a), $\hom(Y,Z)$ is $\T_1$. 
Let $\epsilon$ be the counit of the coreflection of $\Top_B$ on $\kTop_B$.
The $\hom(Y,Z)$-component 

 \begin{equation}\label{eq47}
 Z^Y=k(\hom(Y,Z))\xra{\epsilon_{\hom(Y,Z)}}\hom(Y,Z) 
\end{equation} 
\mbox{}\\
of $\epsilon$ is monic. The category $\fTop_B$ is closed under subobjects, therefore $Z^Y\in \fTop_B$. The space
$Z^Y\in \kTop_B$, thus $Z^Y\in \kTop_B \cap \fTop_B=\kfTop_B$.
\qedhere    
\end{proof}

\begin{remark}\label{r10}\mbox{}
\begin{enumerate}

 \item Assume that $B$ be a $\T_1$-space. Let $K$ be a fibrewise compact, fibrewise Hausdorff space and  $Z\in \khTop_B$. Then as observed in Remark \ref{r9}, the space  $\map_B(K,Z)$ may not be fibrewise Hausdorff. Therefore the argument used in the proof of Proposition \ref{p25} cannot be used to prove that $\khTop_B$ is cartesian closed. In fact, this does not seem to be true. 
\item Let $\mk$ be the subcategory of $\Top_B$ of compactly generated spaces 
in the sense of James (\cite[Definition 10.3]{J2} and Remark \ref{pr11}.2). Then $\mk$ is cartesian with binary product $\tm'_B$ defined in (\cite[Page 83]{J2}. For any $X\in \mk$ that is locally sliceable \cite[Definition 1.16]{J2}, the functor  
$$-\tm'_B X:\mk \ra \mk$$
  has a right adjoint which is the functor 	$$\map'_B(X,-):\mk \ra \mk $$
	defined in \cite[Page 84]{J2}. This follows
from the fact that the evaluation functions 
$$\map'_B(X,Z)\tm'_B X \ra Z$$
are continuous \cite[Page 85]{J2}  and that the adjoint of a continuous
function $$h: Y \tm'_B X \ra Z$$ can be regarded as a continuous function
$$ k_B(\widehat{h}):Y\ra \map'_B(X,Z)$$
 as in  \cite[Lemma 10.16]{J2}. 
\end{enumerate} 
 \end{remark}

\begin{proposition}\label{pp14}\cite[Proposition 11.4.]{RC}\\
 Every compactly generated, $k$-Hausdorff space is weak Hausdorff.  
That is,  $$\kTop \cap \hkTop \subset \hwTop.$$
 \end{proposition}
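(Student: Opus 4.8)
The plan is to argue directly from the definitions, using only the cited properties of compactly generated and $k$-Hausdorff spaces. Let $X$ be a compactly generated, $k$-Hausdorff space, let $K$ be a compact Hausdorff space, and let $u \colon K \ra X$ be a continuous map; we must show $u(K)$ is closed in $X$. Since $X$ is compactly generated, it suffices to show that $u(K) \cap L$ is closed in $L$ for every compact Hausdorff subspace $L$ of $X$ (this is the $k$-space condition; in the $B = \Pt$ case of Proposition \ref{pp21}, taking $K$ to range over compact Hausdorff subspaces). So fix such an $L$, with inclusion $j \colon L \hra X$.

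The key step is to realize $u(K) \cap L$ as the image of a compact set under a map into $L$, and then close that image using the $k$-Hausdorff property of $X$ together with the fact that $L$, being compact Hausdorff, is already Hausdorff. First form the product $K \tm_{\Top} L$, which is compact Hausdorff, and the map $(u \circ \mathrm{pr}_K, j \circ \mathrm{pr}_L) \colon K \tm_{\Top} L \ra X \tm_{\Top} X$. Since $X$ is $k$-Hausdorff, the preimage of the diagonal $\Delta_X$ under this map — call it $Z \subset K \tm_{\Top} L$ — is closed, hence compact. Now $\mathrm{pr}_L(Z) \subset L$ consists exactly of those points $\ell \in L$ that equal $u(k)$ for some $k \in K$, i.e.\ $\mathrm{pr}_L(Z) = u(K) \cap L$. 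Since $Z$ is compact and $L$ is Hausdorff, $\mathrm{pr}_L(Z)$ is closed in $L$. As $L$ was an arbitrary compact Hausdorff subspace of $X$ and $X$ is compactly generated, $u(K)$ is closed in $X$; hence $X$ is weak Hausdorff.

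The main obstacle is bookkeeping rather than conceptual: one must be careful that the notion of "compactly generated" being used is the $k$-space property relative to compact \emph{Hausdorff} subspaces (as in Proposition \ref{pp21} with $B = \Pt$, via the isomorphism $P$ of (\ref{peq16})), and that the images $u(K)$, $\mathrm{pr}_L(Z)$ are taken with the subspace topology so that "closed in $L$" and "closed in $X$" interact correctly through $j$. One should also confirm $Z$ is nonempty-or-empty without issue (the empty case is trivial). No deep input is needed beyond: products of compact Hausdorff spaces are compact Hausdorff, continuous images of compact sets in Hausdorff spaces are closed, and the two definitions in play (Definition \ref{d3} for weak Hausdorff, Definition \ref{d5} for $k$-Hausdorff, both specialized to $B = \Pt$, $\Omega = B$).
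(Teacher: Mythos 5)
Your argument contains a genuine gap in the reduction step. You claim that, because $X$ is compactly generated, it suffices to check that $u(K)\cap L$ is closed in $L$ for every compact Hausdorff \emph{subspace} $L$ of $X$, citing Proposition \ref{pp21} with $B=\Pt$. But Proposition \ref{pp21} carries the hypothesis that $X$ is (fibrewise) Hausdorff, and that hypothesis is doing real work there: for a space that is not already known to be (weak) Hausdorff, the subspace characterization is strictly weaker than the actual definition of $\Comp$-generated used in this paper, namely that $X$ carries the final topology with respect to \emph{all} continuous maps $g\colon L\ra X$ from compact Hausdorff spaces (Theorem \ref{t2}.3, Corollary \ref{pc14}.3). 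The point is that the image $g(L)$ of such a test map need not be a Hausdorff subspace of $X$ when $X$ is not weak Hausdorff, so closedness on compact Hausdorff subspaces does not control closedness for all test maps. Since the weak Hausdorff property of $X$ is exactly the conclusion you are trying to establish, invoking Proposition \ref{pp21} here is circular.

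The fix is small and brings you to the paper's proof: test against an arbitrary continuous map $g\colon L\ra X$ with $L$ compact Hausdorff rather than against a subspace inclusion $j$. Form $u\tm_{\Top} g\colon K\tm_{\Top} L\ra X\tm_{\Top} X$; by $k$-Hausdorffness the preimage $Z$ of $\Delta_X$ is closed in $K\tm_{\Top} L$, and $\mathrm{pr}_L(Z)=g^{-1}(u(K))$ is closed in $L$ because $\mathrm{pr}_L$ is a closed map ($K$ being compact) --- equivalently, because $Z$ is compact and $L$ is Hausdorff, as in your argument. Since this holds for every test map $g$, the correct final-topology characterization gives that $u(K)$ is closed in $X$. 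Everything else in your write-up (the use of the diagonal, compactness of $Z$, the identification of the projection of $Z$) is sound and coincides with the paper's proof once this substitution is made.
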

  
\begin{proof}
Let $X$ be a compactly generated $k$-Hausdorff space. Let 
$f:K\ra X$ be continuous where $K$ is compact Hausdorff. Let
$g:L\ra X$ be a continuous map from a compact Hausdorff space $L$ to $X$, 
$f\tm_{\Top} g: K\tm_{\Top} L \ra X\tm_{\Top} X$ and $pr_L:K\tm_{\Top} L \ra L$ is the projection.
The map $pr_L$ is closed, therefore
 $g^{-1}(f(K))=pr_L((f\tm_{\Top} g)^{-1})(\Delta_X)$ is closed. It follows that $f(K)$ is closed and $X$ is weak Hausdorff.
\qedhere   
\end{proof}

\begin{remark}\label{pr8}
Observe that by Propositions \ref{pp14} and \ref{p12}, $\khkTop=\khwTop$.  
 \end{remark}

\begin{proposition}\label{pp15}
The reflective subcategories
\begin{equation}\label{peq30}
 \kfTop,\khTop,\kuTop,\khcTop,\khkTop   \text{  and  }  \khwTop  
\end{equation} 
of $\kTop$ are cartesian closed with internal $\hom$ functor induced by that of $\kTop$. 
 \end{proposition}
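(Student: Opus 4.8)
The plan is to derive this from Remark \ref{pr4}. By Corollary \ref{pc15} together with Notation \ref{n2}, each of the six subcategories in (\ref{peq30}) has the form $\mathsf{t}\cap\kTop$, where $\mathsf{t}$ is one of $\fTop,\hTop,\uTop,\hcTop,\hkTop,\hwTop$, and is reflective in $\kTop$; moreover $\kTop$ is cartesian closed with internal $\hom$ functor $(-)^{(-)}$ as in Examples \ref{pex6}.1. So by Remark \ref{pr4}.2 it is enough to check that if $Y,Z\in\mathsf{t}\cap\kTop$, then the exponential object $Z^Y$ of $\kTop$ again lies in $\mathsf{t}$ (it lies in $\kTop$ automatically). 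By (\ref{peq19}), $Z^Y\cong k(\hom(Y,Z))$, and by Corollary \ref{pc14}.2 the coreflector $k$ leaves underlying sets unchanged, so the counit $\epsilon_{\hom(Y,Z)}\colon k(\hom(Y,Z))\ra\hom(Y,Z)$ is a continuous bijection, hence a monomorphism of $\Top$. Since each such $\mathsf{t}$ is closed under subobjects in $\Top$ (this is part of the hypotheses verified for these subcategories in Examples \ref{pex5} and Remark \ref{pr5}), it suffices to prove $\hom(Y,Z)\in\mathsf{t}$.

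By Remark \ref{pr9}, $\hom(Y,Z)=\lm S^Y_Z$ carries the initial topology with respect to the maps $\Top(Y,Z)\xra{\Top(\sigma,Z)}\abs{\Hom(W,Z)}$, where $(W\st{\sigma}\ra Y)$ runs over $\Comp/Y$ and $\Hom(W,Z)$ is the compact-open exponential. This source is jointly injective: the one-point space $\Pt$ lies in $\Comp$, and the maps attached to the constant maps $\sigma_y\colon\Pt\ra Y$, $y\in Y$, recover the point-evaluations $f\mapsto f(y)$. Hence $\hom(Y,Z)$ embeds as a subspace of a product, taken in $\Top$, of the spaces $\Hom(W,Z)$. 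As $\mathsf{t}$ is closed under subobjects and small products in $\Top$, we are reduced to the single remaining point:
\[
K\ \text{compact Hausdorff and}\ Z\in\mathsf{t}\ \Longrightarrow\ \Hom(K,Z)\in\mathsf{t}.
\]
For this, the point-evaluations $\mathrm{ev}_x\colon\Hom(K,Z)\ra Z$, $x\in K$, are continuous and jointly injective, so the canonical map $\Hom(K,Z)\ra\prod_{x\in K}Z$ is a continuous bijection onto the subspace of functions $K\ra Z$ in the product topology; that subspace lies in $\mathsf{t}$, while $\Hom(K,Z)$ carries a finer topology on the same underlying set. One checks in each case that $\mathsf{t}$ is reflected along any injective continuous $i\colon X\ra X'$: for $\fTop,\hTop,\uTop,\hcTop$ one pulls back, respectively, a separating open set, disjoint open neighborhoods, disjoint closed neighborhoods, or a separating real-valued function; for $\hkTop$ (resp. $\hwTop$) one uses, for $L$ compact Hausdorff and $u\colon L\ra X\tm X$ (resp. $\alpha\colon L\ra X$), the identities $u^{-1}(\Delta_X)=((i\tm i)u)^{-1}(\Delta_{X'})$ (resp. $\alpha(L)=i^{-1}((i\alpha)(L))$), whose right-hand sides are closed whenever $X'$ has the property. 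Therefore $\Hom(K,Z)\in\mathsf{t}$, and the proof is complete. For the last two cases one may alternatively invoke $\khkTop=\khwTop$ from Remark \ref{pr8} to reduce weak Hausdorffness to $k$-Hausdorffness.

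The main obstacle is exactly this last step. The compact-open topology on $\Hom(K,Z)$ is in general strictly finer than the topology of pointwise convergence, so $\Hom(K,Z)$ is \emph{not} literally a subspace of a power of $Z$, and one must observe that each of the six separation properties survives an arbitrary refinement of the topology (equivalently, is reflected by continuous bijections). This is precisely where the statement is genuinely non-fibrewise: over a general $\T_1$ base the analogous fact already fails for fibrewise Hausdorff spaces (Remark \ref{r9}), which is why, among the fibrewise versions, only $\kfTop_B$ is treated (Proposition \ref{p25}, via Proposition \ref{p26}) while $\khTop_B,\kuTop_B,\dots$ are left aside.
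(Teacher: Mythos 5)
Your proposal is correct, and it rests on the same two pillars as the paper's proof --- the continuity of the point evaluations and the closure of each of the six classes under products and under injective continuous maps into them --- but it packages them differently. The paper argues directly on $Z^Y$: it notes that each $Ev_y\colon Z^Y\ra Z$ is continuous and then verifies the six separation axioms one by one by pulling back the relevant separating data (a closed point, disjoint open or closed neighborhoods, a real-valued function, or the diagonal) along the evaluations. You instead make a uniform reduction: $Z^Y=k(\hom(Y,Z))$ maps by a continuous bijection to $\hom(Y,Z)$, which in turn maps monomorphically into (powers of) $Z$, and each class $\mathsf{t}$ is closed under small products and under subobjects. This buys a single argument covering all six cases, at the cost of a detour through $\hom(Y,Z)$ and $\Hom(K,Z)$ that is not really needed: since monomorphisms in $\Top$ are exactly the injective continuous maps, your final paragraph on ``$\mathsf{t}$ is reflected along injective continuous maps'' is precisely the subobject-closure already recorded in the paper (Corollary \ref{pc12}, Examples \ref{pex5}, Proposition \ref{p6} and the remarks after Definition \ref{d5}), and the jointly injective family of evaluations $Ev_y\colon Z^Y\ra Z$, $y\in Y$, already gives a monomorphism $Z^Y\ra\underset{\Top}{\overset{y\in Y}{\prod}}Z$ into an object of $\mathsf{t}$, which finishes the proof in one step. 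Two small cautions: the product of the spaces $\Hom(W,Z)$ over all of $\Comp/Y$ is indexed by a proper class, so it does not literally exist in $\Top$; one should either restrict to the set-indexed subfamily of point evaluations (as above) or speak only of the initial topology rather than of a subspace of a product. And the phrase ``internal $\hom$ functor induced by that of $\kTop$'' in the statement also requires, per Remark \ref{pr4}, that the binary products agree, which follows from reflectivity (Proposition \ref{pp13}.1.(a)) exactly as in the paper; your write-up implicitly uses this but does not say it.
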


\begin{proof} Let $\Topo$ be one of the categories in (\ref{peq30}). By
Proposition \ref{pc15}, $\Topo$ is a reflective subcategory of $\kTop$. 
By Remark \ref{pr4}, we just need to prove that if $Y,Z \in\Topo$, then the exponential object $Z^Y$ in $\kTop$ defined by Examples \ref{pex6}.1  is again in $\Topo$.  
So let $Y,Z\in \Topo$. For $y\in Y$, the  evaluation map $Ev_y:Z^Y \ra Z$  at $y$ and is continuous.
\begin{itemize}
 
\item $\Topo=\kfTop$:\\
Let $f_0 \in Z^Y$. $Ev_y$ is continuous, $Z$ is Fréchet,  thus 
$Ev^{-1}_{y}(f(y))$ is closed in $Z^Y$. It follows that $\{f_0\}= \underset{\rm y\in Y}{\rm \cap}Ev_{y}^{-1}(f_0(y))$ is closed in $Z^Y$ and $Z^Y$ is a Fréchet space.

\item $\Topo =\khTop$:\\ Let $f,g\in \khTop$ with $f\neq g$. Let $y_0 \in Y$ be such that $f(y_0)\neq g(y_0)$. Let $U,V$ be disjoint open neighborhoods of $f(y_0)$ and $g(y_0)$. Then $Ev ^{-1}_{y_0}(U)$ and  $Ev ^{-1}_{y_0}(V)$ are disjoint open neighborhoods of $f$ and $g$. It follows that
$Z^{Y}$ is Hausdorff.

\item $\Topo =\kuTop$:\\ Let $f,g\in \khTop$ with $f\neq g$. Let $y_0 \in Y$ be such that $f(y_0)\neq g(y_0)$. Let $A,B$ be disjoint closed neighborhoods of $f(y_0)$ and $g(y_0)$. Then $Ev ^{-1}_{y_0}(A)$ and  $Ev ^{-1}_{y_0}(B)$ are disjoint closed neighborhoods of $f$ and $g$. It follows that
$Z^{Y}$ is Urysohn.

\item $\Topo =\khcTop$:\\
Let $f,g\in \khcTop$ with $f\neq g$. Let $y_0 \in Y$ be such that $f(y_0)\neq g(y_0)$. $Z$ is completely Hausdorff, thus there exists a continuous fonction $\psi: Z\ra [0,1]$ such that $\psi(f(y_0))=0$ and $\psi(g(y_0))=1$. 
$Ev_{y_0}$ is continuous, thus $\psi Ev_{y_0}: Z^Y\ra [0,1]$ is continuous. $\psi Ev_{y_0}(f)=0$ and  $\psi Ev_{y_0}(g)=1$.
 It follows that $Z^Y$ is completely Hausdorff.
\item $\Topo =\khkTop=\khwTop:$\\
For a topological space $X$, let $\Delta_X$ denote the diagonal of $X$.
Let $$f:K \ra Z^Y\tm_{\Top} Z^Y$$ be a continuous map, where $K$ is compact Hausdorff. Then 
\begin{equation}\label{eq49}
 f^{-1}(\Delta_{Z^Y})=\bigcap_{y\in Y}((Ev_y \tm_{\Top} Ev_y) f)^{-1}(\Delta_Z)
\end{equation}  
is closed in $K$. It follows that $Z^Y$ is $k$-closed.
 \end{itemize}
\qedhere   
 \end{proof}	

 \begin{proposition}\label{p39} Assume that $B$ is Hausdorff. Then   $\kTop/B\subset\kTop_B$.
 \end{proposition}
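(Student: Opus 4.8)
The plan is to deduce the inclusion from the descriptions of compactly generated and fibrewise compactly generated spaces as carriers of final topologies. Recall that $\kTop=\Comp_l[\Top]$ and $\kTop_B=(\Comp_B)_l[\Top_B]$, that $\Comp_B$ is a suitable subcategory of $\Top_B$ since $B$ is $\T_1$ (Proposition~\ref{p15} and its proof), and that $\kTop/B$ and $\kTop_B$ are both full subcategories of $\Top_B$, so it suffices to prove the inclusion on objects. First I would fix an object $p\colon X\to B$ of $\kTop/B$; I must show that the fibrewise space $(X,p)$ is $\Comp_B$-generated. By Corollary~\ref{pc14}.3 the topology of $X$ is the final topology with respect to the family $\mathcal F$ of all continuous maps $C\to X$ with $C$ compact Hausdorff, whereas by Theorem~\ref{t2}.3 what is required is that the topology of $X$ be the final topology with respect to the family $\mathcal G$ of all continuous fibrewise maps $K\to(X,p)$ with $(K,q)\in\Comp_B$.

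The key step is to observe that every $f\colon C\to X$ in $\mathcal F$ lies in $\mathcal G$ once $C$ is equipped with the projection $q:=p\circ f\colon C\to B$. Indeed $(C,q)$ is fibrewise Hausdorff because $C$ is Hausdorff, and here is where the hypothesis on $B$ enters: since $B$ is Hausdorff, $q$ is a proper map. By Theorem~\ref{t5} it suffices to check that $q$ is closed and that $q^{-1}(b)$ is compact for every $b\in B$; the former holds because a closed, hence compact, subset of $C$ has compact image in $B$ and compact subsets of a Hausdorff space are closed, while the latter holds because $q^{-1}(b)$ is a closed subset of the compact space $C$. Thus $(C,q)\in\Comp_B$, and $f$ is a continuous fibrewise map $(C,q)\to(X,p)$ since $p\circ f=q$; hence $f\in\mathcal G$.

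Finally I would conclude by the usual comparison of final topologies. Writing $\tau$ for the given topology on $X$ and $\tau_{\mathcal F}$, $\tau_{\mathcal G}$ for the final topologies induced by the two families, every member of $\mathcal G$ is continuous as a map into $(X,\tau)$, so $\tau\subseteq\tau_{\mathcal G}$; the inclusion $\mathcal F\subseteq\mathcal G$ yields $\tau_{\mathcal G}\subseteq\tau_{\mathcal F}$; and $\tau_{\mathcal F}=\tau$ because $X\in\kTop$. Hence $\tau=\tau_{\mathcal G}$, so by Theorem~\ref{t2}.3 the fibrewise space $(X,p)$ is $\Comp_B$-generated, i.e.\ lies in $\kTop_B$, as desired. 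The only genuine obstacle is the verification that the composite $C\to X\to B$ is a fibrewise compact projection; this is precisely the step that breaks down when $B$ is merely $\T_1$, and it is where the Hausdorff hypothesis is indispensable.
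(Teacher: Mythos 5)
Your proof is correct, and its essential content---that a continuous map $q\colon C\ra B$ from a compact Hausdorff space to a Hausdorff base is closed with compact fibres, hence proper by Theorem~\ref{t5}, so that $\Comp/B\subset\Comp_B$---is exactly the pivot of the paper's own argument. Where you diverge is in how you pass from this inclusion of generating subcategories to the inclusion $\kTop/B\subset\kTop_B$: the paper quotes Examples~\ref{pex6}.2 to identify $\kTop/B$ with $(\Comp/B)_l[\Top_B]$ and then applies Corollary~\ref{pc5} to obtain $(\Comp/B)_l[\Top_B]\subset(\Comp_B)_l[\Top_B]$ in one line, whereas you unwind both coreflective hulls into their final-topology descriptions (Corollary~\ref{pc14}.3 and Theorem~\ref{t2}.3) and verify $\tau\subseteq\tau_{\mathcal G}\subseteq\tau_{\mathcal F}=\tau$ by hand. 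Your route is more elementary and self-contained---in particular it does not need the identification $(\Comp/B)_l[\Top_B]=\kTop/B$---at the cost of re-deriving, in this special case, the monotonicity of strong coreflective hulls that Corollary~\ref{pc5} packages once and for all. The comparison of final topologies is carried out correctly: you rightly avoid claiming $\mathcal G\subseteq\mathcal F$ (which would be false, since a fibrewise compact fibrewise Hausdorff space over $B$ need not be compact or Hausdorff as a space) and use only the containment $\mathcal F\subseteq\mathcal G$ together with the continuity of all members of $\mathcal G$ into $(X,\tau)$, which is all that is needed.
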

   
\begin{proof} 	
$B$ is Hausdorff, by Theorem \ref{t5}, $\Comp/B\subset \Comp_B$. By Examples \ref{pex6}.2, $\Comp/B$ is left Kan extendable and $(\Comp/B)_l[\Top_B]=\kTop/B$. It follows from Corollary \ref{pc5} that $$ \kTop/B= (\Comp/B)_l[\Top_B]\subset (\Comp_B)_l[\Top_B]=\kTop_B. $$
\qedhere      
 \end{proof}
\begin{proposition}\label{p33} Assume that $B$ is locally compact Hausdorff space. Then 
 $\kTop_B=\kTop/B$.
 \end{proposition}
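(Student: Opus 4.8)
The plan is to prove the inclusion $\kTop_B \subset \kTop/B$, which together with Proposition \ref{p39} (applicable since a locally compact Hausdorff space is Hausdorff) will give the equality. To this end, recall that $\kTop_B = (\Comp_B)_l[\Top_B]$ and $\kTop/B = (\Comp/B)_l[\Top_B]$ by Examples \ref{pex6}.2. By Corollary \ref{pc5}.1, it suffices to show that $\Comp_B \subset (\Comp/B)_l[\Top_B] = \kTop/B$; that is, every fibrewise compact fibrewise Hausdorff space $X$ over $B$ is $\Comp/B$-generated. Indeed, once this is known, Corollary \ref{pc5}.1 yields that $(\Comp_B)_l[\Top_B]$ is a coreflective subcategory of $(\Comp/B)_l[\Top_B]$, hence contained in it, so $\kTop_B \subset \kTop/B$.

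First I would fix a fibrewise compact fibrewise Hausdorff space $p : X \to B$ and aim to exhibit it as having the final topology determined by maps out of objects of $\Comp/B$, using the characterization in Theorem \ref{t2}.3 (which applies since $\Comp_B$ is suitable, as $B$ is $\T_1$). Concretely, I would cover $B$ by open sets $\Omega$ each with compact closure $\overline{\Omega}$ in $B$ (possible because $B$ is locally compact Hausdorff). By Proposition \ref{p32}, $p^{-1}(\overline{\Omega})$ is a compact subspace of $X$; as a closed subspace of the fibrewise Hausdorff $X$ it is Hausdorff in the ordinary sense, hence an object of $\Comp$. The projection $p^{-1}(\overline{\Omega}) \to B$ (landing in $\overline{\Omega}$, which is compact Hausdorff) makes $p^{-1}(\overline{\Omega})$ into an object of $\Comp/B$, and the inclusions $p^{-1}(\overline{\Omega}) \hookrightarrow X$ are morphisms in $\Top_B$. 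The key point is then that $X$ carries the final topology with respect to this family: a subset $A \subseteq X$ whose preimage in each $p^{-1}(\overline{\Omega})$ is open must be open in $X$, because the interiors of the $p^{-1}(\Omega)$ cover $X$ and openness is local (Lemma \ref{pl9}). Hence $X$ has the final topology induced by fibrewise maps from objects of $\Comp/B$, so by the analogue of Theorem \ref{t2}.3 for the suitable subcategory $\Comp/B$ of $\Top_B$, $X$ is $\Comp/B$-generated.

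The main obstacle I anticipate is the bookkeeping needed to ensure the members of the covering family genuinely live in $\Comp/B$ and that the final-topology characterization is being applied to the correct generating class. In particular one must check that $\Comp/B$ is itself a suitable subcategory of $\Top_B$ so that Theorem \ref{t2}.3 is available for it; this holds because, for each $b \in B$, a point is compact Hausdorff and the fibrewise space $B^b$ (a one-point fibre over $b$, cf. Example \ref{ex3}) is an object of $\Comp/B$, and these satisfy condition (\ref{eq14}). One must also verify the "local openness" step carefully: given $x \in X$ with $b = p(x)$, choose an open $\Omega \ni b$ with compact closure; then $x$ lies in the interior (in $X$) of $p^{-1}(\overline{\Omega})$, namely in $p^{-1}(\Omega)$, so Lemma \ref{pl9} applies with the family $\bigl(p^{-1}(\overline{\Omega})\bigr)_\Omega$. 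With these checks in place the inclusion $\Comp_B \subset \kTop/B$ follows, and combining with Proposition \ref{p39} completes the proof that $\kTop_B = \kTop/B$.
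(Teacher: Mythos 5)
Your proof is correct and follows essentially the same route as the paper: one inclusion comes from Proposition \ref{p39}, and the other from showing that every object of $\Comp_B$ is $\Comp/B$-generated because local compactness of $B$ together with Proposition \ref{p32} gives each point of $X$ a compact Hausdorff neighbourhood lying in $\Comp/B$, after which Corollary \ref{pc5} finishes the argument. The only cosmetic difference is that you unfold the covering/final-topology step by hand (via Lemma \ref{pl9} and Theorem \ref{t2}.3) where the paper simply cites Corollary \ref{c14}, which packages exactly that argument.
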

  
\begin{proof}
By Proposition \ref{p39}, $\kTop/B\subset\kTop_B$.\\ 
Let $(X\st{p}\ra B)\in \Comp_B$, let $x_0\in K$, $b_0=p(x_0)$ and $K$ a compact neighborhood  of $b_0$.  Then $p^{-1}(K)$ is a neighborhood of $x_0$ which is Hausdorff.  By Proposition \ref{p32}, $p^{-1}(K)$ is compact. Therefore  $(p^{-1}(K)\st{p_{/}}\ra B) \in \Comp/B$. By Corollary \ref{c14}, $(X\st{p}\ra B)\in (\Comp/B)_l[\Top_B]$. It follows that $\Comp_B\subset (\Comp/B)_l[\Top_B]$.
By Corollary \ref{pc5},  
$$\kTop_B=(\Comp_B)_l[\Top_B]\subset (\Comp/B)_l[\Top_B]=\kTop/B.$$ 
Therefore $\kTop_B=\kTop/B.$ 
\qedhere     
 \end{proof}
Assume that $B$ is locally compact Hausdorff. Then by Example \ref{pex6}.1,
 $B\in\kTop$ and by Proposition \ref{p33}, the category $\kTop_B$ is just the slice category $\kTop/B$. The adjunction given by lemma \ref{l2} yields an adjunction   
\begin{equation}\label{eq47}
\begin{tikzcd}
             \kTop_B\arrow[r, shift left=1ex, "P_B"{name=G}] &  \kTop\arrow[l, shift left=.3ex, ".\tm_{\kTop}B" ].
 \end{tikzcd} 
\end{equation}
\section{Fibrewise sequential spaces}\label{s14}
It is a well known fact that the category of sequential spaces is cartesian closed. We here show that this fact extends to the fibrewise setting,  provided that the base space $B$ is Hausdorff.\\
\indent Let $\bn$ be the discrete space of non-negative integers, $\bn^{+}=\bn\cup\{\infty\}$ its one point compactification. Let $\N_B$ be the  subcategory of $\Top_B$ whose objects are continuous maps $\bn^{+} \ra B$.  

\begin{proposition}\label{p27} 
The subcategory $\N_B$ is a left Kan extendable in $\Top_B$.
\end{proposition}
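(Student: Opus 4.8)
The plan is to verify that $\N_B$ satisfies the hypotheses of Definition \ref{pd10}, i.e.\ that it is a \emph{suitable} subcategory of $\Top_B$, and then invoke Theorem \ref{t2}.1 to conclude that $\N_B$ is left Kan extendable. Recall that $\N_B$ has as objects the continuous maps $\bn^{+}\to B$, where $\bn^{+}=\bn\cup\{\infty\}$ is the one-point compactification of the discrete countable space $\bn$; as a full subcategory of $\Top_B$ its morphisms are the fibrewise continuous maps between such objects.

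First I would fix a point $b\in B$ and produce the required fibrewise space $E(b)\in\N_B$ whose fibre over $b$ is nonempty and whose fibre over every other point is empty. The natural candidate is the constant map $c_b:\bn^{+}\to B$ sending every element of $\bn^{+}$ to $b$; this is continuous, so $(\bn^{+}\xrightarrow{c_b}B)$ is an object of $\N_B$. Its fibre over $b$ is all of $\bn^{+}$, which is nonempty, and its fibre over any $c\neq b$ is empty, since $c_b$ has image $\{b\}$. Thus setting $E(b)=(\bn^{+}\xrightarrow{c_b}B)$ satisfies condition \eqref{eq14} of Definition \ref{pd10}. This shows $\N_B$ is suitable.

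Then by Theorem \ref{t2}.1, applied with $\mw=\N_B$, the subcategory $\N_B$ is left Kan extendable in $\Top_B$, which is the assertion. (One also obtains from the remaining parts of Theorem \ref{t2} an explicit description of the coreflector and of the $\N_B$-generated objects, though that is not needed for the statement at hand.)

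\textbf{Main obstacle.} There is essentially no obstacle here: the entire content is the observation that constant maps $\bn^{+}\to B$ lie in $\N_B$ and realize a single point of an arbitrary fibre while being empty over all other fibres, so that suitability holds automatically. The only point requiring a moment's care is to confirm that $\N_B$, being a full subcategory, genuinely contains these constant maps as objects (it does, since the defining condition is merely that the structure map $\bn^{+}\to B$ be continuous), after which Theorem \ref{t2} does all the work. The heavier lifting — constructing the density comonad via the colimit-of-the-comma-category computation and checking idempotency through $\mw$-monicity of the counit — has already been carried out in Section \ref{s8} and is packaged into Theorem \ref{t2}.
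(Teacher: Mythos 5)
Your proposal is correct and matches the paper's proof, which simply observes that $\N_B$ is suitable and invokes Theorem \ref{t2}; your explicit verification via the constant maps $c_b:\bn^{+}\to B$ fills in exactly the detail the paper leaves implicit.
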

  
\begin{proof} 	
The subcategory $\N_B$ is  suitable. By Theorem \ref{t2},	$\N_B$ is left Kan extendable. 
\qedhere    
 \end{proof}
We will call $\N_B$-generated objects  fibrewise sequential spaces. The category  $(\N_B)_{l}[\Top_B]$ of fibrewise sequential spaces will be  denoted by $\Seq_B$. 

\begin{remark}\label{pr6} 
Let $\Seq$ be the subcategory of $\Top$ that corresponds to $\Seq_{B}$ under the isomorphism $P$ of (\ref{peq16}). Then $\N$ is a dense subcategory of $\Seq$. The objects of $\Seq$ are called sequential spaces. 
\end{remark}
 
\begin{proposition}\label{p28}  
A fibrewise topological space $X \ra B$ is fibrewise sequential iff its domain $X$ is a sequential space.  
 \end{proposition}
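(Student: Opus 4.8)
The plan is to exploit the known fact (Example \ref{pex6}.1 / the classical theory) that $\N_B$ and $\N$ are ``compatible under slicing'' together with the observation that the underlying-space functor $\Top_B \to \Top$ both preserves and reflects the relevant colimits. First I would recall from Theorem \ref{t2}.3 that a fibrewise space $X \to B$ is fibrewise sequential iff $X$ carries the final topology with respect to all continuous fibrewise maps $V \to X$ with $V \in \N_B$; dually, a space $Y$ is sequential iff $Y$ carries the final topology with respect to all continuous maps $\bn^{+} \to Y$ (Remark \ref{pr6}, via the density of $\N$ in $\Seq$). The key point is that a continuous fibrewise map $\bn^{+} \to X$ over $B$ (i.e. an object of $\N_B/X$) is, after forgetting the structure map to $B$, the same thing as a continuous map $\bn^{+} \to X$ in $\Top$, because any continuous map $\bn^{+} \to X$ composes with the projection $p_X : X \to B$ to give a structure map making it fibrewise. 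So the indexing categories $\N_B/X$ and $\N/|X|$ coincide, and the diagrams whose colimits must reconstruct the topology are literally the same diagrams of sets/spaces.

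Concretely, the two directions go as follows. Suppose $X \to B$ is fibrewise sequential. By Corollary \ref{pc6} (or Theorem \ref{t2}.3) the topology on $X$ is the final topology induced by the maps $|V| \to |X|$ for $V \in \N_B/X$; since these are exactly the maps $\bn^{+} \to X$ for all continuous maps $\bn^{+} \to X$ in $\Top$, the space $X$ carries the final topology with respect to all such maps, hence $X$ is sequential. Conversely, suppose the underlying space $X$ is sequential. A subset $A \subseteq X$ that pulls back to an open set along every $\bn^{+} \to X$ in $\Top$ is open; but every such map factors through $B$ via $p_X$, so $A$ also pulls back to an open set along every $V \to X$ in $\N_B/X$, and therefore, by Theorem \ref{t2}.2--3, $A$ is open in the $\N_B$-generated refinement $\omega(X)$ of $X$. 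Since $X$ is already finer than $\omega(X)$ and the underlying set is unchanged, $X = \omega(X)$, i.e. $X$ is fibrewise sequential. I would phrase this symmetrically in terms of ``$\omega(X) = X$ iff $k(|X|) = |X|$'' where $k$ is the sequential coreflector on $\Top$, using that the underlying set of $\omega(X)$ is $|X|$ and its topology is exactly the final topology on $|X|$ defined by all maps from one-point compactifications of $\bn$.

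The one subtlety I expect to need care with — the ``main obstacle'', such as it is — is the bookkeeping that continuity of a map $\bn^{+}\to X$ over $B$ is no stronger than continuity of the underlying map $\bn^{+}\to X$, i.e. that the forgetful functor $\Top_B \to \Top$ restricts to an \emph{isomorphism} of comma categories $\N_B/X \cong \N/|X|$ for each fixed $X \in \Top_B$. This is immediate (a map $\bn^{+}\to X$ always has a canonical structure map, namely its composite with $p_X$, and maps of such objects are just maps over $X$), but it is the hinge on which both implications turn, so I would state it explicitly before invoking Theorem \ref{t2}. Everything else is a direct transcription of the characterizations in Theorem \ref{t2} and the definition of $\Seq$, $\Seq_B$.
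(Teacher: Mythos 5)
Your proof is correct and is essentially the paper's argument in concrete form: the paper deduces the proposition from Corollary \ref{pc6} (the colimit characterization of $\mw$-generated objects) together with Lemma \ref{l2}.1 (the forgetful functor $P_B$ creates colimits), which is precisely the abstract counterpart of your key observation that $\N_B/X \cong \N/P_B(X)$ because the structure map to $B$ is free data, so the generating diagrams and hence the final topologies coincide. One minor slip that does not affect the argument: the coreflection $\omega(X)$ is \emph{finer} than $X$ (not coarser, as you write), but your conclusion $X=\omega(X)$ is obtained correctly by comparing open sets directly.
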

  
\begin{proof} 	
 This is a consequence  of Corollary \ref{pc6} and lemma \ref{l2}.1.
\qedhere   
 \end{proof}

\begin{proposition}\label{p22} Assume that $B$ is $\T_2$. Then: 
\begin{enumerate}
 
\item $\Seq_B$ is cartesian closed.
 \item   $\Seq_B$ is a  coreflective subcategory of $\kTop_B$. Furthermore,
the inclusion functor $\Seq_B \hra \kTop_B$ preserves finite products. 
\end{enumerate}  
\end{proposition}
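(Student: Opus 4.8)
The plan is to deduce both parts from the fibrewise Day's theorem (Theorem \ref{t6}) and the general machinery about closeable Kan extendable subcategories (Corollary \ref{pc11}), by verifying that $\N_B$ satisfies the hypotheses of Theorem \ref{t6} when $B$ is Hausdorff. First I would record that $\N_B$ is suitable (already noted in Proposition \ref{p27}) and that $B$ is in particular $\T_1$, so hypotheses 1 and 2 of Theorem \ref{t6} hold. Next I must check hypothesis 3: each object of $\N_B$ is exponentiable in $\Top_B$. An object of $\N_B$ is a continuous map $\bn^{+}\to B$ whose domain $\bn^{+}$ is compact Hausdorff; I would invoke Theorem \ref{t5} to see that such a fibrewise space is fibrewise compact (the projection is proper because $\bn^{+}$ is compact and $B$ is $\T_1$, so points of $B$ are closed and fibres are closed in $\bn^{+}$, hence compact; and the map is closed since $\bn^{+}$ is compact and $B$ Hausdorff), and that it is fibrewise Hausdorff since $\bn^{+}$ is Hausdorff. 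Then Theorem \ref{t1} gives exponentiability in $\Top_B$.

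The main point to verify is hypothesis 4: for $V,W\in\N_B$, the fibrewise product $V\times_{\Top_B}W$ is $\N_B$-generated. Here I would argue as in the proof of Theorem \ref{t7} and Examples \ref{pex6}.2: since $B$ is Hausdorff, the diagonal of $B$ is closed, so $V\times_{\Top_B}W$ is a closed subspace of $V\times_{\Top}W$, which is compact Hausdorff (a product of two compact Hausdorff spaces, each being a copy of $\bn^{+}$, or a closed subspace thereof). Hence the domain of $V\times_{\Top_B}W$ is a compact metrizable space; in particular it is a sequential space, so by Proposition \ref{p28} the fibrewise space $V\times_{\Top_B}W$ is fibrewise sequential, i.e. $\N_B$-generated. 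With hypotheses 1--4 in hand, Theorem \ref{t6} yields that $\N_B$ is left Kan extendable (already known) and that $\Seq_B=(\N_B)_l[\Top_B]$ is a cartesian closed subcategory of $\Top_B$, which is part 1.

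For part 2, I would first observe that every object of $\N_B$ lies in $\Comp_B$ (this is exactly the computation above: $\bn^{+}\to B$ is fibrewise compact fibrewise Hausdorff), so $\N_B\subset\Comp_B\subset\kTop_B=(\Comp_B)_l[\Top_B]$, hence $\N_B\subset(\Comp_B)_l[\Top_B]$. By Corollary \ref{pc5}.1, $\Seq_B=(\N_B)_l[\Top_B]$ is then a coreflective subcategory of $\kTop_B=(\Comp_B)_l[\Top_B]$. For the preservation of finite products: both $\N_B$ and $\Comp_B$ are closeable left Kan extendable subcategories of the bicomplete category $\Top_B$ (closeability of $\N_B$ follows from the verification of Definition \ref{pd7} carried out inside the proof of Theorem \ref{t6}, and closeability of $\Comp_B$ likewise since it satisfies the hypotheses of Theorem \ref{t6} by Theorem \ref{t7}), and $\N_B\subset\Comp_B$; so Corollary \ref{pc11} applies and shows the inclusion functor $\Seq_B\hra\kTop_B$ preserves finite products. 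The only mild obstacle is bookkeeping: making sure that $\bn^{+}\times_{\Top}\bn^{+}$ (or the relevant closed subspace) really is sequential so that Proposition \ref{p28} applies — this is immediate since it is compact metrizable, hence first countable, hence sequential — and checking that both subcategories in play are genuinely closeable, which is where one reuses the internal verification of Definition \ref{pd7} already done in Theorem \ref{t6}.
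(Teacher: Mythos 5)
Your proposal is correct and follows essentially the same route as the paper: verify the hypotheses of Theorem \ref{t6} for $\N_B$ (exponentiability via Theorems \ref{t5} and \ref{t1}, and $\N_B$-generation of binary products via metrizability of subspaces of $\bn^+\tm_{\Top}\bn^+$ and Proposition \ref{p28}), then deduce part 2 from $\N_B\subset\Comp_B$ together with Corollary \ref{pc5}.1 and Corollary \ref{pc11}. The only difference is cosmetic: you route through closedness of the diagonal of $B$, whereas the paper only needs that the domain of $p\tm_{\Top_B}q$ is a subspace of the metric space $\bn^+\tm_{\Top}\bn^+$, hence metric, hence sequential.
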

  
\begin{proof} \mbox{} 
\begin{enumerate}
 
\item 	The space $B$ is $\T_2$. By Theorem \ref{t5}, objects of $\N_B$ are fibrewise compact, fibrewise Hausdorff. By Theorem \ref{t1} they are exponentiable in $\Top_B$. The space $\bn^+$ is  a metric space. Let $\bn^+\st{p}\ra B$ and  $\bn^+\st{q}\ra B$ be two objects in $\N_B$. The domain of the product $p\tm_{\Top_B}q$ is a subspace of $\bn^+\tm_{\Top}\bn^+$ and is therefore a metric space.   It follows that the domain of $p\tm_{\Top_B}q$, which is a subspace of $\bn^+\tm_{\Top}\bn^+$,  is a metric space. A metric space is sequential, thus by Proposition \ref{p28}, 
$p\tm_{\Top_B}q \in \Seq_B$.
  By Theorem \ref{t6}, $\Seq_B$ is cartesian closed.
\item The base space $B$ is $\T_2$, therefore $\N_B$ is a subcategory of $\Comp_B$. By Corollary \ref{pc5}.1, $\Seq_B$ is a coreflective subcategory of $\kTop_B$, and by Corollary \ref{pc11}, the inclusion functor $\Seq_B \hra \kTop_B$ preserves finite products.\\
\end{enumerate}
\qedhere   
\end{proof}

\begin{remark}\label{pr7}\mbox{}
\begin{enumerate}
 
\item Assume that $B$ is a sequential space. Then by Proposition \ref{p28},  $\Seq_B$ is just the slice category $\Seq/B$ and the adjunction given by Lemma \ref{l2}.2 yields an adjunction
\begin{equation}\label{eq49}
\begin{tikzcd}
             \Seq_B\arrow[r, shift left=1ex, "P_B"{name=G}] &  \Seq\arrow[l, shift left=.3ex, ".\tm_{\Seq}B" ].
 \end{tikzcd} 
\end{equation}
\item Let $s:\Top \ra \Seq$ be a coreflector. Then the functor 
$\Seq_B \ra \Seq_{s(B)}$ which takes a fibrewise sequential space $X \st{p}\ra B$ to $ X\st{s(p)}\ra s(B)$ is an isomorphism of categories.
\item By the previous points, for any topological space $B$, the functor $P_B:\Seq_B \ra \Seq$
 is left adjoint. Its 
right adjoint	takes  a sequential space $X$ to the fibrewise sequential space $X\tm_{\Seq}s(B)$ whose projection is the composite map $$X\tm_{\Seq}s(B) \st{pr}\ra s(B) \st{\epsilon_B}\ra B,$$ 
where $s:\Top \ra \Seq$ is a coreflector and $\epsilon_B$ is the $B$-component of the counit $\epsilon$ of the coreflection of $\Top$ on  $\Seq$.
\end{enumerate}
 \end{remark}
\section{Fibrewise Alexandroff spaces}\label{s15}
The category of Alexandroff space is known to be equivalent to the cartesian category of preorders and is therefore cartesian closed (Escardó, Lawson \cite[Examples (2), page 114]{ES}). Our objective in this section is to extend this fact to the fibrewise setting.\\
\indent For $b\in B$, let
$$ 
\pi^{t}_{b}:\Top_B \ra \Top
$$
be the functor which takes a fibrewise space over $B$ to its fibre over $b$ as defined by (\ref{eq28}), and let
$$
i_b:\Top \ra \Top_B
$$
be the functor which takes a space $X$ to the fibrewise space whose domain is $X$ and whose projection $X\ra B$ is constant at $b$.

\begin{lemma}\label{l18} Let $b\in B$. Then   
\begin{enumerate}
 
\item The functor $i_b$ is left adjoint to $\pi^{t}_{b}$. 
 \item Assume that $\{b\}$ is closed  in $B$. Then  $\pi^{t}_{b}$ is  left adjoint to the functor

\begin{equation}\label{eq52}
 \begin{array}{rccl}
  \map(B^b,i_b(.)):& \Top & \ra &\Top_B \\
     &Y&\longmapsto & \map_B(B^b,i_b(Y)).
	\end{array} 
\end{equation}
 where $B^b$ is the fibrewise space defined by Example \ref{ex3}.
\end{enumerate}
\end{lemma}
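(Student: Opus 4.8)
The plan is to verify the two adjunctions directly by exhibiting natural bijections on hom-sets. For part 1, I want to show $\Top_B(i_b(X), Z) \cong \Top(X, \pi^t_b(Z))$ naturally in $X \in \Top$ and $Z \in \Top_B$. Unwinding the definitions: a morphism $i_b(X) \to Z$ in $\Top_B$ is a continuous map $f\colon X \to Z$ (where $Z$ is viewed via its domain) making the triangle over $B$ commute, i.e. $p_Z \circ f$ equals the constant map at $b$. That commutativity forces $f(X) \subseteq p_Z^{-1}(b) = Z_b = \pi^t_b(Z)$, so $f$ factors (continuously, since $Z_b$ carries the subspace topology) through a map $X \to \pi^t_b(Z)$. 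Conversely, any continuous $g\colon X \to \pi^t_b(Z)$ composed with the inclusion $Z_b \hookrightarrow Z$ gives a fibrewise map $i_b(X) \to Z$. These assignments are mutually inverse and visibly natural; this is essentially a routine check.

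For part 2, I would combine part 1 with the exponential adjunction of Theorem \ref{t1}. The hypothesis that $\{b\}$ is closed in $B$ is exactly what makes $B^b$ (the one-point fibre over $b$, cf. Example \ref{ex3}/\ref{ex2}) fibrewise compact fibrewise Hausdorff: fibrewise Hausdorffness is automatic, and by Theorem \ref{t5} the projection $B^b \to B$ is proper iff it is closed with compact fibres, which holds precisely when $\{b\}$ is closed. Hence $B^b$ is exponentiable in $\Top_B$ by Theorem \ref{t1}, and $\map_B(B^b, -)$ is right adjoint to $(-)\tm_{\Top_B} B^b$. Now I compute, for $X \in \Top_B$ and $Y \in \Top$:
\begin{equation*}
\Top_B\bigl(X, \map_B(B^b, i_b(Y))\bigr) \cong \Top_B\bigl(X \tm_{\Top_B} B^b,\, i_b(Y)\bigr) \cong \Top\bigl(\pi^t_b(X \tm_{\Top_B} B^b),\, Y\bigr),
\end{equation*}
where the first isomorphism is Theorem \ref{t1} and the second is part 1 of this lemma. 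The remaining point is to identify $\pi^t_b(X \tm_{\Top_B} B^b)$ with $\pi^t_b(X) = X_b$: the fibre of the fibrewise product over $b$ is $X_b \tm_{\Top} (B^b)_b = X_b \tm_{\Top} \{b\} \cong X_b$, and over any $c \neq b$ the fibre is empty since $(B^b)_c = \emptyset$. So $\pi^t_b(X \tm_{\Top_B} B^b) \cong X_b = \pi^t_b(X)$, naturally in $X$. Chaining the three natural bijections gives $\Top_B(X, \map(B^b, i_b(Y))) \cong \Top(\pi^t_b(X), Y)$, which is exactly the claimed adjunction $\pi^t_b \dashv \map(B^b, i_b(-))$.

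The only genuine obstacle is making sure the identification $\pi^t_b(X \tm_{\Top_B} B^b) \cong \pi^t_b(X)$ is an isomorphism of topological spaces (not just sets) and is natural; but since $X \tm_{\Top_B} B^b$ is the pullback $X \times_B B^b$ and $B^b$ is a one-point subspace with the subspace topology, its fibre over $b$ is canonically homeomorphic to $X_b$, and functoriality in $X$ is immediate from the universal property. Everything else is a concatenation of adjunctions already established in the paper, so no further delicate point arises; one should, however, explicitly invoke Theorem \ref{t5} to justify that the closedness of $\{b\}$ yields fibrewise compactness of $B^b$, since that is where the hypothesis of part 2 is actually used.
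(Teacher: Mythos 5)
Your proposal is correct and follows essentially the same route as the paper: part 1 by direct inspection of hom-sets, and part 2 by chaining part 1 with the exponential adjunction of Theorem \ref{t1} after identifying $X\tm_{\Top_B}B^b$ fibrewise with ($i_b$ of) the fibre $X_b$. Your explicit appeal to Theorem \ref{t5} to locate where the closedness of $\{b\}$ is used is a worthwhile addition that the paper leaves implicit (it appears only in Example \ref{ex2}).
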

   
\begin{proof}\mbox{} 
\begin{enumerate}
 
\item  Let $X\in \Top$ and $Y\in \Top_B$. Then 
 $\Top_B(i_b(X),Y)\cong \Top(X,\pi^{t}_{b}(Y)).$ 
 \item Let $X\in \Top_B$ and $Y\in \Top$. Then
$$ \begin{array}{rclr}
 \Top(\pi^t_b(X),Y)&\cong& \Top(X_b,Y)  &\\
              &\cong&\Top_B(X\tm_{\Top_B}B^b,i_b(Y))  &\\ 
							&\cong&\Top_B(X,\map_B(B^b,i_b(Y)))  &(\mbox{by  Theorem } \ref{t1}) 
             \end{array}$$
\end{enumerate}
\qedhere   
 \end{proof}
\begin{proposition}\label{p34}   
Let $E\in \Top$ be an exponentiable space and let $b\in B$ be such that $\{b\}$ is closed. Then $i_b(E)$ is an exponentiable object of $\Top_B$.  
\end{proposition}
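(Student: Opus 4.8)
The plan is to show that the endofunctor $-\tm_{\Top_B} i_b(E):\Top_B\ra\Top_B$ has a right adjoint by combining the adjunctions of Lemma \ref{l18} with the exponentiability of $E$ in $\Top$. The key observation is that for any $X\in\Top_B$ with projection $p_X:X\ra B$, the product $X\tm_{\Top_B} i_b(E)$ is concentrated in the fibre over $b$: a point of it is a pair $(x,e)$ with $p_X(x)=b$, so its underlying fibrewise space is $i_b(X_b\tm_{\Top} E)$. First I would make this identification precise, i.e. establish a natural isomorphism
\begin{equation}\label{eq:p34key}
X\tm_{\Top_B} i_b(E)\;\cong\; i_b\bigl(\pi^t_b(X)\tm_{\Top} E\bigr),
\end{equation}
which amounts to checking that the fibre product, computed as in Appendix \ref{C}, has this form as a topological space over $B$ (the projection being constant at $b$). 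This step is essentially a routine verification about products in a slice category.

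Granting \eqref{eq:p34key}, the right adjoint is built by composing three functors. For $Y\in\Top_B$ we have the chain of natural bijections
$$
\begin{array}{rcll}
\Top_B\bigl(X\tm_{\Top_B} i_b(E),\,Y\bigr) &\cong& \Top_B\bigl(i_b(\pi^t_b(X)\tm_{\Top} E),\,Y\bigr) & (\text{by }\eqref{eq:p34key})\\[1mm]
&\cong& \Top\bigl(\pi^t_b(X)\tm_{\Top} E,\,\pi^t_b(Y)\bigr) & (\text{by Lemma }\ref{l18}.1)\\[1mm]
&\cong& \Top\bigl(\pi^t_b(X),\,\pi^t_b(Y)^{E}\bigr) & (\text{because }E\text{ is exponentiable in }\Top)\\[1mm]
&\cong& \Top_B\bigl(X,\,\map_B(B^b,i_b(\pi^t_b(Y)^{E}))\bigr) & (\text{by Lemma }\ref{l18}.2),
\end{array}
$$
where in the last step we use that $\{b\}$ is closed in $B$, so $B^b$ is fibrewise compact fibrewise Hausdorff (Theorem \ref{t5}) and hence exponentiable in $\Top_B$ (Theorem \ref{t1}), which is what makes Lemma \ref{l18}.2 applicable. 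All four bijections are natural in $X$ and $Y$, so the composite functor
$$
Y\;\longmapsto\;\map_B\bigl(B^b,\,i_b(\pi^t_b(Y)^{E})\bigr)
$$
is a right adjoint to $-\tm_{\Top_B} i_b(E)$. This exhibits $i_b(E)$ as an exponentiable object of $\Top_B$.

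The only genuine point requiring care is the naturality and the identification \eqref{eq:p34key}; in particular one must also check that the binary products $X\tm_{\Top_B} i_b(E)$ exist for all $X$, but this is immediate since $\Top_B$ is a slice category of the complete category $\Top$ and hence has all finite products (see Appendix \ref{A}, or Proposition \ref{pp13}). I do not expect any serious obstacle: the exponentiability of $E$ in $\Top$ supplies the middle bijection, and Lemma \ref{l18} packages the passage between $\Top$ and $\Top_B$ at the closed point $b$; the closedness hypothesis on $\{b\}$ enters solely to guarantee that $B^b$ is fibrewise compact, which is exactly the hypothesis needed to invoke Theorem \ref{t1} inside Lemma \ref{l18}.2.
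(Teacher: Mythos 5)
Your proposal is correct and follows essentially the same route as the paper: the identification $X\tm_{\Top_B} i_b(E)\cong i_b(\pi^{t}_{b}(X)\tm_{\Top}E)$ followed by the same chain of adjunction bijections via Lemma \ref{l18}.1, exponentiability of $E$ in $\Top$, and Lemma \ref{l18}.2, yielding the identical right adjoint $Y\mapsto\map_B(B^b,i_b(Y_b^{E}))$. Your explicit remark on where the closedness of $\{b\}$ enters is a welcome clarification that the paper leaves implicit.
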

    
\begin{proof} 
Let $$(.)^E:\Top \ra \Top$$ be a right adjoint of the functor 
$$.\tm_{\Top}E:\Top \ra \Top$$
and let $X,Y\in \Top_B$. We have  $$X\tm_{\Top_B}i_b(E)=i_b(\pi^{t}_{b}(X)\tm_{\Top}E).$$ Therefore 
$$ \begin{array}{rclr}
 \Top_B(X\tm_{\Top_B}i_b(E),Y)&\cong &\Top_B(i_b(\pi^{t}_{b}(X)\tm_{\Top}E),Y) &\\
              &\cong&\Top(\pi^{t}_{b}(X)\tm_{\Top}E,Y_b) & (\mbox{by Lemma \ref{l18}.1})\\ 
							&\cong&\Top(\pi^{t}_{b}(X),Y_b^E)  & \\
								&\cong&\Top_B(X,\map_B(B^b,i_b(Y^{E}_{b})))  & (\mbox{by Lemma \ref{l18}.2}) 
             \end{array}$$
Thus $i_b(E)$ is exponentiable in $\Top_B$. 
\qedhere   
\end{proof}

The Sierpinski space is the topological space denoted by $\bs$, whose underlying set is $\{0,1\}$ and whose set of open sets is $\mo(\bs)=\{\emptyset, \{1\},\bs\}.$ Let $\Sier$ be the  subcategory of $\Top$ having $\bs$ as its unique object and $\Sier_B$ the  subcategory of $\Top_B$ whose objects are all continuous maps  $\bs \sra B$.

\begin{proposition}\label{p35}
$\Sier_B$ is a left Kan extendable subcategory of $\Top_B$.
\end{proposition}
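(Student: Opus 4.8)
The plan is to verify that $\Sier_B$ is a \emph{suitable} subcategory of $\Top_B$ in the sense of Definition \ref{pd10}, and then invoke Theorem \ref{t2} to conclude that it is left Kan extendable in $\Top_B$. Recall that $\Sier_B$ has, as objects, all continuous maps $\bs \ra B$, where $\bs$ is the Sierpi\'nski space on $\{0,1\}$ with open sets $\{\emptyset,\{1\},\bs\}$; a morphism in $\Sier_B$ is, as always in a slice category, a continuous fibrewise map between two such structure maps.

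First I would fix $b\in B$ and exhibit an object $E(b)$ of $\Sier_B$ satisfying the fibre condition (\ref{eq14}). The natural candidate is the structure map $c_b:\bs \ra B$ which is constant at $b$; this is continuous (constant maps are always continuous), so it defines an object of $\Sier_B$. Its fibre over $c\in B$ is $c_b^{-1}(c)$, which equals $\bs$ when $c=b$ and $\emptyset$ when $c\neq b$. Hence $E(b)_b=\bs\neq\emptyset$ and $E(b)_c=\emptyset$ for all $c\neq b$, which is exactly the requirement (\ref{eq14}) in Definition \ref{pd10}. Since $b\in B$ was arbitrary, $\Sier_B$ is suitable.

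Then the conclusion is immediate: by Theorem \ref{t2}.1, every suitable subcategory of $\Top_B$ is left Kan extendable in $\Top_B$, so $\Sier_B$ is left Kan extendable. I would also note, for use in the sequel, that Theorem \ref{t2}.2 and \ref{t2}.3 then describe the coreflector onto $(\Sier_B)_l[\Top_B]$ and characterize the $\Sier_B$-generated fibrewise spaces as those carrying the final topology induced by all fibrewise maps from objects of $\Sier_B$.

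There is essentially no obstacle here: the only point requiring a line of verification is that the constant map $\bs\to B$ at $b$ is genuinely continuous and has the prescribed fibres, and both are routine. (An alternative, equally short, choice of $E(b)$ would be any continuous map $\bs\to B$ whose image lies in $\overline{\{b\}}$ and which hits $b$; but the constant map is the cleanest.) I would therefore present the proof in just a few sentences, reducing everything to Definition \ref{pd10} and Theorem \ref{t2}.

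\begin{proof}
For $b\in B$ let $E(b)$ be the object of $\Sier_B$ given by the constant map $c_b\colon\bs\ra B$, $c_b(x)=b$ for all $x\in\bs$; this map is continuous, hence an object of $\Sier_B$. Its fibre over $c\in B$ is $E(b)_c=c_b^{-1}(c)$, which is $\bs\neq\emptyset$ when $c=b$ and $\emptyset$ when $c\neq b$. Thus $E(b)$ satisfies (\ref{eq14}), and since $b\in B$ was arbitrary, $\Sier_B$ is a suitable subcategory of $\Top_B$ in the sense of Definition \ref{pd10}. By Theorem \ref{t2}.1, $\Sier_B$ is left Kan extendable in $\Top_B$.
\end{proof}
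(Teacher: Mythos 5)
Your proposal is correct and follows exactly the paper's route: the paper simply asserts that $\Sier_B$ is suitable and invokes Theorem \ref{t2}, while you additionally spell out the witness $E(b)$ as the constant map $c_b:\bs\ra B$, which is the obvious choice and makes the suitability check explicit. No gaps.
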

    
\begin{proof} 	
$\Sier_B$ is a suitable subcategory of $\Top_B$. By Theorem \ref{t2},	$\Sier_B$ is left Kan extendable.
\qedhere       
 \end{proof}
The subcategory $\Sier$ of $\Top$ corresponds to the subcategory $\Sier_{\Pt}$ of $\Top_{\Pt}$ under the isomorphism $P$ of (\ref{peq16}).
We therefore have the following.

\begin{corollary}\label{pc16}
$\Sier$ is  left Kan extendable subcategory in $\Top$. 
\end{corollary}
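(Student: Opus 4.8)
The statement to prove is Corollary \ref{pc16}: that $\Sier$ is left Kan extendable in $\Top$. This is a direct corollary of Proposition \ref{p35} (which says $\Sier_B$ is left Kan extendable in $\Top_B$) via the transition from the fibrewise setting over $B$ to the absolute setting, using the isomorphism $P: \Top_{\Pt} \ra \Top$ of (\ref{peq16}).

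The plan is essentially to specialize Proposition \ref{p35} to $B = \Pt$ (the one-point space) and transport along the isomorphism $P$. First I would observe that, as noted in the text immediately preceding the corollary, the subcategory $\Sier$ of $\Top$ corresponds to $\Sier_{\Pt}$ of $\Top_{\Pt}$ under the isomorphism $P$ of (\ref{peq16}): indeed, $\Top_{\Pt} = \Top/\Pt$, and since $\Pt$ is terminal, $\Top/\Pt \cong \Top$ with every fibrewise space over $\Pt$ being just a space, and the continuous maps $\bs \sra \Pt$ being in bijection with the single object $\bs$. Then, since $P$ is an isomorphism of categories carrying $\Sier_{\Pt}$ onto $\Sier$, and since left Kan extendability is a property preserved by equivalences (indeed isomorphisms) of categories — the inclusion functor $\Sier_{\Pt} \ra \Top_{\Pt}$ has an idempotent density comonad if and only if the conjugated inclusion functor $\Sier \ra \Top$ does, because density comonads, idempotency, and the relevant colimits all transport along $P$ — the conclusion follows. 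By Proposition \ref{p35} applied with $B = \Pt$, $\Sier_{\Pt}$ is left Kan extendable in $\Top_{\Pt}$; transporting along $P$ gives that $\Sier$ is left Kan extendable in $\Top$.

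The main (and only) obstacle is really just bookkeeping: making sure that the notion of "suitable subcategory" specializes correctly. A suitable subcategory of $\Top_B$ requires, for each $b \in B$, a fibrewise space $E(b)$ with $E(b)_b \neq \emptyset$ and $E(b)_c = \emptyset$ for $c \neq b$; over $\Pt$ this just says the subcategory contains a nonempty space, which is exactly how "suitable subcategory of $\Top$" is defined in Section \ref{s8}. Since $\Sier_{\Pt}$ contains $\bs$, which is nonempty, $\Sier_{\Pt}$ is suitable, hence left Kan extendable by Theorem \ref{t2}; equivalently $\Sier$ is a suitable subcategory of $\Top$ and one may appeal directly to Corollary \ref{pc14}. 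There is essentially nothing hard here — the work was all done in Proposition \ref{p35} and Theorem \ref{t2}.

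\begin{proof}
The subcategory $\Sier$ of $\Top$ corresponds to the suitable subcategory $\Sier_{\Pt}$ of $\Top_{\Pt}$ under the isomorphism $P$ of (\ref{peq16}). By Proposition \ref{p35} (with $B = \Pt$), $\Sier_{\Pt}$ is left Kan extendable in $\Top_{\Pt}$. Since $P$ is an isomorphism of categories carrying $\Sier_{\Pt}$ onto $\Sier$, it carries the idempotent density comonad of the inclusion $\Sier_{\Pt} \hra \Top_{\Pt}$ to an idempotent density comonad of the inclusion $\Sier \hra \Top$. Hence $\Sier$ is left Kan extendable in $\Top$. (Alternatively, $\Sier$ is a suitable subcategory of $\Top$ since it contains the nonempty space $\bs$, so the result is a special case of Corollary \ref{pc14}.)
\qedhere
\end{proof}
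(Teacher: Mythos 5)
Your proposal is correct and follows the paper's own route: the corollary is obtained by specializing Proposition \ref{p35} to $B=\Pt$ and transporting along the isomorphism $P$ of (\ref{peq16}), exactly as the text preceding the corollary indicates. The extra remark that one may instead invoke Corollary \ref{pc14} directly is also fine and consistent with the paper's framework.
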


\begin{proposition}\label{pp18}  
A fibrewise topological space $X \ra B$ is $\Sier_B$-generated iff its domain $X$ is $\Sier$-generated. 
 \end{proposition}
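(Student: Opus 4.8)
The plan is to mirror the argument used for Proposition \ref{p28} (fibrewise sequential spaces), since $\Sier_B$ plays exactly the role for $\Sier$-generated spaces that $\N_B$ plays for sequential spaces. The key structural facts are Corollary \ref{pc6} (an object is $\mw$-generated iff it is a colimit of a diagram valued in $\mw$) and the adjunction of Lemma \ref{l2} relating $\Top_B$ and $\Top$ through the functor $P_B:\Top_B\to\Top$ sending a fibrewise space to its domain. I would first observe that $P_B$ is a left adjoint (it has a right adjoint $.\tm_{\Top}B$), hence preserves all colimits, and that it sends every object of $\Sier_B$ to $\bs$, i.e. to an object of $\Sier$; conversely every object of $\Sier$ is $P_B$ of an object of $\Sier_B$ (take any continuous map $\bs\to B$ — at least the constant ones exist).

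For the forward direction, suppose $X\to B$ is $\Sier_B$-generated. By Corollary \ref{pc6} there is a functor $F:\mk\to\Sier_B$ with $X\cong\colim JF$ in $\Top_B$, where $J:\Sier_B\hookrightarrow\Top_B$ is the inclusion. Applying $P_B$ and using that it preserves colimits, $X\cong P_B(\colim JF)\cong\colim (P_B J F)$ in $\Top$. Since $P_B J F$ takes values in $\Sier$ (each object of $\Sier_B$ has domain $\bs$), Corollary \ref{pc6} (applied in $\Top$, using Corollary \ref{pc16} that $\Sier$ is left Kan extendable there) shows $X$ is $\Sier$-generated.

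For the converse, suppose the domain $X$ is $\Sier$-generated. I would show the fibrewise space $p:X\to B$ has the final topology determined by all continuous fibrewise maps $V\to (X,p)$ with $V\in\Sier_B$; by Theorem \ref{t2}.3 this is exactly the condition for $(X,p)$ to be $\Sier_B$-generated. The point is that any continuous map $\sigma:\bs\to X$ automatically becomes a fibrewise map from the object $(\bs\xrightarrow{p\sigma}B)\in\Sier_B$ to $(X,p)$, so the family of fibrewise maps out of $\Sier_B$ into $(X,p)$ and the family of ordinary continuous maps $\bs\to X$ induce the same final topology on the underlying set $|X|$. Since $X$ is $\Sier$-generated, that final topology is the given topology of $X$, which therefore coincides with the $\Sier_B$-coreflection of $(X,p)$; repleteness of $\Sier_B{}_l[\Top_B]$ then gives $(X,p)\in\Sier_B{}_l[\Top_B]$.

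The only mildly delicate point — the ``main obstacle'' — is checking that passing between fibrewise maps $V\to X$ (over $B$) and ordinary maps on fibres does not change the final topology: one must verify that the projection of any $\Sier_B$-object is irrelevant to the underlying-set colimit, which is precisely the content of the suitability computation already carried out before Theorem \ref{t2} (the underlying functor $|J_X|$ has the colimit described there, with underlying set $|X|$). Granting that, both directions are formal consequences of $P_B$ being a colimit-preserving functor together with Corollaries \ref{pc6} and \ref{pc16} and Theorem \ref{t2}.
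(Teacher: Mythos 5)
Your argument is correct and matches the paper's (one-line) proof in substance: the paper simply cites the colimit characterization of $\mw$-generated objects together with Lemma \ref{l2}.1 (the functor $P_B$ creates colimits), which is exactly the machinery you deploy. Your converse is phrased via the final-topology criterion of Theorem \ref{t2}.3 rather than by lifting a colimit presentation along $P_B$, but these are equivalent packagings of the same observation that continuous maps $\bs\ra X$ and fibrewise maps out of $\Sier_B$-objects into $(X,p)$ are the same data.
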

	 
\begin{proof} 	
 This is a consequence  of Proposition \ref{pp17}.1  and lemma \ref{l2}.1.
\qedhere   
 \end{proof}

Recall that an Alexandroff space is a topological space in which arbitrary  intersections of open subsets are open. Equivalently, 
an Alexandroff space is a topological space for which arbitrary  unions of closed subsets are closed. Let $\Alex$ be the subcategory of $\Top$ of Alexandroff spaces. A finite topological space has only finitely many open sets, and is therefore an Alexandroff space. \\ 
Let $B$ be a subset of an Alexandroff space $X$ and let $\overline{B}$ denote the topological closure of $B$. The subspace $\underset{b\in B}{\bigcup}\overline{\{b\}}$ is a closed subset of $X$ containing
$B$. It follows that $\overline{B}=\underset{b\in B}{\bigcup}\overline{\{b\}}$. \\
\indent We next provide a simple proof of the following result which is given (without proof) in \cite[Examples (2), page 114]{ES}. 

\begin{proposition}\label{pp16}
 A topological space $X$ is $\Sier$-generated iff it is an Alexandroff space. That is $\Sier_{l}[\Top]=\Alex$.
\end{proposition}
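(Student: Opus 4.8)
The plan is to characterize $\Sier$-generated spaces by unravelling Theorem~\ref{t2}.3 (via Corollary~\ref{pc14}) and comparing the resulting final topology with the Alexandroff condition. Recall that by Corollary~\ref{pc14}.3, a space $X$ is $\Sier$-generated precisely when $X$ carries the final topology induced by all continuous maps $\bs \ra X$. So the whole statement reduces to the identity: the final topology on the underlying set of $X$ with respect to all maps from $\bs$ coincides with the original topology of $X$ if and only if $X$ is Alexandroff. One direction is essentially trivial: continuous maps $\bs \ra X$ are final, so the original topology is always at least as fine as the final $\Sier$-topology, hence $X$ is $\Sier$-generated iff every $\Sier$-final-open set is open in $X$.

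First I would describe the continuous maps $\bs \to X$ concretely. A function $f:\{0,1\}\ra X$ with $f(1)=x$, $f(0)=y$ is continuous iff $f^{-1}(U)$ is open in $\bs$ for every open $U\subseteq X$; since $\mathcal O(\bs)=\{\emptyset,\{1\},\bs\}$, this holds iff: whenever $x\in U$ open, also $\{0\}$ or $\{0,1\}$ maps appropriately — concretely, $f$ is continuous iff $y\in\overline{\{x\}}$ (equivalently $x$ lies in every open set containing $y$, i.e. $x$ is a specialization-generic point for $y$). So the continuous maps $\bs \ra X$ are exactly the pairs $(x,y)$ with $y \in \overline{\{x\}}$, together with the constant maps. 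Then a subset $A\subseteq X$ is $\Sier$-final-open iff for every such pair with $x\in A$ we have $y\in A$ — in other words, iff $A$ is closed under specialization downward... wait, more precisely: $A$ is final-open iff for every continuous $f:\bs\ra X$, $f^{-1}(A)$ is open in $\bs$, i.e. $f^{-1}(A)\in\{\emptyset,\{1\},\bs\}$; the only constraint that can fail is $0\in f^{-1}(A)$ while $1\notin f^{-1}(A)$, so the condition is: $y\in A \Rightarrow x\in A$ whenever $y\in\overline{\{x\}}$. Dually, a subset $C$ is $\Sier$-final-closed iff $y\in\overline{\{x\}}$ and $x\in C$ implies $y\in C$, i.e. $C$ is a union of point-closures $\overline{\{x\}}$. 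Hence $X$ is $\Sier$-generated iff every set of the form $\bigcup_{x\in S}\overline{\{x\}}$ (for $S\subseteq X$) is closed in $X$.

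Next I would close the loop with the Alexandroff condition. If $X$ is Alexandroff, then as recalled in the paragraph preceding the proposition, $\overline{S}=\bigcup_{x\in S}\overline{\{x\}}$ for every $S$, so every $\Sier$-final-closed set is genuinely closed, proving $X\in\Sier_l[\Top]$. Conversely, if $X$ is $\Sier$-generated, take an arbitrary family $(C_i)_{i\in I}$ of closed subsets and put $S=\bigcup_i C_i$; since each $C_i$ is closed, $\overline{\{x\}}\subseteq C_i\subseteq S$ for each $x\in C_i$, so $S=\bigcup_{x\in S}\overline{\{x\}}$ is $\Sier$-final-closed, hence closed in $X$. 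Therefore arbitrary unions of closed sets are closed, i.e. $X$ is Alexandroff. Combining, $\Sier_l[\Top]=\Alex$.

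The only mildly delicate point — the part I expect to be the main obstacle, though it is really bookkeeping — is pinning down exactly which functions $\bs\ra X$ are continuous and then correctly reading off the final-topology condition in terms of point-closures (getting the direction of the specialization order right, and handling the degenerate constant maps, which impose no extra condition). Once that dictionary between "continuous maps out of $\bs$" and "specialization pairs $y\in\overline{\{x\}}$" is set up cleanly, both implications are immediate from Corollary~\ref{pc14}.3 and the displayed identity $\overline{B}=\bigcup_{b\in B}\overline{\{b\}}$ valid in Alexandroff spaces.
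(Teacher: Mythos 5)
Your proof is correct and follows essentially the same route as the paper's: both reduce the statement to Corollary~\ref{pc14} (the final-topology characterization of $\Sier$-generated spaces), identify continuous maps $\bs\ra X$ with specialization pairs $y\in\overline{\{x\}}$, and invoke the identity $\overline{B}=\bigcup_{b\in B}\overline{\{b\}}$ for Alexandroff spaces. The only cosmetic difference is that you phrase everything uniformly in terms of final-closed sets being unions of point-closures, whereas the paper argues one direction via intersections of open sets and the other via an explicitly constructed map $g:\bs\ra X$; the content is the same.
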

  	
\begin{proof} 	
 Let $X$ be a $\Sier$-generated topological space, $(O_i)_{i\in I}$ a family of open sets in $X$ and $f:\bs \ra X$ a continuous map. Then
$f^{-1}(\underset{i\in I}{\bigcap} O_i)=\underset{i\in I}{\bigcap}f^{-1}(O_i)$ which open in $\bs$ since $\bs$ is an Alexandroff space. By Corollary \ref{pc14}.2, $\underset{i\in I}{\bigcap} O_i$ is open in $X$ and $X$ is an Alexandroff space.  
Conversely, assume that $X$ is an Alexandroff space. Let $B \subset X$ be such that $f^{-1}(B)$ is closed for every continuous map $f:\bs \ra X$  and let $a\in \overline{B}$. There exists $b\in B$ such that $a\in \overline{\{b\}}$. If $a=b$ then $a\in B$, if $a\neq b$, define $g:\bs \ra X$ by $g(0)=a$ and $g(1)=b$. Then 
$$g(\overline{\{1\}})=g(\bs)=\{a,b\}\subset \overline{\{b\}} \subset \overline{g(\{1\})}$$
Therefore $g$ is continuous and $g^{-1}(B)$ is a closed subset of $\bs$ containing $1$. It follows that $g^{-1}(B)=\bs$, in particular, $a=g(0)\in B$ and $B$ is closed in $X$. By Corollary \ref{pc14}.2, $X$ is $\Sier$-generated.
\qedhere   
\end{proof}

It follows that $(\Sier_B)_l[\Top_B]$ is the subcategory $\Alex_B$ of  fibrewise spaces $X \sra B$ whose domain $X$ is an Alexandroff space.
Objects of $\Alex_B$ are called fibrewise Alexandroff spaces over $B$.

\begin{corollary}\label{pc17}\mbox{}
\begin{enumerate}
 
 \item $\Alex$ is a coreflective subcategory of $\Top $ containing $\Sier$ as a dense subcategory.
\item $\Alex_B$ is a coreflective subcategory of $\Top_B $ containing $\Sier_B$ as a dense subcategory.
\end{enumerate}
\end{corollary}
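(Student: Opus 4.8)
The plan is to obtain both assertions as immediate specializations of Proposition \ref{pp17}, once the two categories of coalgebras in play have been identified with categories of Alexandroff spaces; no new construction is required.

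For the first statement I would argue as follows. By Corollary \ref{pc16}, $\Sier$ is left Kan extendable in $\Top$, so Proposition \ref{pp17} applies with $\mw=\Sier$ and $\mc=\Top$. Proposition \ref{pp17}.2 gives that the free coalgebra functor $F_{L}:\Top\ra\Sier_{l}[\Top]$ is a coreflector, so $\Sier_{l}[\Top]$ is a coreflective subcategory of $\Top$; Proposition \ref{pp17}.1 gives that $\Sier_{l}[\Top]$ is the strong coreflective hull of $\Sier$, which by Remark \ref{pr1}.2 means exactly that it is a replete coreflective subcategory of $\Top$ in which $\Sier$ is dense. It then remains only to invoke Proposition \ref{pp16}, which identifies $\Sier_{l}[\Top]$ with $\Alex$, to conclude.

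For the second statement I would repeat this argument verbatim over $\Top_B$: Proposition \ref{p35} supplies left Kan extendability of $\Sier_{B}$ in $\Top_B$, Proposition \ref{pp17} then yields that $(\Sier_B)_l[\Top_B]$ is coreflective in $\Top_B$ and is the strong coreflective hull of $\Sier_{B}$, hence, by Remark \ref{pr1}.2, contains $\Sier_{B}$ as a dense subcategory; the identification $(\Sier_B)_l[\Top_B]=\Alex_B$ recorded just before the statement then finishes the proof.

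I do not anticipate a genuine obstacle here: the corollary is a repackaging of Proposition \ref{pp17} in the two concrete cases already in hand, and its only substantive inputs --- the identifications $\Sier_{l}[\Top]=\Alex$ and $(\Sier_B)_l[\Top_B]=\Alex_B$ --- were established earlier. The single point worth making explicit is the passage from ``strong coreflective hull'' to ``replete coreflective subcategory in which $\Sier$ (resp. $\Sier_{B}$) is dense'' via Remark \ref{pr1}.2, so that the density clause of the corollary appears literally in the conclusion of Proposition \ref{pp17}.
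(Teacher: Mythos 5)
Your proposal is correct and follows essentially the same route as the paper, whose proof is a one-line citation of Propositions \ref{pp16}, \ref{pp17} and \ref{pp18}; you merely spell out the same chain (left Kan extendability, Proposition \ref{pp17} giving coreflectivity and the strong coreflective hull, hence density by Remark \ref{pr1}.2, and the identifications with $\Alex$ and $\Alex_B$). No gap.
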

   	
\begin{proof}  	
 This follows from Proposition \ref{pp16}, Proposition \ref{pp17},  Proposition \ref{pp17}.1  and Proposition \ref{pp18}.
\qedhere   
\end{proof}	
We next generalize   \cite[Lemma 4.6.]{ES}. 

\begin{proposition}\label{pp19}\mbox{}
\begin{enumerate}
 
 \item The Sierpinski space $\bs$ is sequential.
\item The category $\Alex$ is a coreflective subcategory of $\Seq$.
\item The category $\Alex_B$ is a coreflective subcategory of $\Seq_B$.
\end{enumerate}  
 \end{proposition}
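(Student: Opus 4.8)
The plan is to prove the three statements in order, with each one providing the key input for the next. For part (1), I would show directly that $\bs$ is sequential by exhibiting $\bs$ as a quotient (or simpler: a continuous retract) of $\bn^{+}$, or more concretely by checking the defining property: a subset $A \subseteq \bs$ is open whenever $A \cap \text{(image of every convergent sequence)}$ is open. Since $\bs$ is finite, there is very little to check — every subset's interaction with convergent sequences is trivial — so the cleanest route is to note that $\bs$ itself is a quotient of $\bn^{+}$ via the map sending $0 \mapsto 1$ (the non-open point... wait, must be careful about which point is open) and $n, \infty \mapsto$ the appropriate points; then invoke Proposition \ref{p28}-style reasoning that quotients of sequential spaces are sequential, or just observe finiteness makes the sequential condition vacuous. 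Actually the slickest argument: $\bs \in \N_l[\Top] = \Seq$ because $\bs$ is the quotient of the convergent sequence $\bn^+$ obtained by collapsing $\{\infty\} \cup \{0\}$ to one point (mapping it to the closed point $0$ of $\bs$) and sending everything else to the open point $1$ — this is continuous and a quotient map, hence $\bs$ is sequential by Proposition \ref{p13}.1 applied with $\mw = \N$.

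For part (2), I would combine part (1) with the machinery of Corollary \ref{pc5}. We have $\Sier \subseteq \Seq = \N_l[\Top]$ by part (1) (since $\bs$ is sequential, the one-object subcategory $\Sier$ sits inside $\Seq$). Both $\Sier$ and $\N$ are left Kan extendable in $\Top$ (Corollary \ref{pc16} and Proposition \ref{p27} with $B = \Pt$, i.e. Corollary \ref{pc14}). Hence by Corollary \ref{pc5}.1, $\Sier_l[\Top] = \Alex$ is a coreflective subcategory of $\N_l[\Top] = \Seq$. That is precisely the claim.

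For part (3), the fibrewise version follows the same pattern. By Proposition \ref{p35}, $\Sier_B$ is left Kan extendable in $\Top_B$; by Proposition \ref{p27}, $\N_B$ is left Kan extendable in $\Top_B$. I need $\Sier_B \subseteq \N_l[\Top_B] = \Seq_B$: a fibrewise space in $\Sier_B$ is a continuous map $\bs \to B$, and its domain $\bs$ is sequential by part (1), so by Proposition \ref{p28} it lies in $\Seq_B$. Then Corollary \ref{pc5}.1 gives that $(\Sier_B)_l[\Top_B] = \Alex_B$ is a coreflective subcategory of $(\N_B)_l[\Top_B] = \Seq_B$.

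The main obstacle, such as it is, is part (1): verifying that $\bs$ is sequential cleanly rather than by an ad hoc point-chase. The rest is bookkeeping with Corollary \ref{pc5}, whose hypotheses (both subcategories left Kan extendable, one contained in the generated hull of the other) are each already established elsewhere in the paper, so parts (2) and (3) should be essentially immediate once part (1) and the two containments $\Sier \subseteq \Seq$, $\Sier_B \subseteq \Seq_B$ are in hand. One should double-check the direction of the coreflection and that $\Alex \subseteq \Seq$ genuinely as the relevant inclusion of hulls (i.e. that $\Alex = \Sier_l[\Top]$, which is Proposition \ref{pp16}, and likewise $\Alex_B = (\Sier_B)_l[\Top_B]$, noted just before Corollary \ref{pc17}).
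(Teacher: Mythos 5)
Your proposal is correct and follows essentially the same route as the paper: exhibit $\bs$ as a quotient of $\bn^{+}$ (the paper uses $q(\infty)=0$, $q(n)=1$ for $n\in\bn$; your variant collapsing $\{0,\infty\}$ works equally well) and invoke Proposition \ref{p13}.1, then apply Corollary \ref{pc5} for parts (2) and (3). The containments $\Sier\subseteq\Seq$ and $\Sier_B\subseteq\Seq_B$ that you flag are exactly what the paper's appeal to Corollary \ref{pc5} relies on, so there is no gap.
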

	  
\begin{proof} 	
Define $q:\bn^{+} \ra  \bs$ by $q(\infty)=0$ and $q(n)=1$ for all $n\in \bn$. The map $q$ is a quotient map, thus  by Proposition \ref{p13}.1, $\bs$ is sequential. By Corollary \ref{pc5}, $\Alex$ is a coreflective subcategory of $\Seq$.  Similarly, $\Alex_B$ is a coreflective subcategory of $\Seq_B$.
\qedhere   
 \end{proof}

\begin{proposition}\label{p37}\mbox{}
\begin{enumerate}
 
\item The subcategory $\Alex$ of $\Top$ is cartesian closed.  
\item If $B$ is $\T_1$, then the subcategory $\Alex_B$ of $\Top_B$ is cartesian closed.
\end{enumerate}  
\end{proposition}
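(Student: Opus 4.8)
The plan is to apply the general machinery of Theorem \ref{t6} (the fibrewise Day's theorem) to the suitable subcategory $\Sier_B$ of $\Top_B$, exactly in parallel with the treatment of $\Seq_B$ in Proposition \ref{p22}, and to deduce part (1) from part (2) via the isomorphism $P$ of (\ref{peq16}). For part (2), I would verify the four hypotheses of Theorem \ref{t6} in turn. Hypothesis 1 ($B$ is $\T_1$) is given. Hypothesis 2 ($\Sier_B$ is suitable) is Proposition \ref{p35}. Hypothesis 3 asks that every object of $\Sier_B$ be exponentiable in $\Top_B$; here the objects of $\Sier_B$ are the continuous maps $\bs \to B$, and since $B$ is $\T_1$ and $\bs$ is finite, such a map is fibrewise compact by Theorem \ref{t5} (its point-inverses are finite subspaces of the finite, hence compact, space $\bs$, and it is automatically closed), and it is fibrewise Hausdorff because $\bs$ embedded over a single point of $B$ trivially separates points in a common fibre — so Theorem \ref{t1} gives exponentiability. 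Hypothesis 4 asks that for $V, W \in \Sier_B$ the product $V \times_{\Top_B} W$ be $\Sier_B$-generated, i.e.\ by Proposition \ref{pp18} that its domain be $\Sier$-generated; but the domain of $V \times_{\Top_B} W$ is a subspace of $\bs \times_{\Top} \bs$, which is a finite space, hence Alexandroff, and by Proposition \ref{pp16} a subspace of an Alexandroff space is Alexandroff (arbitrary intersections of relatively open sets are relatively open), so the domain is Alexandroff $=\Sier$-generated by Proposition \ref{pp16}. With all four hypotheses in hand, Theorem \ref{t6} yields that $\Sier_B$ is left Kan extendable and that $(\Sier_B)_l[\Top_B]$ is a cartesian closed subcategory of $\Top_B$; since $(\Sier_B)_l[\Top_B] = \Alex_B$ (established just before the statement), part (2) follows.

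For part (1), the cleanest route is to observe that $\Sier$ corresponds under the isomorphism $P: \Top_{\Pt} \to \Top$ of (\ref{peq16}) to the suitable subcategory $\Sier_{\Pt}$ of $\Top_{\Pt}$ (as already noted in the text preceding Corollary \ref{pc16}), and $\Pt$ is trivially $\T_1$; so part (2) applied with $B = \Pt$, transported along $P$, gives that $\Alex = \Sier_l[\Top]$ is cartesian closed. Alternatively, one may invoke Corollary \ref{pc13} (Day's theorem for $\Top$) directly: $\Sier$ is suitable, $\bs$ is exponentiable in $\Top$ (it is even finite, hence locally compact Hausdorff, cf.\ Examples \ref{pex4}.1), and $\bs \times_{\Top} \bs$ is finite hence Alexandroff hence $\Sier$-generated by Proposition \ref{pp16}; this gives $\Sier_l[\Top] = \Alex$ cartesian closed.

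I expect the only point requiring genuine care to be the verification that the objects of $\Sier_B$ are fibrewise compact \emph{and} fibrewise Hausdorff over a $\T_1$ base — in particular that a continuous map $\bs \to B$ with $B$ being $\T_1$ is a proper map. This is where the $\T_1$ hypothesis on $B$ is actually used: the map $\bs \to B$ has image a subset of $B$ of at most two points, which is closed because $B$ is $\T_1$, and more generally the image of any closed (equivalently, any) subset of the finite space $\bs$ is a finite, hence closed, subset of $B$; combined with finiteness of point-inverses, Theorem \ref{t5} gives properness. Fibrewise Hausdorffness is immediate since any two distinct points of $\bs$ lying over the same $b \in B$ are separated already within $\bs$. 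Everything else is a direct citation of earlier results, so no further obstacle is anticipated; the proof itself can therefore be quite short, essentially a checklist reducing to Theorem \ref{t6} and Proposition \ref{pc13}.
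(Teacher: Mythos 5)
There is a genuine gap in your verification of hypothesis 3 of Theorem \ref{t6}. You claim that an object $\bs \to B$ of $\Sier_B$ is fibrewise Hausdorff, asserting that ``any two distinct points of $\bs$ lying over the same $b\in B$ are separated already within $\bs$.'' This is false: the Sierpinski space $\bs$ is the standard example of a non-Hausdorff space — the only neighbourhood of the closed point $0$ is $\bs$ itself, so $0$ and $1$ cannot be separated by disjoint open sets. Worse, the $\T_1$ hypothesis on $B$ forces exactly the bad situation: since $0\in\overline{\{1\}}$ in $\bs$, continuity gives $p(0)\in\overline{\{p(1)\}}=\{p(1)\}$, so $p$ is \emph{constant} and the two points of $\bs$ do lie in a common fibre. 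Hence $\bs\to B$ is fibrewise compact but not fibrewise Hausdorff, Theorem \ref{t1} does not apply, and your route to exponentiability collapses. (The same slip recurs in your alternative argument for part (1), where you call $\bs$ ``finite, hence locally compact Hausdorff''; finiteness gives core-compactness, not Hausdorffness.)

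The paper repairs exactly this point by a different mechanism: since $B$ is $\T_1$, the projection $p:\bs\to B$ is constant, so the fibrewise space is of the form $i_b(\bs)$ for the closed point $b=p(0)$; and since $\bs$ is finite, it is core-compact and therefore exponentiable in $\Top$, whence Proposition \ref{p34} (exponentiability of $i_b(E)$ for $E$ exponentiable in $\Top$ and $\{b\}$ closed) yields exponentiability of $\bs\to B$ in $\Top_B$. The rest of your checklist — suitability of $\Sier_B$ via Proposition \ref{p35}, hypothesis 4 via finiteness of the domain of $V\times_{\Top_B}W$ together with Proposition \ref{pp16}, the identification $(\Sier_B)_l[\Top_B]=\Alex_B$, and the reduction of part (1) to the case $B=\Pt$ — is sound and matches the paper's strategy. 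If you replace your Theorem \ref{t1} argument by the constancy-of-$p$ observation and Proposition \ref{p34}, the proof goes through.
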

    	
\begin{proof} We just need to prove 2.\\
 \indent Being finite, $\bs$ is a core-compact space. It is therefore an exponentiable object of $\Top$. Let $\bs \st{p}\ra B$ be continuous.
Assume the space $B$ is $\T_1$, therefore $p$ is constant. By	Proposition \ref{p34}, $\bs \st{p}\ra B$ is an exponentiable object of $\Top_B$. The product in $\Top_B$ of two fibrewise Sierpinski spaces is a fibrewise Alexandroff space.  By Theorem \ref{t6},
 $\Alex_B$ is cartesian closed. 
\qedhere   
\end{proof}

\begin{remark}\label{r12}\mbox{}

\begin{enumerate}
 
\item Assume that $B$ is an Alexandroff space. Then $\Alex_B$ is just the slice category $\Alex/B$ and the adjunction given by Lemma \ref{l2}.2 yields an adjunction
\begin{equation}\label{eq54}
\begin{tikzcd}
             \Alex_B\arrow[r, shift left=1ex, "P_B"{name=G}] &  \Alex\arrow[l, shift left=.3ex, ".\tm_{\Alex}B" ].
 \end{tikzcd} 
\end{equation}
\item Let $a:\Top \ra \Alex$ be a coreflector. Then the functor 
$\Alex_B \ra \Alex_{a(B)}$ which takes a fibrewise Alexandroff space $X \st{p}\ra B$ to $ X\st{a(p)}\ra a(B)$ is an isomorphism of categories.
\item By the previous two points, for any topological space $B$, the functor $P_B:\Alex_B \ra \Alex$
 is left adjoint. Its 
right adjoint	takes  an Alexandroff space $X$ to the fibrewise Alexandroff space $X\tm_{\Alex}a(B)$ with projection the composite $$X\tm_{\Alex}a(B) \st{pr}\ra a(B) \st{\epsilon_B}\ra B.$$ 
Where $a:\Top \ra \Alex$ is a coreflector and $\epsilon_B$ is the $B$-component of the counit $\epsilon$ of the of the coreflection of $\Top$ on  $\Sier$. 	 
\end{enumerate}
 \end{remark}

\appendix
 \appendixpage
\addappheadtotoc
\section{Limits in a slice category}\label{A}
 The aim of this section is to prove that if $\mc$ is a bicomplete category and $b\in \mc$, then  the slice category $\mc/b$ of $\mc$  over $b$ is bicomplete.\\
\indent Let $F: \ma \ra \mc$ an $G: \mb \ra \mc$ be two functors. The comma category $F/G$ is defined to be the category whose objects are arrows
$F(a)\st{\alpha}\ra G(b)$ and whose morphisms from $F(a)\st{\alpha}\ra G(b)$
to $F(a')\st{\alpha'}\ra G(b')$ are pairs of morphisms $(f,g)\in \ma(a,a') \tm_{\Set}  \mb(b,b')$ rendering commutative the diagram 
\begin{equation}\label{eq21} 
 \begin{tikzcd}[sep=large]
    F(a)\arrow[d,"\alpha"']\arrow[r,"F(f)"]&F(a')\arrow[d,"\alpha'"]\\
    G(b) \arrow[r,"G(g)"']&G(b')\\
\end{tikzcd}
\end{equation} 
We have functors
\begin{equation}\label{eq22} 
\begin{matrix}
 P: F/G \ra \ma  &\mbox{and} &Q: F/G \ra \mb 
\end{matrix}
\end{equation} 
defined as follows: if $F(a)\st{\alpha}\ra G(b)\in F/G$, then 
$P(\alpha)=a$ and $Q(\alpha)=b$. If $(f,g)$ is a morphism from $F(a)\st{\alpha}\ra G(b)$
to $F(a')\st{\alpha'}\ra G(b')$ as in (\ref{eq21}), then $P((f,g))=f$ and $Q((f,g))=g$. 

\begin{notations}\label{n1} Let $F: \ma \ra \mc$ an $G: \mb \ra \mc$ be two functors.
\begin{enumerate}
 
\item If $\ma$ is a subcategory of $\mc$ and $F: \ma \ra \mc$ is the inclusion functor, then $F/G$ is also denoted by $\ma/G$.
    
\item If $\mb$ is a subcategory of $\mc$ and $G: \mb \ra \mc$ is the inclusion functor, then $F/G$ is also denoted by $F/\mb$. 
\item If $\ma,\mb$ are  subcategories of $\mc$ and $F: \ma \ra \mc$, $G: \mb \ra \mc$ are the inclusion functors, then $F/G$ is denoted by $\ma/\mb$.
 \item  If $\ma$ has just one object $*$ and just one morphisms $id_*$, then $F/G$ is denoted by $c/G$ where $c=F(*)$. If further $\mb=\mc$ and $G$ is the identity functor, then $c/G$ is called the slice category under $c$ and is denoted by $c/\mc$.

 \item  If $\mb$ has just one object $*$ and just one morphisms $id_*$, then $F/G$ is denoted by $F/c$ where $c=G(*)$. If further $\ma=\mc$ and $F$ is the identity functor, then $F/c$ is called the slice category over $c$ and is denoted by $\mc/c$. Observe that this notation is consistent with the one  previously used.

\end{enumerate}
\end{notations}
\indent Let $\mc$ be category, $b\in \mc$, $\mc/b$ the slice category of $\mc$ over $b$ and define 
\begin{equation}\label{eq19} 
 P_b:   \mc/b \ra \mc  
\end{equation}
to be the functor which takes an arrow-object $c\rightarrow b$ to its domain $c$.\\

\begin{lemma}\label{l2}\mbox{}  

\begin{enumerate}
 
	\item The functor $P_b$ creates colimits. In particular, if $\mc$ is cocomplete, then so is $\mc/b$.
	\item  Assume that the categorical product $c\tm_{\mc} b$ exists for every
	$c\in \mc$, then $P_b$ is left adjoint. In particular $P_b$ preserves colimits.
\end{enumerate}

\end{lemma}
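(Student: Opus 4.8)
The plan is to prove the two assertions in order, the first being essentially formal and the second an instance of the universal property of the slice construction.

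\textbf{Part 1: $P_b$ creates colimits.} First I would set up notation: let $D\colon \mi \to \mc/b$ be a diagram, with $D(i) = (c_i \xra{p_i} b)$, and write $D' = P_b D\colon \mi \to \mc$, so $D'(i) = c_i$. Suppose $D'$ has a colimit $c = \colim D'$ in $\mc$, with colimiting cocone $\kappa_i\colon c_i \to c$. The maps $p_i\colon c_i \to b$ form a cocone from $D'$ to the constant functor at $b$ (naturality is exactly the requirement that the $D$-morphisms commute over $b$), so by the universal property there is a unique $p\colon c \to b$ with $p\kappa_i = p_i$ for all $i$. I claim $(c\xra{p} b)$, together with the cocone $(\kappa_i)$ — which are now morphisms in $\mc/b$ — is a colimit of $D$. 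Indeed, given any cocone $(\lambda_i\colon (c_i,p_i) \to (X,q))$ in $\mc/b$, forgetting to $\mc$ gives a cocone from $D'$, inducing a unique $u\colon c\to X$ in $\mc$ with $u\kappa_i = \lambda_i$; one checks $qu = p$ by precomposing with each $\kappa_i$ (both sides equal $p_i$, and the $\kappa_i$ are jointly epic), so $u$ is a morphism in $\mc/b$, and it is the unique such. This shows $P_b$ lifts colimit cocones uniquely, i.e. creates colimits; in particular if $\mc$ is cocomplete so is $\mc/b$.

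\textbf{Part 2: $P_b$ is a left adjoint when all products $c\tm_\mc b$ exist.} Here I would exhibit a right adjoint $G\colon \mc \to \mc/b$ explicitly, namely $G(c) = (c\tm_\mc b \xra{\pi_2} b)$, the second projection, with $G$ acting on morphisms by $f\tm_\mc 1_b$. The adjunction $P_b \dashv G$ then follows from the universal property of the product: for $(c\xra{p}b) \in \mc/b$ and $X \in \mc$,
\[
\mc/b\big((c,p),\, G(X)\big) = \mc/b\big((c,p),\, (X\tm_\mc b,\pi_2)\big) \cong \mc(c, X),
\]
the bijection sending a morphism $g\colon c\to X\tm_\mc b$ over $b$ to $\pi_1 g$, with inverse $h \mapsto \langle h, p\rangle$. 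Naturality in both variables is routine. Since $P_b = P_b$ is visibly the forgetful functor and this displays it as a left adjoint, it preserves all colimits (which also re-proves the "preserves colimits" half of Part 1).

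\textbf{Main obstacle.} Neither part is genuinely hard; the only point demanding a little care is verifying in Part 1 that the induced map $u$ actually lives in $\mc/b$ (i.e. commutes with the structure maps) and that it is \emph{unique as a morphism of $\mc/b$} — uniqueness is inherited from $\mc$ because the forgetful functor is faithful, so this is immediate once one notices it. The rest is bookkeeping with cocones and the universal properties, so I would keep the write-up terse, as the excerpt does ("Clear." is the style in nearby lemmas).
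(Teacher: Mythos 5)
Your proposal is correct and matches the paper's argument: Part~2 exhibits exactly the same right adjoint $c\mapsto(c\tm_{\mc}b\to b)$, and Part~1 is just the written-out form of the (dual of the) Mac Lane comma-category lemma that the paper cites for creation of colimits. No gaps.
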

   
\begin{proof}\mbox{}  
\begin{enumerate}
 
	\item  This follows from the dual of a straightforward generalization of  \cite[Lemma, page 121]{ML}. 
	\item The functor $\mc \ra \mc/b$ which takes an object $c\in \mc$ to the arrow $c\tm_{\mc} b \rightarrow b$ is a right adjoint of $P_b$. Thus $P_b$ preserves colimits. 
\end{enumerate}
\qedhere   
\end{proof}

Let $\Cat$ be the category of small categories. A poset carries a category structure in the standard way. Thus the ordinal numbers $1=\{0\}$ and $2=\{0,1\}$ may be viewed as small categories. The small category $1$ is a terminal object in $\Cat$.	
   The cone $\mi^{\triangleright}$ of $\mi\in \Cat$ is defined in \cite[Exercice 3.5.iv]{RE}  to be the pushout  in $\Cat$:
$$\begin{tikzcd}[sep=large]
 \arrow[dr, phantom, "\ulcorner", very near end]
    \mi\arrow[r,"*"]&1\arrow[d]\\
    \mi\tm_{\Cat} 2 \arrow[hookleftarrow,u,"i_1"]\arrow[r]&\mi^{\triangleright}
\end{tikzcd}$$
Let $i$ be the composite functor  $  \mi \st{i_0}\hra \mi\tm_{\Cat} 2 \rightarrow \mi^{\triangleright}$. Then $i:\mi \hra \mi^{\triangleright}$ is fully faithful and $\mi$ may be viewed as a full subcategory of $\mi^{\triangleright}$. Furthermore
\begin{enumerate}
 
	  \item  The category $\mi^{\triangleright}$ contains one more object than $\mi$, it is denoted by $*$. 
	\item  The set $\mi^{\triangleright}(i,*)$ contains precisely one morphism denoted by $\sigma_i$, $\forall i \in \mi$.
	\item The set $\mi^{\triangleright}(*,*)$ contains solely the identity morphism.
	\item  The set $\mi^{\triangleright}(*,i)$ is empty,  $\forall i \in \mi$.
\end{enumerate}

\indent Let $X: \mi \ra \mc/b$ be any functor. Define $X^{\triangleright}: \mi^{\triangleright} \ra \mc$ to be the unique functor satisfying the following properties:
\begin{enumerate}
 
	\item  The functor $X^{\triangleright}$ extends $P_bX$ over the category $\mi^{\triangleright}$. That is the following diagram commutes  
	$$\begin{tikzcd}[sep=large]
    \mi\arrow[r,"X"]&\mc/b\arrow[d,"P_b"]\\
    \mi^{\triangleright}\arrow[hookleftarrow,u,"i"]\arrow[r,"X^{\triangleright} "']&\mc\\
\end{tikzcd}
$$
\item   $X^{\triangleright}(*)=b$.
	\item  $X^{\triangleright}(\sigma_i)$ is the arrow $X(i)$ in $\mc$, $ i \in \mi$. 
	 \end{enumerate}
Then one has the following result.

 \label{l2,l9}

\begin{lemma}\label{l9}\mbox{}   

\begin{enumerate}
 
	\item The functor $X$ has a limit if and only if $X^{\triangleright}$ has a limit. Furthermore, a limiting cone $l\st{\lambda}\Ra X^{\triangleright}$
induces a limiting cone from  $\lambda_*:l\ra b$ to $X$, where 
$\lambda_*$ is the $*$-component of the cone $\lambda$.
\item If $\mc$ is complete, then so is $\mc/b$.  
\end{enumerate}
\end{lemma}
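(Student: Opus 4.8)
The plan is to deduce Lemma~\ref{l9} directly from the construction of $X^{\triangleright}$ together with the elementary observation that a cone over $X$ in $\mc/b$ is ``the same data'' as a cone over $X^{\triangleright}$ in $\mc$. First I would set up the correspondence on cones. Given any object $a \to b$ of $\mc/b$, a cone $(a\to b) \st{\mu}\Ra X$ consists of maps $\mu_i \colon (a\to b) \to X(i)$ in $\mc/b$, i.e.\ maps $\mu_i \colon a \to P_b X(i)$ in $\mc$ commuting with the structure maps to $b$; the latter commutation says exactly that $\mu_i$ followed by the arrow $X^{\triangleright}(\sigma_i)\colon P_bX(i)\to b$ equals the fixed map $a\to b$. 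But that is precisely the compatibility needed for the $\mu_i$ together with the map $a \to b$ (in the $*$-slot) to assemble into a cone $a \st{\widetilde{\mu}}\Ra X^{\triangleright}$ over the extended diagram: the only extra component of $X^{\triangleright}$ beyond $P_b X$ is the object $X^{\triangleright}(*)=b$ and the arrows $X^{\triangleright}(\sigma_i)$, and naturality of $\widetilde\mu$ at the morphisms $\sigma_i$ is exactly the commutation just noted (naturality at $\mathrm{id}_*$ being automatic, and there being no morphisms $*\to i$ to check).

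Conversely, I would check that every cone $a \st{\lambda}\Ra X^{\triangleright}$ in $\mc$ arises this way: the component $\lambda_* \colon a \to b$ turns $a$ into an object of $\mc/b$, the naturality squares for $\sigma_i$ force each $\lambda_i \colon a \to P_bX(i)$ to be a morphism in $\mc/b$ from $(a\st{\lambda_*}\to b)$ to $X(i)$, and these patch into a cone over $X$ in $\mc/b$. These two assignments are visibly mutually inverse and natural in $a$ (in the appropriate sense), so the functor $\mathrm{Cone}(-,X)\colon (\mc/b)^{op}\to \Set$ is isomorphic, via $P_b$ on the domain side, to $\mathrm{Cone}(P_b(-),X^{\triangleright})$. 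A limit of $X$ is a representing object for the former and a limit of $X^{\triangleright}$ a representing object for the latter; since $P_b$ is essentially surjective (every $a\in\mc$ underlies some object of $\mc/b$, e.g.\ via a chosen or — if needed — via $a\tm_{\mc}b \to b$ when products exist, though essential surjectivity is not even required here because representability of $\mathrm{Cone}(P_b(-),X^{\triangleright})$ as a functor on $\mc/b$ already pins down the limit) it follows that $X$ has a limit iff $X^{\triangleright}$ does, and when $l\st{\lambda}\Ra X^{\triangleright}$ is limiting, the induced cone with apex $(\lambda_*\colon l\to b)$ is limiting over $X$. This gives part~1.

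For part~2, suppose $\mc$ is complete and let $\mi$ be small and $X\colon\mi\to\mc/b$ a functor. Then $\mi^{\triangleright}$ is again small (it has one more object and finitely many more morphisms than $\mi$), so $X^{\triangleright}\colon\mi^{\triangleright}\to\mc$ has a limit by completeness of $\mc$, and by part~1 this produces a limit of $X$ in $\mc/b$. Hence $\mc/b$ has all small limits. Combined with Lemma~\ref{l2}.1 (which gives colimits in $\mc/b$ whenever $\mc$ is cocomplete), this shows that if $\mc$ is bicomplete then so is $\mc/b$, which is the stated goal of the appendix.

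The only genuinely delicate point — and the one I would write out carefully — is the bijection of cones: specifically, verifying that the naturality of a cone $a\Rightarrow X^{\triangleright}$ at the morphisms $\sigma_i\colon i\to *$ corresponds exactly to the condition that the $\lambda_i$ be morphisms \emph{over $b$}, and that the construction of $X^{\triangleright}$ (a pushout in $\Cat$, hence a well-defined functor out of $\mi^{\triangleright}$) really does have the three listed properties, so that no additional coherence needs checking. Everything else is bookkeeping: there is no size issue, no need for the product hypothesis of Lemma~\ref{l2}.2, and the passage from ``all small limits'' to ``complete'' is immediate.
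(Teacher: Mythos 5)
Your proof is correct in substance and is essentially the argument the paper has in mind: the paper's own proof of Lemma \ref{l9} consists of the single word \enquote{Clear.}, and what you write out --- the bijection between cones from $(a\st{f}\ra b)$ to $X$ in $\mc/b$ and cones from $a$ to $X^{\triangleright}$ in $\mc$ whose $*$-component is $f$, followed by the transfer of the universal property and the smallness of $\mi^{\triangleright}$ for part 2 --- is exactly the verification the author is leaving to the reader. Your remarks that naturality at $\sigma_i$ encodes the \enquote{over $b$} condition, that there are no morphisms $*\to i$ to check, and that the product hypothesis of Lemma \ref{l2}.2 is not needed are all accurate.

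One step is misstated, though, and should be repaired. The claimed isomorphism $\mathrm{Cone}(-,X)\cong\mathrm{Cone}(P_b(-),X^{\triangleright})$ of functors on $(\mc/b)^{op}$ is false as written: for an object $(a\st{f}\ra b)$ one has $\mathrm{Cone}(P_b(a\st{f}\ra b),X^{\triangleright})\cong\coprod_{g\in\mc(a,b)}\mathrm{Cone}\bigl((a\st{g}\ra b),X\bigr)$, which ranges over \emph{all} possible $*$-components $g$ rather than the given $f$; moreover a limit of $X^{\triangleright}$ represents $\mathrm{Cone}(-,X^{\triangleright})$ on $\mc^{op}$, not the displayed functor on $(\mc/b)^{op}$, so the representability comparison does not go through as stated. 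The fix is to drop the representability detour and use the cone bijection you already established directly: it extends to an isomorphism between the category of cones over $X$ and the category of cones over $X^{\triangleright}$ (a morphism of apexes commuting with all cone legs in particular commutes with the $*$-legs, hence is a morphism in $\mc/b$), and terminal objects correspond under an isomorphism of categories. Equivalently, verify the universal property by hand: a limiting cone $l\st{\lambda}\Ra X^{\triangleright}$ gives, for each cone from $(a\st{f}\ra b)$ to $X$, a unique mediating map $a\ra l$ in $\mc$, which is automatically a map over $b$ because it commutes with the $*$-components; conversely, if $(l\st{\lambda_*}\ra b)$ is a limit of $X$ in $\mc/b$, any mediating map in $\mc$ for a cone over $X^{\triangleright}$ is forced over $b$ by the $*$-component equation, so existence and uniqueness in $\mc/b$ yield existence and uniqueness in $\mc$. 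With that adjustment the proof is complete.
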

   
\begin{proof} Clear.
\qedhere   
\end{proof}
\begin{examples}\label{ex4}
Assume that $\mc$ is complete 
\begin{enumerate}
 
	\item Let $x\st{\sigma}\ra b, y\st{\tau}\ra b \in \mc/b$, $p_1:x\tm_{\mc} y \ra x$, $p_2:x\tm_{\mc} y \ra y$ be the projections  and let 
	$$\begin{tikzcd}
e\ar[r,"i"]&x\tm_{\mc} y \ar[r,shift left=.5ex,"\sigma p_1"]
  \ar[r,shift right=.5ex,swap,"\tau p_2"]
&b
\end{tikzcd}$$
be the equalizer (in $\mc$) of the maps $\sigma p_1$	and $\tau p_2$.
The diagram

\begin{center}
\begin{tikzcd}[sep=large]
e \arrow[r, "p_2i"] \arrow[d, "p_1i"']
\arrow[dr, phantom, "\scalebox{1.5}{$\lrcorner$}" , very near start, color=black]
& y \arrow[d, "\tau"] \\
x \arrow[r, "\sigma"'] & b \\
\end{tikzcd}
\end{center}
is a pullback diagram.
 By Lemma \ref{l9}, the composite $\sigma p_1 i=\tau p_2 i:e\ra b$ is the product of the objects 
$\sigma$ and $\tau$ of $\mc/b$. 
	
\item Using generalized equalizers, the previous example may be extended to the case where one has a family of arrow-objects $x_i\st{\sigma_i}\ra b$ of $\mc/b$ indexed be a small set $I$. 
\end{enumerate}

\end{examples}

\section{Limits in a slice category of sets}\label{B}
The aim of this section is to establish certain properties of limits and colimits in a slice   category of  sets.\\
\indent The category $\Set$ of (small) sets is a bicomplete category. For $X,Y,Z\in \Set$, there is a natural isomorphism
	\begin{equation}\label{eq31} 
\Set(X\tm_{\Set}Y,Z)\cong \Set(X,\Set(Y,Z)) 
\end{equation}
	 so that $\Set$ is cartesian closed.\\ 
Let  $E\in \Set$. The slice category of $\Set$ over $E$ is denoted by $\Set_E$. An object of $\Set_E$ is called a set over $E$. It consists of a set $X$ together with a function $p:X\ra E$ called  projection. A set $X\st{p}\ra E$ over $E$ is often identified with its domain $X$. Let
\begin{equation}\label{eq23} 
 P_E:   \Set_E \ra \Set  
\end{equation}
be the functor defined as in (\ref{eq19}).

\begin{proposition}\label{p20}\mbox{}

\begin{enumerate}
 
	\item  The category $\Set_E$ is bicomplete.
	\item  The functor $P_E$ creates and preserves colimits.
\end{enumerate}
\end{proposition}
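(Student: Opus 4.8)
The plan is to deduce Proposition~\ref{p20} directly from the general slice-category results of Appendix~\ref{A}, specialised to $\mc=\Set$ and $b=E$. Since $\Set$ is bicomplete (as recalled at the beginning of this section) and cartesian closed, every hypothesis appearing in Lemmas~\ref{l2} and~\ref{l9} is automatically satisfied, so the argument should amount to quoting those results in the right order, with no separate calculation needed.

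First I would prove part~1. By Lemma~\ref{l2}.1 the functor $P_E:\Set_E\ra\Set$ creates colimits; since $\Set$ is cocomplete, the ``in particular'' clause of that lemma forces $\Set_E$ to be cocomplete. For completeness, I would invoke Lemma~\ref{l9}.2: $\Set$ is complete, hence so is $\Set_E$. Combining the two conclusions yields that $\Set_E$ is bicomplete.

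For part~2, the creation of colimits is again precisely Lemma~\ref{l2}.1. To see that $P_E$ preserves colimits, I would check the hypothesis of Lemma~\ref{l2}.2, namely that the categorical product $X\tm_{\Set}E$ exists for every $X\in\Set$; this is immediate, since $\Set$ is complete and the cartesian product of two sets always exists. Lemma~\ref{l2}.2 then gives that $P_E$ is a left adjoint, and a left adjoint preserves all colimits. (Alternatively, one may observe that a functor which creates colimits into a cocomplete category automatically preserves them, which also settles the point.)

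I do not expect any genuine obstacle here: the only step requiring a moment's attention is verifying that the standing hypotheses of the cited lemmas hold for $\mc=\Set$ and $b=E$, which they do trivially because $\Set$ is bicomplete and cartesian closed. The entire content of the proposition is therefore already contained in Appendix~\ref{A}.
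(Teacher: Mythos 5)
Your proposal is correct and follows exactly the paper's own route: the paper's proof of Proposition \ref{p20} likewise just cites Lemma \ref{l2}, Lemma \ref{l9} and the bicompleteness of $\Set$. You merely spell out the (trivially satisfied) hypotheses of those lemmas, which the paper leaves implicit.
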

   
\begin{proof}
This follows from Lemma \ref{l2}, Lemma \ref{l9} and the fact that $\Set$ is bicomplete.
\qedhere   
\end{proof}

Let $F\subset E$, $J^{s}_{F}:F \sra E$ the inclusion map and
\begin{equation}\label{eq29} 
J^{s,*}_{F}:\Set_E \sra \Set_F
\end{equation}

 the functor given by pulling back along the inclusion map $J^{s}_{F}$.

\begin{lemma}\label{l12}
Let $F\subset E$. Then the functor  
$J^{s,*}_{F}: \Set_E \ra \Set_F$ preserves both limits and colimits. 
 \end{lemma}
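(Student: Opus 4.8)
The statement to prove is Lemma \ref{l12}: the pullback functor $J^{s,*}_F : \Set_E \to \Set_F$ along an inclusion $F \subseteq E$ preserves both limits and colimits.

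The plan is to identify $J^{s,*}_F$ concretely and then invoke standard facts, or alternatively exhibit adjoints on both sides. Concretely, for a set $X \xrightarrow{p} E$ over $E$, the pullback along $J^s_F : F \hookrightarrow E$ is $p^{-1}(F) \to F$, that is, the restriction of $X$ to the part of it lying over $F$; on morphisms it is the evident restriction. First I would record this description explicitly, since it makes both preservation claims transparent.

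For the preservation of limits: the cleanest route is to observe that $J^{s,*}_F$ has a left adjoint. Indeed, since $J^s_F$ is a monomorphism in $\Set$, post-composition with $J^s_F$ gives a functor $(J^s_F)_! : \Set_F \to \Set_E$ sending $(Y \to F)$ to $(Y \to F \hookrightarrow E)$, and one checks directly from the universal property of the pullback that $(J^s_F)_! \dashv J^{s,*}_F$. Being a right adjoint, $J^{s,*}_F$ preserves all limits. For the preservation of colimits: I would exhibit a right adjoint to $J^{s,*}_F$. The candidate is the functor sending $(Y \xrightarrow{q} F)$ to the set $Y \sqcup (E \setminus F)$ mapping to $E$ via $q$ on $Y$ and the identity inclusion on $E\setminus F$; equivalently this is the ``relative cofree'' or pushforward along the complemented mono $J^s_F$. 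One verifies $J^{s,*}_F \dashv (J^s_F)_*$ by a short computation with the hom-sets, using that a map $X \to Y \sqcup (E\setminus F)$ over $E$ is forced to send $p^{-1}(E\setminus F)$ into $E \setminus F$ and is thus the same data as a map $p^{-1}(F) \to Y$ over $F$. Hence $J^{s,*}_F$, being a left adjoint, preserves all colimits.

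Alternatively, and perhaps more in the spirit of Appendix \ref{B}, one can argue via the functor $P_E$ of (\ref{eq23}): colimits in $\Set_E$ and $\Set_F$ are created by $P_E$ and $P_F$ respectively (Proposition \ref{p20}.2), and on underlying sets $J^{s,*}_F$ computes a pullback along a mono, which in $\Set$ is just taking the preimage $p^{-1}(F)$; preimage along a fixed subset commutes with arbitrary unions and the formation of colimit diagrams of sets, so $P_F \circ J^{s,*}_F$ preserves colimits and hence so does $J^{s,*}_F$. For limits, one uses that in $\Set$ the pullback functor along any map preserves limits (pullback is itself a limit, and limits commute with limits), passing through the description of limits in slice categories given by Lemma \ref{l9}. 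I expect the main (minor) obstacle to be purely bookkeeping: keeping the two adjunctions straight and checking that the unit/counit really are natural and satisfy the triangle identities, or equivalently verifying the hom-set bijections are natural in both variables. None of this is deep; the lemma is essentially the observation that a complemented subobject inclusion is an exponentiable, and in fact bi-adjointly-restrictable, map.
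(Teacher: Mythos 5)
Your proposal is correct and takes essentially the same route as the paper, whose entire proof is the observation that $J^{s,*}_F$ is simultaneously a right adjoint and a left adjoint; you have merely made the two adjoints $(J^s_F)_!\dashv J^{s,*}_F\dashv (J^s_F)_*$ explicit, and both are identified correctly.
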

   
\begin{proof}
Clearly, $J^{s,*}_{F}$ is at once a right adjoint and  a left adjoint. Therefore it preserves both limits and colimits.
\qedhere   
\end{proof}
Let $e\in E$. For $X\in \Set_E$, define the fibre of $X$ over $e$ to be the set $X_e=p^{-1}(e)$, where $p$ is the projection of the  set $X$ over $E$. One has a functor 
\begin{equation}\label{eq27} 
 \pi^{s}_{e}:\Set_E \ra \Set
\end{equation}
defined as follows: For $X\in \Set_E$, $\pi^{s}_{e}(X)=X_e$ and for $f\in \Set_E(X,Y)$, $\pi^{s}_{e}(f)$ is the map $f_e:X_e\ra Y_e$ induced by $f$.

\begin{lemma}\label{l7} \mbox{}  
\begin{enumerate}
 
\item  The functors $\pi^{s}_{e}$, $e\in E$, preserve limits and a functor $X:\mi\ra \Set_E$ has a limit iff the composite functor 
$\pi^{s}_{e}X$ has a limit for all $e\in E$.
 \item The functors $\pi^{s}_{e}$, $e\in E$, preserve colimits and a functor  $X:\mi\ra \Set_E$ has a colimit iff the composite functor 
$\pi^{s}_{e}X$ has a colimit for all $e\in E$.
\end{enumerate}
\end{lemma}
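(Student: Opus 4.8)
*The functors $\pi^{s}_{e}$, $e\in E$, preserve limits and a functor $X:\mi\ra \Set_E$ has a limit iff the composite functor $\pi^{s}_{e}X$ has a limit for all $e\in E$. Dually for colimits.*

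The plan is to deduce everything from two structural facts already available in the excerpt: first, that $\Set_E\cong\prod_{e\in E}\Set$ as categories (the "family fibration" over a discrete base is just a product of copies of $\Set$ indexed by $E$), with the isomorphism sending a set $X\st{p}\ra E$ to the family of fibres $(X_e)_{e\in E}$; and second, the elementary fact that in a product category $\prod_{e}\mc_e$, a diagram has a (co)limit iff each of its components does, computed componentwise, and the projection functors $\prod_e\mc_e\ra\mc_{e_0}$ preserve and reflect (co)limits. Under this identification the functor $\pi^{s}_{e}$ is exactly the $e$-th projection functor, so both claims of the lemma are immediate.

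Concretely, first I would record the isomorphism of categories $\Phi\colon\Set_E\xra{\cong}\prod_{e\in E}\Set$, $X\mapsto(X_e)_{e\in E}$, on objects and on morphisms (a fibrewise map over $E$ is precisely a family of maps on the fibres, with no compatibility conditions since $E$ is discrete), and observe $\pi^{s}_{e}=\mathrm{pr}_e\circ\Phi$ where $\mathrm{pr}_e$ is the $e$-th projection of the product category. Next I would invoke the standard fact that limits and colimits in $\prod_{e}\Set$ are computed componentwise: given $X\colon\mi\ra\prod_e\Set$, i.e.\ a family $X^{(e)}\colon\mi\ra\Set$, a cone over $X$ is the same data as a family of cones over the $X^{(e)}$, and it is limiting iff each component cone is. This yields at once that each $\mathrm{pr}_e$ preserves limits and colimits, and that $X$ has a (co)limit iff each $X^{(e)}$ does. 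Transporting back along $\Phi$ gives the statement for $\pi^{s}_e$ and $\Set_E$.

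Alternatively — and this is perhaps the more in-the-spirit-of-the-paper route — one can avoid naming the product category and argue directly: $\pi^{s}_{e}$ preserves limits because it has a left adjoint (the functor $\Set\ra\Set_E$ sending $S$ to $S$ equipped with the constant projection at $e$) and preserves colimits because it also has a right adjoint (sending $S$ to $\coprod_{e'\in E}S^{[e'=e]}\ra E$, where the fibre over $e$ is $S$ and over $e'\neq e$ is a singleton). This is the exact analogue of Lemma \ref{l12} for the inclusion $F\subset E$, and indeed $\pi^{s}_e=J^{s,*}_{\{e\}}$ composed with the canonical iso $\Set_{\{e\}}\cong\Set$. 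The "iff" direction for existence of (co)limits then follows because a diagram $X\colon\mi\ra\Set_E$ can be reconstructed fibrewise: if every $\pi^{s}_eX$ has a limit $L_e$, assemble $\coprod_{e}L_e\ra E$ and check it is a limit of $X$ using that a cone to $X$ in $\Set_E$ is determined by, and determined by, its fibrewise components, each of which factors uniquely through $L_e$.

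I do not anticipate a genuine obstacle here; the only thing to be careful about is the bookkeeping in the "$X$ has a limit iff each $\pi^{s}_eX$ has a limit" direction, namely checking that the fibrewise-assembled cone really is universal in $\Set_E$ and not merely universal fibrewise — but this is precisely the content of "colimits/limits in a product category are componentwise," so it is routine. Since the excerpt explicitly says "Clear." for the analogous Lemma \ref{l9}, I would keep the write-up to the level of the adjointness observation plus a one-line remark on the fibrewise reconstruction of cones.
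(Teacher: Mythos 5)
Your proposal is correct, and your second route is exactly the paper's argument: the paper proves preservation of limits and colimits by citing Lemma \ref{l12} (i.e.\ that $\pi^{s}_{e}$ is essentially $J^{s,*}_{\{e\}}$, which is simultaneously a left and a right adjoint) and dismisses the ``iff'' clauses as easy to verify. Your first route, identifying $\Set_E$ with the product category $\prod_{e\in E}\Set$ so that $\pi^{s}_{e}$ becomes a projection and the ``iff'' statements reduce to the componentwise computation of (co)limits in a product, is a clean alternative packaging that makes the existence claims immediate rather than left to the reader.
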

   
\begin{proof}
By  Lemma \ref{l12},  $\pi^{s}_{e}$ preserves limits and colimits. The other properties are easy to verify.
\qedhere   
 \end{proof}
Let $X,Y\in \Set_E$.  Then 
\begin{equation}\label{eq34} 
 P_E(X)\cong\underset{\Set}{\overset{e\in E}{\coprod}} X_e
\end{equation}
and a map $f:X\ra Y$ can be written as 

\begin{equation}\label{eq35} 
 f= \underset{\Set}{\overset{e\in E}{\coprod}}f_e: \underset{\Set}{\overset{e\in E}{\coprod}} X_e\ra  \underset{\Set}{\overset{e\in E}{\coprod}} Y_e
\end{equation}
so that one has a natural isomorphism
\begin{equation}\label{eq32} 
 \Set_E(X,Y)\cong\Set_E( \underset{\Set}{\overset{e\in E}{\coprod}} X_e, \underset{\Set}{\overset{e\in E}{\coprod}} Y_e)\cong  \underset{\Set}{\overset{e\in E}{\prod}}\Set(X_e,Y_e)  
\end{equation} 
Define 
\begin{equation}\label{eq33} 
 \begin{array}{rrcl}
(.)^{(.)}:&\Set^{op}_E\tm \Set_E&\ra &\Set_E\\
    &(Y,Z) &\mapsto &Z^Y= \underset{\Set}{\overset{e\in E}{\coprod}}\Set(Y_e,Z_e)
	\end{array}
\end{equation} 
That is, $Z^Y$ is the set over $E$ whose fibre over $e \in E $ is $\Set(Y_e,Z_e)$.\\
Let $X,Y,Z\in \Set_E$. By Lemma \ref{l7}.1,
$(X\tm_{\Set_E} Y)_e=X_e\tm_{\Set} Y_e$. Therefore
$$ \begin{array}{rclr}
   \Set_E(X\tm_{\Set_E} Y,Z)&\cong& \underset{\Set}{\overset{e\in E}{\prod}}\Set(X_e\tm_{\Set} Y_e,Z_e)    & (\mbox{by } (\ref{eq32}))\\ 
              &\cong& \underset{\Set}{\overset{e\in E}{\prod}}\Set(X_e,\Set(Y_e,Z_e))  &(\mbox{by } (\ref{eq31}))\\ 
							&\cong&\Set_E(X,Z^Y) & (\mbox{by (\ref{eq32}) and  (\ref{eq33}}))  
          \end{array}$$
It follows that $\Set_E$ is cartesian closed.

\begin{remark}\label{r1}(Category of elements, \cite [ Definition 2.4.1.]{RE} and \cite [(3.35)]{K})\\
\begin{enumerate}
 
\item Let $\Set$ be the category of (small) sets and $T:\mi \ra \Set$ be a functor. Recall that
\begin{enumerate}
 
\item The category $\int T$ of elements of $T$ is the 
category whose objects are pairs $(i,s)$ where $i\in \mi$ and $s\in T(i)$ and morphisms from $(i,s)$ to $(j,t)$ are morphisms
$f$ from $i$ to $j$ satisfying $T(f)(s)=t$.
\item The functor $T$ has a colimit iff the  connected components of the category $\int T$ form an object $\pi_0(\int T)\in \Set$, i.e. a  small set . In this case, the cone  $T\st{\lambda}\Ra\pi_0(\int T)$ whose $i$-component is the map 
 $$\begin{array}{rrll}
\lambda_i:&T(i) &\ra &\pi_0(\int T)\\
    &t &\mapsto &\left[(i,t) \right]
	\end{array}$$
	is a colimiting cone.
\item It follows from the previous point that if a cone $T\st{\lambda}\Ra S$  from the functor $T$ to $S\in \Set$ is such that:  
 
\begin{itemize}
 
\item  $\forall s\in T(i)$,   $\forall t\in T(j)$,   $\lambda_i(s)=\lambda_j(t) \Leftrightarrow$ the objects $(i,s)$ and $(j,t)$ of $\int T$ are in the same connected component.
\item   $ \bigcup_{i\in \mi}  \lambda_i(T(i))=S.$ 
\end{itemize}
Then $\lambda$ is a colimiting cone.
\end{enumerate}
\item Assume now that $T: I \ra \Set_E$ is a functor. Let  $p_i:T(i) \ra E$ be the projection of the  set $T(i)$ over $E$
 and   
 $P_E: \Set_E \ra \Set$ the functor given by (\ref{eq23}).
Then by  Lemma \ref{l2} and Remark \ref{r1}.1.(b), $T$ has a colimit iff the  connected components of the category $\int P_ET$ form a (small) set. When this is the case, then the colimit of $T$ is    
$$\begin{array}{rll}
\pi_0(\int P_ET) &\ra &E\\
    \left[(i,t) \right] &\mapsto & p_i(t)
	\end{array}$$
\end{enumerate}
\end{remark}

\begin{lemma}\label{l11}  
Let $X:\mi\ra \Set$ be a functor and $Y\in \Set$. Assume that $X\st{\lambda}\Ra Y$ is a colimiting cone and let $Y'\subset Y$. Then the functor $X$ induces a functor $X':\mi\ra \Set$ given by  $X'(i)=\lambda_i^{-1}(Y')$, $i\in \mi$. Furthermore, the cone $\lambda':X'\Ra Y'$   induced by $\lambda$  is a colimiting cone.  
 \end{lemma}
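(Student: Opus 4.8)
Lemma \ref{l11} states: Given a functor $X:\mi\ra\Set$ with colimiting cone $X\st{\lambda}\Ra Y$, and a subset $Y'\subset Y$, the assignment $X'(i)=\lambda_i^{-1}(Y')$ extends to a functor and the induced cone $\lambda':X'\Ra Y'$ is colimiting.

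The plan is to first verify that the recipe $X'(i)=\lambda_i^{-1}(Y')$ really defines a subfunctor of $X$, and then to check that the induced cone $\lambda'$ is colimiting by applying the combinatorial criterion of Remark \ref{r1}.1.(c), which transfers the whole question to a statement about connected components of categories of elements.

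First I would check well-definedness. For a morphism $f:i\ra j$ in $\mi$, naturality of $\lambda$ gives $\lambda_j\circ X(f)=\lambda_i$, so $X(f)$ carries $X'(i)=\lambda_i^{-1}(Y')$ into $X'(j)=\lambda_j^{-1}(Y')$ and hence restricts to a map $X'(f):X'(i)\ra X'(j)$; functoriality of $X'$ is inherited from that of $X$. Since each $X'(f)$ is literally a restriction of $X(f)$, this exhibits $\int X'$ as the full subcategory of $\int X$ spanned by the pairs $(i,s)$ with $\lambda_i(s)\in Y'$, the inclusion being injective on objects and fully faithful. The cone $\lambda'$ is obtained by corestricting each $\lambda_i$ to $Y'$ (which is possible precisely by definition of $X'(i)$), and it is a cone because $\lambda$ is.

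The one genuine observation is that the cone value is ``locally constant'' along $\int X$: if $f:(i,s)\ra(j,t)$ is a morphism of $\int X$, i.e. $X(f)(s)=t$, then $\lambda_j(t)=\lambda_j(X(f)(s))=\lambda_i(s)$. Consequently, whenever two objects $(i,s)$ and $(j,t)$ of $\int X$ with $\lambda_i(s)\in Y'$ are joined by a zigzag in $\int X$, every object occurring along that zigzag has its $\lambda$-value in $Y'$ and therefore lies in $\int X'$, and every morphism of the zigzag is already a morphism of $\int X'$. Together with the fact that $\int X'$ is a full subcategory of $\int X$, this shows that two objects of $\int X'$ are in the same connected component of $\int X'$ if and only if they are in the same connected component of $\int X$.

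It then remains to verify the two hypotheses of Remark \ref{r1}.1.(c) for $\lambda'$. For surjectivity: since $\lambda$ is colimiting, $Y\cong\pi_0(\int X)$ via $\lambda_i(s)=[(i,s)]$ by Remark \ref{r1}.1.(b), so given $y'\in Y'\subseteq Y$ there is some $(i,s)$ with $\lambda_i(s)=y'$; then $s\in X'(i)$ and $\lambda'_i(s)=y'$, whence $\bigcup_{i\in\mi}\lambda'_i(X'(i))=Y'$. For the equality criterion: for $s\in X'(i)$ and $t\in X'(j)$ we have $\lambda'_i(s)=\lambda'_j(t)$ iff $\lambda_i(s)=\lambda_j(t)$ iff (as $\lambda$ is colimiting) $(i,s)$ and $(j,t)$ lie in the same connected component of $\int X$, iff, by the previous paragraph, they lie in the same connected component of $\int X'$. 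Remark \ref{r1}.1.(c) now yields that $\lambda'$ is a colimiting cone, so in particular $\colim X'$ exists and equals $Y'$. The only point requiring care is the zigzag argument of the third paragraph; everything else is bookkeeping.
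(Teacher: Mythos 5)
Your proposal is correct and follows essentially the same route as the paper: identify $\int X'$ as a full subcategory of $\int X$, show connected components agree, and apply the criterion of Remark \ref{r1}.1.(c). The only difference is that you spell out the zigzag argument that the paper asserts in one line ("two objects of $\int X'$ are in the same path-component iff they are in the same path-component of $\int X$"), which is a welcome amplification but not a different method.
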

  
\begin{proof} Clearly, the functor $X$ induces a functor $X':\mi\ra \Set$
and the cone $\lambda$ induces a cone $\lambda': X'\Ra Y'$. Two objects $(i_1,x_1)$ and $(i_2,x_2)$ in the category $\int X'$ are in the same path-component iff they are in the same path-component of the category $\int X$. That is iff $\lambda'_{i_1}(x_1)=\lambda_{i_1}(x_1)=\lambda_{i_2}(x_2)=\lambda'_{i_2}(x_2)$. Furthermore, $ \bigcup_{i\in \mi}  \lambda'_i(X'(i))=Y'$. By Remark \ref{r1}.3, $\lambda'$ is a colimiting cone.
\qedhere   
\end{proof}

\section{Limits in the category of fibrewise spaces}\label{C}
  Define  $\absl{}:\Top\ra\Set$ to be the underlying set functor. $\abs{.}$  has a left adjoint which is the discrete functor 
$$\Disc : \Set \longrightarrow\Top$$
and has a right adjoint which is the codiscrete functor $$\Codisc :\Set\ra\Top$$ In particular, the underlying functor $\abs{.}$ preserves limits and colimits. \\
\indent For any functor $T:\mi\ra\Top$, define the underlying set functor of $T$ to be the composite functor
\begin{equation}\label{eq25}
 \abs{T}:\mi\st{T}{\ra}\Top\st{\abs{.}}{\ra}\Set. 
\end{equation}
\begin{lemma}\label{l13} 
Let $\mi$ be a (not necessarily small) category and $T:\mi\ra\Top$ a functor. Then: 
\begin{enumerate}
 
	\item $T$ has a limit (resp. colimit) iff $\abs{T}$ has a limit (resp. colimit).
	\item Suppose that $\abs{T}$ has a limit (resp. colimit). Then $\lm T$ (resp. $\colim T$) is the topological space whose underlying set is $\lm\abs{T}$ (resp. $\colim \abs{T}$) and whose topology is the initial (resp. final) topology defined by the limiting (resp. colimiting) cone components $\lm\abs{T}\longrightarrow\abs{T(i)}$ (resp. $\abs{T(i)}\longrightarrow\colim\abs{T}$).
\end{enumerate}
\end{lemma}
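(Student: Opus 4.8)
The plan is to exploit the fact that the underlying–set functor $\absl{}:\Top\ra\Set$ is topological: it is simultaneously a right adjoint (with left adjoint $\Disc$) and a left adjoint (with right adjoint $\Codisc$), and a (co)limit in $\Top$ is obtained by taking the corresponding (co)limit of underlying sets and equipping it with the appropriate initial (resp.\ final) topology. I will treat limits; colimits are handled by the exact dual argument. Note that no completeness or cocompleteness of $\Set$ for large diagrams is needed, since the statement only concerns existence of the relevant (co)limit in $\Set$ when it is assumed to exist, and the initial/final topology constructions are valid for arbitrary families.

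First I would show that if $\abs{T}$ has a limit in $\Set$ then $T$ has a limit in $\Top$. Write $L=\lm\abs{T}$ with limiting cone $(\pi_i:L\ra\abs{T(i)})_{i\in\mi}$, and let $\tilde{L}$ be the set $L$ endowed with the initial topology for the family $(\pi_i)_i$, so each $\pi_i:\tilde{L}\ra T(i)$ is continuous. For a morphism $f:i\ra j$ in $\mi$ the equality $\abs{T(f)}\circ\pi_i=\pi_j$ holds in $\Set$ and all maps are continuous, so $(\pi_i)_i$ is a cone from $\tilde{L}$ to $T$ in $\Top$. Given any cone $(g_i:Z\ra T(i))_i$ in $\Top$, the underlying maps $(\abs{g_i})_i$ form a cone over $\abs{T}$ and hence factor uniquely through a set map $g:\abs{Z}\ra L$ with $\pi_i g=\abs{g_i}$; by the universal property of the initial topology, $g$ is continuous as a map $Z\ra\tilde{L}$ precisely because each composite $\pi_i g=\abs{g_i}$ is continuous, and uniqueness of $g$ in $\Set$ forces uniqueness in $\Top$. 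Thus $\tilde{L}=\lm T$, and its underlying set is $\lm\abs{T}$ with the claimed topology.

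For the converse, since $\absl{}$ has a left adjoint it preserves limits, so if $T$ has a limit in $\Top$ then $\abs{T}$ has a limit in $\Set$. The two implications together give part 1 for limits, and combined with the explicit construction of $\tilde{L}$ and uniqueness of limits up to isomorphism they give part 2. Dually, $\absl{}$ has a right adjoint and hence preserves colimits, and replacing the initial topology by the final topology (and its universal property) yields the colimit assertions. There is no real obstacle here; the only point meriting a line of care is the verification that the initial/final topology construction has the required universal property, which is immediate from the defining universal properties of initial and final topologies, and the remark above that the non-smallness of $\mi$ is harmless.
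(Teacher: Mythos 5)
Your proposal is correct and is exactly the standard argument the paper intends (the paper's own proof is just ``Clear,'' relying on the surrounding observation that $\absl{}$ has both the discrete left adjoint and the codiscrete right adjoint, with limits and colimits formed via initial and final topologies). No gaps: the universal-property verification for the initial/final topology and the adjointness argument for the converse are precisely what is needed, and your remark that largeness of $\mi$ is harmless is also accurate.
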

   
\begin{proof}
Clear.
\qedhere   
\end{proof}
  It follows from the above lemma that functor $\abs{.}:\Top\ra\Set$ preserves and lifts limits and colimits. $\Set$ is bicomplete, then so is $\Top$.\\
\indent The slice category of $\Top$ over $B$ is denoted by $\Top_B$. An object of $\Top_B$ is called a fibrewise topological space over $B$. It consists of a topological space $X$ together with a continuous map $p:X\ra B$ called projection. A fibrewise topological space $p:X\ra B$ is often identified with its domain $X$. Let
\begin{equation}\label{eq26} 
 P_B:   \Top_B \ra \Top 
\end{equation}
be the functor defined as in (\ref{eq19}).

\begin{proposition}\label{p21}\mbox{}

\begin{enumerate}
 
	\item  $\Top_B$ is bicomplete.
	\item  The functor $P_B$ creates and preserves colimits.
\end{enumerate}
\end{proposition}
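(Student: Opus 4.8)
The plan is to reduce the entire statement to the general slice-category results of Appendix \ref{A}, exactly as was done for the set-theoretic analogue Proposition \ref{p20}. The one genuinely new input is that $\Top$ itself is bicomplete; this was just recorded as a consequence of Lemma \ref{l13} together with the bicompleteness of $\Set$. Once that is in hand, nothing further needs to be computed.

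For part~1, I would argue as follows. Since $\Top$ is cocomplete, Lemma \ref{l2}.1 applied with $b=B$ shows that $P_B$ creates colimits and in particular that $\Top_B=\Top/B$ is cocomplete. Dually, since $\Top$ is complete, Lemma \ref{l9}.2 gives that $\Top_B$ is complete. Combining the two yields that $\Top_B$ is bicomplete.

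For part~2, the assertion that $P_B$ creates colimits is literally the content of Lemma \ref{l2}.1. For preservation, note that because $\Top$ is complete, the categorical product $X\tm_{\Top}B$ exists for every $X\in\Top$; hence Lemma \ref{l2}.2 applies and produces a right adjoint of $P_B$ (namely $X\mapsto(X\tm_{\Top}B\ra B)$ via the second projection). A left adjoint preserves all colimits, so $P_B$ preserves colimits.

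I do not anticipate any real obstacle: the mathematical substance is entirely carried by Lemmas \ref{l2} and \ref{l9}, and the only fact that is specific to topological spaces — the bicompleteness of $\Top$ — has already been established. The final write-up will therefore amount to a one-line invocation of each cited lemma, in the same style as the proof of Proposition \ref{p20}.
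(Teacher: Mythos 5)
Your proposal is correct and matches the paper's own proof, which simply cites Lemma \ref{l2} and Lemma \ref{l9}; you have merely spelled out the same reduction in more detail, including the observation that $P_B$ preserves colimits because it is a left adjoint via Lemma \ref{l2}.2. No gaps.
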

   
\begin{proof}
This follows from Lemma \ref{l2} and Lemma \ref{l9}.
\qedhere   
\end{proof}
 If $X$ is a fibrewise space over $B$, then $\abs{X}$ is a set over $\abs{B}$ so that one has an underlying \enquote{fibrewise} set functor 
$$\abs{.}:\Top_B\ra\Set_{\abs{B}}. $$
 The functor $\abs{.}$ has a left adjoint which is the ordinary discrete functor 
$$\Disc : \Set_{\abs{B}} \ra\Top_B,$$
and a right adjoint which is the codiscrete functor $$\Codisc :\Set_{\abs{B}}\ra\Top_B.$$
 It associates to a fibrewise set $S$ over $\abs{B}$ the topological space whose underlying set is $S$ and whose topology is the initial topology defined by the projection $p:S\ra \abs{B}$ of the fibrewise set $S$ on $\abs{B}$. \\ 
\indent It follows that the underlying fibrewise set functor $\abs{.}$ preserves both limits and colimits.  We have a commutative diagram of colimit preserving functors
\begin{equation}\label{eq12}
\begin{tikzcd}[sep=large]
    \Top_B\ar[r,"\abs{ .}"]\ar[d,"P_B"']&\Set_{\abs{B}}\ar[d,"P_{\abs{B}}"]\\
    \Top\ar[r,"\abs{ .}"']&\Set\\  
\end{tikzcd}
\end{equation}
where $P_B$ and $P_{\abs{B}}$ are the functors defined by (\ref{eq26}) and (\ref{eq23}) respectively. For any functor $T:\mi\ra\Top_B$, define the underlying  functor of $T$ to be the composite functor
\begin{equation}\label{eq72}
 \abs{T}:\mi\st{T}{\ra}\Top_B\st{\abs{.}}{\ra}\Set_{\abs{B}} 
\end{equation} 

\begin{lemma}\label{l3}  
Let $\mi$ be a (not necessarily small) category and $T:\mi\ra\Top_B$ a functor.
\begin{enumerate}
 
	\item  If one of the functors
	$T$, $\abs{T}$, $P_BT$, $\abs{P_BT}=P_{\abs{B}}\abs{T} $
	has a colimit, then so do the others. 
	\item  Assume that $T$ has a colimit, then $P_B(\colim T)$ is the topological space whose underlying set is $\colim \abs{ P_BT}$
	and whose topology is the final topology induced by the components
	of the colimiting cone $\abs{P_BT} \Rightarrow \colim \abs{ P_BT}$.
\end{enumerate}
\end{lemma}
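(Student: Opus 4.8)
The plan is to reduce everything to the already-established behaviour of the forgetful functors in the commutative square \eqref{eq12} together with Lemma \ref{l13} and Lemma \ref{l2}. First I would treat part 1. The square \eqref{eq12} consists of four functors, and we have recorded that $P_B:\Top_B\to\Top$ creates and preserves colimits (Proposition \ref{p21}), that $\absl{}:\Top_B\to\Set_{\absl{B}}$ preserves colimits and has a right adjoint (hence reflects nothing automatically, but it does lift them — this was the content of the discussion preceding the lemma), that $\absl{}:\Top\to\Set$ preserves and lifts colimits (Lemma \ref{l13}), and that $P_{\absl{B}}:\Set_{\absl{B}}\to\Set$ creates and preserves colimits (Proposition \ref{p20}). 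So the chain of equivalences "has a colimit" for $T$, $\absl{T}$, $P_BT$, $P_{\absl{B}}\absl{T}=\absl{P_BT}$ follows by walking around the square: if $P_BT$ has a colimit then $T$ does, since $P_B$ creates colimits; if $\absl{P_BT}$ has a colimit then $P_BT$ does, since $\absl{}:\Top\to\Set$ lifts colimits; the equality $\absl{P_BT}=P_{\absl{B}}\absl{T}$ is the commutativity of \eqref{eq12}; and if $\absl{T}$ has a colimit then $P_{\absl{B}}\absl{T}$ does since $P_{\absl{B}}$ preserves colimits, while conversely if $P_{\absl{B}}\absl{T}$ has a colimit then $\absl{T}$ does since $P_{\absl{B}}$ creates colimits. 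Finally, $T$ has a colimit iff $P_BT$ has one, closing the loop.

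For part 2, assume $T$ has a colimit. By Proposition \ref{p21}.2 the functor $P_B$ preserves colimits, so $P_B(\colim T)\cong\colim P_BT$ in $\Top$. Now apply Lemma \ref{l13}.2 to the functor $P_BT:\mi\to\Top$: its colimit is the topological space whose underlying set is $\colim\absl{P_BT}$ and whose topology is the final topology defined by the components of the colimiting cone $\absl{P_BT}\Rightarrow\colim\absl{P_BT}$. Identifying $P_B(\colim T)$ with $\colim P_BT$ gives exactly the asserted description.

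I do not anticipate a serious obstacle here; the lemma is genuinely a bookkeeping statement assembling facts already proved. The one point requiring a little care is making sure that the forgetful functor $\absl{}:\Top_B\to\Set_{\absl{B}}$ not only preserves colimits but also "lifts" or "creates" them in the sense needed to run the reverse implications in part 1 — but this was already noted in the paragraph before the lemma (it has both adjoints, and the analogous lifting statement is exactly Lemma \ref{l13}.1 transported along the slice construction). Since a fully faithful statement is not needed, only the existence equivalences and the explicit formula, the argument is short. Thus the proof is essentially "clear", as stated, once one has organised which of the four functors in \eqref{eq12} preserves and which creates colimits.

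\begin{proof}
Part 1 follows by combining the properties of the four functors in the commutative square \eqref{eq12}. By Proposition \ref{p21}.2, $P_B:\Top_B\to\Top$ creates colimits, so $T$ has a colimit iff $P_BT$ does. By Lemma \ref{l13}.1 applied to $P_BT$, the functor $\absl{}:\Top\to\Set$ reflects and preserves the existence of colimits, so $P_BT$ has a colimit iff $\absl{P_BT}$ does. By commutativity of \eqref{eq12}, $\absl{P_BT}=P_{\absl{B}}\absl{T}$. By Proposition \ref{p20}.2, $P_{\absl{B}}:\Set_{\absl{B}}\to\Set$ creates and preserves colimits, so $\absl{T}$ has a colimit iff $P_{\absl{B}}\absl{T}$ does. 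Chaining these equivalences shows that any one of $T$, $\absl{T}$, $P_BT$, $\absl{P_BT}=P_{\absl{B}}\absl{T}$ has a colimit if and only if all of them do.

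For part 2, assume $T$ has a colimit. By Proposition \ref{p21}.2, $P_B$ preserves colimits, hence $P_B(\colim T)\cong\colim P_BT$ in $\Top$. Applying Lemma \ref{l13}.2 to the functor $P_BT:\mi\to\Top$, the space $\colim P_BT$ has underlying set $\colim\absl{P_BT}$ and carries the final topology induced by the components of the colimiting cone $\absl{P_BT}\Rightarrow\colim\absl{P_BT}$. Transporting along the isomorphism $P_B(\colim T)\cong\colim P_BT$ gives the stated description of $P_B(\colim T)$.
\end{proof}
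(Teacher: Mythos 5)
Your proof is correct and follows essentially the same route as the paper: both arguments rest on the commutative square (\ref{eq12}), the fact that $P_B$ and $P_{\abs{B}}$ create and preserve colimits, and Lemma \ref{l13} for the underlying-set functor on $\Top$, with part 2 obtained by combining preservation of colimits by $P_B$ with Lemma \ref{l13}.2. The only difference is presentational: the paper notes that all four functors are colimit-preserving left adjoints and then closes the loop with the single implication from $\abs{P_BT}$ back to $T$, whereas you chain the equivalences explicitly, which amounts to the same thing.
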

   
\begin{proof}\mbox{}
\begin{enumerate}
   
	\item The functors in diagram (\ref{eq12}) are left adjoints, they are therefore colimit preserving, we just need to prove that if $\abs{P_BT}$ has a colimit, then so is $T$. Assume then that $\abs{P_BT}$ has a colimit. By Lemma \ref{l13}, $P_BT$ has a colimit. $P_B$ creates colimits, therefore $T$ has a colimit as desired.
	\item This follows immediately the first property, Lemma \ref{l13} and the fact that $P_B$ preserves colimits.
\end{enumerate}
\qedhere   
\end{proof}

\begin{lemma}\label{l16}  
Let $\mi$ be a (not necessarily small) category and $T:\mi\ra\Top_B$ a functor. Then:
\begin{enumerate}
 
	\item $T$ has a limit iff $\abs{T}$ has a limit.
	\item  Assume that $S$ is a  set over $\abs{B}$ and $S \st{\lambda}\Rightarrow \abs{T}$ is a limiting cone. Let $L$ be the topological space whose underlying set is $S$ and whose topology is the initial topology defined by the components of $\lambda$. Then $L$ is a fibrewise space over $B$ and the cone $L\Rightarrow T$ induced by $\lambda$ is a limiting cone.
\end{enumerate}
\end{lemma}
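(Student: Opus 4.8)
The plan is to mimic the proof of Lemma~\ref{l13}, using that the underlying fibrewise set functor $\abs{.}\colon\Top_B\ra\Set_{\abs{B}}$ has a left adjoint (the discrete functor) and a right adjoint (the codiscrete functor). Since $\abs{.}$ is a right adjoint it preserves limits, so the implication ``$T$ has a limit $\Rightarrow$ $\abs{T}$ has a limit'' in statement~1 is immediate, with $\abs{\lm T}=\lm\abs{T}$. The real work is statement~2, whose construction will also yield the converse implication in statement~1; alternatively one could deduce everything from Lemma~\ref{l9} (applied to both $\Top/B=\Top_B$ and $\Set/\abs{B}=\Set_{\abs{B}}$) together with Lemma~\ref{l13}, after identifying $\abs{T^{\triangleright}}$ with $(\abs{T})^{\triangleright}$ via the commutative square~(\ref{eq12}), but the direct route below seems cleaner.

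For statement~2 I would start from a limiting cone $S\st{\lambda}\Ra\abs{T}$ in $\Set_{\abs{B}}$, with structure map $p_S\colon S\ra\abs{B}$, and let $L$ be the set $S$ topologised by the initial topology for the maps $\lambda_i\colon S\ra T(i)$, $i\in\mi$ (together with $p_S\colon S\ra B$; when $\mi\neq\emptyset$ this last map is redundant because $p_S=p_{T(i_0)}\circ\lambda_{i_0}$ for any chosen $i_0\in\mi$, where $p_{T(i_0)}$ is the projection of the fibrewise space $T(i_0)$, so the topology is the one in the statement, and when $\mi=\emptyset$ one simply gets the terminal object $1_B\colon B\ra B$ of $\Top_B$). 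First I would check that $p_S\colon L\ra B$ is continuous, so that $L$ is a fibrewise space over $B$, and that each $\lambda_i$ is then a continuous fibrewise map, so that $\lambda$ is a cone $L\Ra T$ in $\Top_B$.

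Next I would verify the universal property. Given any cone $M\st{\mu}\Ra T$ in $\Top_B$, apply $\abs{.}$ to obtain a cone $\abs{M}\st{\abs{\mu}}\Ra\abs{T}$ in $\Set_{\abs{B}}$; by the universal property of $S$ there is a unique morphism $f\colon\abs{M}\ra S$ of sets over $\abs{B}$ with $\lambda_i\circ f=\abs{\mu_i}$ for all $i$. Since $f$ lies over $\abs{B}$ it commutes with the projections to $B$, and since each $\lambda_i\circ f=\mu_i$ is continuous, the defining property of the initial topology on $L$ makes $f\colon M\ra L$ continuous; hence $f$ is the required morphism in $\Top_B$, and it is the unique one because $\abs{.}$ is faithful. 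This shows $L=\lm T$, proving statement~2 and the remaining direction of statement~1.

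I do not expect a genuine obstacle here: this is the standard argument that a topological functor creates limits. The only points that need a little care are keeping the projection $p_S\colon L\ra B$ continuous (handled by the factorisation through some $\lambda_{i_0}$, or by adjoining $p_S$ to the family defining the initial topology) and the degenerate case $\mi=\emptyset$, where the limit is just the terminal object $1_B$ of $\Top_B$.
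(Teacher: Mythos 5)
Your argument is correct, but it takes a different route from the paper, whose entire proof of this lemma is the one line ``These are consequences of Lemmas \ref{l9} and \ref{l13}'': the paper reduces a limit in $\Top_B$ to a limit in $\Top$ of the extended diagram $T^{\triangleright}$ over the cone category $\mi^{\triangleright}$ (Lemma \ref{l9}), and then to a limit of underlying sets with the initial topology (Lemma \ref{l13}). You instead verify the universal property directly: topologise $S$ by the initial topology for the $\lambda_i$, check continuity of the projection and of the comparison map out of any competing cone, and use faithfulness of $\absl{.}$ for uniqueness --- the standard ``topological functor creates limits'' argument. You do flag the paper's route as an alternative, so you are aware of it. What each approach buys: the paper's is shorter given the earlier lemmas, and because $T^{\triangleright}$ always contains $B$ itself in the diagram, the initial topology there automatically includes the projection to $B$ among its defining maps, so the empty-diagram case needs no special treatment. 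Your direct route is self-contained and has the merit of exposing a genuine (if minor) imprecision in the literal statement of part~2: when $\mi=\emptyset$ the initial topology ``defined by the components of $\lambda$'' is the indiscrete one and the projection to $B$ need not even be continuous, so one must adjoin $p_S$ to the defining family (or treat the terminal object $1_B$ separately), exactly as you do. Both proofs are sound.
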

   
\begin{proof} These are consequences of Lemmas \ref{l9} and \ref{l13}.
\qedhere   
\end{proof}

\begin{lemma}\label{l14}  
  Let $X:\mi\ra \Top$ be a functor and $Y\in \Top$. Assume that $X\st{\lambda}\Ra Y$ is a colimiting cone and let $Y'\subset Y$. Let $X':\mi\ra \Top$ be the functor given by  $X'(i)=\lambda_i^{-1}(Y')$, $i\in \mi$. Then the cone $\lambda':X'\Ra Y'$ induced by $\lambda$ is a colimiting cone, provided that $Y'$ is either open or closed in $Y$. 
\end{lemma}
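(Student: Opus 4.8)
The statement to prove is Lemma \ref{l14}: given a functor $X : \mi \to \Top$ with a colimiting cone $X \stackrel{\lambda}{\Rightarrow} Y$, and a subset $Y' \subset Y$ that is either open or closed in $Y$, the induced cone $\lambda' : X' \Rightarrow Y'$ (where $X'(i) = \lambda_i^{-1}(Y')$) is again a colimiting cone.

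The plan is to reduce the topological statement to the set-theoretic one already established in Lemma \ref{l11}, and then check that the colimit topology on the set $Y'$ coincides with the subspace topology inherited from $Y$. First I would apply the underlying-set functor $\absl{}$: since $\absl{} : \Top \to \Set$ preserves colimits (stated in Appendix \ref{C}), the cone $\absl{X} \Rightarrow \absl{Y}$ is a colimiting cone in $\Set$, and $\absl{X'(i)} = \lambda_i^{-1}(\absl{Y'})$. By Lemma \ref{l11}, the induced cone $\absl{X'} \Rightarrow \absl{Y'}$ is a colimiting cone in $\Set$. By Lemma \ref{l13}.1, it follows that $X'$ has a colimit in $\Top$, and by Lemma \ref{l13}.2 the underlying set of $\colim X'$ is $\absl{Y'}$ with the final topology determined by the maps $\lambda'_i : X'(i) \to Y'$. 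So it remains to show this final topology is exactly the subspace topology on $Y' \subset Y$; equivalently, that the canonical continuous bijection $\colim X' \to Y'$ is a homeomorphism, i.e. that $\lambda'$ is a colimiting cone.

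The key step, and the one place the hypothesis on $Y'$ is used, is the following: a subset $A \subset Y'$ is open in the final topology iff $(\lambda'_i)^{-1}(A) = \lambda_i^{-1}(A)$ is open in $X'(i)$ for every $i$. I would argue that if $Y'$ is open in $Y$, then $X'(i) = \lambda_i^{-1}(Y')$ is open in $X(i)$, so a subset of $X'(i)$ is open in the subspace topology iff it is open in $X(i)$; hence $\lambda_i^{-1}(A)$ open in $X'(i)$ for all $i$ is equivalent to $\lambda_i^{-1}(A)$ open in $X(i)$ for all $i$, which by the colimit property of $\lambda$ means $A$ is open in $Y$, hence (since $A \subset Y'$ and $Y'$ open) open in $Y'$. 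The closed case is dual: if $Y'$ is closed in $Y$, then $X'(i)$ is closed in $X(i)$, a subset of $X'(i)$ is closed in the subspace topology iff closed in $X(i)$, and one runs the same argument with ``closed'' in place of ``open'', using that $\lambda$ is a colimiting cone to detect closed sets and that a closed subset of the closed set $Y'$ is closed in $Y$. In both cases the final topology on $Y'$ agrees with the subspace topology, so $\lambda'$ is a colimiting cone.

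I do not anticipate a serious obstacle here; the only subtlety is bookkeeping the difference between ``$A$ open in $X'(i)$'' and ``$A$ open in $X(i)$'', which is precisely why openness (or closedness) of $Y'$ is needed — without it the two notions need not agree and the statement fails. So the proof is: invoke $\absl{}$ preserving colimits and Lemma \ref{l11} to handle the underlying sets, invoke Lemma \ref{l13} to get a colimit in $\Top$ with a final topology, and then do the short point-set verification above that the final topology is the subspace topology, splitting into the open case and the closed case.
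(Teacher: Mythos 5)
Your proof is correct and takes essentially the same route as the paper's: reduce to Lemma \ref{l11} on underlying sets, then use Lemma \ref{l13} together with the openness (resp.\ closedness) of $Y'$ --- which makes ``open (resp.\ closed) in $X'(i)$'' agree with ``open (resp.\ closed) in $X(i)$'' --- to identify the final topology on $Y'$ with the subspace topology. The only cosmetic difference is that the paper writes out the closed case and leaves the open case as ``similar,'' whereas you do the reverse.
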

   
\begin{proof}
By Lemma \ref{l11}, the cone $\abs{\lambda'}:\abs{X'}\Ra \abs{Y'}$ is a colimiting cone in $\Set$. Assume that $Y'$ is closed, then $X'(i)$ is closed in $X(i)$, all $i\in \mi$. Let $C \subset Y'$ such that 
$\lambda'^{-1}_{i}(C)$ is closed in $X'(i)$ for all $i\in I$. The subset $\lambda^{-1}_{i}(C)=\lambda'^{-1}_{i}(C)$ is closed in $X(i)$ for all $i\in I$. By Lemma \ref{l13}, $C$ is closed $Y$ and therefore $C$ is closed $Y'$.
It follows that $Y'$ has the final topology defined by the components of the cone $\abs{\lambda'}$. By Lemma \ref{l13}, $\lambda'$ is a colimiting cone. A similar argument can be used in the case where $Y'$ is open.
\qedhere   
\end{proof}

Let $A\subset B$, $J^t_A:A \sra B$ the inclusion map and
\begin{equation}\label{eq30} 
J^{t,*}_A: \Top_B \ra \Top_A
\end{equation}
The functor given by pulling back along the inclusion map $J^t_A$.

\begin{lemma}\label{l15} \mbox{} 
\begin{enumerate}
 
\item The functor $J_A^{t,*}$ preserves limits.
\item  The functor $J_A^{t,*}$ preserves colimits provided that $A$ is either open or closed in $B$.
\end{enumerate}
 \end{lemma}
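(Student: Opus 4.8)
The plan is to handle the two parts separately. Part 1 follows from the fact that $J^{t,*}_A$ is a right adjoint. Writing $f=J^t_A:A\hookrightarrow B$, post-composition with $f$ defines a functor $f_!:\Top_A\ra\Top_B$ sending an arrow-object $Y\ra A$ to $Y\ra A\hookrightarrow B$; and for $X\in\Top_B$, $Y\in\Top_A$, a morphism $f_!Y\ra X$ in $\Top_B$ is exactly a continuous map $Y\ra X$ whose composite with the projection of $X$ factors through $A$, which by the universal property of the pullback $X\tm_B A=J^{t,*}_A(X)$ is the same thing as a morphism $Y\ra J^{t,*}_A(X)$ in $\Top_A$. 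Thus $f_!\dashv J^{t,*}_A$, and so $J^{t,*}_A$ preserves limits. (Alternatively: on underlying fibrewise sets $J^{t,*}_A$ agrees with $J^{s,*}_{\abs A}$, which preserves limits by Lemma \ref{l12}, while a subspace of a space carrying an initial topology again carries the initial topology; the description of limits in $\Top_B$ given by Lemma \ref{l16} then yields the claim.)

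For part 2, let $T:\mi\ra\Top_B$ have a colimiting cone $T\st{\nu}{\Ra}C$. Let $q:P_B(C)\ra B$ be the projection of $C$ and $p_i:P_B(T(i))\ra B$ those of the $T(i)$, so that $q\circ P_B(\nu_i)=p_i$ since each $\nu_i$ is a morphism over $B$. By Proposition \ref{p21}.2, $P_B$ creates colimits, hence $P_B(T)\st{P_B(\nu)}{\Ra}P_B(C)$ is a colimiting cone in $\Top$. Now the underlying space $P_A(J^{t,*}_A(C))$ is the subspace $q^{-1}(A)\subset P_B(C)$, and $P_A(J^{t,*}_A(T(i)))$ is the subspace $p_i^{-1}(A)=P_B(\nu_i)^{-1}(q^{-1}(A))\subset P_B(T(i))$, with $P_A(J^{t,*}_A(\nu))$ the induced cone. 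If $A$ is open (resp. closed) in $B$, then $q^{-1}(A)$ is open (resp. closed) in $P_B(C)$, so Lemma \ref{l14}, applied to the colimiting cone $P_B(\nu)$ and the subset $q^{-1}(A)$, shows that $P_A(J^{t,*}_A(\nu))$ is a colimiting cone in $\Top$. Since $P_A$ creates colimits (Lemma \ref{l2}.1 applied to the slice over $A$), and $J^{t,*}_A(\nu)$ is a cone over the diagram $J^{t,*}_A\circ T$ lifting this colimiting cone, it is itself a colimiting cone; hence $J^{t,*}_A$ preserves colimits.

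I do not expect a serious obstacle: in part 2 all the content is packaged in Lemma \ref{l14}, and the open/closed hypothesis on $A$ is exactly what that lemma requires (and is the reason it cannot be omitted). The only points needing care are the routine identification, in the second paragraph, of the underlying space of $J^{t,*}_A$ applied to a colimit with the restriction of the underlying space of the colimit along its projection, together with the matching of cone components — both immediate from the facts that each $\nu_i$ is a morphism over $B$ and that $A\hookrightarrow B$ is an embedding, so that pullback along it is restriction to preimages on underlying sets — and a one-line invocation of creation of colimits to upgrade the cone $J^{t,*}_A(\nu)$, whose image under $P_A$ is colimiting, to a colimiting cone.
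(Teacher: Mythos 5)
Your proof is correct and follows essentially the same route as the paper: part 1 via $J^{t,*}_A$ being a right adjoint (you additionally exhibit the left adjoint $f_!$ explicitly), and part 2 by combining Proposition \ref{p21} (creation of colimits by $P_B$, and likewise $P_A$) with Lemma \ref{l14} applied to the open or closed subset $q^{-1}(A)$. The paper states this in two sentences; you have simply filled in the same details it leaves implicit.
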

  
\begin{proof}
The first point results from the fact that $J_A^{t,*}$ is a right adjoint. The second is a consequence of Lemma \ref{l14} and Proposition \ref{p21}.
\qedhere   
\end{proof}

  Let $b\in B$. For $X\in \Top_B$, define the fibre of $X$ over $b$ to be the subspace $X_b=p^{-1}(b)$, where $p$ is the projection of the fibrewise space $X$.
One has a functor
\begin{equation}\label{eq28} 
\pi^{t}_{b}:\Top_B \ra \Top
\end{equation}
defined as follows: For $X\in \Top_B$, $\pi^{t}_{b}(X)=X_b$ and for $f\in \Top_B(X,Y)$, $\pi^{t}_{b}(f)$ is the map $f_b:X_b\ra Y_b$ is the map induced by $f$. 

\begin{lemma}\label{l8} Let $X:\mi\ra \Top_B$ be a functor.
\begin{enumerate} 
 
\item  The functors $\pi^{t}_{b}$, $b\in B$, preserve limits and  the functor $X$ has a limit iff the composite functor 
$\pi^{t}_{b}X$ has a limit for all $b\in B$.
 \item Assume $B$ is a $\T_1$-space. Then the functors $\pi^{t}_{b}$, $b\in B$, preserve colimits and  the functor $X$ has a colimit iff the composite functor 
$\pi^{t}_{b}X$ has a colimit for all $b\in B$.
\end{enumerate}
\end{lemma}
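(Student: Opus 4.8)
Let $X:\mi\ra\Top_B$ be a functor.
\begin{enumerate}
\item The functors $\pi^{t}_{b}$, $b\in B$, preserve limits, and $X$ has a limit iff $\pi^{t}_{b}X$ has a limit for all $b\in B$.
\item If $B$ is $\T_1$, the functors $\pi^{t}_{b}$ preserve colimits, and $X$ has a colimit iff $\pi^{t}_{b}X$ has a colimit for all $b\in B$.
\end{enumerate}

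The plan is to reduce everything to the already-established set-level statement, Lemma \ref{l7}, via the faithful underlying-set functors, exactly in the spirit of Lemma \ref{l3} and Lemma \ref{l16}. First I would record the key factorization: for each $b\in B$ the diagram
\begin{equation*}
\begin{tikzcd}[sep=large]
\Top_B\ar[r,"\pi^{t}_{b}"]\ar[d,"\abs{.}"']&\Top\ar[d,"\abs{.}"]\\
\Set_{\abs{B}}\ar[r,"\pi^{s}_{b}"']&\Set
\end{tikzcd}
\end{equation*}
commutes on the nose, since $\pi^{t}_{b}(X)=X_b=p^{-1}(b)$ and $\abs{X_b}=\abs{p}^{-1}(b)=\pi^{s}_{b}(\abs{X})$, and morphisms are treated fibrewise in both rows. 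Also $\pi^{t}_{b}=\pi^{t}_{b}\circ(\text{forget along }B^b\hookrightarrow B)$ is a right adjoint composed with the identity-fibre projection, or more simply I can cite that $\pi^{t}_{b}=J^{t,*}_{\{b\}}$ followed by the canonical iso $\Top_{\{b\}}\cong\Top$ (this is exactly the $B$-version of Lemma \ref{l12}). In the $\T_1$ case $\{b\}$ is closed in $B$, so Lemma \ref{l15}.2 applies.

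For part 1: $\pi^{t}_{b}$ is a right adjoint (by Lemma \ref{l15}.1, being $J^{t,*}_{\{b\}}$ up to the iso $\Top_{\{b\}}\simeq\Top$), hence preserves limits. For the ``iff'': if $X$ has a limit, each $\pi^{t}_{b}X$ does by preservation. Conversely, suppose $\pi^{t}_{b}X$ has a limit for every $b$. Then $\abs{\pi^{t}_{b}X}=\pi^{s}_{b}\abs{X}$ has a limit for every $b$ (the underlying-set functor on $\Top$ preserves and creates limits, Lemma \ref{l13}), so by Lemma \ref{l7}.1 the functor $\abs{X}:\mi\ra\Set_{\abs{B}}$ has a limit, and then by Lemma \ref{l16}.1 $X$ has a limit. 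For part 2 I argue symmetrically: under the hypothesis that $B$ is $\T_1$, $\pi^{t}_{b}$ is (up to the iso $\Top_{\{b\}}\simeq\Top$) the pullback functor $J^{t,*}_{\{b\}}$ along the closed inclusion $\{b\}\hookrightarrow B$, so Lemma \ref{l15}.2 gives that it preserves colimits. For the ``iff'': if $X$ has a colimit, preservation gives the $\pi^{t}_{b}X$ colimits; conversely if each $\pi^{t}_{b}X$ has a colimit then $\abs{\pi^{t}_{b}X}=\pi^{s}_{b}\abs{X}$ has a colimit for all $b$ (underlying-set functor on $\Top$ preserves/creates colimits, Lemma \ref{l13}), so $\abs{X}$ has a colimit by Lemma \ref{l7}.2, hence $P_{\abs B}\abs X=\abs{P_B X}$ has one, and Lemma \ref{l3}.1 promotes this to a colimit of $T=X$ in $\Top_B$.

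The only delicate point — and the one I would write out carefully — is the colimit direction of part 2, specifically where the $\T_1$ hypothesis enters. Without $\{b\}$ closed, the pullback functor $J^{t,*}_{\{b\}}$ need not preserve colimits (Lemma \ref{l15}.2 demands $A$ open or closed), and indeed a single point of a non-$\T_1$ base need not be closed; so I must invoke $\T_1$ precisely to say $\{b\}$ is closed. Everything else is a routine transport along the commuting square of underlying-set functors, citing Lemmas \ref{l3}, \ref{l7}, \ref{l13}, \ref{l15}, \ref{l16}; I would not belabor those verifications beyond noting the square commutes and the vertical functors are faithful and (co)limit-creating in the relevant directions. I would close with ``Clear.'' only if the referee tolerates it; more safely, the two-paragraph argument above suffices.

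\begin{proof}
For each $b\in B$ let $B^b$ be the one-point fibrewise subspace of Example \ref{ex3}. The functor $\pi^{t}_{b}$ factors, up to the canonical isomorphism $\Top_{\{b\}}\cong\Top$, as the pullback functor $J^{t,*}_{\{b\}}:\Top_B\ra\Top_{\{b\}}$ along the inclusion $\{b\}\hookrightarrow B$. Moreover, the square
\begin{equation*}
\begin{tikzcd}[sep=large]
\Top_B\ar[r,"\pi^{t}_{b}"]\ar[d,"\abs{.}"']&\Top\ar[d,"\abs{.}"]\\
\Set_{\abs{B}}\ar[r,"\pi^{s}_{b}"']&\Set
\end{tikzcd}
\end{equation*}
commutes, since $\abs{X_b}=\abs{p}^{-1}(b)=\pi^{s}_{b}(\abs{X})$ for $X\in\Top_B$ with projection $p$, and morphisms are handled fibrewise in both rows.

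\textbf{(1)} By Lemma \ref{l15}.1 the functor $J^{t,*}_{\{b\}}$, and hence $\pi^{t}_{b}$, is a right adjoint, so it preserves limits. If $X$ has a limit, then so does each $\pi^{t}_{b}X$ by this preservation. Conversely, assume $\pi^{t}_{b}X$ has a limit for every $b\in B$. By Lemma \ref{l13}.1 the underlying-set functor $\abs{.}:\Top\ra\Set$ preserves and creates limits, so $\pi^{s}_{b}(\abs{X})=\abs{\pi^{t}_{b}X}$ has a limit for every $b$. By Lemma \ref{l7}.1 the functor $\abs{X}:\mi\ra\Set_{\abs{B}}$ has a limit, and by Lemma \ref{l16}.1 the functor $X$ has a limit.

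\textbf{(2)} Assume $B$ is $\T_1$, so that $\{b\}$ is closed in $B$ for every $b$. By Lemma \ref{l15}.2 the functor $J^{t,*}_{\{b\}}$, and hence $\pi^{t}_{b}$, preserves colimits. If $X$ has a colimit, then so does each $\pi^{t}_{b}X$. Conversely, assume $\pi^{t}_{b}X$ has a colimit for every $b\in B$. By Lemma \ref{l13}.1 the functor $\abs{.}:\Top\ra\Set$ preserves and creates colimits, so $\pi^{s}_{b}(\abs{X})=\abs{\pi^{t}_{b}X}$ has a colimit for every $b$. By Lemma \ref{l7}.2 the functor $\abs{X}:\mi\ra\Set_{\abs{B}}$ has a colimit, whence $P_{\abs{B}}\abs{X}=\abs{P_BX}$ has a colimit, and by Lemma \ref{l3}.1 the functor $X$ has a colimit in $\Top_B$.
\end{proof}
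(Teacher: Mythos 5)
Your proof is correct and follows essentially the same route as the paper's: preservation via Lemma \ref{l15} (identifying $\pi^{t}_{b}$ with the pullback functor $J^{t,*}_{\{b\}}$, with $\T_1$ supplying closedness of $\{b\}$ for the colimit case), and the converse direction by transporting along the underlying-set functors through Lemmas \ref{l13}, \ref{l7}, \ref{l16} and \ref{l3}. You merely make explicit the commuting square and the factorization that the paper leaves implicit.
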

    
\begin{proof}\mbox{} 
\begin{enumerate}
 
\item  By Lemma \ref{l15}.1, the functors $\pi^{t}_{b}$ preserve limits. The fact that $\pi^{t}_{b}X$ has a limit for all $b\in B$
implies that $X$ has a limit is a consequence of   Lemmas \ref{l13}.1, \ref{l7}.1 and \ref{l16}.1.
 \item By Lemma \ref{l15}.2, the functors $\pi^{t}_{b}$  preserve colimits. The fact that $\pi^{t}_{b}X$ has a colimit for all $b\in B$
implies that $X$ has a colimit is a consequence of  Lemmas \ref{l13}.1, \ref{l7}.2 and \ref{l3}.1.
\end{enumerate}
 \qedhere     
\end{proof}



\end{document}